\def\eps{{\varepsilon}}
\def\mes{{\rm mes}}
\def\Card{{\rm Card}}
\def\Prob{{\mathbb{P}}}
\def\bbH{\mathbb{H}}
\def\bbW{\mathbb{W}}
\def\naturals{\mathbb{N}}
\def\Tor{\mathbb{T}}
\def\reals{\mathbb{R}}
\def\integers{\mathbb{Z}}
\def\RmII{{I\!\!I}}
\def\RmIII{{I\!\!I\!\!I}}
\def\bC{\mathbf{C}}
\def\bG{\mathbf{G}}
\def\bm{\mathbf{m}}
\def\bp{\mathbf{p}}
\def\bt{\mathbf{t}}
\def\bv{\mathbf{v}}
\def\bx{\mathbf{x}}
\def\bkappa{\boldsymbol{\kappa}}
\def\brho{\boldsymbol{\rho}}
\def\bsigma{\boldsymbol{\sigma}}
\def\brC{{\bar C}}
\def\brH{{\bar H}}
\def\bra{{\bar a}}
\def\brc{{\bar c}}
\def\brx{{\bar x}}
\def\bry{{\bar y}}
\def\breta{{\bar \eta}}
\def\cA{\mathcal{A}}
\def\cB{\mathcal{B}}
\def\cC{\mathcal{C}}
\def\cD{\mathcal{D}}
\def\cI{\mathcal{I}}
\def\cF{\mathcal{F}}
\def\cH{\mathcal{H}}
\def\cE{\mathcal{E}}
\def\cL{\mathcal{L}}
\def\cM{\mathcal{M}}
\def\cN{\mathcal{N}}
\def\cO{\mathcal{O}}
\def\cS{\mathcal{S}}
\def\cT{\mathcal{T}}
\def\cW{\mathcal{W}}
\def\fA{\mathfrak{A}}
\def\fc{\mathfrak{c}}
\def\fg{\mathfrak{g}}
\def\fh{\mathfrak{h}}
\def\fm{\mathfrak{m}}
\def\fp{\mathfrak{p}}
\def\hC{{\hat C}}
\def\hX{{\hat X}}
\def\tH{{\tilde H}}
\def\tx{{\tilde x}}
\theoremstyle{definition}
\newtheorem{definition}{Definition}[section]
\newtheorem{theorem}[definition]{Theorem}
\newtheorem{lemma}[definition]{Lemma}
\newtheorem{corollary}[definition]{Corollary}
\newtheorem{proposition}[definition]{Proposition}
\newtheorem{problem}[definition]{Problem}
\newtheorem{remark}[definition]{Remark}
\newtheorem{example}[definition]{Example}
\numberwithin{equation}{section}
\definecolor{OliveGreen}{rgb}{0,0.6,0}
\def\DS{\displaystyle}
\def\R{\mathbb{R}}
\def\Z{\mathbb{Z}}
\def\T{\mathbb{T}}
\def\N{\mathbb{N}}
\def\a{{\bf a}}
\def\Ab{{\mathbb{A}}}
\author{D. Dolgopyat, C. Dong, A. Kanigowski, and P. N\'andori}
\title[Flexibility of statistical properties for smooth systems with the CLT]
{Flexibility of statistical properties for smooth systems satisfying the central limit theorem}
\begin{document}

\begin{abstract}
In this paper we  exhibit new classes of smooth systems which satisfy the Central Limit Theorem (CLT) and have (at least) one of the following properties:
\begin{itemize}
\item zero entropy;
\item weak but not strong mixing;  
\item (polynomially) mixing but not $K$; 
\item  $K$ but not Bernoulli;
\item non Bernoulli and mixing at arbitrary fast polynomial rate.
\end{itemize}
 We also give an example
of a system satisfying the CLT where the normalizing sequence is regularly varying with index $1$.
\end{abstract}

\maketitle

\tableofcontents

\part{Results}
\section{Partially chaotic systems}
\label{ScPartCahos}
An important discovery made in the last century is that deterministic systems can exhibit chaotic behavior.
Currently there are many examples of systems enjoying a full array of chaotic properties 
which follow from either uniform hyperbolicity or non-uniform hyperbolicity, in case there is 
a control on the region
where hyperbolicity is weak
\cite{BDV05, Bow75, CM06, You98}. 
Systems which satisfy only some of the above properties are less
understood. In fact, it is desirable to have more examples of such systems in order to understand 
the full range of possible behaviors of partially chaotic systems.

The Central Limit Theorem (CLT) is 
a hallmark of chaotic behavior. 
There is a vast literature on the topic. In particular there are numerous methods of establishing
CLT, including the method of moments (cumulants) \cite{BG, Ch95}, 
spectral method \cite{Gou15}, the martingale method \cite{Gor69, HH80, KKM18}
(the list of references here is by no means exhaustive, we just provide a sample of papers
which could be used for introducing non-experts to the corresponding techniques).
However, the above methods require strong mixing properties of the system.
As a result, they apply to systems which have strong statistical properties
including Bernoulli property and summable decay of correlations.
The only example going beyond strongly chaotic framework is the product 
of an Anosov\footnote{The methods of \cite{CC13} apply to more general systems in the first factor,
however, they seem insufficient to produce the examples described in Theorems 
\ref{ThCLT+K}--\ref{ThWeakMix}.}
 diffeomorphism (called diffeo in the sequel) and a Diophantine rotation,
which is shown in \cite{CC13} to satisfy the CLT (see also \cite{K98, PW10} or Corollary \ref{CrUE-CLT} below).

Thus the knowledge on possible ergodic behaviors of smooth systems satisfying CLT is very restricted. The main goal of this paper is to provide new classes of systems satisfying 
CLT with interesting ergodic properties.

More precisely,
let $F$ be a $C^r$ diffeomorphism of a smooth orientable manifold $M$ 
preserving a measure $\zeta$ which
is absolutely continuous with respect to volume. 
Let $C^r_0(M)$ denote the space of $C^r$ functions of zero mean,
 i.e. satisfying $\zeta(A)=0.$  For an observable $A$, consider the ergodic sums
$$ A_N(x)=\sum_{n=0}^{N-1} A(F^n x). $$ 

\begin{definition}
\label{DefCLT} {We say that $F$ satisfies the}
{\em Central Limit Theorem (CLT)} if there is a sequence $a_n$ such that
 for each $A \in  C^r_0(M)$, $\frac{A_n}{a_n}$ converges in law as $n\to\infty$
to normal random variable with zero mean and variance $\sigma^2(A)$ 
(such normal random variable will be denoted $\cN(0, \sigma^2(A))$ in the sequel)
and moreover that 
$\sigma^2(\cdot)$
is not identically equal to zero on $C^r_0(M).$
We say that $F$ satisfies the {\em classical CLT}
if one can take $a_n=\sqrt{n}.$ 
\end{definition}

\begin{definition}
\label{def:mixing}
We say that $F$ is mixing at rate $\psi$ if for any
$A_1, A_2\in C^r_0(M)$ 
the correlation function $\rho_n(A_1, A_2)=\zeta(A_1\cdot(A_2\circ F^n))$ satisfies
\begin{equation}
\label{EqPM}
\left|\rho_n(A_1, A_2)\right|\leq  \|A_1\|_{C^r} \|A_2\|_{C^r} \psi(n).
\end{equation}
In case $\psi(n) = C n^{- \delta}$ for some $C, \delta>0,$
we say that $F$ is {\em polynomially mixing}. If  
$\psi(n) = C e^{-\delta n}$ for some $C, \delta>0,$
we say that $F$ is {\em exponentially mixing}.
\end{definition}

The above definitions can be extended to flows in a straightforward way. 

We say that a system is $K$ if it has no non-trivial zero entropy factor; it is Bernoulli if it is isomorphic to a Bernoulli shift. We are now ready to state our main results.
\begin{theorem}
\label{ThCLT+K}
For each $m\in \naturals$ there exists an analytic diffeomorphism $F_m$  which is mixing at rate $n^{-m}$
but is not Bernoulli.
Moreover, $F_m$ is $K$ and satisfies the classical CLT.
\end{theorem}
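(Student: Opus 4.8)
The plan is to build $F_m$ as a skew product over a Bernoulli base, where the fiber dynamics are chosen so that the whole system fails to be Bernoulli while the base forces $K$ and good mixing. The natural candidates for the non-Bernoulli ingredient are the $T,T^{-1}$-type constructions or, more amenable to smooth realization, a skew product of the form $F_m(x,y) = (Gx, R_{\tau(x)} y)$ where $G$ is an Anosov (or Axiom A) diffeomorphism on the base, $y$ ranges over a compact abelian group (a torus or a nilmanifold), and the cocycle $\tau$ is a smooth $C^r_0$ function. To get the polynomial mixing rate $n^{-m}$ rather than exponential, one takes the fiber to be a translation by an amount depending on $x$ on a higher-dimensional torus, or one replaces the straight product by a time-$\tau$ suspension-type twist; the exponent $m$ will be tuned by the number of "degenerate" frequency directions one puts in. The $K$ property should come from the hyperbolic base together with a relative-$K$ argument (the fiber factor, being an isometric extension, cannot carry a zero-entropy factor that is not already visible in the base), while the \emph{failure} of Bernoulli comes from the fact that an isometric extension of a Bernoulli map by a rigid (equicontinuous) cocycle has a nontrivial "rigidity" obstruction — the classical point being that Bernoulli systems are loosely Bernoulli in a way incompatible with the presence of such an equicontinuous skew component unless the cocycle is a coboundary, and one arranges $\tau$ so that it is not cohomologous to a constant.

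**Next I would establish the CLT**, and here the key is that the earlier machinery of the paper — in particular whatever abstract CLT criterion is proved for "partially chaotic" skew products (the analogue of Corollary \ref{CrUE-CLT} and the surrounding results, extending \cite{CC13} beyond the Anosov $\times$ Diophantine rotation case) — should apply directly. Concretely, one decomposes an observable $A \in C^r_0(M)$ into its Fourier modes along the fiber; the zeroth mode is a function on the Anosov base and satisfies the classical CLT by standard uniformly hyperbolic theory (\cite{Bow75}, method of moments \cite{BG}), while each nonzero fiber mode becomes a \emph{twisted} Birkhoff sum $\sum_{n} e^{2\pi i k \cdot (\tau_n(x) + \ldots)} a_k(G^n x)$ over the hyperbolic base, which is controlled by a complex/twisted transfer operator. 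The point is that for the Diophantine choice of the fiber translation vector, these twisted sums are of lower order (they satisfy a CLT with their own variance, or are even $O(1)$ in $L^2$), so the total variance $\sigma^2(A)$ is governed by the base part and the normalization is $\sqrt n$; non-degeneracy of $\sigma^2(\cdot)$ on $C^r_0(M)$ follows because it is already non-degenerate on the pullbacks of $C^r_0$ functions of the base. This is where one invokes "\cite{CC13} or Corollary \ref{CrUE-CLT} below" rather than reproving it.

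**I expect the main obstacle to be simultaneously achieving the exact polynomial rate $n^{-m}$ and the non-Bernoulli property in an analytic category.** Polynomial (as opposed to exponential) mixing for a smooth system is delicate: it typically requires either a parabolic mechanism or a carefully tuned quasi-periodic obstruction, and one must verify the rate bound \eqref{EqPM} uniformly over $C^r$ observables — this forces a quantitative Diophantine condition on the fiber frequencies that is compatible with, but must be checked against, the non-coboundary condition needed for non-Bernoulli-ness. The interplay is subtle because making the cocycle "more rigid" (helpful for killing Bernoulli) tends to \emph{slow} mixing toward $n^0$, while making it generic speeds mixing toward exponential and risks restoring Bernoulli; the construction must thread between these, presumably by using a fiber of dimension growing with $m$ and a frequency vector with Liouville-type behavior of a precisely controlled order. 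A secondary technical point is proving $K$ (no zero-entropy factor at all): one must rule out exotic factors mixing base and fiber coordinates, which I would handle via a relative unique ergodicity / relative $K$ argument for isometric extensions of $K$-automorphisms, combined with the fact that the cocycle is not a coboundary. Once these structural facts and the abstract CLT criterion are in place, assembling the statement of Theorem \ref{ThCLT+K} is routine.
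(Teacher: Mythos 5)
Your construction is not the paper's, and more importantly it would not work: the fiber you propose is an \emph{isometric} extension (a translation $R_{\tau(x)}$ on a torus or compact abelian group), and by Rudolph's theorem an ergodic compact group extension of a Bernoulli automorphism is itself Bernoulli. So in your setup the moment you succeed in proving the $K$ property you have also proved Bernoullicity, and the claimed ``rigidity obstruction'' from the equicontinuous skew component does not exist. The paper's example is the opposite of rigid in the fiber: $F(x,y)=(f(x),G_{\tau(x)}y)$ with $f$ Anosov, $\tau:X\to\R^d$ a smooth \emph{mean-zero} cocycle not cohomologous to one taking values in a proper subspace, and $G$ a higher-rank \emph{exponentially mixing} homogeneous action (e.g.\ the Weyl chamber flow on $SL(d+1,\R)/\Gamma$), with $d\geq 3$. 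The polynomial rate is not tuned by Diophantine frequencies but by the dimension $d$ of the cocycle: correlations at time $n$ are controlled by the local limit theorem for $\tau_n$ and decay like $n^{-d/2}$, so one takes $d=2m$. Your Diophantine-tuning mechanism is in fact the one the paper uses for \emph{other} table entries (Anosov $\times$ Diophantine rotation; $(Ax,y+\alpha\tau(x))$ with Liouville $\alpha$), and neither of those produces a non-Bernoulli $K$ system with summable correlations.

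The CLT argument also differs: instead of Fourier modes and twisted transfer operators (the natural tool for isometric fibers), the paper follows Bolthausen's random-scenery strategy, conditioning on $x$ and writing $H_N$ as an integral of the fiber observable against the occupation measure $\frac{1}{\sqrt N}\sum_n\delta_{\tau_n(x)}$, then applying the Bj\"orklund--Gorodnik CLT for exponentially mixing actions (Proposition \ref{PrBG}); the hypothesis to verify is an anticoncentration bound for $\tau_n$, supplied by the multiple local limit theorem for the Anosov base when $d\geq 3$. Finally, you dismiss non-Bernoullicity as following from a soft structural fact, whereas it is the main difficulty of the theorem: the paper proves it by a Kalikow--Rudolph style very-weak-Bernoulli counting argument whose key estimate (Proposition \ref{prop:cruc}) rests on exponential mixing of $G$ on balls and the fact that the random-walk-like sums $\tau_n$ enter every Weyl chamber with unbounded norm, forcing the relative atoms of the past partition to be points. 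The authors explicitly note that with the same cocycle but a Bernoulli-shift fiber in dimension $d\geq 3$ the system \emph{is} Bernoulli, so the hyperbolic fiber dynamics is essential --- precisely the ingredient your proposal omits.
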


To the best of our knowledge,
the first part of the theorem provides the first example of a system which has summable correlations
but is not Bernoulli. The second (``moreover") part answers a question that we heard from multiple sources, first time from 
J-P.~Thouvenot.

We also show that the CLT does not imply positive entropy:

\begin{theorem}
\label{ThZE-CLT}
(a) There exists an analytic flow of zero entropy which satisfies the CLT with normalization 
$a_T=T/\ln^{1/4} T.$

(b) For each $r\in \naturals$ there is a manifold $M_r$ and a $C^r$ diffeo $F_r$ on $M_r$ of zero entropy which satisfies the
classical CLT. 
\end{theorem}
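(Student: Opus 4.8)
The plan is to build both examples as suspension-type constructions over a hyperbolic base, where the fibre dynamics is slow (elliptic/parabolic) enough to kill entropy but the ergodic sums still feel the hyperbolic factor after a logarithmic time rescaling. For part (a) I would take an Anosov diffeomorphism $T$ of a torus (or a geodesic flow on a surface of negative curvature) as a ``randomness source'' and form a time-changed or skew-product flow in which the speed along the flow direction is modulated by a function $\phi$ depending on the Anosov coordinate, chosen so that the return-time cocycle has the exponentially mixing properties needed to run a CLT via the method of moments / cumulants as in \cite{BG, Ch95}. Zero entropy will be arranged by making the construction an isometric (or distal) extension in the fibre, or by a Furstenberg-type argument: the only source of positive entropy would be the base, and the base is collapsed by the time change in such a way that the Abramov formula gives entropy zero. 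The delicate point is to reconcile these two demands — enough hyperbolicity to get Gaussian limits, but a fibre action tame enough for zero entropy — and this is where the bulk of the work lies.

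For the normalization $a_T = T/\ln^{1/4}T$ in part (a): I expect this to come from a self-similar / renormalization structure in the construction, where the Birkhoff integral $\int_0^T A(g^t x)\,dt$ decomposes over a hierarchy of scales (Ostrowski-type or dyadic blocks along orbits of the base map), each level contributing an essentially independent increment, and the variance at level $k$ growing geometrically while the number of relevant levels up to time $T$ is $\sim \ln T$. Summing roughly $\ln T$ weakly dependent contributions, each of size comparable to $T$ times a damping factor, should produce a Gaussian of width $\asymp T/\ln^{1/4}T$ after one checks a Lindeberg-type condition and controls the covariances between levels. I would verify the CLT by computing all cumulants of $A_T/a_T$ and showing the third and higher ones vanish in the limit while the second converges to $\sigma^2(A)>0$; non-degeneracy of $\sigma^2(\cdot)$ on $C^r_0$ follows by exhibiting one observable with a non-trivial contribution from the hyperbolic factor.

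For part (b), the $C^r$ (rather than analytic) and the classical $\sqrt n$ normalization suggest a different, more robust mechanism: take $M_r$ to be a bundle over a torus with fibre a sphere (or nilmanifold) on which one puts a diffeomorphism $F_r$ that is an isometric extension of a Diophantine toral rotation, twisted along a $C^r$ cocycle valued in a compact group, in the spirit of the Anosov $\times$ rotation example of \cite{CC13} but with the Anosov factor replaced by something of zero entropy. Zero entropy is immediate since both the rotation base and the isometric fibre have zero entropy (Pinsker / Abramov), and the product/skew structure cannot create entropy. The classical CLT should then follow from the mechanism already encapsulated in Corollary \ref{CrUE-CLT} applied not to a hyperbolic base but to a carefully chosen quasi-periodic base with enough effective mixing of the cocycle on the relevant Sobolev-type norms, so that $\frac{A_n}{\sqrt n}$ is asymptotically Gaussian with positive variance on $C^r_0$; the loss of analyticity here is what permits the cocycle to be engineered with the prescribed decay while keeping the ambient map smooth only finitely many times.

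The main obstacle, in both parts, is simultaneously guaranteeing \emph{non-degeneracy} of $\sigma^2$ on all of $C^r_0(M)$ and \emph{zero entropy}: the natural ways to force $\sigma^2>0$ pump in hyperbolicity (hence entropy), and the natural ways to kill entropy (isometric/parabolic fibres) tend to make $\sigma^2$ vanish or the sums grow slower than any power. Resolving this tension — by locating the ``noise'' in a cocycle over a zero-entropy base and proving a quantitative equidistribution estimate strong enough to drive the CLT — is the heart of the argument; the entropy computation and the cumulant estimates are then comparatively routine.
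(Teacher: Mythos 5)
Your proposal has a genuine gap: both of the concrete mechanisms you suggest would fail, and the key idea of the paper's construction is missing. In part (a), a time change of an Anosov flow (or geodesic flow) has entropy $h_\mu(\phi)/\int \phi\, d\mu$ by Abramov's formula, which cannot vanish when the original entropy is positive and the roof is positive; likewise an isometric or distal extension of a hyperbolic base has the same entropy as the base. So you cannot ``collapse'' the base entropy by a time change or by an isometric fibre, and your zero-entropy argument for (a) does not go through. In part (b), an isometric (compact-group-valued) extension of a Diophantine rotation is a distal, zero-entropy system, but precisely for that reason it has no source of Gaussian fluctuations: ergodic sums of smooth observables over such systems do not grow like $\sqrt n$, and no choice of cocycle into a compact group can manufacture a non-degenerate classical CLT there. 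You correctly identify the tension between non-degenerate variance and zero entropy, but both of your proposed resolutions sit on the wrong side of it.

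The paper resolves the tension differently: the examples are generalized $T,T^{-1}$ systems $F(x,y)=(fx,\,G_{\tau(x)}y)$ in which the base $f$ is itself of \emph{zero} entropy (the horocycle flow for (a), a Diophantine toral translation for (b)), the fibre action $G$ is a genuinely hyperbolic, exponentially mixing homogeneous action, and the cocycle $\tau$ has zero mean but divergent ergodic sums. Zero entropy is then immediate from the Abramov--Rokhlin formula (Lemma \ref{LmBFZE}): $h_\mu(f)=0$ and the fibre contribution is $\sum_i\max\{\chi_i(\mu(\tau)),0\}=0$ because $\mu(\tau)=\mathbf{0}$ --- the fibre is hyperbolic, but the drift is zero. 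The Gaussian limit comes from the fibre: since $\|\tau_n(x)\|$ is typically large (at least logarithmically), exponential mixing of $G$ decorrelates the terms $H(f^nx,G_{\tau_n(x)}y)$, and one applies the Bj\"orklund--Gorodnik CLT to the occupation measure of $\tau_n$ (Proposition \ref{PrBG}, Corollary \ref{CrUE-CLT}). The exponent $1/4$ in (a) is not a generic multiscale artefact: it is the square root of the $\sqrt{\ln T}$ scale in the temporal limit theorem for the winding cocycle along horocycles, obtained by renormalizing with the geodesic flow (Proposition \ref{PrTLTHoro}). In (b) the heart of the matter is Proposition \ref{prop:dioph}: a lacunary $C^r$ cocycle over the rotation, built with Minkowski's theorem and a probabilistic choice of coefficients, whose Birkhoff sums are smaller than $\log^2 n$ only with probability $o(n^{-5})$; this is what forces finite smoothness and the dependence of $\dim M_r$ on $r$, and it is absent from your sketch.
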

We note that in all previous results on the CLT, the normalization was regularly varying with index 
$\frac{1}{2}.$ \footnote{\label{FtNLnN} CLT with normalization $\sqrt{n\ln n}$ appears for
expanding and hyperbolic maps with neutral fixed points \cite{Gou04, Br19}, as well as in several hyperbolic billiards
\cite{BCD11, BG06, SzV07}. In a followup paper we will show it also appears for generalized 
$T, T^{-1}$ transformations with hyperbolic base and two parameter exponentially mixing flows in the
fiber.}

 We also give examples of weakly mixing but not mixing as well as polynomially mixing but not K systems satisfying the CLT.
 
\begin{theorem} 
\label{ThWeakMix}
(a) There exists a weakly mixing but not mixing flow, which satisfies the classical CLT. 

(b) There exists a polynomially mixing flow, which is not $K$ and satisfies the classical CLT. 
\end{theorem}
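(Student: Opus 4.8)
The plan is to realize both flows as products $\Phi_t=B_t\times R_t$ on $X\times Y$, with $(X,B_t)$ a fixed exponentially mixing Anosov flow --- the real analytic geodesic flow on a compact hyperbolic surface will do --- and $(Y,R_t)$ an analytic time change of a linear flow on a torus in a Diophantine direction, chosen to carry the required defect. For (a) take $Y=\TOR^2$ and $R_t$ a weakly mixing analytic time change of the linear flow; such flows exist, and on $\TOR^2$ no smooth time change of a linear flow is mixing, so $R_t$ is weakly mixing but not mixing. Then $\Phi_t$ is weakly mixing (weak mixing is stable under products) while $C^r$ witnesses to non-mixing for $R_t$ pull back to witnesses for $\Phi_t$, so $\Phi_t$ is not mixing. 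For (b) take $Y=\TOR^3$ and $R_t$ a polynomially mixing analytic time change of the linear flow (a construction of Fayad type); then $R_t$ has zero entropy, so $\Phi_t$ is not $K$, and $\Phi_t$ is polynomially mixing: decomposing an observable as in the next paragraph, the pieces that involve $B_t$ have exponentially small correlations and the one that factors through $R_t$ has polynomially small correlations. A feature common to both choices, and crucial below, is that $R_t$ traces the orbits of a Diophantine linear flow; solving the cohomological equation of that linear flow in the smooth category shows that for every $A\in C^r_0(Y)$ the ergodic integral $\int_0^T A(R_ty)\,dt$ is bounded uniformly in $T$ and $y$, so $R_t$ is invisible to any $\sqrt T$-scaled limit law.

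The classical CLT for $\Phi_t$ is the heart of the matter, and to prove it I would, given $A\in C^r_0(X\times Y)$, write $A=A^{(X)}(x)+A^{(Y)}(y)+A^0(x,y)$ with $A^{(X)}(x)=\int_Y A(x,y)\,dy\in C^r_0(X)$, $A^{(Y)}(y)=\int_X A(x,y)\,dx\in C^r_0(Y)$ and $A^0$ having vanishing marginals in both variables, so $A_T=[A^{(X)}]_T+[A^{(Y)}]_T+[A^0]_T$. Here $[A^{(X)}]_T=\int_0^T A^{(X)}(B_tx)\,dt$ obeys the classical CLT by exponential mixing of $B_t$; $[A^{(Y)}]_T=\int_0^T A^{(Y)}(R_ty)\,dt$ is uniformly bounded by the previous paragraph, hence negligible after division by $\sqrt T$; and for $[A^0]_T$ one expands $A^0=\sum_{k\ge 1}a_k\otimes\phi_k$ in an orthonormal basis $\{\phi_k\}_{k\ge 0}$ of $L^2(Y)$ with $\phi_0\equiv 1$. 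The vanishing of both marginals forces every $a_k$ into $C^r_0(X)$ and every $\phi_k$ ($k\ge 1$) into $L^2_0(Y)$, and then the joint cumulants of $[A^0]_T$ are estimated by the Leonov--Shiryaev formula for cumulants of products: the surviving terms correspond to pairs of partitions with full join and no singleton blocks, in each of which the $X$-cumulants decay exponentially in the time gaps while the residual $Y$-factors are Birkhoff averages of $|\rho^Y_u(\phi_k)|^2$, which vanish in the Ces\`aro sense because $R_t$ is weakly mixing. This gives $\kappa_m([A^0]_T)=o(T^{m/2})$ for $m\ge 3$ and convergence of $\kappa_2([A^0]_T)/T$, hence a classical CLT for $[A^0]_T$; the same bookkeeping yields joint asymptotic normality of $([A^{(X)}]_T,[A^0]_T)$. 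Adding the three contributions, $A_T/\sqrt T\Rightarrow\cN(0,\sigma^2(A))$ with $\sigma^2(A)=\lim_T\kappa_2(A_T)/T$, and $\sigma^2$ is not identically zero since it already fails to vanish on $C^r_0(X)\subset C^r_0(X\times Y)$. This argument follows the pattern of, and partly extends, Corollary~\ref{CrUE-CLT}; the smoothness of $A$ is used precisely to make the expansion in $k$ converge with the uniformity the cumulant estimates require.

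The main obstacle is located differently in the two halves. For (a) it is essentially a matter of assembling known building blocks --- an analytic weakly mixing time change of a Diophantine linear flow on $\TOR^2$, which is automatically non-mixing --- and verifying that non-mixing of the $Y$-factor really does survive in $\Phi_t$ at the level of $C^r$ observables; this is comparatively soft. For (b) the difficulty is real: one needs an analytic, zero-entropy flow $R_t$ that mixes at a genuinely \emph{polynomial} rate (not merely qualitatively) while still having uniformly bounded ergodic integrals, and constructing such an $R_t$ --- by a quantitative strengthening of Fayad-type time changes of linear flows on $\TOR^3$, or by some other mechanism --- is where the actual work of part (b) goes. Granting such an $R_t$, the decomposition above simultaneously produces the classical CLT, polynomial mixing, and the zero-entropy factor, which is (b).
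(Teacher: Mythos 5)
Your route (a direct product of an Anosov flow with an exotic torus flow) is genuinely different from the paper's, which builds both examples as generalized $T,T^{-1}$ skew products over special flows of Kochergin/Fra\c{c}zek--Lema\'nczyk type with a positive skewing function, and gets the CLT from Corollary~\ref{CrUE-CLT} together with Remark~\ref{RmPosAv} and the deviation estimate of Proposition~\ref{prop:deverg}. Unfortunately your construction has a genuine gap already in part (a): an analytic (or even $C^\infty$) time change of a linear flow in a \emph{Diophantine} direction on $\TOR^2$ is never weakly mixing, because the time-change function is smoothly cohomologous to a constant, so the reparametrized flow is smoothly conjugate to the linear flow and has pure point spectrum. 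To obtain weak mixing you must either pass to Liouville frequencies or allow the time change to degenerate at a point (a Kochergin-type fixed point, which is what the paper uses via the special-flow representation with a symmetric logarithmic singularity). In either case the claim that is doing all the work in your CLT argument --- that $\int_0^T A(R_ty)\,dt$ is uniformly bounded for every smooth mean-zero $A$ --- fails: it is precisely the solvability of the cohomological equation in the Diophantine smooth setting that you invoked, and that setting is incompatible with weak mixing. Without it, $[A^{(Y)}]_T/\sqrt T$ need not vanish, and controlling it requires a quantitative deviation bound such as the $O(T^{1/2-\epsilon})$ estimate of Proposition~\ref{prop:deverg}, which is the actual analytic content of the paper's proof (there the bound is proved for the surface flow via Denjoy--Koksma-type estimates for roof functions with singularities, for a full measure set of rotation numbers).

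For part (b) you candidly acknowledge that the key object --- an analytic, zero-entropy, genuinely polynomially mixing flow with uniformly bounded ergodic integrals --- is not constructed, so the proposal does not close; note moreover that the two requirements pull against each other for the same reason as above (bounded ergodic integrals for all smooth observables is a coboundary/rigidity condition at odds with the singular time changes that produce polynomial mixing on $\TOR^2$). The paper sidesteps this tension entirely: polynomial mixing is supplied by the exponentially mixing \emph{fiber} flow $G_t$ driven by a positive cocycle $\tau$ (so that \eqref{eq:I1} holds by Remark~\ref{RmPosAv}), non-$K$-ness comes from the zero-entropy base factor, and the base only needs the $O(T^{1/2-\epsilon})$ deviation bound rather than boundedness. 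A repaired version of your product scheme would have to import an estimate of exactly that kind for the $Y$-factor, at which point the two proofs essentially converge.
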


All the systems in Theorems \ref{ThCLT+K}--\ref{ThWeakMix} belong to the class of generalized $T,T^{-1}$ transformations which are described in Section \ref{sec:TT-1} below. In order to construct our examples we need to extend significantly the existing methods for proving both the CLT and the non Bernoulli
property of these maps. {In fact, the main difficulty in Theorems \ref{ThZE-CLT} and \ref{ThWeakMix} is to establish the CLT while other properties are rather straightforward. On the other hand, the main difficulty in Theorem \ref{ThCLT+K} is to show non Bernoullicity. More details} on the general framework for proving the CLT for generalized $T, T^{-1}$ transformations is presented in Section \ref{ScCLTResults}, while the precise results pertaining to the non Bernoullicity
are described in Section \ref{ScNBResults}.

\section{Generalized $T, T^{-1}$ systems}
\label{sec:TT-1}
Generalized $T, T^{-1}$ transformation is a classical subject 
(see \cite{Hal, Mei74, We72} and reference therein for the early work on this topic) 
{with a rich range of applications in} probability and ergodic theory.
In fact, generalized $T, T^{-1}$ transformations were used to exhibit examples 
of systems with unusual limit laws \cite{KS, CC17},
central limit theorem with non standard normalization \cite{Bolt}, K but 
non Bernoulli systems
in abstract \cite{Kal82} and smooth setting in various dimensions \cite{Kat80, Rud, KRHV},
very weak Bernoulli but not weak Bernoulli partitions \cite{dHKSS},
slowly mixing systems \cite{dHS97, LB, DDKN}, systems with multiple Gibbs measures \cite{EL04, MN79}.
Here, we exhibit further ergodic and statistical properties of these systems.

To define smooth $T, T^{-1}$ transformations,
let $X, Y$ be compact manifolds, $f:X\to X$ be a smooth map preserving 
a measure $\mu$ and $G_t: Y\to Y$ be a $d$ parameter
flow on $Y$ preserving a measure $\nu.$ 
Throughout this work, we assume that
$G_t$ is exponentially mixing of all orders (see \eqref{eq:memixing} for a precise 
definition).
Let $\tau: X\to\reals^d$ be a smooth map.
We study the following map $F: X\times Y\to X\times Y$
\begin{equation}
\label{TTInvDef}
F(x,y)=(f(x), G_{\tau(x)}y). 
\end{equation}
Note that $F$ preserves the measure $\zeta=\mu\times \nu$ and that 
$$ F^N(x,y)=(f^N x, G_{\tau_N(x)} y)\quad\text{where}\quad 
\tau_N(x)=\sum_{n=0}^{N-1} \tau(f^n x). $$

We also consider continuous  $T,T^{-1}$ transformations. Namely let $h_t$ be a flow on $X$ preserving $\mu$.
Set
\begin{equation}\label{eq:Tcont}
F_T(x,y)=(h_T(x),G_{\tau_T(x)}y)
\quad\text{where}\quad \tau_T(x)=\int_0^T\tau(h_t x)dt.
\end{equation}

In the literature, generalized $(T,T^{-1})$ transformations are sometimes called  Kalikow systems. If $d\geq 2$, we call them {\it higher rank Kalikow systems}.

\section{Central Limit Theorem for $T, T^{-1}$ transformations}
\label{ScCLTResults}

Here we present sufficient results for $T, T^{-1}$ transformations defined by \eqref{TTInvDef} (and \eqref{eq:Tcont}) to satisfy the CLT. The results of this section will be proven in Part \ref{PtCLT}.

\subsection{Continuous actions in the fiber}
\label{SSResCont}
Let $f$
and $G_t$ be as in Section \ref{sec:TT-1}.
We assume that $G_t$ enjoys exponential mixing of all orders.  
In the case $d\geq 2$ our main example is the following:
 $Y=SL_{d+1}(\reals)/\Gamma$, 
 $G_t: Y\to Y$ is the  Cartan action 
on $Y$ (see Example \ref{eg1} for more details), and $\nu$ is the Haar measure. 
For $d=1$ there are more examples, see e.g. the discussion in \cite{DDKN}. Given a  H\"older function $H: X\times Y\to \reals$, let
$$  H_N{(x,y)}=\sum_{n=0}^{N-1} H(F^n(x,y)). $$
We want to study the distribution of $H_N$ when the initial condition $(x,y)$ is distributed according to
$\zeta.$

The definition and the results below are stated for discrete time. However they can be directly translated to continuous time. The discrete versions are used in Theorems  \ref{ThCLT+K} and \ref{ThZE-CLT}(b), whereas the continuous versions are used in Theorem \ref{ThWeakMix}.

\begin{definition}
We say that $\tau$ satisfies {\em polynomial large deviation bounds} 
if there exists $\kappa>0$ such that for each $\eps>0$ there exists $C$
such that for any $N\in\naturals $,
\begin{equation}
\label{EqLD-Poly}
 \mu\left(
\left\|
\frac{\tau_N}{N}- \mu(\tau)
\right\|\geq \eps\right)\leq \frac{C}{N^\kappa}.
\end{equation} 
\end{definition}

\begin{theorem}
\label{ThCLT3}Suppose that the base map satisfies the  following: there exist $r$ such that for each $A\in C^r(M)$ with $\mu(A)=0$,
there is a number $\sigma^2(A)\geq 0$ such that $\frac{A_N}{\sqrt{N}}\to \cN(0, \sigma^2(A))$
as $N\to\infty.$\footnote{{Note that in contrast with Definition \ref{DefCLT} we do not require $\sigma^2(A)$ 
to be generically non-zero.}} Suppose furthermore that there is some
$\eps >0$ and $C$ so that for every $N$,
\begin{equation}
\label{eq:I1}
\mu\left(\| \tau_N \|<\log^{1+ \eps}N\right)<\frac{C}{N^{5}}.
\end{equation}
Then there is $\Sigma^2$ such that
$\DS \frac{ H_N}{\sqrt{N}}$ converges as $N\to \infty$ to 
the normal distribution with mean $\zeta(H)$  and variance $\Sigma^2.$
\end{theorem}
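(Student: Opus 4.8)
The plan is to exploit the fact that the fiber flow $G_t$ is exponentially mixing of all orders, so that once the base orbit has pushed the fiber coordinate far (which \eqref{eq:I1} guarantees happens with overwhelming probability), the fiber dynamics decorrelates very quickly in time. First I would split the observable $H$ into its fiberwise average and a remainder: write $\bar H(x)=\int_Y H(x,y)\,d\nu(y)$ and $\mathring H = H-\bar H$. The contribution of $\bar H$ to $H_N$ is simply $(\bar H)_N$, an ergodic sum over the base map $f$; since $\bar H\in C^r(X)$ with $\mu(\bar H)=\zeta(H)$ (up to the constant $\zeta(H)$ which I subtract first), the hypothesis on the base gives $(\bar H-\mu(\bar H))_N/\sqrt N\to\cN(0,\sigma^2(\bar H))$. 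So the real work is to show that $\mathring H_N(x,y)=\sum_{n=0}^{N-1}\mathring H(f^n x, G_{\tau_n(x)}y)$, after dividing by $\sqrt N$, also converges to a Gaussian, and that the two pieces combine into a single Gaussian with some variance $\Sigma^2$ (by a joint characteristic function computation, conditioning on the base orbit).

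For the fiber part I would use the method of moments / cumulants conditionally on the base. Fix a typical $x$ and consider the random variable $\mathring H_N(x,\cdot)$ on $(Y,\nu)$. Its moments are multiple integrals $\int_Y \prod_{j=1}^k \mathring H(f^{n_j}x, G_{\tau_{n_j}(x)}y)\,d\nu(y)$; expanding the $k$-th cumulant, exponential mixing of all orders for $G_t$ bounds each term by $C e^{-\delta \min_j |\tau_{n_{j+1}}(x)-\tau_{n_j}(x)|}$ as soon as the displacements $\tau_{n_j}(x)$ are well separated. The key deterministic input is that, for $x$ in a set of $\mu$-measure $1-O(N^{-5})$, the gaps $\|\tau_m(x)-\tau_n(x)\|$ grow at least like $\log^{1+\eps}|m-n|$ (this is where \eqref{eq:I1} is applied, together with a union bound over pairs $m,n\le N$, the polynomial loss $N^2$ being absorbed by $N^{-5}$). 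On that good set the $\ge 3$-rd cumulants of $\mathring H_N/\sqrt N$ tend to $0$ while the second moment converges, so $\mathring H_N(x,\cdot)/\sqrt N$ converges in law to a centered Gaussian with a variance $v(x)$; one then shows $v(x)$ is $\mu$-a.e. constant, equal to some $v_\infty$ (e.g. using that $\tau_n(x)$ itself satisfies a CLT or at least that the "effective" correlation sum stabilizes — here the large deviation / equidistribution of the $G_{\tau_n(x)}$-orbit is used).

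To assemble the final statement I would compute the limiting characteristic function of $H_N/\sqrt N$ by writing $\EXP_\zeta[e^{it H_N/\sqrt N}] = \EXP_\mu\big[\,e^{it(\bar H-\mu(\bar H))_N/\sqrt N}\,\EXP_\nu[e^{it\mathring H_N(x,\cdot)/\sqrt N}]\,\big]$; the inner expectation converges $\mu$-a.e. to $e^{-t^2 v_\infty/2}$ by the previous step and is bounded, so by dominated convergence the outer expectation converges to $e^{-t^2 v_\infty/2}\cdot\EXP_\mu[e^{it(\bar H-\mu(\bar H))_N/\sqrt N}]\to e^{-t^2 v_\infty/2}\,e^{-t^2\sigma^2(\bar H)/2}$, giving $\Sigma^2=\sigma^2(\bar H)+v_\infty$. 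The main obstacle I anticipate is the fiber cumulant estimate: making the "all orders exponential mixing" bound genuinely summable requires controlling not just the minimal gap among the $\tau_{n_j}(x)$ but the full combinatorial structure of near-coincidences (clusters of indices $n_j$ with small $\tau$-displacement), and handling the boundary case where some displacements are $O(1)$; these $O(1)$-displacement terms must be shown to contribute only the $O(N)$ that builds the variance and no more. The polynomial large deviation bound for $\tau$ and the strong $N^{-5}$ decay in \eqref{eq:I1} are exactly what is needed to push the bad sets below the $O(1/\sqrt N)$ threshold and to guarantee the gap growth uniformly in $N$.
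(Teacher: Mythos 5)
Your architecture matches the paper's: split $H$ into its fiberwise mean $\bar H(x)=\int H(x,y)\,d\nu(y)$ plus a fiberwise-centered part, prove a quenched CLT for the fiber part using exponential mixing of all orders of $G_t$ (the paper packages the cumulant computation into Proposition \ref{PrBG}, quoted from \cite{BG}, rather than redoing it), show the quenched variance is a.e.\ independent of $x$ (this is done via the ergodic theorem applied to $\sigma_{n,n+k}(x)=\sigma_{0,k}(f^nx)$, a point you gesture at but do not supply), and conclude by the asymptotic independence of the base and fiber contributions, exactly as in your characteristic-function computation.

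However, your ``key deterministic input'' contains a genuine gap. By stationarity, \eqref{eq:I1} gives $\mu\left(\|\tau_n-\tau_m\|<\log^{1+\eps}|n-m|\right)\le C|n-m|^{-5}$, \emph{not} $CN^{-5}$; summing over the $\sim N$ pairs at each separation $k$ yields $\sum_k N\cdot Ck^{-5}=O(N)$, which does not tend to zero. So the union bound cannot produce a set of measure $1-O(N^{-5})$ on which \emph{all} gaps are at least $\log^{1+\eps}|m-n|$, and no such set exists in general: a typical cocycle has $\Theta(N)$ pairs with $\|\tau_m-\tau_n\|=O(1)$, and these are exactly the pairs that build the variance. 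You flag the cluster problem at the end, but the remedy is not a sharper union bound; it is a count of a different shape, namely condition (b) of Proposition \ref{PrBG}: for each $r\ge 3$, $N^{-r/2}\sum_{n<N}\#\{j<N:\|\tau_j(x)-\tau_n(x)\|\le K\ln N\}^{r-1}\to 0$. The paper derives this (Lemma \ref{LmQuarter}) by showing that, outside a set of $x$'s of vanishing measure, every shifted point $f^nx$, $n<N$, has at most $N^{1/4-\eps}$ returns of $\tau_j(f^nx)$ to the ball of radius $K\ln N$; here \eqref{eq:I1} is invoked only for $|j|\ge N^{0.21}$, where $\sum_j Cj^{-5}\le CN^{-0.84}$ is small enough for a Markov-inequality argument, the short times being bounded trivially by $N^{0.21}$. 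It is this bound, combined with $r\ge 3$, that kills the cumulants of order at least three; the variance term survives precisely because the $O(N)$ close pairs contribute $O(N)$ to the second moment. Without replacing your uniform-gap claim by such a near-return count, the fiber cumulant estimate does not close.
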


In particular we have the following 
corollary.

\begin{corollary}
\label{CrUE-CLT}Suppose $\tau$ satisfies \eqref{eq:I1} and for 
all
smooth mean zero functions $A$, $\DS A_N/\sqrt{N}$ converges in law to zero as $N\to\infty$.
 Then there is $\Sigma^2$ such that
$\DS \frac{H_N}{\sqrt{N}}$ converges as $N\to \infty$ to 
the normal distribution with
mean $\zeta(H)$  and variance $\Sigma^2.$ 
\end{corollary}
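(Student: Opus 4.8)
The plan is to deduce Corollary~\ref{CrUE-CLT} directly from Theorem~\ref{ThCLT3} by verifying that the hypothesis "$A_N/\sqrt N\to 0$ in law for all smooth mean-zero $A$'' is a special case of the CLT hypothesis in Theorem~\ref{ThCLT3}. Concretely, I would first observe that convergence in law to the constant $0$ is the same as convergence in law to the degenerate normal distribution $\cN(0,0)$; thus, setting $\sigma^2(A)=0$ for every $A\in C^r_0(M)$, the base map $f$ satisfies exactly the displayed assumption of Theorem~\ref{ThCLT3} (which, as the footnote there stresses, explicitly allows $\sigma^2(A)$ to vanish identically, so there is no conflict with the genuine non-degeneracy demanded in Definition~\ref{DefCLT}). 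Since \eqref{eq:I1} is assumed verbatim in the corollary, all hypotheses of Theorem~\ref{ThCLT3} hold.

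Next I would simply invoke Theorem~\ref{ThCLT3}: it produces a number $\Sigma^2$ such that $H_N/\sqrt N$ converges in law, as $N\to\infty$, to the normal distribution with mean $\zeta(H)$ and variance $\Sigma^2$. This is precisely the assertion of the corollary, so nothing further is needed; the proof is essentially one line once the reduction is spelled out.

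The only point requiring a word of care is the identification of "convergence in law to $0$'' with "convergence in law to $\cN(0,0)$'': these coincide because the Dirac mass at $0$ is the weak limit of $\cN(0,\sigma_k^2)$ as $\sigma_k^2\to 0$ and, more to the point, the statement of Theorem~\ref{ThCLT3} only uses the hypothesis through the behavior of $A_N/\sqrt N$, for which "converges to $\cN(0,\sigma^2(A))$ with $\sigma^2(A)=0$'' is by definition the same event as "converges to $0$ in law''. Hence there is genuinely no gap. I do not anticipate any real obstacle here; the corollary is a packaging of Theorem~\ref{ThCLT3} tailored to the situation — used later for the Anosov-times-Diophantine-rotation example — where the base dynamics has no CLT-scale fluctuations at all, so that the entire Gaussian behavior of $H_N$ comes from the cocycle $\tau_N$ driving the exponentially mixing fiber flow $G_t$.

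\begin{proof}[Proof of Corollary~\ref{CrUE-CLT}]
Apply Theorem~\ref{ThCLT3} with $\sigma^2(A)=0$ for every $A\in C^r_0(M)$. Indeed, the hypothesis that $A_N/\sqrt N$ converges in law to $0$ means precisely that $A_N/\sqrt N\to\cN(0,\sigma^2(A))$ with $\sigma^2(A)=0$, so the assumption on the base map in Theorem~\ref{ThCLT3} is satisfied (the footnote in that theorem explicitly permits $\sigma^2(\cdot)$ to vanish identically). The remaining assumption \eqref{eq:I1} is hypothesized here as well. Theorem~\ref{ThCLT3} then yields $\Sigma^2$ such that $H_N/\sqrt N$ converges in law to the normal distribution with mean $\zeta(H)$ and variance $\Sigma^2$, which is the claim.
\end{proof}
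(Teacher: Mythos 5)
Your proposal is correct and matches the paper's intent exactly: the corollary is stated as an immediate specialization of Theorem~\ref{ThCLT3} to the degenerate case $\sigma^2(A)\equiv 0$, which the footnote to that theorem explicitly permits. Your identification of "convergence in law to $0$" with "convergence to $\cN(0,0)$" is the only point needing mention, and you handle it properly.
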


\begin{remark}
\label{RmPosAv}
We remark that if
$\tau$ satisfies polynomial large deviations bounds with $\mu(\tau)\neq 0$ and 
$\kappa \geq 5$, then \eqref{eq:I1} holds.
This property is sometimes more convenient to check.
 In particular, \eqref{eq:I1} is satisfied if $\tau$ is strictly 
positive. In fact, it is sufficient that there is a constant $a$ such that $m(\tau)>a$ for each 
$f$ invariant measure $m.$ The later condition is convenient for systems which have a small number
of invariant measures,  such as flows on surfaces considered in \S \ref{SSSWM}.
\end{remark}

\subsection{Discrete actions} 
The problem discussed in \S \ref{SSResCont} also makes sense when $G$ is an action of $\integers^d$
and $\tau:X\to\integers^d$ is a map satisfying 
\eqref{eq:I1}. In \S \ref{SSResCont} we restricted our
attention to continuous actions, since our motivation is to construct smooth systems with
exotic properties, however all the results presented above remain valid for $\integers^d$-actions.
The proof requires minor modifications since the approach presented below requires only the
smoothness with respect to $y$, but not with respect to $x.$
Therefore, we leave both formulations and proofs to the readers.

\subsection{Previous results}
\label{subsec:RWRS}

The first results about $T, T^{-1}$ transformations
 pertain to so called {\em random walks in random scenery}\footnote{We refer to \cite{Pe20} for a
 survey of limit theorems for random walks in random scenery.}
 . In this model
we are given a sequence $\{\xi_z\}_{z\in \integers^d}$ of i.i.d. random variables.
Let $\tau_n$ be a simple random walk on $\integers^d$ independent of $\xi$s. We are interested
in  $\DS S_N=\sum_{n=0}^{N-1} \xi_{\tau_n}.$ This model could be put in the present framework as
follows. Let $X$ be a set of sequences  $\{v_n\}_{n\in \integers},$ 
where $v_n\in \{\pm e_1, \pm e_{2}, \dots \pm e_d\}$ where $e_j$ are basis vectors in $\integers^d,$
$\mu$ is the Bernoulli measure with 
$\mu(v_n=\pm e_j)=\frac{1}{2d}$ for all $n \in \integers$ and for all $j\in 1, \dots, d,$
$Y$ is the space of sequences $\{\xi_z\}_{z\in\integers^d}$, $\nu$ is the product with marginals induced by 
$\xi$, $f$ and $G$ are shifts and $\tau(\{v\})=v_0.$ For random walks in random scenery,
 the CLT is due to \cite{Bolt}.
In the context of dynamical systems, Theorem \ref{ThCLT3} 
was proven
in \cite{DDKN} assuming that $f$ enjoys multiple exponential mixing. 
The case $d=1$ which leads to a non Gaussian limit was analyzed in \cite{LB} using the techniques
of stochastic analysis.
In the present paper we follow a
method of \cite{Bolt} which seem more flexible and allows a larger class of base systems.
In the dynamical setting the strategy of \cite{Bolt} amounts to regarding $F$ as a 
{\em Random Dynamical System (RDS)} on $Y$ driven by $f.$ We first prove a quenched 
CLT for typical realization of the noise $x$ and then show that the parameters of
the CLT are almost surely
constant. Limit Theorems for RDS were studied in a number of papers (see e.g. \cite{K98}). 
The novelty of the present setting is that instead of requiring hyperbolicity
in the fibers we assume just mixing. This requires a different  approach. We relate the problem of CLT for the $T,T^{-1}$ transformation to fluctuations of ergodic sums of the skewing function. In particular, in several interesting cases
we are able to show that typically the ergodic sums of the skewing function $\tau$ are large on the logarithmic scale but small on the scale $N^{1/2}$. This new approach has the potential to be applicable to more general context.

\subsection{Examples}
\label{SSEx}
 Here we describe several applications of our results on the CLT including
systems substantiating Theorems \ref{ThCLT+K}, \ref{ThZE-CLT} and \ref{ThWeakMix}.

 \subsubsection{Anosov base}
Let $f$ be an Anosov diffeo preserving  a Gibbs measure $\mu$.

\begin{theorem}
\label{ThAnosovCLT}
Suppose that either

(i) $\mu(\tau)\neq 0$ or

(ii) $d\geq 3.$

Then $F$ satisfies the classical CLT.
\end{theorem}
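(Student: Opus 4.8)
The plan is to deduce this from Theorem \ref{ThCLT3}. An Anosov diffeomorphism $f$ preserving a Gibbs measure $\mu$ satisfies the classical CLT for all sufficiently smooth mean-zero observables (this is classical — via e.g. the spectral/transfer operator method or Gordin's martingale method), so the hypothesis on the base map in Theorem \ref{ThCLT3} holds for a suitable $r$. Thus the entire burden is to verify the anticoncentration estimate \eqref{eq:I1}, namely that $\mu\bigl(\|\tau_N\| < \log^{1+\eps}N\bigr) < C N^{-5}$, in each of the two cases.

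Case (i), $\mu(\tau)\neq 0$, is the easy one: $\tau$ is a smooth (hence Hölder) function over an Anosov system, so it satisfies exponential large deviations, $\mu\bigl(\|\tau_N/N - \mu(\tau)\| \geq \eps\bigr) \leq C e^{-c(\eps) N}$. Taking $\eps = \tfrac12\|\mu(\tau)\|$ forces $\|\tau_N\| \geq \tfrac12 \|\mu(\tau)\| N$ off an exponentially small set, which is far stronger than the polynomial bound \eqref{eq:I1} (this is exactly the mechanism recorded in Remark \ref{RmPosAv} with $\kappa = \infty$). So \eqref{eq:I1} holds and Theorem \ref{ThCLT3} applies.

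Case (ii), $d \geq 3$ with $\mu(\tau)$ possibly zero, is where the real work lies. Here $\tau_N$ is a $\reals^d$-valued Birkhoff sum over an Anosov system with zero mean, so by the CLT in the base it is typically of size $\sqrt{N}$, and the event $\{\|\tau_N\| < \log^{1+\eps}N\}$ is a genuine small-ball (local limit) event. The key point is that $\tau_N$ has nondegenerate Gaussian fluctuations in every direction: if the covariance matrix $\Sigma_\tau$ of $\tau$ were degenerate along some direction $u\in\reals^d$, then $\langle u,\tau\rangle$ would be an $L^2$-coboundary, $\langle u,\tau\rangle = g\circ f - g$; but one can choose $\tau$ (as we are free to in the construction) so that no such coboundary relation holds, equivalently $\Sigma_\tau > 0$. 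Granting nondegeneracy, I would run a local large deviation / Edgeworth-type argument: the probability that $\tau_N$ lands in a ball of radius $\rho$ is $O\bigl((\rho/\sqrt{N})^d + N^{-A}\bigr)$ for any $A$, where the first term comes from the Gaussian density on scale $\sqrt N$ times the volume of the ball and the second is the error in the (multidimensional, possibly lattice) local CLT for Anosov maps. With $\rho = \log^{1+\eps}N$ and $d\geq 3$ this gives $(\log^{1+\eps}N/\sqrt N)^d = N^{-d/2}\mathrm{polylog} \leq C N^{-5}$ once... wait, $d/2 \geq 3/2$, which is far from $5$; so instead I would iterate/bootstrap over disjoint time-blocks: split $\{0,\dots,N-1\}$ into $K \sim \log N$ blocks of length $N/K$, use approximate independence of the block increments (exponential mixing of $f$) together with the per-block small-ball bound $O((\mathrm{polylog}/\sqrt{N/K})^d) = o(1)$, so that the probability all $K$ increments simultaneously stay in a polylog ball is at most (per-block probability)$^{K}$ up to mixing errors, which decays faster than any polynomial. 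This is the standard route to turning the $d\geq 3$ transience-type bound into the required $N^{-5}$.

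The main obstacle is precisely this last step: making the multidimensional local limit theorem / small-ball estimate for $\tau_N$ quantitative and uniform enough — handling the possibility that $\tau$ takes values in (a coset of) a proper closed subgroup of $\reals^d$ (the lattice vs. non-lattice dichotomy), ensuring $\Sigma_\tau$ is strictly positive definite, and controlling the mixing errors across $\sim\log N$ blocks so the product bound genuinely beats $N^{-5}$. Once \eqref{eq:I1} is established, Theorem \ref{ThCLT3} delivers the convergence of $H_N/\sqrt N$ to a normal law, and the nondegeneracy of the limiting variance $\Sigma^2$ on a large class of observables (needed for the "classical CLT" in the sense of Definition \ref{DefCLT}) follows by testing against functions depending only on the $Y$-coordinate, for which $H_N$ is a genuine ergodic sum of the exponentially mixing flow $G_t$ evaluated along the orbit and cannot degenerate.
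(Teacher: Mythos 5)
Your treatment of case (i) is correct and is exactly the paper's argument (exponential large deviations for H\"older observables over an Anosov--Gibbs system, as recorded in Remark \ref{RmPosAv}).

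Case (ii) contains a genuine gap, and the strategy itself cannot be repaired. You propose to verify \eqref{eq:I1} for a zero-mean $\tau$ with $d\geq 3$. But \eqref{eq:I1} is simply \emph{false} in that regime for $3\leq d\leq 9$: the very local limit theorem you invoke shows that $\mu\left(\|\tau_N\|<\log^{1+\eps}N\right)$ is of order $N^{-d/2}(\log N)^{d(1+\eps)}$, which is much larger than $N^{-5}$ unless $d\geq 10$. Your attempted rescue via $K\sim\log N$ time blocks does not bound the relevant event: $\{\|\tau_N\|<\log^{1+\eps}N\}$ is a condition on the \emph{endpoint} $\tau_N=\sum_j\Delta_j$, and a small endpoint does not force each block increment $\Delta_j$ to be small (they can cancel), so the product bound (per-block probability)$^K$ applies to a strictly smaller event. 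Conditioning on all but the last block yields only a single small-ball factor, i.e.\ $N^{-d/2}$ again. Consequently Theorem \ref{ThCLT3} is not applicable in case (ii), and the paper does not use it there.

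What the paper actually does (\S\ref{SSAnCLT}) is return to Proposition \ref{PrBG} and verify its hypotheses directly. Condition (c) is obtained from the LLT bound $\mu(\sigma_{0,k})=O(k^{-d/2})$, which gives \eqref{Eq1Cor} with $\beta=d/2>1$; footnote \ref{FtBeta} notes that $\beta>1$ suffices for summability, and this is precisely where $d\geq 3$ enters. Condition (b) is obtained not from \eqref{eq:I1} but from Lemma \ref{LmQuarter}: one bounds the local time $\ell(x,t,N)=\Card\{n\leq N:|\tau_n(x)-t|\leq 1\}$, whose $p$-th moments are uniformly bounded by the multiple LLT (transience of the $d\geq 3$ ``walk''), and Markov's inequality then shows that the number of near-returns to any logarithmic ball exceeds $N^{1/4-\epsilon}$ only with probability $o(N^{-1})$. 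If you want to salvage your write-up, replace the attempt to prove \eqref{eq:I1} by this local-time moment argument feeding into Lemma \ref{LmQuarter}, and check condition (c) of Proposition \ref{PrBG} via the $k^{-d/2}$ correlation decay rather than via Theorem \ref{ThCLT3}. Your closing remarks on nondegeneracy of the limiting variance are consistent with \S\ref{SSVNZ}.
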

This theorem was previously proven in \cite[Corllary 5.2]{DDKN}.
Here we show that Theorem~\ref{ThAnosovCLT} fits in the framework of the present paper.
Also, in Part \ref{PtKalikow} we shall show that the map $F$ from Theorem 
\ref{ThAnosovCLT}(ii) is not Bernoulli,
so this result will serve as an example of Theorem \ref{ThCLT+K}.
We note that part (i) of Theorem~\ref{ThAnosovCLT} directly follows from Theorem \ref{ThCLT3} since
in this case we have exponential large deviations (\cite{K90}). The derivation of part (ii) 
using the methods of the present paper will be given in 
\S \ref{SSAnCLT}.

\subsubsection{Theorem \ref{ThZE-CLT} (a)}
Let $d=1$ and let $Q$ be a hyperbolic surface of 
constant negative curvature of arbitrary genus $p \geq 1$.
Let
$h_t$ be the (stable) horocycle flow on the
unit tangent bundle $X=SQ$, that is,
$h_t$ is moving $x \in X$ at unit speed along its
stable horocycle
\begin{equation}
\label{def:hor}
\cH(x) = 
\{ \tx \in X: 
\lim_{t \to \infty}
d( {\bf G}_t(x),  {\bf G}_t(\tx)) = 0
\}
\end{equation}
where $\bf G_t$ is the geodesic flow on $X$. Let $\tau:X\to \reals$ be a smooth mean-zero cocycle defined as follows: let $\gamma_1, \dots, \gamma_{2p}$ be the basis in homology of $Q.$ Choose $i\in \{1, \dots 2p\}$
and let $\lambda$ be a
closed form on $Q$ such that 
\begin{equation}
\label{DualBase}
\int_{\gamma_j} \lambda=\delta_{ij} 
\end{equation}
where $\delta$ is the Kronecker symbol.
Take 
\begin{equation}
\label{HoroTau}
\tau(q,v)=\lambda(q) (v^*)
\end{equation}
 where 
$v^*$ is a unit vector obtained from $v$ by the $90$ degree rotation.
 We assume that the $\mathbb{R}$ action $(G_t, Y,\nu)$ is exponentially mixing of all orders and 
 consider the system (see \eqref{eq:Tcont})
$$
F_T(x,y)=(h_T(x),G_{\tau_T(x)}y).
$$
We have
\begin{equation}
\label{Wind}
 \tau_T(x)=\int_{\fh(x, T)} \lambda 
\end{equation}
where $\fh(x,T)$ is the projection of the horocyle starting from
$x$ and
of length $T$, to $Q$.

Let $H: X\times Y\to \reals$ be a smooth observable. 

The next result is a more precise version
of Theorem \ref{ThZE-CLT}(a).

\begin{theorem}
\label{ThHoro}
There exists $\sigma^2\geq 0$ such that 
$\frac{(\ln T)^{1/4} \; H_T}{T}$ converges as $T\to\infty$ to 
the normal distribution with zero mean and variance $\sigma^2.$
\end{theorem}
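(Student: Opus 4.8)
The plan is to apply the general CLT for continuous $T,T^{-1}$ transformations (the continuous analogue of Theorem \ref{ThCLT3}, equivalently Corollary \ref{CrUE-CLT}) with base flow $h_t$ the stable horocycle flow on $X = SQ$ and with skewing cocycle $\tau$ given by \eqref{HoroTau}. Two ingredients must be verified. First, the horocycle flow, when renormalized by $T/(\ln T)^{1/4}$, satisfies a CLT-type statement: for every smooth mean-zero $A$ on $X$, the ergodic integral $A_T$ divided by $T/(\ln T)^{1/4}$ converges in law — and in fact we expect it to converge to zero after this particular normalization, which is the hypothesis needed in the corollary. Here the key point is that ergodic integrals of the horocycle flow over a horocycle of length $T$ grow like $T^{1/2}$ times a subpolynomial correction (this is classical: Flaminio--Forni, Ratner, Bufetov--Forni type asymptotics for horocycle ergodic integrals), hence $A_T / (T/(\ln T)^{1/4}) \to 0$. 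Second, and this is the heart of the matter, one must establish the large deviation-type lower bound \eqref{eq:I1} for $\tau_T$, namely that $\mu(\|\tau_T\| < \log^{1+\eps} T)$ decays faster than any fixed polynomial power of $T$ — here, however, the relevant normalization is different, because $\tau_T$ itself is the winding number integral $\int_{\fh(x,T)}\lambda$ of a closed form along a long horocycle arc, and these grow like $\sqrt{T}$ (not linearly), so the statement needed is really that $\tau_T$ is large on the scale between $\log^{1+\eps} T$ and $\sqrt T$ — which is exactly the "large on the logarithmic scale but small on the scale $N^{1/2}$" phenomenon advertised in \S\ref{subsec:RWRS}.

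The steps I would carry out, in order, are as follows. (1) Identify the winding cocycle: using \eqref{Wind} and the fact that $\lambda$ is closed, reduce $\tau_T(x) = \int_{\fh(x,T)}\lambda$ to a quantity governed by the homology class of the (almost-)closed curve obtained by closing up the horocycle arc with a bounded geodesic segment; this connects $\tau_T$ to the windings of horocycles, whose distributional behavior is known. (2) Quote/derive the limit theorem for horocycle windings: by results on the horocycle flow (the relevant fact is that $\tau_T/\sqrt{\ln T}$ — or some such normalization by a slowly varying factor — has a nondegenerate limit law, cf. the Cauchy-type limit theorems for horocycle winding numbers due to Sinai, Le Jan, Enriquez--Franchi--Le Jan, and more recently refined), deduce in particular that $|\tau_T|$ is, with overwhelming probability, at least $\log^{1+\eps} T$; quantitatively, one needs the bound $\mu(|\tau_T| < \log^{1+\eps} T) = O(T^{-5})$, which should follow from an anticoncentration estimate for $\tau_T$ — i.e. the density of $\tau_T/b_T$ (with $b_T$ the correct norming) stays bounded near $0$, combined with the blow-up of $b_T$. (3) Feed (1), (2) and the horocycle CLT into the continuous version of Theorem \ref{ThCLT3}: this yields that $H_T$, renormalized by the same sequence that renormalizes the base ergodic integrals, converges to a centered Gaussian. (4) Pin down the normalization as $T/(\ln T)^{1/4}$: trace through the proof of Theorem \ref{ThCLT3} to see that the norming sequence $a_T$ for $H_T$ is determined by the growth of $\|\tau_T\|$ (the RDS/random-scenery heuristic: $H_T \approx$ sum of $\sim T$ fiber-increments over a ``trajectory'' of diameter $\sim \|\tau_T\| \sim \sqrt{T}/(\ln T)^{1/4}$ in the $d=1$ recurrent scenery, so the effective number of independent fiber contributions scales accordingly and the $(\ln T)^{1/4}$ correction appears); the quarter-power exponent is the signature of the $\sqrt{\ln T}$-type growth of one-dimensional horocycle windings interacting with the square-root law of the underlying integral. (5) Zero entropy and the other assertions of Theorem \ref{ThZE-CLT}(a) follow separately from the fact that $h_t$ has zero entropy and $G_t$ does too, so $F_T$ is a zero-entropy flow.

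The main obstacle I expect is step (2): establishing the sharp anticoncentration / lower-deviation bound \eqref{eq:I1} for the horocycle winding cocycle with a genuinely quantitative (polynomial-in-$T$) rate, rather than merely a distributional limit. Qualitative limit theorems for horocycle windings are classical, but the $O(T^{-5})$ decay of $\mu(|\tau_T| < \log^{1+\eps}T)$ requires effective equidistribution of long horocycle arcs (with a power-saving error) together with control on how much mass $\tau_T$ can place in a shrinking neighborhood of the origin — this likely uses quantitative mixing of the geodesic flow and a careful analysis of the self-intersection/homological structure of long horocycle arcs. A secondary difficulty is correctly matching the norming constant: showing the limit is nondegenerate (i.e. $\sigma^2 > 0$ for generic $H$, as required by Definition \ref{DefCLT}) and that $T/(\ln T)^{1/4}$ is exactly the right scale rather than, say, $T/(\ln T)^{c}$ for some other $c$; this demands a precise asymptotic for the variance of $\tau_T$, not just an order of magnitude.
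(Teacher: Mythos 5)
Your proposal routes the proof through the continuous analogue of Theorem \ref{ThCLT3}/Corollary \ref{CrUE-CLT}, and its load-bearing step is the verification of \eqref{eq:I1} for $\tau_T$. This cannot work, because the growth rate you assign to the winding cocycle is wrong by an exponential factor. You assert that $\tau_T=\int_{\fh(x,T)}\lambda$ grows like $\sqrt{T}$; in fact the geodesic flow renormalizes a horocycle arc of length $T$ to one of length $1$ in geodesic time $\ln T$, and Stokes' theorem (applied to the quadrilateral bounded by the two horocycle arcs and two geodesic segments, cf.\ \eqref{eq:Stokes}) converts $\tau_T$ into an ergodic integral of the geodesic flow over time $\ln T$. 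Hence $\tau_t$ for $t\le T$ is $O(\ln T)$ with fluctuations of order $\sqrt{\ln T}$ (the Dolgopyat--Sarig temporal CLT, \cite{DS17}), so $\mu(\|\tau_T\|<\log^{1+\eps}T)$ tends to $1$ rather than decaying like $T^{-5}$: condition \eqref{eq:I1} fails outright. This is also visible from the statement of the theorem itself: Theorem \ref{ThCLT3} produces the normalization $\sqrt{T}$, whereas Theorem \ref{ThHoro} asserts normalization $T/(\ln T)^{1/4}$, regularly varying with index $1$; the random-scenery heuristic with range $\sqrt{\ln T}$ (not $\sqrt{T}/(\ln T)^{1/4}$, as you write in step (4)) is what yields $T/\sqrt{R_T}\sim T/(\ln T)^{1/4}$.

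The actual proof therefore bypasses Theorem \ref{ThCLT3} entirely and applies Proposition \ref{PrBG} directly to the measure $\fm_T(x)=\frac{(\ln T)^{1/4}}{T}\int_0^T\delta_{\tau_t(x)}\,dt$, whose total mass $(\ln T)^{1/4}$ gives property (a). The technical heart is a mixing local limit theorem (Proposition \ref{PrTLTHoro}) for the integer-valued winding $\xi_t=\tau_t-\beta(h_tx)+\beta(x)$ on the scale $\sqrt{\ln T}$, obtained by transferring the CLT/LLT and anticoncentration bounds for the geodesic flow (via its symbolic coding) back to the horocycle parametrization; properties (b) and (c) of Proposition \ref{PrBG} then follow from the anticoncentration bound $\mes(t\le T:\xi_t=k)\le CT/\sqrt{\ln T}$ and an explicit variance computation yielding \eqref{HoroVar}. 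Your step (1) (closing up the arc and reading off homology) and your reduction of the base term via Flaminio--Forni are correct and do appear in the paper, but without replacing the $\sqrt{T}$-scale large-deviation input by the $\sqrt{\ln T}$-scale local limit theorem, the argument as proposed does not produce the stated normalization and the central step would fail.
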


Assuming Theorem \ref{ThHoro}, we can complete the proof of 
Theorem \ref{ThZE-CLT}(a) by showing that the limiting variance is not identically
zero (which  will be done in
Section \ref{ScVar}) 
and that $F_T$ has zero entropy. The latter statement 
 is a consequence of the following lemma (which is formulated for maps, 
so we apply it for the time $1$ map $F_1$ to conclude that
the continuous $T,T^{-1}$ system has zero entropy).

\begin{lemma}
\label{LmBFZE}
Let $F$ be a generalized $T, T^{-1}$ transformation such that $h_\mu(f)=0$ and
$\mu(\tau)=0.$ Then $h_\zeta(F)=0.$
\end{lemma}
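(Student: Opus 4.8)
\textbf{Proof plan for Lemma \ref{LmBFZE}.}

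The plan is to compute the entropy of $F$ via the Abramov--Rokhlin formula for skew products: since $F(x,y)=(f(x),G_{\tau(x)}y)$ is a skew product over $(f,\mu)$ with fibers $Y$, we have
\begin{equation}
\label{EqAbRo}
h_\zeta(F)=h_\mu(f)+h_\mu(F\mid \pi),
\end{equation}
where $\pi:X\times Y\to X$ is the projection onto the base and $h_\mu(F\mid\pi)$ denotes the relative (fiber) entropy of $F$ over the factor $(X,\mu,f)$. By hypothesis $h_\mu(f)=0$, so it remains to show that the fiber entropy vanishes. Since $\pi$ is $F$-equivariant (covering $f$), the fiber entropy equals $\int_X h_{\nu}(G_{\tau(\cdot)};\,x)\,d\mu$ in an appropriate sense; more robustly, I would bound it directly.

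The key observation is that the $F$-orbit in the fiber over a base point is governed by the cocycle $\tau_N(x)=\sum_{n=0}^{N-1}\tau(f^nx)$, and the Birkhoff sums $\tau_N(x)/N\to\mu(\tau)=0$ for $\mu$-a.e.\ $x$ by the ergodic theorem (applied to each coordinate of $\tau$, using that $G_t$ is a $\reals^d$- or $\integers^d$-action). Hence, along a typical base orbit, the total ``fiber displacement'' after $N$ steps is $o(N)$: for any $\delta>0$, $\|\tau_N(x)\|<\delta N$ for all large $N$. The first step is to make this sub-linearity quantitative enough to control the number of fiber orbit segments: fix a finite partition $\xi$ of $Y$ into small boxes (of diameter $<\eps$); then the partition of the fiber over $x$ whose atoms are the refinements $\bigvee_{n=0}^{N-1}G_{\tau_n(x)}^{-1}\xi$ has, for $\mu$-a.e.\ $x$, at most $C(\eps)\cdot(\delta N)^{d}\cdot(\text{const})^{d}$ atoms, because all the shifts $G_{\tau_n(x)}$ for $n\le N$ move points by a total amount $o(N)$, so only polynomially-in-$N$ many distinct translates of $\xi$ can appear. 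A polynomial count of atoms contributes zero exponential growth rate, so the fiber entropy with respect to $\xi$ is $0$; taking $\eps\to 0$ (so that $\xi$ generates the Borel $\sigma$-algebra of $Y$) gives $h_\mu(F\mid\pi)=0$, and \eqref{EqAbRo} finishes the proof.

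The main obstacle is the measurability/uniformity needed to turn the $\mu$-a.e.\ statement $\|\tau_N(x)\|=o(N)$ into a genuine bound on the relative entropy, since the number of fiber atoms depends on $x$ and the Birkhoff convergence is not uniform. I would handle this by Egorov's theorem: restrict to a set $X_\rho\subset X$ with $\mu(X_\rho)>1-\rho$ on which $\|\tau_N/N\|<\delta$ uniformly for $N\ge N_0(\rho)$; the atoms of the fiber partition over $X_\rho$ are then uniformly polynomially bounded in number, giving relative entropy $\le$ (contribution from $X\setminus X_\rho$), which is controlled by $\rho\log(\text{fiber alphabet size})$ and tends to $0$ as $\rho\to0$. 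Alternatively, and perhaps more cleanly, one can invoke the Abramov--Rokhlin formula together with the fact that a $\reals^d$-action $G_t$ restricted to a single orbit (or any sub-exponentially spaced sequence of times) has zero topological/metric entropy growth, so the relative entropy of the skew product over a zero-entropy base is automatically zero; this is the structural statement that $T,T^{-1}$-type skew products do not create entropy in the fiber beyond that of the base, and it is exactly the hypothesis $h_\mu(f)=0$, $\mu(\tau)=0$ that we are using.
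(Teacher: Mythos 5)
The paper's proof is a two-line appeal to the Abramov--Rokhlin formula in the form $h_\zeta(F)=h_\mu(f)+\sum_i\max\{\chi_i(\mu(\tau)),0\}$, where the $\chi_i$ are the Lyapunov functionals of the fiber action; both terms vanish under the hypotheses. You start from the same formula in its relative-entropy form $h_\zeta(F)=h_\mu(f)+h_\mu(F\mid\pi)$ and then try to prove directly that the fiber term vanishes. That plan is reasonable (it amounts to reproving the zero-drift case of the formula the paper cites), but your counting step contains a genuine error.

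The error: from $\|\tau_n(x)\|\le\delta N$ for all $n\le N$ you conclude that $\bigvee_{n=0}^{N-1}G_{\tau_n(x)}^{-1}\xi$ has only polynomially many atoms ``because only polynomially many distinct translates of $\xi$ appear.'' The number of atoms of a join is not controlled by the number of distinct partitions being joined: a join of $K$ partitions with $m$ atoms each can have up to $m^K$ atoms. Concretely, take $Y=\T^2$ with $G$ generated by a hyperbolic automorphism $A$ and suppose $\tau_n(x)$ sweeps through $\{0,1,\dots,\lfloor\delta N\rfloor\}$; then your join refines $\bigvee_{k=0}^{\lfloor\delta N\rfloor}A^{-k}\xi$, which has on the order of $e^{h(A)\delta N}$ atoms --- exponential in $N$, not polynomial. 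What is true, and what saves the argument, is that the atom count is at most $e^{C\delta N}$ with $C$ determined by the Lyapunov exponents of the smooth fiber action (this is exactly where the algebraic/smooth structure of $G$ enters; the bound would fail for, say, a symbolic $\Z^d$ fiber action with $d\ge 2$). Hence the fiber entropy is at most $C\delta$ for every $\delta>0$, and one concludes by letting $\delta\to 0$ --- a limit your write-up never takes because you believe the count is already subexponential for fixed $\delta$. Your closing ``cleaner alternative'' is also false as stated: the relative entropy of a skew product over a zero-entropy base is \emph{not} automatically zero (an irrational rotation times a hyperbolic automorphism, i.e.\ $\tau\equiv\mathrm{const}\neq 0$, has positive entropy); the hypothesis $\mu(\tau)=0$ must enter quantitatively through the sublinearity of $\tau_N$ exactly as above.
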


\begin{proof}
By the classical Abramov-Rokhlin entropy addition formula 
(\cite{AR}),
$$h_\zeta(F)=h_\mu(f)+\sum_i \max\{\chi_i(\mu(\tau)), 0\}$$
where $\chi_i:\R^d\to \reals$ are Lyapunov functionals of $G_t$
(we refer to Section \ref{sec:cartan} for
the background on this notion).
In our case the first term vanishes since the base has zero entropy, and
the second term vanishes since $\chi_i(\mu(\tau))=\chi_i(\mathbf{0})=0.$
\end{proof}


\begin{remark}
The proof of Theorem \ref{ThHoro} given in Section \ref{ScHoro} applies to a slightly more general situation.
Namely, using the ideas from \cite{DN-Flows} one can consider the case of $\tau:X\to \reals^d$ 
where each component of $\tau$ is of the form $\lambda(q) (v^*)$ where $\lambda$ is a closed form
(not necessarily taking integer values on the basis loops). We note that by the results of \cite{FF03}
every function which has non-zero components only in the discrete series representation is homologous to 
a function of the form \eqref{HoroTau}. On the other hand \cite{FF03} also shows that for general smooth functions on $X$ the behavior of ergodic integrals is very different. Therefore the results
of Section \ref{ScHoro} do not apply to the general observables on $X.$ 
Similarly, the example of Theorem \ref{ThZE-CLT}(b) also requires a careful choice of 
the skewing function.
\end{remark}

\subsubsection{Theorem \ref{ThZE-CLT} (b)}
In this section we will construct, for any fixed $r\in \mathbb{N}$, a $C^r$ 
zero entropy system for which the classical central limit theorem holds. Let $\bm\in \mathbb{N}$, $\alpha\in \mathbb{T}^\bm$. We say that 
$\alpha\in \mathbb{D}(\bkappa)$ if there exists $D>0$ such that for every $k\in \mathbb{Z}^\bm$, 
$$|\langle k, \alpha\rangle|\geq D |k|^{-\bkappa}.$$
Recall that $\mathbb{D}(\bkappa)$ is non-empty if $\bkappa\geq \bm$ and it has full measure if $\bkappa>\bm.$
For $d\in \mathbb{N}$ let $(G_t, M,\nu)$ be a $\reals^d$ action which is exponentially mixing of all orders.


The main tool for constructing the example for Theorem \ref{ThZE-CLT}(b) 
is a fine control of ergodic averages of the translation by $\alpha$. 
Namely, in Section \ref{ScToral} we prove:

\begin{proposition}\label{prop:dioph}
For every $\bkappa/2<r<\bm$, there is a $d\in \mathbb{N}$ such that for every 
$\alpha\in \mathbb{D}(\bkappa)$, we have:

\begin{enumerate}
\item[D1.]  for every $\phi \in \bbH^r( \mathbb{T}^\bm,\reals)$ of zero mean, 
$\|\phi_n\|_2=o(n^{1/2})$; (Here  $\bbH^r(\mathbb{T}^\bm,\reals)$ denotes the Sobolev space of order $r$)

\item[D2.]  there is a function $\tau:=\tau^{(\alpha)}\in C^{r}(\mathbb{T}^\bm,\reals^d)$ such that 
$\mu(\tau) = \mathbf{0}$ and 
$$
\mu\Big(\{x\in \mathbb{T}^\bm\;:\; \|\tau_n(x) \|<\log^2n\}\Big)=o(n^{-5}).
$$
\end{enumerate}
\end{proposition}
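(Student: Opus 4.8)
The plan is to exploit the arithmetic properties of $\alpha\in\mathbb D(\bkappa)$ via Fourier analysis on $\mathbb T^\bm$. For D1, write a zero-mean $\phi\in\bbH^r(\mathbb T^\bm,\reals)$ as a Fourier series $\phi=\sum_{k\neq 0}\hat\phi(k)e^{2\pi i\langle k,x\rangle}$, so that
$$
\phi_n(x)=\sum_{k\neq 0}\hat\phi(k)\,\frac{e^{2\pi i n\langle k,\alpha\rangle}-1}{e^{2\pi i\langle k,\alpha\rangle}-1}\,e^{2\pi i\langle k,x\rangle},
$$
whence, by orthogonality,
$$
\|\phi_n\|_2^2=\sum_{k\neq 0}|\hat\phi(k)|^2\left|\frac{e^{2\pi i n\langle k,\alpha\rangle}-1}{e^{2\pi i\langle k,\alpha\rangle}-1}\right|^2
\leq \sum_{k\neq 0}|\hat\phi(k)|^2\min\!\left(n^2,\ \frac{1}{\|\langle k,\alpha\rangle\|^2}\right).
$$
Using the Diophantine bound $\|\langle k,\alpha\rangle\|\geq D|k|^{-\bkappa}$ and the Sobolev decay $|\hat\phi(k)|^2\leq \|\phi\|_{\bbH^r}^2|k|^{-2r}(1+|k|^2)^{-\epsilon'}$ (more precisely $\sum_k|\hat\phi(k)|^2(1+|k|)^{2r}<\infty$), one splits the sum at $|k|\sim n^{1/\bkappa}$: for small $k$ one bounds the summand by $n^2|\hat\phi(k)|^2$, which contributes $o(n^2)\cdot\big(\sum_{|k|\le n^{1/\bkappa}}|\hat\phi(k)|^2\big)$ — and since $r>\bkappa/2$ one has $\sum_{|k|\le T}|\hat\phi(k)|^2=o(T^{-(2r-\bkappa)})\cdot(\dots)$, giving $o(n^{2-(2r-\bkappa)/\bkappa})=o(n)$ up to the tail estimate; for large $k$ one bounds by $D^{-2}|k|^{2\bkappa}|\hat\phi(k)|^2$ and uses $2\bkappa-2r<\bkappa<2r$... the point is that the condition $r>\bkappa/2$ is exactly what makes both pieces $o(n)$ after optimizing the cutoff. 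I would carry out this split carefully, tracking that it is the \emph{strict} inequality $r>\bkappa/2$ that yields little-$o$ rather than $O$.

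For D2, the idea is to build $\tau$ as a lacunary-type Fourier series whose ergodic sums are forced to be large on a set of nearly full measure. Concretely, pick a rapidly increasing sequence of frequencies $k_j\in\mathbb Z^\bm$ and set $\tau=\sum_j c_j\big(\cos(2\pi\langle k_j,x\rangle),\dots\big)$ distributed among the $d$ coordinates, with coefficients $c_j$ decaying fast enough that $\tau\in C^r$ (this forces a relation between the growth of $|k_j|$ and the decay of $c_j$: we need $\sum_j c_j|k_j|^r<\infty$), yet slowly enough — and with $|k_j|$ growing fast enough — that for a given $n$ there is a "resonant scale" $j=j(n)$ for which $\tau_n$ has a coordinate of size $\gtrsim c_{j(n)}\cdot\min(n,\|\langle k_{j(n)},\alpha\rangle\|^{-1})$ on most of $\mathbb T^\bm$. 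The choice $d$ large is what gives enough room: one wants several independent resonant directions so that the vector $\tau_n(x)$, not just one coordinate, avoids the ball of radius $\log^2 n$; alternatively one arranges the phases so a single coordinate already does the job and keeps $d$ moderate. The small-measure estimate $\mu(\|\tau_n\|<\log^2 n)=o(n^{-5})$ then follows from an anti-concentration / small-ball bound for the partial Fourier sum at the resonant scale — e.g. a trigonometric-polynomial analogue of the Erdős–Littlewood–Offord or a direct Chebyshev/higher-moment estimate, since $\cos(2\pi\langle k_{j(n)},x\rangle)$ is far from $0$ except on a set of measure comparable to $1/|\text{(effective period)}|$, and stacking $O(\log n)$ such scales drives the measure below $n^{-5}$.

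The main obstacle I expect is D2: one must simultaneously (i) keep $\tau$ genuinely $C^r$ — not merely Sobolev — which constrains $c_j$ against $|k_j|$; (ii) ensure $\mu(\tau)=\mathbf 0$ (automatic for a series with no constant term, so this is free); (iii) guarantee that for \emph{every} $n$, not just a subsequence, some scale $j(n)$ resonates and produces a large coordinate of $\tau_n$ — this requires the scales $|k_j|$ to be dense enough on a multiplicative scale while the Diophantine inequality keeps $\|\langle k_j,\alpha\rangle\|^{-1}$ from being too small, so the growth of $|k_j|$ must be tuned against $\bkappa$; and (iv) turn "large on average" into "small-ball measure $o(n^{-5})$", which is where the choice of $d$ and of how many scales to superimpose enters. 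Balancing (i) and (iii)–(iv) is the delicate constrained optimization, and I would organize the proof around first fixing the admissible regime for $(|k_j|,c_j,d)$ in terms of $\bkappa$ and $r$, then verifying the three requirements in turn. The constraint $r<\bm$ should enter through needing enough coordinates/frequency directions in $\mathbb Z^\bm$ to realize a $C^r$ function with the required resonance structure.
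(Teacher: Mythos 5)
Your overall strategy for D1 is the paper's (Fourier expansion, the bound $|A_k(n)|^2\le C\min\{n^2,\|\langle k,\alpha\rangle\|^{-2}\}$, and a split at $|k|\sim n^{1/\bkappa}$), but the execution as written is inverted and contains false steps: $\sum_{|k|\le T}|\hat\phi(k)|^2$ increases to $\|\phi\|_2^2>0$, so it is certainly not $o(T^{-(2r-\bkappa)})$, and bounding the low-frequency block by $n^2\sum_{|k|\le n^{1/\bkappa}}|\hat\phi(k)|^2$ gives something of order $n^2$, not $o(n)$. The correct assignment is the opposite one: on $|k|\le n^{1/\bkappa}$ use the Diophantine bound $\|\langle k,\alpha\rangle\|^{-2}\le D^{-2}|k|^{2\bkappa}=D^{-2}|k|^{2r}|k|^{2(\bkappa-r)}\le D^{-2}|k|^{2r}n^{2(\bkappa-r)/\bkappa}$, and on $|k|>n^{1/\bkappa}$ use $n^2\le |k|^{2r}n^{2(1-r/\bkappa)}$; both blocks are then $\le C\|\phi\|_{\bbH^r}^2 n^{2(1-r/\bkappa)}$, and $r>\bkappa/2$ gives the strictly smaller power $n^{1-r/\bkappa}=o(n^{1/2})$. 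This is fixable, but as stated the argument does not go through.

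For D2 the gap is more serious: two essential ideas are missing. First, the existence of a resonant frequency for \emph{every} $n$ is not a tuning problem left to the reader in the paper; it is produced by Minkowski's theorem applied to a lattice in $\reals^{\bm+1}$, which yields for each $N$ a $k_N$ with $|k_N|\le R_\bm N^{1/\bm}$ and $|\langle k_N,\alpha\rangle|<1/(4N)$. This uses nothing about $\alpha\in\mathbb{D}(\bkappa)$ (the Diophantine condition only gives \emph{lower} bounds on $|\langle k,\alpha\rangle|$ and is used in D1, not here), and it is exactly where $r<\bm$ enters: normalizing the resonant harmonic by $|k_N|^{-r}$ to keep $\tau\in C^r$ still leaves a contribution of size $\gtrsim N^{1-r/\bm}$, a positive power of $N$. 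Second, and more importantly, your small-ball argument in $x$ does not work for a fixed deterministic series: the resonant term has modulus $\gtrsim N^{1-r/\bm}/\log^2 N$ for \emph{every} $x$, but the sub-resonant terms are only controlled by $\|\langle k_{2^j},\alpha\rangle\|^{-1}\lesssim |k_{2^j}|^{\bkappa}$ with $\bkappa\ge\bm$, so they can be comparable to or larger than the resonant term and could cancel it on a large set of $x$; "stacking $O(\log n)$ scales" would moreover require independence in $x$ of events attached to different frequencies, which is not justified and still would not address cancellation. The paper's device is to make the coefficients i.i.d.\ random: conditioning on all coefficients except the resonant one $a_{l(N)}$, the sum $\tau_N(x)$ is an affine function of $a_{l(N)}$ with slope $\ge cN^{1-r/\bm-\eps}$, so anti-concentration holds in the \emph{coefficient} variable; independence of the $d$ coordinates raises the single-coordinate bound to the power $d$, one chooses $d$ with $d(1-r/\bm-2\eps)>20$, sums over $N$, and applies Fubini twice to extract a deterministic admissible choice of coefficients. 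Your alternative of keeping $d$ moderate with a single coordinate doing the job cannot beat $n^{-1+}$, far from the required $o(n^{-5})$.
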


\begin{proof}[Proof of Theorem \ref{ThZE-CLT} (b)] 
Let $F$ be the $T,T^{-1}$
transformation with $f$ being the translation of the 
$\mathbf{m}$-torus by $\alpha$ and $\tau$ as provided
by Proposition \ref{prop:dioph}(D2).
$F$ has zero entropy by Lemma \ref{LmBFZE}.
The CLT follows from Corollary \ref{CrUE-CLT}. Namely, property 
\eqref{eq:I1} follows from D2, and
$A_N/\sqrt{N}\to 0$ in law by D1, since $C^r\subset \bbH^r.$ 
The fact that the limiting variance is not identically
zero follows from
Section \ref{ScVar}.
\end{proof}


\subsubsection{ Theorem \ref{ThWeakMix}}
\label{SSSWM}

Let $\alpha\in \mathbb{T}$ be an irrational number. Let $f:\mathbb{T}\to \reals_+$ be a function which is $C^3$ on $\Tor\setminus\{0\}$, $\int f d Leb = 1$ and $f$ satisfies
\begin{equation}\label{eq:asy2}
\lim_{\theta\to 0^+}\frac{f''(\theta)}{h''(\theta)}=A\text{ and }
\lim_{\theta\to 1^-}\frac{f''(\theta)}{h''(1-\theta)}=B,
\end{equation}
where $A^2+B^2\neq 0$ and the function $h$ is specified below.
 We consider the {\em special flow} over $R_\alpha \theta=\theta+\alpha$ and under $f$.
This flow acts on $X=\{(\theta,s): \theta\in \T, 0\leq s<f(\theta) \}$ by
$$
T_t^f(\theta,s)=(\theta+N(\theta,s,t)\alpha,s+t-f_{N(\theta,s,t)}(\theta)),
$$
where $N(\theta,s,t)$ is the unique number such that 
$f_{N(\theta,s,t)}(\theta)\leq s+t<f_{N(\theta,s,t)+1}(\theta)$ (where 
$f_n(\theta) = \sum_{k=0}^{n-1} f(\theta + k \alpha)$).
Such special flows arise as representations of a certain class of smooth flows on surfaces:
\begin{enumerate}
\item if $h(\theta)=\log \theta$ and  $A\neq B$, then the flow $T_t^f$ represents the restriction to the ergodic component of a smooth flow $(\varphi_t)$ on $(\mathbb{T}^2,\mu)$ with one fixed point and one saddle loop. 
Here $\mu$ is given by $p(\cdot)vol$, for some smooth function $p$. Such flows are mixing for a.e. irrational rotation \cite{KSin}.
\item if $h(\theta)=\log \theta$ and $A=B$, then for every irrational $\alpha$ and any surface $M$ with genus $\geq 2$, the flow represents a certain ergodic smooth flow $(\varphi_t)$ on $(M,\mu)$ (see e.g. \cite{Kochergin}, \cite[Proposition 2]{Fr-Lem}). Here $\mu$ is locally given by $p(\cdot)vol$, for a smooth function $p$. 
Such flows are not mixing, \cite{Kochergin}, but weakly mixing for a.e. $\alpha$, \cite{Fr-Lem}.
\item if   $h(\theta)= \theta^{-\gamma}$,  then for some values of $\gamma<1$
 the flow $T_t^f$ represents an ergodic smooth flow $(\varphi_t)$ on $\mathbb{T}^2$ (as shown in \cite{Kochergin2} this is the case, in partcular if $\gamma=1/3$). 
 Moreover by \cite{Kochergin2} $(\varphi_t)$ is mixing for every $\alpha$ and by \cite{Fayad2} if $\gamma\leq 2/5$, then the flow is {\em polynomially} mixing for a full measure set of $\alpha$. In what follows we will always assume that $\gamma\leq 2/5$ 
 (although the proof can be applied for $\gamma<1/2$ with minor changes).
\end{enumerate}


We consider the continuous flow $F_T$ given by (see \eqref{eq:Tcont})
$
F_T(x,y)=(\varphi_T(x), G_{\tau_T}(y)),
$
where $\varphi_t$ is as in (1) or (2) or (3) above,  $(G_t,Y,\nu)$ is an exponentially 
mixing $\mathbb{R}$-flow and $\tau$ is positive.
For $\brH \in C^3(X)$, let 
$$
\brH_U(\theta, s)=\int_{0}^U\brH(T^f_u(\theta, s))du.
$$

Let $\DS \cC^3=\{\brH\in C^3(X):  \bp(\brH):=\lim_{s\to\infty} \brH(0,s)\text{ exists}\}.$
Note that functions on $X$ correspond to
functions on the surface which are $C^3$ with
$\bp(\cdot)$ being the value of the function at the fixed point of the flow. The next result is proven in Section \ref{ScSurfaceFlows}.

\begin{proposition}\label{prop:deverg}There exists $\epsilon>0$ such that for a.e. $\alpha$ and 
for every
 $\bar{H}\in  \cC^{3}$
$$
\mu\left(\Big\{x\in X\;:\; \Big|\bar{H}_T(x)-T\mu(\bar{H})\Big|={\rm O}(T^{1/2-\epsilon})\Big\}\right)=1-o(1),\text{ as } T\to \infty.
$$
\end{proposition}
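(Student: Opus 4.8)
The plan is to reduce the statement about ergodic integrals $\brH_T$ over the surface flow $(\varphi_t)$ to a statement about Birkhoff sums of the roof function $f$ over the rotation $R_\alpha$, and then to exploit the precise asymptotics \eqref{eq:asy2} of $f$ near the singularity to control how the special-flow orbit distributes its time among the tower levels. First I would reduce to the base: for $\brH \in \cC^3$ the function $\brH_T(\theta,s)$ can be written, up to boundary effects at the top and bottom of the orbit segment, as $\sum_{k=0}^{N(\theta,s,T)-1} \hat H(\theta + k\alpha)$ where $\hat H(\theta) = \int_0^{f(\theta)} \brH(\theta, u)\, du$ is a function on $\T$, plus an error controlled by $\brH$ near the fixed point; the condition $\bp(\brH)$ exists is exactly what makes $\hat H$ and the boundary terms well-behaved (the logarithmic or power divergence of $f$ is absorbed because $\brH(0,s)$ stabilizes). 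The number of levels visited satisfies $N(\theta,s,T) \asymp T$ when $f$ has positive integral, more precisely $f_{N}(\theta) \le s + T < f_{N+1}(\theta)$ pins down $N$ as (essentially) the renewal time for the Birkhoff sums $f_n(\theta)$.

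The core estimate is therefore a large-deviation / fluctuation bound for $f_n(\theta) - n\mu(f) = f_n(\theta) - n$ (recall $\int f = 1$) and for the Birkhoff sums $\hat H_n(\theta) - n\mu(\hat H)$ under the rotation $R_\alpha$, valid for a.e.\ $\alpha$, showing that both are $O(n^{1/2-\epsilon})$ off a set of measure $o(1)$. Here is where the singularity of $f$ enters: because $f$ has a logarithmic (cases (1),(2)) or power-type $\theta^{-\gamma}$ with $\gamma \le 2/5$ (case (3)) singularity, the Birkhoff sums $f_n$ are dominated by the largest term, i.e.\ by how close the orbit $\{\theta + k\alpha: 0\le k < n\}$ comes to $0$. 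For a.e.\ $\alpha$ (those with mild Diophantine properties, which is the a.e.\ hypothesis) the closest approach is of order $n^{-1}$ up to logarithmic or $n^{\delta}$ corrections, so a single term contributes $O(\log n)$ (cases (1),(2)) or $O(n^{\gamma + \delta})$ (case (3)); since $\gamma \le 2/5 < 1/2$, summing the tail and using cancellation from the mean-zero part (this is where one uses $A = B$ symmetry in case (2), or the Koksma-type bounds / Denjoy-Koksma estimates along the right subsequence of denominators $q_j$) yields the $O(n^{1/2-\epsilon})$ bound on a set whose complement has measure $o(1)$. I would organize this via the continued fraction expansion of $\alpha$: along $n \asymp q_j$, Denjoy-Koksma gives $|f_{q_j}(\theta) - q_j \int f|$ under control except near the orbit of $0$, and one interpolates general $n$ by the Ostrowski representation $n = \sum a_i q_i$.

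Then I would assemble the pieces: on the good set (intersection of the good sets for $f_n$ and for $\hat H_n$, over the relevant range of $n \asymp T$, which still has measure $1 - o(1)$ by a union bound over $O(\log T)$ dyadic scales of $n$ combined with the $o(1)$ decay), the renewal time satisfies $N(\theta,s,T) = T + O(T^{1/2-\epsilon})$, hence $\hat H_{N(\theta,s,T)}(\theta) = N\mu(\hat H) + O(N^{1/2-\epsilon}) = T\mu(\hat H) + O(T^{1/2-\epsilon})$, and since $\mu(\hat H) = \zeta(\brH)\cdot(\text{normalization})$ — more precisely $\int_X \brH\, d\mu = \int_\T \hat H\, d\theta / \int_\T f\, d\theta = \int_\T \hat H\, d\theta$ because $\int f = 1$ — we get $\brH_T = T\mu(\brH) + O(T^{1/2-\epsilon})$ off a set of measure $o(1)$, which is the claim. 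The main obstacle, and the place I would spend the most care, is the fluctuation bound for $f_n$ itself in the power-singularity case (3): unlike the logarithmic case where a single close approach costs only $O(\log n)$ and is trivially below $n^{1/2-\epsilon}$, here one must show that the sum over all close approaches of the orbit to $0$ — weighted by the $\theta^{-\gamma}$ profile — does not exceed $n^{1/2-\epsilon}$ for a full-measure set of $\alpha$; this requires quantitative control of the number of indices $k \le n$ with $\|\theta + k\alpha\| \in [2^{-i-1}, 2^{-i}]$ and summing $2^{i\gamma}$ times these counts, which is where the restriction $\gamma \le 2/5$ (respectively $\gamma < 1/2$) is genuinely used, and where one must be careful that the exceptional $\theta$-set (not just $\alpha$-set) has measure $o(1)$ rather than merely zero.
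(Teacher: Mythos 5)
Your proposal follows essentially the same route as the paper's proof: reduce $\brH_T$ to Birkhoff sums of $F(\theta)=\int_0^{f(\theta)}\brH(\theta,s)\,ds$ (which inherits the $\theta^{-\gamma}$ singularity with constants proportional to $\bp(\brH)$), control these sums via Denjoy--Koksma along the denominators $q_n$ plus Ostrowski interpolation, restrict to the measure-$(1-o(1))$ set of $\theta$ whose orbit avoids a $T^{-1-1/100}$-neighborhood of the singularity, and handle the renewal time $N(\theta,s,T)$ by the same estimate applied to $f$ itself. One small remark: the $A=B$ symmetry in case (2) is not needed for this deviation bound (the paper treats the logarithmic cases as $P=Q=0$ in the uniform power-singularity framework), and the ``sum over all close approaches'' you flag as the main obstacle is absorbed by applying Denjoy--Koksma to the truncation of $F$ away from a $\frac{1}{10q_n}$-neighborhood of $0$, leaving only the single closest approach per block to estimate separately.
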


\begin{proof}[Proof of Theorem \ref{ThWeakMix}]
 For part (a) let $\varphi_t$ be as in (2). 
To see that $F_T$ is weakly mixing we note that
$F$ is relatively mixing in the fibers (see \cite{DDKN}), so any eigenfunction should be constant
in the fibers and whence constant since $G_T$ is
weakly mixing. 

For part (b) let $\varphi_t$ be as in (3). 
Note that in both cases the CLT follows from continuous versions of Corollary \ref{CrUE-CLT} and Remark \ref{RmPosAv}
(the fact that the limiting variance is not identically
zero follows from
Section \ref{ScVar}).

Taking $\varphi_t$ as in (1) gives different examples of mixing (on an ergodic component) but not $K$ systems satisfying the CLT.
\end{proof}

\section{Non Bernoulicity of $T, T^{-1}$ transformations}
\label{ScNBResults}
Here we show that $T, T^{-1}$ transformations with Anosov base and exponentially mixing fiber 
are non Bernoulli. The proofs are given in part \ref{PtKalikow}.

Let $f:X\to X$ be a diffeomorphism preserving a topologically mixing basic hyperbolic set 
$\Lambda$ and let $\mu$ be a Gibbs measure with H\"older potential on $\Lambda.$
Let $d$ be a positive
integer. Assume one of the following
\begin{itemize}
\item[{\bf a1.}] $Y$ is a nilmanifold, $\nu$ be a Haar measure,
$\Ab=\Z^d$ and  $G_t$ is an $\Ab$
 action on $Y$ by hyperbolic affine maps.
\item[{\bf a2.}] $\Ab\in\{\Z^d,\R^d\}$, $Y$ is a quotient of a semisimple Lie group $H$ by a co-compact irreducible lattice and $G$ 
is a partially hyperbolic action on $Y$ such that the restriction of $\alpha$ to the center space is identity (see \eqref{cen:id}).
\end{itemize}
Let
$\tau:X\to\Ab$ be a H\"older mean zero cocycle which is not homologous to a cocycle taking value
in a proper subgroup of $\Ab$
(this assumption does not lead to a loss of generality
since homologous cocycles give rise to conjugated maps, and if $\tau$ takes value in a proper subgroup
we can consider the smaller action from the beginning).
We consider the skew product \eqref{TTInvDef}.

Our main result is:
\begin{theorem}\label{thm:main0}
If $G$ is as in {\bf a1.} or {\bf a2.} then $F$ is not Bernoulli. 
\end{theorem}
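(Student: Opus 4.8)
The plan is to establish non-Bernoullicity by proving that $F$ fails to be \emph{loosely Bernoulli} / violates the very weak Bernoulli (VWB) criterion, following the strategy pioneered by Kalikow for the classical $T,T^{-1}$ map and its smooth adaptations by Katok, Rudolph, and Kanigowski--Rodriguez Hertz--Vinhage. The key heuristic is that, because the fiber dynamics $G_\tau$ is driven by the cocycle $\tau$ over the Anosov base, two typical points $(x,y)$ and $(x',y')$ whose base orbits are exponentially close in the future but whose past cocycle sums $\tau_{-N}(x)$, $\tau_{-N}(x')$ have diverged will have decorrelated fiber coordinates; conversely, knowing a long stretch of the $F$-orbit pins down both the base orbit and, through the cocycle, the fiber orbit, so that the "scenery" seen in the fiber over a long future block carries definite information about the large-scale geometry of the base orbit in the past. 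This tension is what obstructs the Bernoulli property.

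Concretely, first I would fix a finite generating partition $\cP$ of $X\times Y$ that is a product of a Markov partition $\cP_X$ for $(\Lambda,f,\mu)$ with a nice partition $\cP_Y$ of $Y$ adapted to the $G$-action. Second, I would use the hypothesis that $\tau$ is H\"older, mean zero, and not cohomologous to a cocycle into a proper closed subgroup of $\Ab$ to invoke a local central limit theorem / non-degenerate large deviation behavior for $\tau_N$ over the Gibbs measure $\mu$ (available from the thermodynamic formalism on the basic set $\Lambda$): this guarantees that $\tau_N/\sqrt N$ genuinely spreads out in all of $\Ab$, so the fiber displacement $G_{\tau_N}$ explores $Y$. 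Third — and this is the heart of the matter — I would show the VWB criterion fails: for a positive-measure set of pairs and for all large $n$, the $\bar d$-distance between the conditional distributions of the $\cP$-name of length $n$ in the future, given two different pasts, stays bounded below. The mechanism: condition on a past that forces $\|\tau_{-N}(x)\|$ to be of order $\sqrt N$ for $N\asymp e^{cn}$ (this has definite probability), versus a past forcing it to be of a different order; then the exponential mixing of all orders of $G_t$ in {\bf a1.}/{\bf a2.} is used in reverse — it implies the fiber process over a future block of length $n$ is essentially a function of where $G$ has carried $y$, hence reflects the accumulated $\tau$, so the two conditional future-name distributions must differ by a definite amount. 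For case {\bf a2.} one must additionally handle the center direction: the assumption \eqref{cen:id} that $\alpha$ restricted to the center is the identity means the center coordinate is simply transported, and one works modulo it (or shows it contributes a genuinely non-trivial, non-Bernoulli factor), while the hyperbolic directions of the semisimple-group action carry the decorrelation exactly as in the nilmanifold case.

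The main obstacle I anticipate is making the reverse-mixing argument quantitative and uniform: one needs that over a \emph{typical} long past block the cocycle sum is large on the relevant scale, that this largeness translates — via exponential mixing of all orders of $G$ — into genuine separation of the two conditional future distributions in $\bar d$, and that all of this survives averaging over the base (where atypical base orbits with small $\|\tau_N\|$ occur but only with polynomially small probability, exactly the kind of estimate that Remark \ref{RmPosAv} and the large deviation bounds control). Packaging the nilmanifold case {\bf a1.} and the semisimple case {\bf a2.} under one VWB-failure argument, with the center-is-identity caveat, is the bookkeeping cost; the conceptual content is the Kalikow-type entropy/coding obstruction driven by the non-degeneracy of $\tau$. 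I would carry out {\bf a1.} in full first, then indicate the modifications for {\bf a2.}, deferring the detailed estimates to Part \ref{PtKalikow}.
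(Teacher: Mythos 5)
Your top-level frame (show the VWB criterion fails, using a Markov-times-fiber partition and the non-degeneracy of $\tau$) matches the paper's, but the engine you propose for violating VWB is the wrong one for this setting, and the paper explicitly warns against it. You argue that two pasts forcing different sizes of $\|\tau_{-N}\|$ must yield conditional future-name distributions that differ by a definite amount, "because the fiber process over a future block reflects the accumulated $\tau$." That is the Kalikow--Rudolph mechanism, and it is driven by recurrence of the $\Z^d$ walk ($d=1,2$): the future revisits the scenery and thereby reveals the past displacement. For $d\geq 3$ the walk is transient, the future name does \emph{not} pin down the accumulated $\tau$, and indeed den Hollander--Steif proved that with a Bernoulli shift in the fiber and the \emph{same} cocycle the system \emph{is} Bernoulli. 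So no argument at the level of "the future distribution detects $\tau_{-N}$" can succeed; the fiber dynamics must enter in an essential way, which your sketch does not supply.

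The paper's actual mechanism is different. It first shows (Proposition \ref{prop:atoms}) that the relative fiber atoms of the past partition are points, and then proves the quantitative core, Proposition \ref{prop:cruc}: the set $D(\omega,y',\epsilon',n_k)$ of fiber points $y$ that can be $(\epsilon',n_k)$-close (in an averaged Bowen--Hamming sense, not partition matchability --- this switch is what makes the continuous higher-rank case tractable, via the perturbation inclusion \eqref{eq:pert}) to some orbit over $(\cdot\,,y')$ has measure tending to $0$ uniformly. The proof is a multiscale induction: at a good pair of times the cocycle sums are separated by $\gg \sqrt{n_{k-1}}$, and exponential mixing of $G$ on balls (Lemma \ref{lem:induc}) makes the two corresponding translated copies of $D$ nearly independent, giving $a_k\leq C n_k^{2d+1}a_{k-1}^2$ and hence $a_k\to 0$. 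Non-Bernoullicity then follows because a VWB matching would place a positive measure of fiber points inside a set $D$ of vanishing measure. In short: exponential mixing is used to show that \emph{almost no} fiber point can be matched to a given orbit, not to show that future name distributions remember $\tau_{-N}$. Your proposal is missing this idea, and the step you rely on in its place would fail for $d\geq 3$.
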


 Let us give examples of systems satisfying the assumptions {\bf a1.} and {\bf a2.} respectively.

\begin{example}[Cartan action on $\T^n$]
\label{eg1}
Let $n\ge 3$. A $\Z^{n-1}$-action by hyperbolic automorphisms of the $n$-dimensional torus is called {\em Cartan action}. One can construct concrete examples by considering embedding of algebraic number fields to $\R$. For more details, we refer to \cite{KKS}. Multiple exponential mixing for such actions is proven in much more general setting, see \cite{GS}.
\end{example}

\begin{example}[Weyl Chamber flow on $SL(n,\R)/\Gamma$]
\label{eg2}
Let $n\ge 3$, and $\Gamma$ be a uniform lattice in $SL(n,\R)$. Let $D_+$ be the group of diagonal elements in $SL(n,\R)$ with positive elements. It is easy to see that $D_+$ is isomorphic to $\R^{n-1}$. The group $D_+$ acts on $SL(n,\R)/\Gamma$ by left translation. Thus we obtain a $\R^{n-1}$ action, which is called {\em Weyl Chamber flow}. 
A crucial property of Weyl chamber flow is (multiple) exponential mixing. Exponential mixing is proven by using 
matrix coefficients in \cite{KS1}, and multiple exponential mixing is established in \cite{BEG}.
\end{example}

In case when $f$ is an Anosov map, $\Ab=\R^d$, $\tau$ is smooth, and $d\geq 3$, the map $F$ discussed above satisfies the assumptions of Theorem 
\ref{ThCLT+K}. Indeed, the $K$ property for $F$ follows from
Corollary 2 in \cite{Gui89}, the CLT follows from Theorem \ref{ThCLT3} 
(or from \cite[Theorem 5.1]{DDKN})  and mixing with rate $n^{-d/2}$ 
follows from 
\cite[Theorem 4.7(a)]{DDKN}.

Markov partitions allow to construct a measurable isomorphism between the 
hyperbolic basic sets with a Gibbs measure and a subshift 
of finite type (SFT) with a Gibbs measure (\cite{Bow75}). Let $\sigma:(\Sigma_A,\mu)\to(\Sigma_A,\mu)$ be a topologically transitive SFT with a Gibbs measure $\mu$. Theorem \ref{thm:main0} immediately follows from

\begin{theorem}\label{thm:main}
Let $G$ be as in {\bf a1.} or {\bf a2.}, $\tau$ be a H\"older mean zero cocycle on $\Sigma_A$ and 
$$
F(\omega,y)=(\sigma \omega,G_{\tau(\omega)}y).
$$
Then $F$ is not Bernoulli. 
\end{theorem}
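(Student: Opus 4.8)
The plan is to prove non-Bernoullicity by exhibiting a violation of the very weak Bernoulli (VWB) property, following the Kalikow-style strategy adapted to the exponentially mixing fiber actions in cases \textbf{a1.} and \textbf{a2.}. The key heuristic is that the fiber displacement $\tau_N(\omega)$ behaves like a $d$-dimensional random walk of size $\sqrt{N}$, so two generic base orbits that agree on a long block of coordinates but differ slightly produce fiber points that are pushed apart by a polynomially large amount in the expanding directions of $G$; since $G$ is mixing (indeed, exponentially mixing), the conditional distributions in the fiber decorrelate and cannot be matched by any $\bar d$-close coupling. To make this precise I would first reduce, via the natural extension, to working with the two-sided shift $(\Sigma_A,\sigma,\mu)$ and choose the generating partition $\cP$ to be (a refinement of) the Markov partition in the base times a fixed finite partition $\cQ$ of the fiber $Y$ whose boundary has zero $\nu$-measure and which separates points at a definite scale.

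The main steps, in order, are: (1) Fix $n$ and consider the distribution of the $\cP$-name of length $n$ of $F$ conditioned on the infinite past; this factors as (past of the base) $\times$ (fiber coordinate $y$), and the fiber part is governed by the orbit $y, G_{\tau(\omega)}y, G_{\tau_2(\omega)}y,\dots$. (2) Use the large-deviation/CLT behavior of $\tau_N$ (the random-walk heuristic, valid because $\tau$ is Hölder, mean zero, and not cohomologous into a proper subgroup, so its "variance" is nondegenerate in all $d$ directions) to show that with probability close to one, $\|\tau_N(\omega)\|$ grows like $\sqrt N$ and, crucially, that for two base sequences $\omega,\omega'$ which agree on coordinates $[-n,n]$ but are otherwise independent-ish, the difference $\tau_N(\omega)-\tau_N(\omega')$ reaches magnitude $\gg (\log N)^{C}$, in fact a small power of $N$, for most $N\le n$ — this is where one invokes that the cocycle is not a coboundary plus a proper-subgroup-valued cocycle. (3) Feed this fiber separation into the mixing of $G$: because $G_t$ is exponentially mixing (of all orders in \textbf{a2.}, and by hyperbolic affine structure in \textbf{a1.}), displacing the base point of a fiber orbit by a vector growing polynomially makes the fiber $\cQ$-names along the orbit asymptotically independent of what they would have been, so any joining of the two conditional measures that tries to keep the fiber names matched must fail on a definite fraction of the coordinates. (4) Conclude $\bar d$(conditional name distribution, unconditional name distribution) stays bounded away from $0$ uniformly in $n$, which contradicts VWB, hence $F$ is not Bernoulli (Ornstein theory). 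One must separately check ergodicity/the $K$ or at least ergodic status is not needed for non-Bernoullicity, only that $F$ is a measure-preserving system, which it is.

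The technical heart — and the step I expect to be the main obstacle — is step (3): quantifying how a polynomially large translation in the acting group $\Ab$ destroys the correlation of fiber partition names, \emph{uniformly} over the relevant range of translations and simultaneously for the whole block of $n$ coordinates (not just one). This requires a multiple-mixing input with explicit dependence on the translation parameters: one needs estimates of the form $\big|\nu(Q_0 \cap G_{-s_1}Q_1\cap\dots\cap G_{-s_k}Q_k) - \prod \nu(Q_j)\big| \le C e^{-\delta\min_i\mathrm{dist}(s_i-s_{i-1},\text{walls})}$ along Weyl chamber walls (case \textbf{a2.}) or the analogous statement for hyperbolic affine $\Z^d$-actions on nilmanifolds (case \textbf{a1.}, where the center-identity assumption \eqref{cen:id} guarantees the relevant directions are genuinely hyperbolic). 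The matching lower bound — that $\tau_N(\omega)-\tau_N(\omega')$ actually does land in the "good" (deep-in-a-chamber, polynomially large) region for a positive-density set of $N$ — is a second nontrivial point, handled by the local limit theorem / anticoncentration for Hölder cocycles over SFTs together with the non-degeneracy hypothesis on $\tau$. Once these two ingredients are in place, assembling the $\bar d$ lower bound is a standard (if delicate) bookkeeping argument of Kalikow/Ornstein type, so I would present steps (1), (2), (4) fairly briskly and concentrate the effort on the mixing-versus-translation estimate in step (3).
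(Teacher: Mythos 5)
There is a genuine gap, and it sits exactly where you put the heuristic weight: steps (2)--(3). Your mechanism is ``the cocycle sums of two different pasts diverge polynomially, and then mixing of $G$ decorrelates the fiber names, so no matching survives.'' This is the Kalikow--Rudolph mechanism, and it provably cannot suffice here: for $d\ge 3$ the main result of \cite{dHS97} shows that the \emph{same} cocycle $\tau$ with a Bernoulli $\Z^d$-scenery in the fiber yields a Bernoulli system, even though the two reading paths separate and the scenery values at separated sites are exactly independent. Divergence of $\tau_N(\omega)-\tau_N(\omega')$ plus decorrelation does not obstruct a $\bar d$-matching -- if anything, independence of the conditional name process from the past is what VWB asks for. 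The actual obstruction used in the paper is a rigidity statement about the smooth/homogeneous fiber: the relative atoms of the past partition are single points $y$ (Proposition \ref{prop:atoms}, which itself needs the cone-visiting Lemma \ref{lem:RW} and, in case {\bf a2.}, a separate treatment of the center direction via Moore ergodicity), and the set $D(\omega,y',\epsilon,n)$ of fiber points $y$ that can be $(\epsilon,n)$-close to \emph{some} orbit over $y'$ has measure tending to $0$ (Proposition \ref{prop:cruc}). Being close over many time blocks imposes many quasi-independent constraints on the single point $y$; for a Bernoulli scenery no such over-determination occurs, which is precisely why the dichotomy at $d=3$ exists. Your proposal never formulates this shadowing-set estimate, so as written it does not distinguish the case where the theorem is true from the case where it is false.

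Two further mismatches with the paper's proof are worth noting. First, the ``technical heart'' you anticipate -- a multiple-mixing estimate for the whole block of $n$ translated partition elements simultaneously -- is not what is needed: the paper only uses \emph{pairwise} exponential mixing on balls (Lemma \ref{lem:induc}), fed into a multiscale recursion (Lemma \ref{lem:tpr}) in which anticoncentration of $\tau$ produces one large separating translation per scale and the bound $a_k\le C n_k^{2d+1}a_{k-1}^2$ is squared at each step; no $k$-fold mixing with explicit wall-distance dependence is required. Second, working with $\cQ$-names and $\bar d$-matchability runs into the partition-boundary problem for continuous $\R^d$-actions; the paper replaces matchability by the averaged Bowen closeness of Proposition \ref{prop:VWB} precisely because closeness is stable under small fiber translations (the inclusion \eqref{eq:pert}), which is what makes the discretization over $\Lambda_k$ and hence the recursion possible.
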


One of the main steps in the proof of Theorem \ref{thm:main} is to show that relative atoms (on the fiber) of the past partition are points (see Proposition \ref{prop:atoms}). If  $G$ is a $\Z^d$ Bernoulli shift 
with $d= 1,2$, then the assertion of Proposition \ref{prop:atoms} is still true. This is a consequence of the fact that the corresponding $\Z^d$ random walk is recurrent. In particular in this setting one can easily adapt our proof to cover the examples
  by Kalikow \cite{Kal82}, Rudolph \cite{Rud} and den Hollander--Steif, \cite{dHS97} (in slightly wider generality as they assume $\sigma$ is the full shift). 
On the other hand if $d\geq 3$, then the $\Z^d$ random walk is not recurrent and the assertion of Proposition \ref{prop:atoms} does not hold. In fact,  the main result in \cite{dHS97} says that in this case $F$ is Bernoulli (if $\sigma$ is the full shift).
It is known that symbolic and smooth actions of rank $\geq 2$
are quite different. For example, a higher rank smooth action has zero
entropy \cite{OW} but there is an abundance of symbolic actions with positive entropy. Theorem \ref{thm:main} is another
 manifestation on the difference between smooth and symbolic actions of higher rank.
\\

Our approach is motivated by \cite{Kal82, Rud}. In particular, the statement of the key Proposition 
\ref{lem:tpr} is similar to the corresponding statements of \cite{Kal82, Rud}. However, its proof in our case is different, since the other authors rely on fine properties of the ergodic sums of the 
cocycle $\tau$ while our approach uses exponential mixing in the fiber.
We note that in dimension $d\geq 3$ we can have the same cocycle $\tau$ but different fiber dynamics,
namely, a random walk, and get a Bernoulli system, so using fiber dynamics is essential.
Another important ingredient to our approach is the use of Bowen-Hamming distance 
(see Proposition \ref{prop:VWB}) 
which allows us to handle continuous higher rank actions in the fiber, and so it plays a crucial role in 
constructing the example of Theorem \ref{ThCLT+K}. We also emphasize that the systems considered in \cite{Kal82, Rud} were shown by the authors not to be {\em loosely Bernoulli}. We  believe that our methods would work also to show non loose Bernoullicity at a cost of rather technical combinatorial considerations as one needs to consider the $\bar{f}$ metric instead of the Hamming metric. To keep the presentation relatively simple and since our goal was to establish smooth $K$ but non Bernoulli examples satisfying CLT, we restrict our attention to only deal with non Bernoullicity.\\

 We note that the assumption that $\tau$ has zero mean in the above theorems  
 is essential. Indeed, if $\tau$ has non-zero mean, then by  
 \cite[Theorem 4.1(a)]{DDKN}, $F$ is exponentially mixing, and then one can show using the
argument of \cite{Ka-Bern} that $F$ is Bernoulli. The details will be given in a separate
paper \cite{DKRH}.


\section{Flexibility of statistical properties}
\subsection{Overview}
Here we put the results of Sections \ref{ScCLTResults} and \ref{ScNBResults} into a more general framework.

There is a vast literature on statistical properties of dynamical systems.  A survey  \cite{Sin00} lists
the following hierarchy of statistical properties for dynamical systems preserving 
a smooth measure (the properties
marked with * are not on the list in \cite{Sin00} but 
we added them to obtain a more
complete list).

(1) {\bf (Erg)} Ergodicity;
(2*) {\bf (WM)} Weak Mixing
(3) {\bf (M)} Mixing;
(4*) {\bf (PE)} Positive entropy;
(5) {\bf (K)} K property;
(6) {\bf (B)} Bernulli property;
(7*) {\bf (LD)} Large deviations; 
(8) {\bf (CLT)} Central Limit Theorem\footnote{\cite{Sin00} refers to classical CLT, but since the time
it was written several CLTs with non classical normalization has been proven, cf. footnote
\ref{FtNLnN}.};
(9*) {\bf (PM)} Polynomial mixing;
(10) {\bf (EM)} Exponential mixing.

Properties (1)--(6) are qualitative. They make sense for any measure preserving dynamical system.
Properties (7)--(10) are quantitative. They require smooth structure but provide quantitative estimates.
Namely let $F$ be a $C^r$ diffeomorphism of a smooth orientable manifold $M$ with a fixed volume form preserving a measure $\mu$ which
is absolutely continuous with respect to volume.
Recall that a formal definition of {\bf (CLT)} 
{\bf (PM)} and {\bf (EM)}
were given in Section \ref{ScPartCahos}.
By {\bf (LD)} we mean 
{\em exponential large deviations}, that is  {
for each $\eps>0$
there exists $\delta>0$ and $C$ such that 
for every $N$ and}
for any function $A\in C^r(M)$ of zero mean
$$  \mu(x: |A_N(x)|\geq \eps N)\leq C \|A\|_{C^r} e^{-\delta N}, $$
where 
$\DS  A_N(x)=\sum_{n=0}^{N-1} A(F^n x) $ 
are the ergodic sums.

The same definitions apply to flows with obvious modifications. While 
properties
on the bottom of the list are often more difficult to establish especially in the context of nonuniformly 
hyperbolic systems discussed in \cite{Sin00}
it is not
true that property $(j)$ on this list implies all the properties ($i$) with $i\leq j.$ This leads to the following

\begin{problem}
\label{PrFlex}
  Study logical independence of the properties from the list above. That is, given two disjoint subsets
  $\cA_1, \cA_2\subset \{1, \dots, 10\}$ determine if there exists a smooth map preserving a smooth probability measure
  which has all properties from $\cA_1$ and does not have any properties from $\cA_2.$
\end{problem}  
If $\cA_1$ contains some properties from the bottom of our list while $\cA_2$ contains 
some 
properties from the top, then 
an affirmative answer to Problem \ref{PrFlex} provides
exotic examples exhibiting a new type of stochastic behavior
in deterministic systems. On the other hand finding new implications among properties (1)--(10)
would also constitute an important advance
since it would tell us that once we checked some properties from our list, some additional properties
are obtained as a free bonus.

Of course, the solution of
Problem \ref{PrFlex} in all the cases where $|\cA_1|+|\cA_2|=10$ would immediately imply the solution for
all the cases where $\cA_1\cup \cA_2$ is a proper subset of our list.  However, 
 the cases where $\cA_1\cup\cA_2$ is small, are of a higher practical interest, since any non-trivial
 implication between the properties in $\cA_1\cup \cA_2$ lead to simpler theorems.
We note that all cases with $\cA_1=\emptyset$ can be realized with taking $F=id$ and
all cases with $\cA_2=\emptyset$ can be realized by Anosov diffeomorphisms, so the problem is 
non-trivial only if both $\cA_1$ and $\cA_2$ are non-empty.
Thus the simplest non-trivial case of the problem is 
the case
where both $\cA_1$ and $\cA_2$ consist of a single element.
The known results are summarized
in the table below. Here Y in cell $(i, j)$ means that the property in 
row $i$ implies the property in the column $j.$ $(k)$ in cell $(i,j)$ means that a diffeo number $(k)$ 
on the list below has property $(i)$ but not property $(j).$

The examples in the table below are the following (the papers cited in the list contain results needed to verify some properties
in the table):

(1) irrational rotation; (2) almost Anosov flows studied in \cite{Br19}
\footnote{
In the table we use the fact that the maps with neutral periodic points do not
satisfy LD. Indeed for such maps, if $x$ is $\eps=1/T^k$-close to a neutral periodic orbit $\gamma$,
with $k$ large enough, then $A_T(x)$ is close to $T \int_\gamma A$ which may be far from $T\mu(A)$
if $\mu(A)\neq \int_\gamma A.$ More generally for maps which admit Young tower
with polynomial tail, large deviations have polynomial rather than exponential probabilities.
We refer the reader to \cite{Mel09, GM14, DN-Ren} for discussion of precise large deviation bounds
in that setting.};
(3) horocycle flow (\cite{BF14}); 
(4) Anosov diffeo $\times$ identity; 
(5) maps from Theorem \ref{ThZE-CLT};
(6) skew products on $\Tor^2\times \Tor^2$ of the form $(Ax, y+\alpha \tau(x))$ where $A$ is linear Anosov 
map, $\alpha$ is Liouvillian and $\tau$ is not a coboundary \cite{D02}; 
(7) Anosov diffeo$\times$Diophantine rotation (see \cite{K98, CC13} and Corollary~\ref{CrUE-CLT}).

\medskip

\begin{center}
\begin{tabular}{|c|c|c|c|c|c|c|c|c|}
\hline
& {\bf Erg} & {\bf WM/M} & {\bf PE} & {\bf K/B} & {\bf LD} & {\bf CLT} & {\bf PM} & {\bf EM} \cr
{\bf Erg} & $\clubsuit$ & (1) & (1) & (1)  &(2) & (1) & (1) & (1) \cr
{\bf WM/M} & Y  & $\clubsuit$ & (3) & (3)  & (2) & (6) & (6) & (6) \cr
{\bf PE} & (4) & (4)  & $\clubsuit$ & (4) &  (4) & (4) & (4) & (4)  \cr
{\bf K/B} & Y & Y &  Y & $\clubsuit$  & (2) & (6) & (6) & (6) \cr
{\bf LD} & Y & (1) & (1) & (1)  &   $\clubsuit$ & (1) & (1) & (1)  \cr
{\bf CLT} & Y & (7) & (5) & (7)  &  (2) & $\clubsuit$ & (7) & (7)   \cr
{\bf PM} & Y & Y & (3) & (3)  & (2) &  (3) & $\clubsuit$ & (3)   \cr
{\bf EM} & Y & Y & ?? & ?? & ??  &  ??& Y & $\clubsuit$    \cr
\hline
\end{tabular} 
\end{center}
\medskip

We combined {\bf (WM)} and {\bf (M)} 
(as well as {\bf (K)} and {\bf (B)})
together since the same counter examples work for both properties. 
It is well known that weak mixing does not imply mixing (see Section \ref{ScSurfaceFlows}) and
that $K$ does not imply Bernoulli (see Section \ref{ScNBResults}).

The positive implications in the top left $4\times 4$ corner are standard and can be found in most 
textbooks on ergodic theory. It is also clear that
Exponential Mixing $\Rightarrow$ Polynomial Mixing $\Rightarrow$ Mixing and that both
CLT and Large Deviations imply the weak law of large numbers which in turn entails ergodicity.

There are 4 cells  with the question mark, all of them concentrated in 
{\bf (EM)} row. 
This problem is addressed in an ongoing work, \cite{DKRH}, in which the authors show that if a $C^2$ volume preserving diffeomorphism is exponentially mixing, then it is Bernoulli.

The remaining two cells in {\bf (EM)} row seem hard.  For example,
it is known (\cite{Ch95}, see also \cite{BG}) that the classical CLT follows from {\em multiple
exponential mixing,} that is,  the CLT holds if for each $m$ 
\begin{equation}
\label{eq:memixing}
\left|\int \left(\prod_{j=1}^m A_j(f^{n_j} x) \right) d\mu(x) -\prod_{j=1}^m \mu(A_j)\right|
\leq C_m \prod_{j=1}^m \|A_j\|_{C^r} \; e^{-\delta_m \min_{i\neq j} |n_i-n_j|} .
\end{equation}
Therefore the question if exponential mixing implies CLT is related to the following 
\begin{problem}
Does exponential mixing imply multiple exponential mixing?
\end{problem}
\noindent which a quantitative version of a famous open problem of Rokhlin. The above problem is also interesting in a more general context whether mixing with a certain rate implies higher order mixing with the same rate.

In the construction used to prove Theorem \ref{ThZE-CLT}(b), $\dim(M_r)$ grows linearly with $r$
which leads to the following natural question:
\begin{problem}
Construct a $C^\infty$ diffeomorphism with zero entropy satisfying the classical CLT.
\end{problem}

The table also shows that {\bf (PM)} does not imply any qualitative properties stronger than 
mixing. However in the counter example listed in the table the mixing is quite slow in the sense that
$\psi(n) = Cn^{- \delta}$ in \eqref{EqPM} with $\delta < 1$. This leads to the following problem.

\begin{problem}
Given $m\in \naturals$ construct a diffeomorphism which is mixing at rate $n^{-m}$ and

(a) is not $K$;

(b) has zero entropy;

(c) does not satisfy the CLT.
\end{problem}

Positive implications in our table suggest the following more tractable version of Problem \ref{PrFlex}.
Let ${\bf (NE)}$, $\overline{(\bf E)}$, $\overline{\bf (WM)}$,
 $\overline{\bf (M)}$, $\overline{\bf (PM)}$, ${\bf (EM)}$ denote the systems
 which are respectively non-ergodic, ergodic, weakly mixing, mixing, polynomially mixing, or
 exponentially mixing, but do not have any stronger properties on this list.
 Likewise let $\bf (ZE)$, $\overline{\bf (PE)}$, $\overline{\bf (K)}$, $\bf (B)$
 denote the systems which are respectively zero entropy, positive entropy, $K$ or Bernoulli,
 but do not have any stronger properties on our list. Then Problem \ref{PrFlex} is equivalent to
 \medskip
 
\noindent{\bf Problem \ref{PrFlex}*}
Given $P_1\in \{{\bf (NE)}, \overline{(\bf E)}, \overline{\bf (WM)},
 \overline{\bf (M)}, \overline{\bf (PM)}, {\bf (EM)}\}$,\\
 $P_2\in \{{\bf (ZE)}, \overline{\bf (PE)}, \overline{\bf (K)}, {\bf (B)}\}$,
 $P_3\in \{ {\bf (CLT)}, {\bf non (CLT)}\},$
 $P_4\in \{ {\bf (LD)}, {\bf non (LD)}\}$
 does there exist a smooth dynamical system with properties $P_1, P_2, P_3, P_4$?
  
Theorems \ref{ThCLT+K}, \ref{ThZE-CLT}, and \ref{ThWeakMix} provide several new examples
related to this problem.
Namely, we construct exotic systems which 
satisfy CLT and in some cases are not Bernoulli. 
For this reason we provide below a discussion Problem \ref{PrFlex}
in the case where $|\cA_1|+|\cA_2|=3$ and either CLT$\in \cA_1$ (\S \ref{SSCLT-Flex})
or
B$\in\cA_2$ (\S \ref{SSFlex-Bern}).

\subsection{CLT and flexibility}
\label{SSCLT-Flex}
Here we consider Problem \ref{PrFlex} with $|\cA_1|=2,$ $|\cA_2|=1$ and
CLT$\in \cA_1.$
The table below
lists in cell $(i, j)$ a map which has both property (i) and satisfies CLT but does not have property $j.$
Clearly the question makes sense only if we have an example of a system which has property (i) but not property (j).

\medskip
\begin{center}
\begin{tabular}{|c|c|c|c|c|c|c|c|}
\hline
& {\bf WM} & {\bf M} & {\bf PE} & {\bf K} & {\bf B} & {\bf LD} & {\bf PM} \cr
{\bf WM} & $\clubsuit$ & (9) & (10) & (10) & (10) & (2)  & (11) \cr
{\bf M} & $\clubsuit$  & $\clubsuit$ & (10) & (10) & (10) & (2)  & (11)  \cr
{\bf PE} & (7) & (7)  & $\clubsuit$ & (7) & (7) & (2) &  (7)  \cr
{\bf K} & $\clubsuit$ & $\clubsuit$ & $\clubsuit$ & $\clubsuit$ & (8) & (2) & ?? \cr
{\bf B} & $\clubsuit$ &$\clubsuit$ & $\clubsuit$ & $\clubsuit$ & $\clubsuit$ & (2)  & ??  \cr
{\bf LD} & $\clubsuit$ & (7) & ?? & (7) & (7) &   $\clubsuit$  & (7)  \cr
{\bf PM} & $\clubsuit$ & $\clubsuit$ & (10) & (10) & (10) & (2) &   $\clubsuit$  \cr   
\hline
\end{tabular} 
\end{center}
\medskip

Here (2) and (7) refer to the diffeomorphisms from the previous table, while  (8), (9), (10), and (11) and refer to the maps
from Theorems \ref{ThCLT+K}, \ref{ThWeakMix}(a), (b) and \ref{ThZE-CLT}(a).
To see that the example of Theorem \ref{ThZE-CLT}(a) is not polynomially mixing we
note that for polynomially mixing systems the growth of ergodic integrals can not be regularly
varying with index one. Namely
(see e.g. \cite[\S 8.1]{DDKN}),
for polynomially mixing systems there exists $\delta>0$ such that the
ergodic averages of smooth functions $H$ satisfy 
$\DS \lim_{T\to\infty} \frac{H_T}{T^{1-\delta}}=0$ almost surely, and hence, in law. 

The last table leads to the following questions.

\begin{problem}
Construct an example of K (or even Bernoulli) diffeomorphism which satisfies the CLT but is not polynomially mixing.
\end{problem}

\begin{problem}
Construct an example of a zero entropy map which enjoys both the CLT and the large deviations.
\end{problem}

\subsection{Flexibility and Bernoullicity}
\label{SSFlex-Bern}
Here we consider the special case of Problem \ref{PrFlex} when $|\cA_1|=2$ and
$\cA_2=$\{{\bf B}\}.  In view of \cite{DKRH}
 we assume that {\bf EM}$\not\in \cA_1.$
We may also assume that {\bf CLT}$\not\in \cA_1,$
otherwise we are in the setting of \S \ref{SSCLT-Flex}.
We note that the map of Theorem \ref{ThCLT+K} have all remaining statistical properties except,
possibly, {\bf (LD)} while the horocycle flow enjoys all those properties except being
$K.$ Thus the only remaining question in this case is

\begin{problem}
Find a system which is $K$ and satisfies the large deviation property but is not Bernoulli.
\end{problem}

\subsection{Related questions}
The questions presented below are not special cases of Problem \ref{PrFlex} but they are
of a similar spirit.

\begin{problem}
\label{PrCLTAnyManifold}
Let $M$ a compact manifold of dimension at least two. Does there exists a 
$C^\infty$ diffeomorphism of $M$ preserving a smooth measure satisfying a Central Limit Theorem?
\end{problem}

Currently it is known that any compact manifold of dimension at least two admits an ergodic
diffeomorphism of zero entropy \cite{AK70},
a Bernoulli diffeomorphism \cite{BFK81}, and, moreover, a nonuniformly hyperbolic
diffeomorphism \cite{DP02}. We note that a recent preprint \cite{PSS20}
constructs 
area preserving diffeomorphisms on any surface of class $C^{1+\beta}$ (with $\beta$ small) which satisfy
both {\bf (CLT)} and {\bf (LD)}. It seems likely that similar constructions could be made in higher 
dimensions, however, the method of \cite{PSS20} requires low regularity to have degenerate 
saddles where a typical orbit does not spent too much time, and so the methods do not work 
in higher smoothness such as $C^2.$
We also note that \cite{BD87} shows that for any 
aperiodic dynamical system there exists
some measurable observable satisfying the CLT\footnote{One can also ask which limit distributions
can appear in the limit theorems in the context of measurable dynamics and which normalizations
are possible. These issues are discussed in \cite{Gou18, TW12}.} 
(see \cite{KV19, La93, Le00, Vo99} for related related results).
In contrast Problem \ref{PrCLTAnyManifold} asks to construct 
a system where the CLT holds for most smooth functions.

\begin{problem}
Let $M$ be a compact manifold of dimension at least three. Does there exist a diffeomorphism of $M$ preserving a smooth measure which is $K$ but not Bernoulli?
\end{problem}

We note that in case of dimension two, the answer is negative due to Pesin theory \cite{BP07}. At present
there are no example of $K$ but not Bernoulli maps in dimension three. We refer the reader to
\cite{KRHV} for more discussion on this problem.

The next problem is motivated by Theorem \ref{ThZE-CLT}.

\begin{problem}
For which $\alpha$ does there exist a smooth system satisfying the CLT with normalization
which is regularly varying of index  $\alpha?$
\end{problem}

We mention that several authors 
\cite{Beck, BU18, CILB19, DS18}  obtained the Central Limit Theorem for circle rotations  
where normalization is a slowly varying function.
However, firstly, the functions considered in those papers are only piecewise smooth and, secondly,
there either requires an additional randomness or remove zero density subset of times.
Similar results in the context of substitutions are obtained in \cite{BBH14, PS19}.

\part{Central Limit Theorem}
\label{PtCLT}

\section{A criterion for CLT}
\label{ScProduct}

In order to prove our results, we 
use the strategy of \cite{Bolt} replacing Feller Lindenberg CLT for iid random variables by
a CLT for exponentially mixing systems due to \cite{BG}.
More precisely we need the following result.

\begin{proposition}
\label{PrBG}
 Let $\fm_T$ be a signed measure on $\reals^d$ 
and let $\cS_T:=\int_{\reals^d} A_t(G_t y) d\fm_T(t).$
Suppose that for $\|A_t\|_{C^1(Y)}$ is uniformly bounded, $\nu(A_t)\equiv 0$ and
\smallskip

(a) $\DS \lim_{T\to\infty} \|\fm_T\|=\infty$  where 
 $\|\fm\|$ is the total variation norm:\footnote{We remark that in all our applications
$\fm$ is a non-negative measure, so $\|\fm\|=\fm(\R^d).$}
$$ \|\fm\|=\max_{\reals^d=\Omega_1\cup \Omega_2}\{ \fm(\Omega_1)-\fm(\Omega_2)\};$$

(b) For each $r\in \naturals,$ $r\geq 3$ for each $K>0$ 
$$\lim_{T\to \infty} \int \fm_T^{r-1} (B(t,K\ln \|\fm_T\|)) d\fm_T(t)=0;$$

(c) There exists
$\sigma^2$ so that $\DS \lim_{T\to \infty} V_T=\sigma^2$, where
$$ V_T:=\int\cS_T^2(y)d\nu(y)= \iiint A_{t_1} (G_{t_1} y) A_{t_2} (G_{t_2} y) d\fm_T (t_1) \fm_T(t_2) d\nu(y).$$
Then $\cS_T$ converges  as $T\to\infty$ to normal 
distribution with zero mean and variance $\sigma^2.$
\end{proposition}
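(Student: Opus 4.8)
The plan is to prove Proposition \ref{PrBG} via the method of moments, showing that all moments of $\cS_T$ converge to the moments of $\cN(0,\sigma^2)$. The key point is that $\cS_T$ is an integral against a signed measure $\fm_T$ of the functions $A_t(G_t y)$ which, under the law $\nu$ on $y$, form a family of mean-zero observables of an exponentially mixing of all orders $\reals^d$-action $G_t$. So I would first write
$$
\EXP[\cS_T^r]
=\int_{(\reals^d)^r}
\left(\int_Y \prod_{j=1}^r A_{t_j}(G_{t_j}y)\,d\nu(y)\right)
d\fm_T(t_1)\cdots d\fm_T(t_r),
$$
and then invoke the multiple exponential mixing bound \eqref{eq:memixing} applied to $G_t$ in the fiber (which is a standing assumption) to control the inner integral. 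The standard combinatorial estimate, as used e.g. in \cite{BG, Ch95}, says that the cumulant of order $r\geq 3$ of a sum of exponentially (multiply) mixing random variables is negligible, while the second moment converges by hypothesis (c). To adapt this to the continuous/measure-theoretic setting the role of "number of summands" is played by the total mass $\|\fm_T\|$, which by (a) tends to infinity.

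The main estimate I would carry out is the bound on the $r$-th cumulant for $r\geq 3$. Group the $r$ points $t_1,\dots,t_r$ into clusters according to which ones are within distance $K\ln\|\fm_T\|$ of each other (for suitable $K$ depending on the mixing rate $\delta_m$ and on $r$): the exponential decay in $\min_{i\neq j}|t_i-t_j|$ means that once a gap exceeds $K\ln\|\fm_T\|$, the contribution to the connected correlation is smaller than $\|\fm_T\|^{-100r}$, say, which kills the factor $\|\fm_T\|^r$ coming from the $r$ integrations. What survives is the diagonal-type contribution where at least one pair of indices is forced to be close; but by hypothesis (b), the $\fm_T$-mass of such near-diagonal configurations is $o(1)$ (the integral $\int \fm_T^{r-1}(B(t,K\ln\|\fm_T\|))\,d\fm_T(t)\to 0$ is precisely designed to make any such cluster-collapsed term vanish). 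Iterating over all partitions, one concludes that every cumulant of order $\geq 3$ tends to $0$, the first cumulant is $0$ because $\nu(A_t)\equiv 0$, and the second cumulant is $V_T\to\sigma^2$ by (c). Since the Gaussian is determined by its moments, this yields convergence in law of $\cS_T$ to $\cN(0,\sigma^2)$.

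Two technical points need care. First, the uniform bound on $\|A_t\|_{C^1(Y)}$ is what makes the constants $C_m$ in \eqref{eq:memixing} uniform in $t$, so the multiple mixing estimate can be applied with constants independent of the parameters $t_1,\dots,t_r$; I would make this explicit at the outset. Second, the combinatorial bookkeeping: one has to expand $\prod_j A_{t_j}(G_{t_j}y)$ in a way that produces genuine connected correlations, i.e. pass from moments to cumulants/joint cumulants and estimate each cumulant by the multiple mixing bound, using that a cumulant of a product vanishes unless the index set is "connected" through small gaps. This is the step where one must be slightly careful to match \eqref{eq:memixing}, which controls $\min_{i\neq j}|n_i-n_j|$, to the tree/cluster decomposition; the clean way is to induct on $r$ using the standard identity expressing moments in terms of cumulants and the observation that a cumulant splits (up to exponentially small error) across a large gap.

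I expect the main obstacle to be exactly this last combinatorial/analytic step — quantifying how the exponential decay in $\min_{i\neq j}|t_i-t_j|$ interacts with the $r$-fold integration against $\fm_T$ to show that only the "fully clustered within $O(\ln\|\fm_T\|)$" configurations matter, and then using (b) to show even those are negligible for $r\geq 3$. Once the cumulant bound is in place, assembling the moment convergence and invoking the method of moments is routine. The hypotheses (a), (b), (c) are visibly tailored so that "number of effective summands $\to\infty$" (a), "near-diagonal mass $\to 0$" (b), and "variance converges" (c) are precisely the three inputs the cumulant argument consumes; so the proof structure is dictated by the statement, and the real work is the uniform multiple-mixing cumulant estimate.
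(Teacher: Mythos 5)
Your proposal is correct and follows essentially the same route as the paper: the authors do not write out a proof but cite \cite{BG}, whose argument is precisely the cumulant/method-of-moments scheme you describe — cluster the integration variables at scale $K\ln\|\fm_T\|$, use multiple exponential mixing to make cumulants split across large gaps, use hypothesis (b) to kill the clustered contributions to cumulants of order $\geq 3$, and use (c) for the variance. The only adaptation the paper notes (that $A_t$ may depend on $t$) is handled exactly as you say, via the uniform $C^1$ bound making the mixing constants uniform in $t$.
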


This proposition is proven in \cite{BG} in case $A_t$ does not depend on $t,$
however the proof does not use this assumption.

\section{The CLT for skew products}
\label{ScStandard}
\subsection{ Reduction to quenched CLT}
In this section we will prove Theorem \ref{ThCLT3}.
Consider first the case where 
\begin{equation}
\label{ZMeanFiber}
\int H(x, y)d\nu(y)=0
\end{equation}
 for each $x\in X.$ Given $x\in X$, we consider the measure 
\begin{equation}\label{eq:disc} 
\fm_N(x)=\frac{1}{\sqrt{N}} \sum_{n=0}^{N-1}  \delta_{\tau_n(x)}, \quad
A_{t,x}(y)=\frac{1}{\fm_N(x)(\{t\})} 
\sum_{n\leq N: \tau_n(x)=t} H(f^nx,y). 
\end{equation}

\begin{lemma}
\label{LmRandMes3}
Under the assumptions of Theorems \ref{ThCLT3},
there exists $\sigma^2$
(independent of $x$!)  and subsets $X_N\subset X$ such that
$\DS \lim_{N\to\infty} \mu(X_N)=1$ and for any sequence $x_N\in X_N$
the measures $\{\fm_N(x_N)\}$ satisfy the conditions of Proposition \ref{PrBG}.
\end{lemma}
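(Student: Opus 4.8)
The plan is to verify the three hypotheses (a), (b), (c) of Proposition \ref{PrBG} for the random measures $\fm_N(x)=\frac{1}{\sqrt N}\sum_{n=0}^{N-1}\delta_{\tau_n(x)}$ on a set $X_N$ of large $\mu$-measure, and to show the limiting variance $\sigma^2$ does not depend on the choice of $x_N\in X_N$. First note $\|\fm_N(x)\|=\fm_N(x)(\R^d)=\sqrt N$ for \emph{every} $x$, so (a) holds automatically and moreover $\ln\|\fm_N(x)\|=\tfrac12\ln N$. This is the point where the hypothesis \eqref{eq:I1} enters: it says that with probability $1-O(N^{-5})$ the entire orbit segment has $\|\tau_N(x)\|\ge\log^{1+\eps}N$, and (applying it along the orbit and using invariance of $\mu$, together with a union bound over $n\le N$) one gets that on a set $X_N^{(1)}$ with $\mu(X_N^{(1)})\to1$ the increments $\tau_n(x)$ are spread out: no more than polylogarithmically many of the $N$ points $\tau_0(x),\dots,\tau_{N-1}(x)$ can fall in any ball of radius $O(\ln N)$. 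Concretely, for (b) I would bound $\int \fm_N^{r-1}(B(t,K\ln\|\fm_N\|))\,d\fm_N(t)$ by $N^{-(r-1)/2}\cdot N^{-1/2}\cdot N\cdot \big(\sup_t \#\{n<N:\tau_n(x)\in B(t,K'\ln N)\}\big)^{r-1}$; since $r\ge3$ the prefactor is $N^{-(r-1)/2}\le N^{-1}$ and it beats any power of $\log N$, giving $(b)$ on $X_N$. The returns-to-a-ball estimate is exactly the kind of statement that \eqref{eq:I1} is designed to furnish, via a Borel–Cantelli / Markov-inequality argument over the $O(N)$ relevant pairs.

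The substantive step is (c): the convergence of
$$ V_N(x)=\iint \Big(\int A_{t_1,x}(G_{t_1}y)A_{t_2,x}(G_{t_2}y)\,d\nu(y)\Big)\,d\fm_N(x)(t_1)\,d\fm_N(x)(t_2) $$
to a constant $\sigma^2$ independent of $x$. Expanding the definitions, $V_N(x)=\frac1N\sum_{n,m<N}\int H(f^nx,G_{\tau_n(x)}y)H(f^mx,G_{\tau_m(x)}y)\,d\nu(y)$. The idea is that by the exponential mixing of all orders of $G_t$ in the fiber, the inner integral is exponentially close to a product of $\nu$-averages \emph{unless} $\tau_n(x)$ and $\tau_m(x)$ are within $O(\ln N)$ of each other; since each $H(f^nx,\cdot)$ has zero $\nu$-mean by the reduction \eqref{ZMeanFiber}, the "well-separated" pairs contribute negligibly. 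Hence $V_N(x)$ is, up to $o(1)$, $\frac1N\sum_{\text{pairs with }|\tau_n-\tau_m|\le C\ln N}(\cdots)$, and one recognizes this as a self-intersection-type sum for the cocycle $\tau$. The key is that the base CLT hypothesis — $A_N/\sqrt N\to\mathcal N(0,\sigma^2(A))$ for all mean-zero $A\in C^r(M)$ — controls precisely these self-intersection local times: it forces the normalized counting measure of returns to converge, and a standard argument (approximating the indicator of a ball by a smooth function and using the CLT for $f$ to identify the limit of $\frac1N\sum_{n,m}\phi(\tau_n-\tau_m)$) yields a deterministic limit, with the $x$-independence coming from the fact that the limit is expressed through $\sigma^2(\cdot)$ of the base, not through the individual orbit.

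After handling the normalized case \eqref{ZMeanFiber}, the general case reduces to it by writing $H=H_0+\bar H$ where $\bar H(x)=\int H(x,y)\,d\nu(y)$ depends only on $x$; then $\bar H_N(x)/\sqrt N$ is governed directly by the base CLT hypothesis, $H_0$ satisfies \eqref{ZMeanFiber}, and one checks the cross term vanishes in the limit (again by fiber mixing, since $H_0$ is fiberwise mean-zero while $\bar H$ is fiber-constant), so the two Gaussians add independently. I expect the main obstacle to be step (c): making rigorous the passage from "exponential mixing of all orders kills well-separated pairs" to "the self-intersection sum converges to a constant", which requires carefully splitting the pair sum by the separation scale $\ln N$, controlling the error terms uniformly in $x\in X_N$ using the return estimates from \eqref{eq:I1}, and invoking the base CLT in the form of convergence of local times rather than just of $A_N/\sqrt N$ — this last identification of the limit is the delicate point, and it is also where one must verify $\sigma^2$ is the same for all sequences $x_N\in X_N$.
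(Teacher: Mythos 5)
Your overall architecture (verify (a)--(c) of Proposition \ref{PrBG} for $\fm_N(x)$ on sets $X_N$ of measure tending to $1$) matches the paper, and (a) is handled identically. There are two issues, one minor and one substantive.

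For (b): the claim that \eqref{eq:I1} yields at most polylogarithmically many visits of $\tau_0(x),\dots,\tau_{N-1}(x)$ to any ball of radius $O(\ln N)$ is too strong and does not follow. Visits of $\tau_j$ to a ball centered at $\tau_n$ are controlled by $\|\tau_{j-n}(f^nx)\|$, and \eqref{eq:I1} gives nothing for small gaps $|j-n|$ (say $\le N^{0.21}$): the union bound over the $\sim N$ starting points $f^nx$ already consumes a factor $N$ against the $n^{-5}$ tail. What is actually obtainable --- and what the paper proves via the sets $X_{K,N}$ of \eqref{eq:zz} and a Markov inequality --- is a bound of order $N^{1/4-\epsilon}$ on the number of visits. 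Your exponent count $N^{(2-r)/2}M^{r-1}$ still closes with $M\le N^{1/4-\epsilon}$ and $r\ge 3$, so this is repairable, but the polylogarithmic claim must be dropped.

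The substantive gap is in (c). Discarding well-separated pairs via fiber exponential mixing and \eqref{ZMeanFiber} is correct (this is \eqref{FiberEM}), but you then propose to identify the limit of the remaining sum by invoking the base CLT ``in the form of convergence of self-intersection local times.'' The hypothesis $A_N/\sqrt N\Rightarrow \cN(0,\sigma^2(A))$ is a purely distributional statement and does not control local times or return counts of the cocycle $\tau$; that would require local-limit-type input, which is neither assumed nor available for the bases of Theorems \ref{ThZE-CLT} and \ref{ThHoro}. No such input is needed: setting $\sigma_{n_1,n_2}(x)=\int H(f^{n_1}x,G_{\tau_{n_1}(x)}y)H(f^{n_2}x,G_{\tau_{n_2}(x)}y)\,d\nu(y)$, the cocycle identity gives $\sigma_{n,n+k}(x)=\sigma_{0,k}(f^nx)$, so for each fixed gap $k$ the Birkhoff ergodic theorem makes $\frac1N\sum_n\sigma_{n,n+k}(x)$ converge $\mu$-a.e.\ to the constant $\int\sigma_{0,k}\,d\mu$; the truncation in $k$ is justified in $L^1$ because \eqref{FiberEM} combined with \eqref{eq:I1} yields $\int|\sigma_{0,k}|\,d\mu=O(k^{-\beta})$ with $\beta>2$. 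This is exactly how the paper arrives at \eqref{DefSigma} and at the $x$-independence of $\sigma^2$; the base CLT enters only afterwards, for the $\brH$ component in the proof of Theorem \ref{ThCLT3}, not inside the Lemma.
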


The lemma will be proven later. Now we shall show how to obtain the CLT from the
lemma.

\begin{proof}[Proof of Theorem \ref{ThCLT3}]
Split 
\begin{equation}
\label{BaseFiber}
H(x,y)=\tH(x,y)+\brH(x)\quad\text{where}\quad \brH(x)=\int H(x,y) d\nu(y).
\end{equation}
 Note that
\begin{equation}
\label{eq:zerofiber}
 \int \tH(x,y) d\nu(y)=0.
\end{equation} Hence by
Lemma \ref{LmRandMes3},
$\DS \frac{1}{\sqrt{N}} \sum_{n=0}^{N-1} \tH(F^n(x,y))$ 
is asymptotically normal
and moreover its distribution is asymptotically independent of $x.$ On the other hand
by the CLT for $f$, 
$$ \frac{1}{\sqrt{N}} \sum_{n=0}^{N-1} \brH(\pi_x F^n(x,y))= 
\frac{1}{\sqrt{N}} \sum_{n=0}^{N-1} \brH(f^n(x)) $$
is also asymptotically normal
and its distribution depends only on $x$ but not on $y.$

It follows that 
$$  \frac{1}{\sqrt{N}} \sum_{n=0}^{N-1} \tH(F^n(x,y))\quad\text{and}\quad
\frac{1}{\sqrt{N}} \sum_{n=0}^{N-1} \brH(f^n(x)) $$ 
are asymptotically independent. 
Since the sum of two independent normal random variables is normal, the
result follows.
\end{proof}
\subsection{Proof of the quenched CLT (Lemma \ref{LmRandMes3})}
To prove Lemma \ref{LmRandMes3}, 
we need to check properties (a)--(c) of Proposition \ref{PrBG}. 
{\em Property (a)} is clear since $\|\fm_N(x)\|=\sqrt{N}.$ Other properties are less obvious and will be checked 
in separate sections below.

\subsubsection{{\bf Property (b)}}
Let 
\begin{equation}\label{eq:zz} X_{K, N}=\left\{x: \Card\{n:|n|<N \text{ and }\|\tau_n(x)\|\leq K\ln N\}\geq 
 N^{ 1/4-\epsilon}\right\}.
\end{equation}
\begin{lemma}\label{LmQuarter} Suppose that for some $\epsilon>0$ and
for each $ K$, $\DS \lim_{N\to\infty} N \mu(X_{K, N})=0$. Then 
there are sets $\hX_N$ such that for all $x_N \in \hX_N$ the measures
$\fm_N(x_N)$ satisfy property (b) and $\mu(\hX_N) \to 1$.
\end{lemma}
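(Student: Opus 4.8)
The plan is to show that, outside a set of $x$'s whose measure tends to $0$, the measure $\fm_N(x)$ spreads its mass thinly enough on the logarithmic scale that the integral in property (b) of Proposition \ref{PrBG} vanishes. The key observation is that $\fm_N(x)^{r-1}\bigl(B(t,K\ln\|\fm_N(x)\|)\bigr)$, integrated against $d\fm_N(x)(t)$, counts (up to the normalization $N^{-r/2}$) the number of $r$-tuples $(n_1,\dots,n_r)$ with $0\le n_i\le N-1$ such that $\tau_{n_i}(x)$ all lie in a common ball of radius $K\ln N$. Write $c_{K,N}(x)$ for the maximal number of indices $n\in\{0,\dots,N-1\}$ with $\|\tau_n(x)-s\|\le K\ln N$ for some fixed $s$ — more precisely one should bound the count by covering $\R^d$ by a controlled number of balls of radius $K\ln N$ and taking the worst one, which only costs a polylogarithmic factor. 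Then the integral in (b) is at most (Const$\cdot\log^{?}N)\, N^{-r/2}\, N\, c_{K,N}(x)^{r-1}$, since one index is free (at most $N$ choices) and the remaining $r-1$ must fall in the ball around $\tau_{n_1}(x)$.

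First I would make this counting bound precise: on the complement of $X_{K,N}$ (with a slightly larger constant $K'$ absorbing the radius doubling needed to go from "centered at $\tau_{n_1}(x)$" to "contained in a fixed ball of the cover"), every ball of radius $K\ln N$ contains fewer than $N^{1/4-\epsilon}$ of the points $\{\tau_n(x):|n|<N\}$, hence $c_{K,N}(x)\le N^{1/4-\epsilon}$. Substituting, the integral in (b) is bounded by
$$
\Const\cdot (\log N)^{C}\cdot N^{1-r/2}\cdot N^{(1/4-\epsilon)(r-1)}
= \Const\cdot(\log N)^C\cdot N^{\,1 - r/2 + (1/4-\epsilon)(r-1)}.
$$
The exponent of $N$ equals $\tfrac34 - \tfrac{r}{4} - \epsilon(r-1)$, which for every $r\ge 3$ is $\le -\tfrac{\epsilon}{1}\cdot 2 <0$ at $r=3$ and strictly decreasing in $r$; in particular it is negative for all $r\ge 3$, so property (b) holds for every such $x$.

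Second I would deal with the exceptional set. By hypothesis $N\mu(X_{K,N})\to 0$ for each $K$; fix an increasing sequence $K_j\to\infty$ and set $\hX_N = X\setminus X_{K_{j(N)},N}$ where $j(N)\to\infty$ slowly enough that $\mu(X_{K_{j(N)},N})\to 0$ still holds (a standard diagonal choice). Then $\mu(\hX_N)\to 1$, and for any $x_N\in\hX_N$ the displayed bound above applies with $K=K_{j(N)}\to\infty$, so property (b) holds for every fixed $K$ along the sequence $\fm_N(x_N)$. The main obstacle I anticipate is purely bookkeeping: controlling the number of balls in the cover of the relevant region of $\R^d$ (which has radius $O(N)$ since $\|\tau\|$ is bounded, so $O((N/\ln N)^d)$ balls, contributing only a polynomial-in-$N$ factor that is harmless because it multiplies $N^{(1/4-\epsilon)(r-1)}$ — wait, this must be checked: the cover factor is $N^{O(d)}$, which would break the estimate) — so in fact the right move is \emph{not} to union-bound over a cover but to note that in the integral $\int \fm_N^{r-1}(B(t,K\ln\|\fm_N\|))\,d\fm_N(t)$ the center $t$ ranges only over the support of $\fm_N(x)$, i.e. over the $\le N$ actual values $\tau_n(x)$, so no cover is needed: one genuinely has $\le N$ choices for a "base" index and $\le c_{K,N}(x)^{r-1}$ for the rest, with $c_{K,N}(x)$ now the max over balls \emph{centered at points of the orbit}, which is exactly what the doubling-constant version of $X_{K,N}$ controls. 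With that fix the estimate closes cleanly.
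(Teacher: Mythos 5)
Your counting skeleton is the same as the paper's: one free index among $N$, the remaining $r-1$ indices constrained to a ball of radius $K\ln N$, and the exponent $1-\tfrac r2+(\tfrac14-\epsilon)(r-1)=\tfrac34-\tfrac r4-\epsilon(r-1)<0$ for $r\ge 3$. Your closing observation that no covering of $\R^d$ is needed, because the center $t$ in property (b) ranges only over the support of $\fm_N(x)$, is also correct and is how the paper argues. However, there is a genuine gap in how you pass from the hypothesis to the pointwise bound. The set $X_{K,N}$ controls only the number of indices $n$ with $\tau_n(x)$ in a ball of radius $K\ln N$ \emph{centered at the origin}. What you need is a bound on the number of $j<N$ with $\tau_j(x)$ in a ball centered at $\tau_n(x)$, for every $n<N$; and no enlargement of the constant ("doubling-constant version of $X_{K,N}$") reduces the latter to the former, since a cluster of orbit points can sit arbitrarily far from the origin, where membership of $x$ in the complement of $X_{K',N}$ says nothing. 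Consequently your exceptional set $\hX_N=X\setminus X_{K_{j(N)},N}$ is too large, and the claimed bound on $c_{K,N}(x)$ does not hold on it.

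The correct reduction is via the cocycle identity $\tau_j(x)-\tau_n(x)=\tau_{j-n}(f^nx)$: the number of $j<N$ with $\|\tau_j(x)-\tau_n(x)\|\le K\ln N$ is at most $\Card\{m: |m|<N,\ \|\tau_m(f^nx)\|\le K\ln N\}$ (this is why $X_{K,N}$ is defined with $|n|<N$ rather than $0\le n<N$). Hence the right exceptional set is $\hX_N(K)=\{x:\ f^nx\notin X_{K,N}\ \text{for all}\ n<N\}$, whose complement has measure at most $N\mu(X_{K,N})\to 0$. This union bound over the orbit is exactly where the factor $N$ in the hypothesis is consumed; your argument never uses that factor, which is a sign that the exceptional set was chosen incorrectly. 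With $\hX_N$ defined as above, every ball of radius $K_N\ln N$ centered at an orbit point contains fewer than $N^{1/4-\epsilon}$ orbit points, and your exponent computation then closes the proof exactly as in the paper.
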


\begin{proof}
Given $K$ let
$\DS \hX_N(K)=\{x: f^n x\not\in X_{K, N}\text{ for } n<N\}. $
By the assumption of the lemma,  there exists $K_N\to \infty$ such that  $\mu(\hX_N)\to 1$, where $\hX_N:=\hX_N(K_N)$. 
Now we have for every $x\in \hX_N$:
$$ \int \fm_N^{r-1}(x)(B(t, K\ln N))d \fm_N(x)(t)=$$$$
\frac{1}{N^{r/2}} 
\sum_{n=0}^{N-1} \Card^{r-1} \{j<N:\; \|\tau_j (x)-\tau_n (x)\|\leq 
K_N \ln N\}\leq$$
$$ \frac{1}{N^{r/2}} 
\sum_{n=0}^{N-1} \Card^{r-1} \{j<N:\; \|\tau_{j-n}(f^nx)\|\leq K_N \ln N\}\leq  N^{(1/4-\epsilon)(r-1)-\frac{r}{2}+1} \to 0.$$
Here, in the last line
we used that $x\in \hX_N$ and  that $r \geq 3$.
\end{proof}

To finish the proof of property (b) it remains to show that if $\tau$ satisfies 
\eqref{eq:I1}, then for every fixed $K$, and for $\epsilon = 0.02$,
$\DS \lim_{N\to \infty}N\mu(X_{K, N})=0$.  
First observe that for large $N$
$$
X_{K,N} \subset
\{ 
x: \cL(x,N) \geq N^{0.22}
\},
$$
where
$$
\cL(x,N)  = \Card \{ n: N^{0.21} < |n| < N, \| \tau_n(x) \| \leq K \ln N \}.
$$
Next, observe that if $\tau$ satisfies \eqref{eq:I1}, then for every 
$n$ with $|n|\geq N^{0.21}$,
we have 
$$\mu( \| \tau_n \| <K\ln N)<Cn^{-5}.$$
We conclude by the Markov inequality that
$$
\mu(X_{K, N}) \leq N^{-0.22} \mu (\cL(x,N))
= N^{-0.22}  \sum_{n: N^{0.21} < |n| < N} \mu 
(\| \tau_n \| <K\ln N) < CN^{-1.06}.
$$
Property (b) follows.


\subsubsection{{\bf Property (c)}} 
We need to select $\sigma^2$ so that (c) holds.
Note that
$$ V_N(x)=\frac{1}{N}\int S_N^2(x, y) d\nu(y)=
\frac{1}{N}\sum_{n_1, n_2=1}^N \sigma_{n_1, n_2}(x) $$
where
$$ \sigma_{n_1, n_2}(x)=\int H(f^{n_1} x, G_{\tau_{n_1}(x)} y) 
H(f^{n_2}x, G_{\tau_{n_2}(x)} y) d\nu(y). $$
Thus
$$ \mu\left(V_N(x)\right)=\frac{1}{N} \sum_{n_1, n_2=1}^N \mu( \sigma_{n_1, n_2})
=\sum_{k=-N}^{N} \frac{N-|k|}{N} \int H(x, y) H(f^k x, G_{\tau_k(x)} y)  d\mu(x) d\nu(y). $$ 
Note that due to \eqref{ZMeanFiber} and exponential mixing of $G_t$,
\begin{equation}
\label{FiberEM}
 \left|\sigma_{0, k} (x) \right|\leq C \|H\|_{C^1}^2 e^{-c|\tau_k(x)|} .
\end{equation} 
If $\tau$ satisfies \eqref{eq:I1}, then the above implies that for some
\footnote{\label{FtBeta} The proof below requires only that $\beta>1$. The condition $\beta>2$ will
only be used in Section \ref{ScVar} to characterize the systems with zero asymptotic variance.}
 $\beta>2$
\begin{equation}
\label{Eq1Cor}
\int  \left|\sigma_{n, n+k} (x) \right| d\mu(x) = 
\cO\left(k^{-\beta}\right).
\end{equation}
In particular, \eqref{Eq1Cor} implies that the following limit exists
\begin{equation}
\label{DefSigma}
 \sigma^2:=\lim_{N\to\infty} \mu\left(V_N(x)\right)
=\sum_{k=-\infty}^{\infty}  \int \sigma_{0, k}(x)  d\mu(x).  
\end{equation}
To prove property (c) with $\sigma^2$ given by \eqref{DefSigma}, 
we note that for each $\eps$ there is $L$ such that
$$ V_N(x)=\frac{1}{N} \sum_{n=0}^{N-1} \sum_{k=-L}^L \sigma_{n, n+k}(x)+\cE_L(x) $$
where the error term satisfies $\|\cE_L\|_{L^1}\leq \eps.$
So it is enough to prove that for each fixed~$L$
$$ \lim_{N\to\infty} \frac{1}{N} \sum_{n=0}^{N-1} \sum_{k=-L}^L \sigma_{n, n+k}(x)=
\sum_{k=-L}^{L} \int \sigma_{0, k}(x) d\mu(x). $$
Since $\sigma_{n, n+k}(x)=\sigma_{0, k}(f^n x)$, the result follows from the ergodic theorem.

\section{Horocycle base}
\label{ScHoro}

\subsection{Reduction to a mixing local
limit theorem}
\begin{proof}[Proof of Theorem \ref{ThHoro}]

As in Section \ref{ScStandard} it suffices to give a proof under the assumption
\eqref{ZMeanFiber}. Indeed we can split arbitrary $H$ as 
$\DS H(x,y)=\brH(x)+\tH(x,y)$ where $\tH$ satisfies \eqref{ZMeanFiber}
and 
use the fact that due to \cite{FF03} $\brH_T(x)=O(T^\alpha)$ for some $\alpha<1.$

Analogously to \eqref{eq:disc}, we define
$$
\fm_T(x)=\frac{(\ln T)^{1/4}}{T} \int_{0}^T  \delta_{\tau_t(x)} dt, \quad
A_{t,x}(y)=\frac{1}{\fm_T(x)(\{t\})} 
\int_{s\leq T: \tau_s(x)=t} H(h_s x,y)ds.
$$
As before we check properties (a)--(c) of Proposition \ref{PrBG}. Property (a) is immediate
as $\|\fm_T\|=(\ln T)^{1/4}$. 

To prove (b) and (c) we need some preliminary information. Let us use the notation 
$x = (q,v) \in X$ and say that $q$ is the configurational component
of $x$.

Let $q_0\in Q$ and arbitrary reference point
and for each $q\in Q$ let $\Gamma_q$ be a shortest geodesic from $q_0$ to $q.$ Define
$\beta(q)=\int_{\Gamma_q} \lambda$ and let
$$ \xi_T(x)=\tau_T(x)-\beta(h_T x)+\beta(x) . $$
 \eqref{Wind} shows that $\xi_T(x)$ is an integral of $\lambda$ over a curve starting and ending at $q_0$,
so by \eqref{DualBase} it is an integer. 

We need the following extension of \cite[Theorem 5.1]{DS17}.
Let $\fg_T(x)$ be the configurational component of the
geodesic of length $\ln T$ starting at $q$ with speed $-v.$ Denote
$\DS s_T(x)=\left(\int_{\fg_T(x)}\lambda\right)+\beta(x)-\beta(\brx)$, where $ \brx=\bG_{-\ln T} x$
and $\bG_t$ denotes the geodesic flow.

We say that a function is piecewise continuous if the
set of discontinuities is contained in a finite union of proper compact submanifolds (with boundary).

\begin{proposition}
\label{PrTLTHoro}
There is a zero mean Gaussian density $\fp$, so that the following are true for all $x \in X$.

(a) For each $z \in \reals$, 
$$ 
\frac{1}{T}
\mes\left(t\leq T: \frac{\xi_t-s_T(x)}{\sqrt{\ln T}}\leq z\right)
=\int_{-\infty}^z \fp(s) ds+o(1).$$

(b)
For any set $A \subset X$ whose boundary is a finite union of proper compact submanifolds (with boundary), we have
\begin{equation}
\label{eq:MLLTcaseb}
\frac{\sqrt{\ln T}}{T} \int_0^T 1_{\xi_t(x) = k} 1_{h_t(x) \in A} dt
= \mu(A) \fp\left(\frac{k-s_T(x)}{\sqrt{\ln T}}\right)+o(1),
\end{equation}
where the convergence is uniform when $\frac{k-s_T(x)}{\sqrt{\ln T}}$ 
varies over a compact set.

(c) For any $k\in\integers$ and for any set A as in part (b),
$$  \mes(\{t\leq T: \xi_t(x)=k, x \in A\})
\leq \frac{C T}{\sqrt{\ln T}} \mu(A).$$
\end{proposition}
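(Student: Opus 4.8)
The plan is to recognize that this is an equidistribution / local limit statement for the windings $\xi_t(x)$ of horocycle orbits and to derive it from the equidistribution of long horocycle arcs, following the strategy of \cite{DS17} but with the refinements needed to handle the shift $s_T(x)$ and the indicator $1_{h_t(x)\in A}$. First I would set up the renormalization: for a horocycle of length $T$, subdivide $[0,T]$ into roughly $T/e^{\ln T}=T/T=$ (more precisely, into pieces that after applying the geodesic flow $\bG_{-\ln T}$ become unit-size horocycle arcs). The key algebraic identity is the cocycle relation for $\beta$ together with the commutation $h_t\bG_{-s}=\bG_{-s}h_{e^{-s}t}$, which lets one write $\xi_t(x)$, for $t$ ranging over $[0,T]$, as a sum of $\lfloor e^{\ln T}\rfloor = T$ i.i.d.-like increments obtained from the winding of unit horocycle arcs based at the equidistributing point $\bG_{-\ln T}x$, plus the boundary term $s_T(x)$ coming from the geodesic segment $\fg_T(x)$ of length $\ln T$. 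This reduces (a) to a central limit theorem for these increments (the number of summands is $T$ while the normalization is $\sqrt{\ln T}$, so the fluctuations come from the long thin structure — this is exactly the mechanism that produces the $\sqrt{\ln T}$ in Theorem \ref{ThHoro}) and (b), (c) to the corresponding local limit theorem, with $\mu(A)$ appearing because long horocycle arcs equidistribute in $X$.

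The technical engine is effective equidistribution of horocycle orbits on $X=SQ$ (Sarnak / Flaminio–Forni–Burger type rates), which gives, for a Hölder or piecewise-continuous test function $\Phi$ on $X$,
\begin{equation*}
\frac{1}{L}\int_0^L \Phi(h_u\, \bG_{-\ln T}x)\,du = \mu(\Phi)+O(L^{-\delta})
\end{equation*}
uniformly in $x$, for $L$ up to a fixed power of $e^{\ln T}$. Applying this with $\Phi$ ranging over (i) characters detecting the value of the winding modulo the homology class picked out by $\lambda$, and (ii) products of such characters with $1_A$, one obtains that the empirical distribution of the pair (rescaled winding increment, location in $X$) along the unit arcs converges to (Gaussian density)$\times\mu$. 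Summing over the $T$ arcs and invoking a local CLT for sums of weakly dependent bounded increments (the increments are functions on $X$ evaluated along the horocycle, hence enjoy the mixing needed; the lattice structure of $\xi_t\in\integers$ makes the local, not just integral, version available) yields \eqref{eq:MLLTcaseb}; part (a) is the integrated (Berry–Esseen-type) consequence and part (c) is the crude upper bound extracted from the same estimate without insisting on the precise Gaussian value, using only that the number of $t\le T$ with $\xi_t(x)=k$ in $A$ is $O(T/\sqrt{\ln T})$ uniformly.

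The main obstacle I anticipate is establishing the \emph{uniformity in $x$} together with the \emph{local} (rather than merely integral) form of the limit theorem for $\xi_t$: one must track the winding value in $\integers$ exactly, which requires quantitative control of the fractional parts / the distribution of the increments at scale $1$, and the boundary term $s_T(x)$ must be shown to be a genuine deterministic shift (continuous in $x$ off a codimension-one set) rather than a fluctuating quantity — this is where the piecewise-continuity hypothesis on $A$ and the compact-set uniformity in $\frac{k-s_T(x)}{\sqrt{\ln T}}$ are used. A secondary difficulty is that $\tau$, hence the increments, are only defined via the closed form $\lambda$ with a mild singularity structure inherited from the reference geodesics $\Gamma_q$; one handles this by choosing the reference point $q_0$ and the representative of $\lambda$ so that discontinuities lie on proper submanifolds, matching the hypothesis on $A$, and absorbing the resulting negligible sets into the $o(1)$. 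Once these points are in place, parts (a)–(c) follow by the standard passage from the equidistribution estimate to the (local) CLT.
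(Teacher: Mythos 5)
Your write-up assembles several of the right objects (the renormalizing geodesic $\bG_{-\ln T}$, the boundary term $s_T(x)$, the unit horocycle arc based at $\brx=\bG_{-\ln T}x$), but the probabilistic core of your argument is wrong, and this is a genuine gap rather than a presentational one. You propose to write $\xi_t(x)$ as a sum of $T$ ``i.i.d.-like'' winding increments of unit horocycle sub-arcs and then to invoke ``a local CLT for sums of weakly dependent bounded increments.'' A sum of $T$ genuinely weakly dependent bounded increments has fluctuations of order $\sqrt{T}$; here the fluctuations are of order $\sqrt{\ln T}$, which forces the increments along the horocycle to be massively correlated (the ergodic integral is nearly a coboundary), and precisely in this regime no off-the-shelf CLT or LLT for weakly dependent sequences applies. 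Your parenthetical ``the fluctuations come from the long thin structure'' names the phenomenon but supplies no mechanism, and the input you cite (effective equidistribution of horocycle arcs against fixed test functions) controls only the means of your increments, not the joint distribution needed for a local limit theorem at scale $1$ with variance $\asymp\ln T$.

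The argument the paper actually runs (following \cite{DS17}) avoids this entirely: applying Stokes' theorem to the contractible quadrilateral bounded by the horocycle arcs $\fh(x,t)$ and $\fh(\brx,t/T)$ and the two connecting geodesic segments of length $\ln T$ yields the exact identity $\xi_t(x)=\int_0^{\ln T}\tau^*(\bG_r h_{t/T}\brx)\,dr+\beta(h_{t/T}\brx)-\beta(\brx)$. Thus $\xi_t(x)$ is a \emph{single} geodesic ergodic integral of length $\ln T$, and as $t$ ranges uniformly over $[0,T]$ the initial point $h_{t/T}\brx$ ranges uniformly over a unit (unstable) horocycle arc. The randomness sits in the initial condition, the effective time is $\ln T$, and the $\sqrt{\ln T}$ normalization is then the classical one for the hyperbolic geodesic flow; part (b) is the mixing local limit theorem for the geodesic flow realized as a suspension over a Markov section (aperiodicity being checked via \cite[Lemma A.3]{DN16}), and part (c) is a separate anticoncentration inequality for the symbolic system lifted to the flow --- it does not follow from (b), whose uniformity holds only over a compact window of $\frac{k-s_T(x)}{\sqrt{\ln T}}$. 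To repair your proof you would need to replace the ``sum of $T$ horocycle increments'' reduction by this geodesic-time reduction, or else independently establish a Gaussian local limit with variance $\asymp\ln T$ for such strongly correlated sums, which is essentially the statement to be proved.
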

The proof of the proposition will be given in \S \ref{SSTLTLoc}.

Now we are ready to finish the proof of Theorem \ref{ThHoro}.
Property (b) of Proposition \ref{PrBG} now reduces to showing that for each $K$ and each $r\geq 3$
$$ \int \fm_T^{r-1} (B(t, K \ln \ln T)) d\fm_T(t) \to 0. $$
Observe that by Proposition \ref{PrTLTHoro}(a),  for each unit segment $I\subset \reals$, we have
$\DS \fm_T(I)\leq C/\ln^{1/4} T$ and hence $\fm_T(B(t, K \ln \ln T))\leq \frac{C(K) \ln \ln T}{\ln^{1/4} T}.$
Thus
$$  \int \fm_T(B(t, K \ln \ln T)) d\fm_T(t) \leq \frac{C^{r-1} (K) (\ln \ln T)^{r-1} }{\ln^{(r-1)/4} T} \|\fm_T\|_\infty 
\leq \frac{C^{r-1}(K) (\ln \ln T)^{r-1}}{\ln^{\frac{r-2}{4}} T} \to 0$$
since $r>2.$

To establish property (c) we need to compute $\DS \lim_{T\to\infty}  \frac{\sqrt{\ln T}\; \zeta(H_T^2)}{T^2}.$
We have
$$ \zeta(H_T^2)=\sum_{k_1, k_2\in \Z} \int \cI_{k_1, k_2}(x) d\mu(x) $$
where
$$ \cI_{k_1, k_2}(x)=\int_0^T \int_0^T 1_{\xi_{t_1}=k_1} 1_{\xi_{t_2}=k_2} \;
\rho(h_{t_1}x, h_{t_2} x, k_2-k_1+\beta(q_{t_2})-\beta(q_{t_1})) dt_1 dt_2,$$
$q_t$ is the configurational component of
$h_t(x)$ and
$$ \rho(x', x'', s)=\int H(x', y) H(x'', G_s y) d\nu(y). $$

Fix a large $R$ and partition the sum into three three parts. Let $I$ be the terms where
\begin{equation}
\label{HoroRegionI} |k_2-k_1|\leq R,\quad |k_1-s_T(x)|\leq R \sqrt{\ln T};
\end{equation}
$\RmII$ be the terms where $|k_2-k_1|>R$; and $\RmIII$ be the terms where
$$ |k_2-k_1|\leq R \quad \text{but}\quad |k_1-s_T(x)|>R \sqrt{\ln T}.$$
By our assumption, $\rho$ is exponentially small in $t$, uniformly in $x', x''$. Hence using 
the estimate 
$$ \mes\left(t_2\leq T: \xi_{t_2}(x)=k_2\right)\leq \frac{C T}{\sqrt{\ln T}} $$
valid by Proposition \ref{PrTLTHoro}(c) 
and summing over $k_2$ we obtain
$$ \left|\RmII\right|\leq 
\frac{C' T}{\sqrt{\ln T}} 
\sum_{k_1} \mes\left(t_1\leq T: \xi_{t_1}(x)=k_1\right) e^{-cR}\leq \frac{C'' T^2}{\sqrt{\ln T}} e^{-cR},
$$
$$ \left|\RmIII\right|\leq 
\frac{C' RT}{\sqrt{\ln T}} 
\sum_{|k_1-s_T(x)|>R \sqrt{\ln T}} 
\mes\left(t_1 \leq T: \xi_{t_1}(x)=k_1\right) $$
$$=
\frac{C' RT}{\sqrt{\ln T}} 
\mes\left(t_1 \leq T: |\xi_{t_1}(x)-s_T(x)|>R\sqrt{\ln T} \right). 
$$
Hence given $\delta$ we can take $R$ so large that both $\RmII$ and $\RmIII$ are smaller than
$\DS \frac{\delta T^2}{\sqrt{\ln T}}$ (for $\RmIII$ we use Proposition \ref{PrTLTHoro}(a)). 

Thus the main contribution comes from $I.$ To analyze the main term choose a small $\eps$ and 
divide $X$ into sets as in part (b) with diameter $< \eps$.
Let $x_l=(q_l, v_l)$ be the center of $\cC_l.$ Next we write 
$\cI_{k_1, k_2}=\sum_{l_1, l_2} \cI_{k_1, k_2, l_1, l_2}$ where
$$\cI_{k_1, k_2, l_1, l_2}=\int_0^T \int_0^T 
1_{\xi_{t_1}(x)=k_1} 1_{C_{l_1}} (h_{t_1} x)
1_{\xi_{t_2}(x)=k_2} 1_{C_{l_2}} (h_{t_2} x)
\rho(k_2-k_1+\beta(q_{t_2})-\beta(q_{t_1})) dt_1 dt_2. $$
Using uniform continuity of $\rho$, we obtain
\begin{equation}
\label{Ikkll}
\cI_{k_1, k_2, l_1, l_2}= \delta_{k_1, k_2, l_1, l_2}+\mes(\{t_1: \xi_{t_1}(x)=k_1, \cC_{l_1}\ni h_{t_1} x\})\cdot
\end{equation}
$$ 
\cdot\mes(\{t_2: \xi_{t_2}(x)=k_2, \cC_{l_2}\ni h_{t_2} x\}) 
\cdot\rho(x_{l_1}, x_{l_2}, k_2-k_1+\beta(q_{l_2})-\beta(q_{l_1}) )
$$
where the error term $\delta_{k_1, k_2, l_1, l_2}$ is smaller than
$\DS \frac{T^2 }{R^2 \ln T}\; \eps^6$ (here, the factor $\eps^6$ appears because by
Proposition \ref{PrTLTHoro}(b)
$$ \mes\left(t_j: \xi_{t_j}(x)=k_j, \; h_{t_j}(x)\in \cC_{l_j}\right)\leq C \frac{T}{\sqrt{\ln T}} \mu(C_{l_j})
\leq \brC \frac{T}{\sqrt{\ln T}} \eps^3 ). $$
Applying Proposition \ref{PrTLTHoro}(c) to the main term in \eqref{Ikkll} we get
that 
$$ \frac{\ln T}{T^2}\;  \cI_{k_1, k_2, l_1, l_2}\approx 
\mu(\cC_{l_1}) \mu(\cC_{l_2}) \rho(k_2-k_1+\beta(q_{l_2})-\beta(q_{l_1}))
\fp\left(\frac{k_1-s_T(x)}{\sqrt{\ln T}}\right)\fp\left(\frac{k_2-s_T(x)}{\sqrt{\ln T}}\right).
$$
Performing the sum of $l_1$ and $l_2$ we obtain
$\DS \frac{\ln T}{T^2}\;  \cI_{k_1, k_2}=$
$$\iint \rho(x', x'', k_2 - k_1 + \beta(q'')-\beta(q')) d\mu(x') d\mu(x'') 
\fp\left(\frac{k_1-s_T(x)}{\sqrt{\ln T}}\right)\fp\left(\frac{k_2-s_T(x)}{\sqrt{\ln T}}\right)+o_{\delta\to 0} (1) $$
where $x'=(q', v')$, $x''=(q'', v'').$
Performing the sum over $k_1, k_2$ as in \eqref{HoroRegionI} we obtain
$$ \frac{\sqrt{\ln T}\; \zeta(H_T^2)}{T^2}=
\left(\int_{-R}^R \fp^2 (z) dz\right)
 \sum_{|k|\leq R} \iint \rho(x', x'', \beta(q'')-\beta(q')+k) d\mu(x') d\mu(x'')+o_{R\to \infty}(1).$$
Letting $R\to\infty$ and using that for Gaussian densities
$\DS  \int_{-\infty}^{\infty} \fp^2 (z) dz=\frac{\fp(0)}{\sqrt{2}}$
we get
\begin{equation}
\label{HoroVar}
 \lim_{T\to\infty}  \frac{\sqrt{\ln T}\; \zeta(H_T^2)}{T^2}=\sigma^2:=
\frac{\fp(0)}{\sqrt{2}} \sum_{k\in \integers} \iint \rho(x', x'', \beta(q'')-\beta(q')+k) d\mu(x') d\mu(x'').
\end{equation}
This completes the proof of property (c) and establishes Theorem \ref{ThHoro}.
\end{proof}

\subsection{Mixing local limit theorem for geodesic flow}
\label{SSTLTLoc}
\begin{proof}
[Proof of Proposition \ref{PrTLTHoro}]
Part (a) is \cite[Theorem 5.1]{DS17} but we review the proof as it will be needed
for parts (b) and (c). The key idea is to rewrite the temporal limit 
theorem for the horocycle flow as a central limit theorem for the geodesic flow.
To be more precise, let $\fh (x,t)$ and $\fg (x,t)$ denote the configurational component
of the horocycle $\cH(x,t)$ and the 
geodesic of length $t$ starting from $x$. Consider the quadrilateral $\Pi(x,t, T)$ formed by 
$$ \fh(x, t), \; -\fg(h_t(x), T), \; -\fh(\bG_{-\ln T}  x, t/T),\;  \fg(x, T) $$
where $-$ indicates that the curve is run in the opposite direction. 
This curve $\Pi(x, t, T)$ is contractible as can be seen by shrinking $t$ and $T$ to zero.
Therefore the Stokes Theorem gives
\begin{equation}
\label{eq:Stokes}
\xi_t(x)=\left(\int_0^{\ln T} \tau^*(\bG_r h_u \brx) dr\right)+\beta(h_u \brx)-\beta(\brx) 
\end{equation}
where $\brx=\bG_{-\ln T} x,$ $u=t/T$ and $\tau^*(q,v)=\lambda(v). $
Note that if $t$ is uniformly distributed on $[0, T]$ then $u=t/T$ is uniformly distributed 
on $[0,1]$.
Since the curvature is constant, it follows that 
$h_u \brx$ is uniformly distributed on $\cH (\brx, 1)$.
Now part (a) follows from the central limit theorem for the geodesic flow $\bG$.

To prove part (b), write 
$$
\hat \tau_S(y) = \int_0^S \tau^*(\bG_r y) dr + \beta(y) - \beta(\bG_S y).
$$
Then by \eqref{eq:Stokes}, we have
$$
\frac{\sqrt{\ln T}}{T} \int_0^T 1_{\xi_t(x) = k} 1_{h_t(x) \in A} dt = 
\sqrt{\ln T} \int_0^1 1_{\hat \tau_{\ln T} (h_u(\bar x)) = k} 
1_{\bG_{\ln T}(h_u(\bar x)) \in A} du
$$
\begin{equation}
\label{eq:horgeo}
=  \sqrt{\ln T} \int_0^1 1_{\hat \tau_{\ln T} (\tilde x) = k} 
1_{\bG_{\ln T}(\tilde x) \in A} d m_{\cH(\brx,1)} (\tilde x).
\end{equation}
where $m_{\cH}$ is the arc-length parametrization of $\cH$.
Let us represent the geodesic flow $\bG$ as a suspension over a 
Poincar\'e section $M$
such that $\cT : M \to M$, the first return map to $M$ is Markov (\cite{B73})
and let $\tau_0$ be the first return time. Now we can apply 
\cite[Theorem 3.1(B)]{DN-Flows} to conclude that \eqref{eq:horgeo} 
is asymptotic the RHS of \eqref{eq:MLLTcaseb}. Although
that theorem is formulated for measures absolutely continuous w.r.t $\mu$
but the proof is the same for the measure $m_{\cH(\brx,1)}$ as well.
Note that all assumptions
of that theorem are immediate except for the following: there is no proper subgroup
of $\integers \times \reals$ that would support a function in the cohomology class
of $(\int_0^{\tau_0} \tau^*(\bG_r (.))du,\tau_0(.))$ (with respect to the map $\cT$).
However, this statement follows from
\cite[Lemma A.3]{DN16}. Thus we have established part (b).

Note that the approach of \cite{DN-Flows} also allows to lift the anticoncentration
inequality from the map $\cT$ to the flow $\bG$. Since $\cT$ is a subshift 
of finite type, the anticoncantration inequality holds (see \cite[Lemma A.4]{DDKN}).
Thus
we obtain the anticonentration inequality for $\bG$, which is part (c)
of the proposition.
\end{proof}

\section{Variance} 
\label{ScVar}
In order to complete the proofs of Theorems \ref{ThZE-CLT} and \ref{ThWeakMix} we need to
show that the variances for the examples from \S \ref{SSEx} are not identically zero.
This will be done in \S \ref{SSVNZ} while in \S \ref{SSZVHom} we will discuss a characterization 
of vanishing variance for some of our systems.

\subsection{Observables with non-zero asymptotic variance}
\label{SSVNZ}
Here we show that for the systems in Theorem  \ref{ThHoro}
and Corollary \ref{CrUE-CLT},
there exist observables with non-zero asymptotic variance.

 One simple observation is that if the base system satisfies the classical CLT,  then we can take an observable 
which depends only on $X$ and, by Definition \ref{DefCLT}, the asymptotic
 variance $\sigma^2(\cdot)$ is typically non zero.

In the setting of Corollary \ref{CrUE-CLT}, \eqref{DefSigma} shows that,
for functions satisfying \eqref{ZMeanFiber}
 the asymptotic variance is
given by
\begin{equation}
\label{SkewGK}
\sigma^2=\sum_{k=-\infty}^\infty \int\int H(x, y) H(f^k x, G_{\tau_k(x)} y) d\nu(y) d\mu(x). 
\end{equation}
By ergodicity of $f$, for each $p$ the set of $p$ periodic points has measure 0. Thus for
each $p$ and for almost every $x_0$, there is some $\delta >0$ such that $f^j B(x_0, \delta)\cap B(x_0,\delta)=\emptyset$
for $0<|j|\leq p.$ Let us fix $x_0$ so that
$\varkappa(x_0)$ is positive and finite, 
where $\varkappa$ is the density of $\mu$ with respect to the volume.
Let $\phi$ be a non negative function supported on the unit interval. 
Set $\DS H(x,y)=\phi\left(\frac{d(x, x_0)}{\delta}\right) D(y)$ 
where $D$ is a smooth observable on $Y.$
Then the term in \eqref{SkewGK} corresponding to $k=0$ equals to
$$ \delta^{a} \varkappa(x_0) \int_{\reals^a} \phi^2 (d(\bx, 0)) d\bx \; [\nu(D)]^2 (1+o_{\delta\to 0} (1))$$
where $a=\dim(X)$.
The terms with $0<|k|\leq p$ are equal to zero since for such $k$, the function 
$\DS \phi\left(\frac{d(x, x_0)}{\delta}\right) \phi\left(\frac{d(f^k x, x_0)}{\delta}\right)$ 
is identically equal to $0.$ For $|k|>p$, we can integrate with
respect to $y$ and get that the $k$-th term in \eqref{SkewGK} is 
$O(\delta^a \theta^{|k|})$ 
with some $\theta < 1$ by the exponential mixing
of $G$.
Summing over $k$ we see that the non-zero $k$'s contribute
$O(\delta^a \theta^p)$. Therefore for 
$p$ sufficiently large and $\delta$ sufficiently small,
the RHS of \eqref{SkewGK} is positive.

A similar argument shows that the variance defined in \eqref{HoroVar} is not identically zero.
Again we fix a small $\delta$ and let
$H(x,y)=\phi\left(\frac{d(q, q_0)}{\delta}\right) D(y) $ where $D$ is as above.
Then for small $\delta$ if $q', q''$ are in the support of $\phi\left(\frac{d(\cdot, q_0)}{\delta}\right)$
then 
$$\rho(x', x'', k+\beta(q')-\beta(q''))\approx \phi\left(\frac{d(q', q_0)}{\delta}\right)
\phi\left(\frac{d(q, q_0)}{\delta}\right) \int D(y) D(G_k y) d\nu(y). $$
It follows that 
$$ \sigma^2\approx 
\delta^4
\frac{\fp(0)}{\sqrt{2}} \left(\int_{\reals^2}   \phi (d(\bx, 0)) d\bx\right)^2
 \bsigma^2(D) \text{ where } \bsigma^2(D)=\sum_{k=-\infty}^\infty \int D(y) D(G_k y) d\nu(y). 
$$
It remains to observe that $\bsigma^2(D)$ is non-zero for typical $D,$ (as follows, for 
example, from the discussion in \S \ref{SSZVHom}).

\subsection{Zero Variance and homology}
\label{SSZVHom}
Here we present more information about functions with vanishing asymptotic variance. 
We  recall two useful results. We formulate the results for discrete time systems,
but similar results hold for flows.

\begin{proposition}[{{\em $L_2$--Gotshalk-Hedlund Theorem}}]
\label{PrGHL2}
Let $\cF$ be an automorphism of a space $\cM$ preserving a 
measure $\fm.$ Let $\cA:\cM\to\reals$ be a zero mean observable such that
$\DS \left\|\sum_{n=0}^{N-1} \cA\circ \cF^n\right\|_{L^2}$ is bounded. Then
there exists an $L^2$ observable $\cB$ such that
\begin{equation}
\label{CoB}
\cA=\cB\circ \cF-\cB.
\end{equation}
\end{proposition}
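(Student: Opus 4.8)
The plan is to reformulate the statement inside the Hilbert space $L^2(\cM,\fm)$ and to exploit the boundedness hypothesis to produce $\cB$ as a weak limit of Ces\`aro averages of the partial sums. Let $U$ denote the Koopman operator $U\phi=\phi\circ\cF$; since $\cF$ is an automorphism, $U$ is unitary on $L^2(\cM,\fm)$. Write $S_N=\sum_{n=0}^{N-1}U^n\cA$, so the hypothesis reads $\sup_N\|S_N\|_{L^2}=:C<\infty$ (which in particular forces $\fm(\cA)=0$, since $\fm(S_N)=N\fm(\cA)$). Solving \eqref{CoB} then amounts to exhibiting $\cB\in L^2$ with $(U-I)\cB=-\cA$, i.e. placing $\cA$ in the range, not merely the closed range, of $U-I$.

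First I would record the telescoping identities $S_{N+1}-S_N=U^N\cA$ and hence $(U-I)S_N=U^N\cA-\cA$. Set $T_M=\tfrac1M\sum_{N=1}^M S_N$; then $\|T_M\|_{L^2}\le C$ for every $M$, and a direct computation gives
$$(U-I)T_M=\frac1M\sum_{N=1}^M\bigl(U^N\cA-\cA\bigr)=\frac{S_{M+1}-\cA}{M}-\cA .$$
Since $\|S_{M+1}\|_{L^2}\le C$, the first term tends to $0$ in $L^2$, so $(U-I)T_M\to-\cA$ strongly as $M\to\infty$.

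Next I would extract a weak limit: the sequence $(T_M)$ is bounded in the Hilbert space $L^2(\cM,\fm)$, hence has a subsequence $T_{M_k}$ converging weakly to some $\cB\in L^2$. Because $U-I$ is a bounded operator, it is weak--weak continuous, so $(U-I)T_{M_k}\rightharpoonup(U-I)\cB$; on the other hand $(U-I)T_{M_k}\to-\cA$ strongly, hence weakly. Comparing the two limits gives $(U-I)\cB=-\cA$, and replacing $\cB$ by $-\cB$ yields $\cA=\cB\circ\cF-\cB$ with $\cB\in L^2$, which is exactly \eqref{CoB}.

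There is no genuine analytic obstacle here; the argument is entirely soft. The single point that must be handled with care is that the hypothesis supplies a \emph{uniform} bound on $\|S_N\|_{L^2}$, not merely the existence of the sums: it is precisely this bound that keeps the Ces\`aro averages $T_M$ norm-bounded and forces $M^{-1}S_{M+1}\to0$. I would also note in passing that the conclusion can alternatively be read off spectrally — boundedness of $\|S_N\|_{L^2}$ controls $\int|e^{i\theta}-1|^{-2}\,d\sigma_{\cA}(\theta)$, where $\sigma_\cA$ is the spectral measure of $\cA$ with respect to $U$, which is the standard criterion for $\cA\in\mathrm{Range}(U-I)$ — but the Ces\`aro-average argument above has the advantage of not invoking the spectral theorem.
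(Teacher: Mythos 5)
Your argument is correct and complete. Note that the paper itself does not prove Proposition \ref{PrGHL2}: it is merely \emph{recalled} as a classical result (the $L^2$ analogue of the Gottschalk--Hedlund theorem), so there is no in-text proof to compare against. Your route --- telescoping $(U-I)S_N=U^N\cA-\cA$, averaging to get $(U-I)T_M\to-\cA$ strongly, extracting a weak limit $\cB$ of the norm-bounded Ces\`aro averages $T_M$, and using weak--weak continuity of $U-I$ --- is exactly the standard soft proof of this fact, and every step checks out (in particular the identity $\sum_{N=1}^{M}U^N\cA=S_{M+1}-\cA$ and the sign bookkeeping at the end). The only cosmetic caveat is that your parenthetical remark that boundedness of $\|S_N\|_{L^2}$ already forces $\fm(\cA)=0$ implicitly uses finiteness of $\fm$ (via Cauchy--Schwarz); since zero mean is a hypothesis anyway, nothing in the proof depends on this aside.
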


The next result helps to verify the conditions of the above proposition.
Let $\brho_n=\int \cA(\bx) \cA(\cF^n \bx) d\fm(x).$

\begin{proposition}
\label{PrGK-Cob}
Suppose that 
\begin{equation}
\label{C1SD}
\sum_{n=0}^\infty n |\brho_n|<\infty.
\end{equation}
Then
$\left\|\sum_{n=0}^{N-1} \cA\circ \cF^n\right\|_{L^2}$ is bounded iff 
$$\Sigma^2(\cA):=\sum_{n=-\infty}^\infty \brho_n=0. $$
\end{proposition}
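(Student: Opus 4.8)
```latex
\textbf{Proof proposal for Proposition \ref{PrGK-Cob}.}
The plan is to compute $\left\|\sum_{n=0}^{N-1}\cA\circ\cF^n\right\|_{L^2}^2$ directly and show that, under the summability hypothesis \eqref{C1SD}, it is bounded in $N$ precisely when the series $\sum_n\brho_n$ vanishes. The computation is a standard second-moment expansion, and the work is in controlling the Ces\`aro-type remainder.

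First I would expand the square: since $\cF$ preserves $\fm$, for $|i-j|=k$ the cross term $\int (\cA\circ\cF^i)(\cA\circ\cF^j)\,d\fm$ equals $\brho_{k}$ (using $\brho_{-k}=\brho_k$, which follows from measure-preservation), so
\begin{equation}
\label{eq:expand}
\left\|\sum_{n=0}^{N-1}\cA\circ\cF^n\right\|_{L^2}^2
=\sum_{i,j=0}^{N-1}\brho_{|i-j|}
= N\brho_0+2\sum_{k=1}^{N-1}(N-k)\brho_k
= N\sum_{|k|<N}\brho_k - 2\sum_{k=1}^{N-1}k\,\brho_k.
\end{equation}
Now observe that \eqref{C1SD} guarantees absolute convergence of both $\sum_k\brho_k$ and $\sum_k k\brho_k$; in particular the second sum on the right of \eqref{eq:expand} converges to a finite constant $c:=2\sum_{k=1}^\infty k\brho_k$ as $N\to\infty$, and $\sum_{|k|<N}\brho_k\to\Sigma^2(\cA)$. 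Hence
$$
\left\|\sum_{n=0}^{N-1}\cA\circ\cF^n\right\|_{L^2}^2 = N\,\Sigma^2(\cA) - c + o(1)\quad\text{as }N\to\infty,
$$
where the $o(1)$ comes from the tail $N\sum_{|k|\ge N}\brho_k$, which tends to $0$ because $\sum_k k|\brho_k|<\infty$ forces $N\sum_{|k|\ge N}|\brho_k|\le\sum_{|k|\ge N}|k||\brho_k|\to 0$. This identity makes the equivalence transparent: if $\Sigma^2(\cA)\neq 0$ the right-hand side grows linearly in $N$ and the $L^2$ norms are unbounded; if $\Sigma^2(\cA)=0$ the right-hand side converges to $-c\ge 0$, so the $L^2$ norms are bounded (and in fact convergent).

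The only genuinely delicate point is the control of the tail term $N\sum_{|k|\ge N}\brho_k$, and this is exactly where the stronger hypothesis \eqref{C1SD} (rather than mere summability of $\brho_k$) is needed; with only $\sum|\brho_k|<\infty$ one could not rule out a slowly decaying oscillatory correction that still leaves the partial sums unbounded. Everything else—symmetry of $\brho$, the algebraic rearrangement in \eqref{eq:expand}, nonnegativity of the limiting constant—is routine. Note also that combining this with Proposition \ref{PrGHL2} immediately yields the coboundary representation \eqref{CoB} when $\Sigma^2(\cA)=0$, which is the use we make of it.
```
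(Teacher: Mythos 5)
Your proof is correct and follows essentially the same route as the paper's: expand the square using stationarity and $\brho_{-k}=\brho_k$, rewrite the sum as $N\Sigma^2(\cA)$ plus remainders, and use \eqref{C1SD} to bound the Ces\`aro correction $\sum_k k\brho_k$ and the tail $N\sum_{|k|\ge N}\brho_k$. The only difference is cosmetic (you track the exact limiting constant $-c$ rather than just an $O(1)$ bound), so nothing further is needed.
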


\begin{proof}
The result follows because
$$ \left\|\sum_{n=0}^{N-1} \cA\circ \cF^n\right\|_{L^2}^2=\sum_{n=-N}^N (N-|n|) \brho_n=
N\Sigma^2-\sum_{n=-N}^N n \brho_n-\sum_{|n|\geq N} N \brho_n $$
and both sums in the last expression are less than
$\DS \sum_{n=-\infty}^\infty |n\brho_n |.$
\end{proof}

Now we describe  application of Propositions \ref{PrGHL2} and \ref{PrGK-Cob}.

(a) {\em Systems from Corollary \ref{CrUE-CLT}}.
Notice that \eqref{Eq1Cor} (and $\beta>2$) implies that \eqref{C1SD} holds for observables $H(x,y)$ satisfying
\eqref{ZMeanFiber}. Splitting a general $H$ as in 
\eqref{BaseFiber} and applying Propositions \ref{PrGHL2} and \ref{PrGK-Cob} to $\tH,$
we conclude that the asymptotic variance vanishes iff $\tH$ is an $L^2$ coboundary,
that is, iff $H$ is a {\em relative coboundary} in the sense that $H$ can be decomposed as
$\DS  H(x,y)=I\circ F-I+\brH$
where $I, \brH$ are in $L^2$ and $\brH$ does not  depend on $y.$


(b) {\em Systems with Anosov base.} Assume that a base is an Anosov diffeo. Then 
Theorems 4.1 and 4.7 in \cite{DDKN} tell us that \eqref{C1SD} holds if either the mean of
$\tau$ is non-zero or if $d\geq 5$, so the asymptotic variance vanishes iff $H$ is an $L^2$
coboundary. If we suppose that $\|\tau\|_{C^1}$ is small, and that the drift $\mu(\tau)$ is not
on the boundary of the Weyl chamber, that is $\chi(\mu(\tau))\neq 0$ for any root in the Lie algebra,
then the system will be partially hyperbolic and, for generic $\tau$, it will be accessible
(cf. \cite{Br75, BW99}). Then the results of \cite{Wil13} will imply that $H$ is a continuous coboundary.
 Therefore the integral of $H$ with respect to any $F$ invariant measure is zero, implying that the set of coboundaries is a subspace of infinite codimension.

\part{Higher rank Kalikow systems}
\label{PtKalikow}

\section{Homogeneous abelian actions} \label{sec:cartan}
Let $H$ be a connected nilpotent or semi-simple Lie group, $\Gamma$ be a co-compact lattice
and $Y=H/\Gamma$. Let $\Ab=\mathbb{Z}^d$ or $\mathbb{R}^d$.   If $H$ is nilpotent, i.e. $Y$ is a nilmanifold, then we consider the action $G$  on $Y$ by affine maps. If $H$ is semisimple, 
and $\Gamma$ is a co-compact irreducible lattice, then $G$ acts on $Y$ by left translations.



Let $(G,\Ab, Y,\nu)$ be an abelian action on $Y$ as above.
 Let $d_H$ denote the right-invariant metric on $H$ and $d_Y$ the induced metric on $Y$.
For ${\bf t} \in \Ab$, the corresponding diffeomorphism $G_\bt$ will be denoted by ${\bf t}$ for simplicity. Moreover, ${\bf t}_\ast:TY\to TY$ denotes the differential of ${\bf t}$.

By classical Lyapunov theory, there are finitely many linear functionals $\chi_i:\Ab \to \R$ and a splitting 
$\DS TY:=\bigoplus_{i=1}^mE^{\chi_i}$  which is invariant under $\Ab$, such that for any $\epsilon>0$, 
there exists a Riemannian metric $\|\cdot\|_{TY}$, such that for all ${\bf t}\in \Ab$, we have
  \begin{equation}\label{eq:exp}
 e^{\chi_i({\bf t})-\epsilon\|{\bf t}\|}\|v\|_{TY}\le \|{\bf t}_{\ast}(v)\|_{TY}\le e^{\chi_i({\bf t})+\epsilon\|{\bf t}\|}\|v\|_{TY}, \text{ for every } v\in E_{\chi_i}.
 \end{equation} If $\Ab=\Z^d$, we extend the functionals $\{\chi_i\}$ to $\R^d$. It follows that there exist transverse, $G$-invariant foliations $W^{i}=W_{\chi_i}$ such that for every $y\in M$, 
 $W^i(y)\subset Y$ is a smooth immersed submanifold and  
\begin{equation}\label{eq:foliA}
 TY:=\bigoplus_{i}TW^i.
 \end{equation}
The map $y\mapsto W^i(y)$ is smooth. In the algebraic case that we are considering the spaces $E_{\chi_i}$ and $W^i$ are also algebraic: let $\mathfrak h$ be the Lie algebra of $H$, then the tangent space $TM$ at $e\Gamma$ is identified with $\mathfrak h$, and there exist subalgebras $\mathfrak h_i$ of $\mathfrak h$ such that $\DS \mathfrak h=\bigoplus_i\mathfrak h_i$, and $\mathfrak h_i=E_{\chi_i}$ under the identification. Accordingly, there exist subgroups $H_i=\exp(\mathfrak h_i)$ such that $W^i(y)=H_i(y)$ for any $y\in Y$. If $G$ is an $\R^k$ action on homogeneous spaces of noncompact type, the derivative action on $TY$ induced by $G$ is identified with the adjoint action. 
 The connected components of $\mathbb{R}^d$ where all Lyapunov functions keep the
same sign are called {\em Weyl chambers}. The Lie identity implies that in each Weyl chamber $C$
the subspaces
$$ \fh_C^+=\sum_{\lambda_i>0 \text{ on } C} \fh_i, \quad
\fh_C^-=\sum_{\lambda_i\leq 0 \text{ on } C} \fh_i $$
are subalgebras, hence integrable. We denote the corresponding foliations by
$\bbW_C^+$ and $\bbW_C^-$ respectively.

If there exists a nonzero Lyapunov functional, then we call $G$ a (partially) hyperbolic action, and if the foliation $W^c$ corresponding to zero Lyapunov functionals coincides with the orbit foliation, then we call 
$G$ {\it Anosov action}. In particular, for actions $G$ as in {\bf a1} 
the center foliation $W^c$ is trivial. 

For partially hyperbolic actions as in {\bf a2}, the assumption that $G$ is identity on the center space means that  the center foliation is generated by an action of the group $H^c$ which commutes with  
$G$:
\begin{equation}\label{cen:id}
\text{If }\bry\in W^c(y), \;\bry=g_c\cdot y, \; g_c\in H^c \;\; \text{ then }\;\; G_t(\bry)=g_c\cdot G_t(y).
\end{equation}

 Both Cartan actions (Example \ref{eg1}) and Weyl Chamber flows (Example \ref{eg2}) are Anosov actions.

We introduce a system of local coordinates on $Y$ using  the exponential map from 
$\DS TY=\bigoplus_{i=1}^mE^{\chi_i}$ to $Y$. Thus we can rewrite the vector 
$z\in T_y Y$ as $(z_1,\cdots, z_m)$, where $z_i\in E^{\chi_i}$. There exists a constant $\zeta_0$ such that for any $y\in Y$, the exponential map $\exp: B({\bf 0},\zeta_0)\subset T_y Y\to Y$ is one to one.
For $\delta_i\leq \zeta_0$, $i\leq m$, let 
\begin{equation}\label{eq:para}C(\{\delta_i\},y):=\{\exp(z):z=(z_1,\cdots,z_m)\in B({\bf 0},\zeta_0)\subset T_yM, \ |z_i|\leq \delta_i/2\}
\end{equation}
denote the parallelogram centered at $y$ with side lengths $\{\delta_i\}$.


Recall that  a smooth action $G$ on $(Y,\nu)$ is {\em exponentially mixing for 
sufficiently smooth functions} if there exists $k\in \N$ such that for all $f,g\in C^k(Y)$,
$\bv\in \Ab$
$$
|\langle f,g\circ G_{\bf v}\rangle- \nu(f)\nu(g)|\leq Ce^{-\eta \|{\bf v}\|}\|f\|_k\|g\|_k.
$$
By \cite{KKS,KS1}, any action $G$ as in {\bf a1} or 
{\bf a2} is exponentially mixing for sufficiently smooth functions.

Moreover, we say that $g$ is  {\em exponentially mixing on balls} if\ there exist $C,\eta',\eta>0$ such that for every ${\bf v}\in \Ab$, every $B(y,r), B(y',r')\subset Y$ with $y,y'\in Y$ and $r,r'\in (e^{-\eta' \|{\bf v}\|},1)$ the following holds:

\begin{equation}
\label{ExpMixBalls}
|\nu(B(y,r)\cap G_{\bv}B(y',r))- \nu(B(y,r))\nu(B(y',r'))|\leq Ce^{-\eta \|{\bf v}\|}.
\end{equation}

A standard approximation argument (see eg. \cite{Ga07}) shows  that exponential mixing for sufficiently smooth functions implies that $G$ is exponentially mixing on balls. So we have: 
\begin{lemma}\label{exp:mix} Any action $G$ as in {\bf a1.} or {\bf a2.} 
is exponentially mixing on balls.
\end{lemma}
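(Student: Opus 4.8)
The plan is to deduce Lemma~\ref{exp:mix} from the exponential mixing of $G$ for sufficiently smooth functions by a standard mollification argument, paying attention to the quantitative bookkeeping of the error terms so that the loss in the exponential rate can be absorbed by shrinking $\eta$ and by allowing balls to have radii as small as $e^{-\eta'\|\bv\|}$.

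\textbf{Step 1: Mollified indicator functions.}
First I would fix $k\in\naturals$ as in the definition of exponential mixing for smooth functions and, for a ball $B(y,r)\subset Y$, construct smooth bump functions $\phi_{y,r}^{\pm}\in C^k(Y)$ with $0\le\phi_{y,r}^-\le \one_{B(y,r)}\le\phi_{y,r}^+\le 1$ such that $\phi_{y,r}^-$ equals $1$ on $B(y,r(1-\kappa))$ and $\phi_{y,r}^+$ is supported in $B(y,r(1+\kappa))$ for a small parameter $\kappa\in(0,1)$ to be chosen. Using the local coordinates from \eqref{eq:para} (or simply the injectivity radius and a fixed reference bump on $\reals^{\dim Y}$ rescaled by $\kappa r$), one gets $\|\phi_{y,r}^{\pm}\|_k\le C(\kappa r)^{-k}$. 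Here the key quantitative input, which I would isolate as a sub-lemma, is a uniform \emph{regularity/thickness} bound for the Haar measure: $\nu\big(B(y,r(1+\kappa))\setminus B(y,r(1-\kappa))\big)\le C'\kappa\, r^{\,D}\le C'\kappa$ where $D=\dim Y$; this holds because $\nu$ is locally equivalent to Lebesgue measure in the exponential coordinates with uniformly bounded density (compactness of $Y$).

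\textbf{Step 2: Sandwich and optimize.}
Then I would write, for $\bv\in\Ab$,
\begin{align*}
\nu\big(B(y,r)\cap G_{\bv}B(y',r')\big)
&\le \int \phi_{y,r}^{+}\,(\phi_{y',r'}^{+}\circ G_{-\bv})\,d\nu\\
&\le \nu(\phi_{y,r}^{+})\,\nu(\phi_{y',r'}^{+})+C e^{-\eta_0\|\bv\|}\|\phi_{y,r}^{+}\|_k\|\phi_{y',r'}^{+}\|_k\\
&\le \nu(B(y,r))\nu(B(y',r'))+C'\kappa + C e^{-\eta_0\|\bv\|}(\kappa r)^{-k}(\kappa r')^{-k},
\end{align*}
and symmetrically for the lower bound using $\phi^{-}$. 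Now choose $\kappa=\kappa(\bv):=e^{-\eta\|\bv\|}$ for a small $\eta>0$. Then the term $C'\kappa$ is $O(e^{-\eta\|\bv\|})$. For the last term, since $r,r'\ge e^{-\eta'\|\bv\|}$, we have $(\kappa r)^{-k}(\kappa r')^{-k}\le e^{2k(\eta+\eta')\|\bv\|}$, so the whole term is bounded by $Ce^{-(\eta_0-2k(\eta+\eta'))\|\bv\|}$. Choosing $\eta'$ and $\eta$ small enough that $2k(\eta+\eta')<\eta_0$ makes this $O(e^{-\eta''\|\bv\|})$ for some $\eta''>0$, and setting the output rate to $\min(\eta,\eta'')$ finishes the bound \eqref{ExpMixBalls}. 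The constant $C$ in \eqref{ExpMixBalls} absorbs all the fixed constants; note it does not need to be small, only the exponential rate matters.

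\textbf{Main obstacle.}
I expect the only real point requiring care is Step~1: ensuring the bump functions can be built with $C^k$ norm controlled \emph{uniformly in $y$} by $(\kappa r)^{-k}$, and that the measure of the thin annular shell is $O(\kappa)$ uniformly in $y$ and $r\le 1$. Both follow from compactness of $Y$ together with the smooth structure — there is a uniform lower bound on the injectivity radius, the exponential map has uniformly bounded derivatives of all orders up to $k+1$, and $\nu$ has a smooth positive density in normal coordinates — so this is genuinely routine, which is why the statement is phrased as a consequence of ``a standard approximation argument (see e.g. \cite{Ga07})''. The bookkeeping in Step~2 (matching the polynomial blow-up $(\kappa r)^{-k}$ against the exponential gain) is the part I would write out carefully, but it presents no conceptual difficulty.
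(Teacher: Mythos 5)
Your proposal is correct and is exactly the ``standard approximation argument'' that the paper invokes (citing \cite{Ga07}) without writing out: mollify the indicators at scale $\kappa r$, apply exponential mixing for smooth functions, and choose $\kappa=e^{-\eta\|\bv\|}$ with $\eta,\eta'$ small enough that the polynomial blow-up $(\kappa r)^{-k}(\kappa r')^{-k}$ is beaten by the mixing rate $\eta_0$. The quantitative bookkeeping in your Step 2 matches what is needed for the definition \eqref{ExpMixBalls}, including the role of the lower cutoff $r,r'>e^{-\eta'\|\bv\|}$.
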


\section{Relative atoms of the past partition}
Recall that $F:(\Sigma_A\times Y,\mu\times \nu)\to(\Sigma_A\times Y,\mu\times \nu)$ is given by $F(\omega,y)=(\sigma \omega, G_{\tau(\omega)}y).$ Let $\mathcal P_{\epsilon}$ be a partition of $\Sigma_A$ given by cylinders on coordinates $[-\epsilon^{-\frac{1}{\beta}},0]$, where $\beta$ is the H\"older exponent of $\phi$. Let $\mathcal{Q}_\epsilon$ be a partition of $Y$ into sets with piecewise smooth boundaries and of diameter $\leq \epsilon$.

Let $\Omega$ denote the alphabet of the shift space 
$\Sigma_A = \Omega^{\mathbb Z}$. 
For $\omega^- = (..., \omega_{-1}, \omega_0) 
\in \Omega^{\mathbb Z_{\leq 0}}$, let 
$$\Sigma_A^+(\omega^-) = 
\{ \omega^+ = (\omega_1,\omega_2,...) \in \Omega^{\mathbb Z_+}: (...,\omega_{-1}, \omega_0, \omega_1,...)
\in \Sigma_A\}.
$$
Note that $\Sigma_A^+(\omega^-)$ only depends on finitely many coordinates of 
$\omega^-$. 
We will also use the notation $\omega = (\omega^-, \omega^+)$
and $\Sigma_A^+(\omega) = \Sigma_A^+(\omega^-)$.
For $\omega = (\omega^-, \omega^+)$
and $S^+ \subset \Sigma_A^+(\omega) $, we write
$$\mu_{\omega}^+(S^+) =\mu( \{(\omega^-, \bar \omega^+): \bar \omega^+ \in S^+\}).$$
With a slight
abuse of notation, we also denote by $\mu_{\omega}^+$ a measure on
$\Sigma_A$ defined by 
$\mu_{\omega}^+(S) = \mu_{\omega}^+(\{\bar \omega^+:
 ( \omega^-, \bar \omega^+) \in S\})$.
Notice that we have, for any measurable $S\subset \Sigma_A$,$$\mu(S)=\int_{\Sigma_A}\mu^+_\omega(S)d\mu(\omega).$$

We can assume that $\tau$ only depends on
the past. Indeed, if this is not the case, then $\tau$ is cohomologous
to another H\"older function $\bar \tau$ depending only on the past:
$\tau(\omega) = 
\bar \tau(\omega^-) + h(\omega) - h(\sigma \omega)$. If 
$\bar F$ is the $T,T^{-1}$ transformation constructed using 
$\bar \tau$ and $H(\omega, y) = (\omega, G_{h(\omega)}y)$,
then $H\circ F = \bar F \circ H$. Since $F$ and $\bar F$ are conjugate,
we can indeed assume that $\tau$ only depends on the past.

The main result of this section is:
\begin{proposition}\label{prop:atoms}
There exists $\epsilon_0>0$ and a full measure set $V\subset \Sigma_A\times Y$ such that for every 
$(\omega,y)\in V$, the atoms of 
$$
\bigvee_{i=0}^{\infty}F^{i}(\mathcal P_{\epsilon_0}\times \mathcal{Q}_{\epsilon_0})
$$
are of the form $\{\omega^-\times
\Sigma_A^+(\omega^-)\}\times \{y\}$, i.e. the past of $\omega$ and the $Y$-coordinate are fixed.
\end{proposition}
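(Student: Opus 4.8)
The plan is to show that the partition $\bigvee_{i=0}^\infty F^i(\cP_{\eps_0}\times \cQ_{\eps_0})$ separates both the past of $\omega$ and the $Y$-coordinate, for a suitable choice of $\eps_0$ and on a set of full measure. First I would observe that two points $(\omega,y)$ and $(\bromega,\bry)$ lie in the same atom of this super-partition iff for every $i\ge 0$, the images $F^i(\omega,y)$ and $F^i(\bromega,\bry)$ lie in the same element of $\cP_{\eps_0}\times \cQ_{\eps_0}$; writing $F^i(\omega,y)=(\sigma^i\omega, G_{\tau_i(\omega)}y)$, this means $\sigma^i\omega$ and $\sigma^i\bromega$ agree on coordinates $[-\eps_0^{-1/\beta},0]$ and $d_Y(G_{\tau_i(\omega)}y, G_{\tau_i(\bromega)}\bry)\le \eps_0$ for all $i\ge 0$. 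The first condition forces $\omega$ and $\bromega$ to agree on all coordinates $\le 0$, i.e. $\omega^-=\bromega^-$ (this is immediate and needs no smallness of $\eps_0$, just that cylinders on $[-\eps_0^{-1/\beta},0]$ get pushed forward). Since $\tau$ depends only on the past — which we have arranged — this already gives $\tau_i(\omega)=\tau_i(\bromega)$ for all $i\ge 0$, so the fiber condition simplifies to $d_Y(G_{\tau_i(\omega)}y, G_{\tau_i(\omega)}\bry)\le \eps_0$ for all $i\ge 0$.

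Thus the crux is: for a.e.\ $\omega$ and a.e.\ $y$, if $d_Y(G_{\tau_i(\omega)}y, G_{\tau_i(\omega)}\bry)\le \eps_0$ for all $i\ge 0$ then $\bry=y$. Equivalently, letting $\bt_i=\tau_i(\omega)\in\Ab$, the set of $i$ for which $G_{\bt_i}$ contracts the distance between $y$ and $\bry$ — or at least does not expand it past $\eps_0$ — must, for the two points to stay $\eps_0$-close forever, be so constraining that $\bry=y$. Here is where I would use the assumption that $\tau$ is mean zero and not cohomologous to a cocycle into a proper subgroup of $\Ab$: this should force the sequence $\{\bt_i\}$ to be, for typical $\omega$, \emph{recurrent} and to explore $\Ab$ in all directions, so that it visits every Weyl chamber infinitely often and, more quantitatively, for any nonzero tangent direction $v\in E^{\chi_j}$ the quantity $\chi_j(\bt_i)$ is positive (hence $G_{\bt_i}$ expands the $v$-component) for a positive-density, indeed recurrent, set of times. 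Decompose $\bry - y$ (in exponential coordinates at $y$, valid since we may take $\eps_0\le \zeta_0$) into components $z_j\in E^{\chi_j}$. If some $z_j\ne 0$, then along the infinitely many times $i$ with $\chi_j(\bt_i)$ large and positive the $j$-component of $G_{\bt_i}y - G_{\bt_i}\bry$ grows like $e^{(\chi_j(\bt_i)-\eps\|\bt_i\|)}|z_j|$ by \eqref{eq:exp}, contradicting the uniform bound $\eps_0$ — provided we can guarantee $\chi_j(\bt_i) - \eps\|\bt_i\| \to \infty$ along a subsequence. The zero-Lyapunov (center) directions need separate treatment: in case {\bf a1} there are none, and in case {\bf a2}, by \eqref{cen:id} the center translation $g_c$ commutes with $G$, so $d_Y(G_{\bt_i}y, G_{\bt_i}\bry) = d_Y(G_{\bt_i}y, g_c G_{\bt_i}y)$ stays bounded below by a constant depending on $g_c$ unless $g_c=e$; choosing $\eps_0$ smaller than that constant forces the center component to vanish too.

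The main obstacle I anticipate is making precise the claim that for typical $\omega$ the cocycle values $\bt_i=\tau_i(\omega)$ are recurrent and, in each Lyapunov direction, take arbitrarily large positive values of $\chi_j(\bt_i)$ while $\eps\|\bt_i\|$ stays controlled — this is a large-deviation / recurrence statement about Birkhoff sums of a mean-zero Hölder cocycle over a mixing SFT, and the "not cohomologous to a proper subgroup" hypothesis is exactly what rules out the cocycle being trapped in a half-space where some $\chi_j$ stays $\le 0$. I would handle this by a Borel–Cantelli argument: show that for each $j$ and each threshold $R$, the set of $\omega$ such that $\chi_j(\tau_i(\omega))\ge R$ and $\|\tau_i(\omega)\|\le R/\eps$ for infinitely many $i$ has full measure, using the CLT / local limit behavior of $\tau_i$ (the cocycle is nondegenerate in every direction precisely because it is not a coboundary into a proper subgroup) together with ergodicity of $\sigma$. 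A clean way to organize this is to note that $G_{\bt_i}$ applied to a small ball around $y$ distorts it into a thin parallelogram, and to use the exponential-mixing-on-balls Lemma \ref{exp:mix} together with the fact that the family $\{G_{\bt_i}\}$ is not equicontinuous on any neighborhood of the diagonal — but the elementary direct estimate via \eqref{eq:exp} is probably cleaner. Once the recurrence-in-all-directions statement is in hand, intersecting the relevant full-measure sets over countably many $j$, $R$ and over the $\mu_\omega^+$-a.e.\ choice of $\bry$ (using the disintegration identity $\mu(S)=\int \mu_\omega^+(S)\,d\mu(\omega)$ and Fubini) produces the full-measure set $V$, and the proof is complete.
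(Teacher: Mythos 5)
Your treatment of the hyperbolic directions is essentially the paper's: separation in the fiber comes from the cocycle visiting every expanding cone with unbounded norm (a Borel--Cantelli/CLT statement, which is the paper's Lemma \ref{lem:RW}) combined with the Lyapunov expansion \eqref{eq:exp}, and your reduction to $\omega^-=\bromega^-$ and $\tau_i(\omega)=\tau_i(\bromega)$ also matches. But there are two genuine problems.

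First, a direction error. An atom of $\bigvee_{i=0}^{\infty}F^{i}(\cP_{\epsilon_0}\times\cQ_{\epsilon_0})$ is an intersection $\bigcap_i F^i(A_i)$ over atoms $A_i$ of $\cP_{\epsilon_0}\times\cQ_{\epsilon_0}$, so two points lie in the same atom iff their images under $F^{-i}$ lie in the same atom of $\cP_{\epsilon_0}\times\cQ_{\epsilon_0}$ for every $i\ge 0$ --- this is the \emph{past} partition. Your condition that $\sigma^i\omega$ and $\sigma^i\bromega$ agree on coordinates $[-\epsilon_0^{-1/\beta},0]$ for all $i\ge 0$ fixes all coordinates $\ge -\epsilon_0^{-1/\beta}$, i.e.\ the future of $\omega$, not the past, which contradicts the very statement being proved. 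The correct reduction uses the backward orbit, and the relevant cocycle values are $\a_j=\tau_{-j}(\omega)$ (as in the paper); since the recurrence-into-cones lemma holds for $\integers_-$ as well, this is repairable, but as written the argument addresses a different partition.

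Second, and more seriously, the center directions in case {\bf a2} are not handled. If $\bry=g_c\cdot y$ with $g_c\in H^c$, then by \eqref{cen:id} one has $d_Y(G_{\bt_i}y,G_{\bt_i}\bry)=d_Y(z,g_c\cdot z)$ with $z=G_{\bt_i}y$, but this is not bounded below by a constant depending only on $g_c$; more to the point, $\epsilon_0$ must be fixed before $g_c$ is, and $g_c$ ranges over a continuum containing elements arbitrarily close to the identity, which move every point by less than any prescribed $\epsilon_0$. So no choice of $\epsilon_0$ ``smaller than that constant'' exists. The paper's proof uses a different mechanism here: it chooses $\cQ_{\epsilon_0}$ so that some atom fails to be invariant under every small translation in the compact part of the center (condition \eqref{eq:qinv}), invokes Moore's ergodicity theorem for non-compact center elements, and then uses ergodicity of $F$ (via Guivarc'h) to bring the orbit into a small ball that $g_c$ moves out of its atom. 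Separately, your passage from tangent-vector expansion to separation in $d_Y$ on the quotient $H/\Gamma$ needs the injectivity-radius/systole control of the paper's Lemma \ref{lem:ball} (two points can separate in $H$ yet re-approach modulo $\Gamma$, and the ``first time the distance exceeds $\eta/4$'' argument with bounded increments is what rules this out); this is a smaller but real omission.
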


Before we prove the above proposition, we need some lemmas. For a non-zero $\chi_i$, 
let $\mathcal{C}_i\subset \R^d$ be a cone 
$$\cC_i=\{\a\in \R^d: \chi_i(\a)\geq c'\|\a\|\} , \quad \text{where}\quad c'=\min_{i: \chi_i \neq 0}\|\chi_i\|/2.$$ 

We start with the following lemma:
\begin{lemma}\label{lem:ball} Let $(G,Y,\nu)$ be as in a1. or a2. 
Choose cones $\hat\cC_i$ properly contained in $\cC_i.$
Let $\{\a_j\}_{j\in \N}\subset \Ab$, 
be a sequence such that $\a_1=0$ and
\begin{itemize}
\item[A.] $\sup_{j}\|\a_{j+1}-\a_j\|<+\infty$; 
\item[B.] 
for every $i$ we have
$\DS \sup_{j: {\bf a}_j\in { \hat\cC_i}} \|{\bf a}_j\|=\infty$.
\end{itemize}
 Then there exists $\eta=\eta(G,\sup_{j}\|\a_{j+1}-\a_j\|)>0$ 
 such that for any $y,y'\in Y$ with $y'\notin W^c(y)$, there exists $j\in \N$ such that 
 $d_Y(\a_jy,\a_jy')\geq \eta/4$.
\end{lemma}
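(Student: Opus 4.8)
The plan is to exploit the hyperbolicity of $G$ along the cones $\hat\cC_i$ together with exponential mixing on balls (Lemma \ref{exp:mix}), arguing by contradiction. Suppose $y' \notin W^c(y)$ but $d_Y(\a_j y, \a_j y') < \eta/4$ for every $j$. Writing $y' = gy$ with $g = \exp(z)$, $z = (z_1, \dots, z_m) \in T_yY$, the hypothesis $y' \notin W^c(y)$ means some component $z_i$ with $\chi_i \neq 0$ is nonzero. The idea is that whenever $\a_j$ lies in the expanding cone $\hat\cC_i$ for that particular $i$, the differential $(\a_j)_*$ stretches the $z_i$-component by a factor roughly $e^{\chi_i(\a_j)} \geq e^{c'\|\a_j\|}$ (using \eqref{eq:exp} with $\eps$ chosen small relative to $c'$, which is legitimate since the choice of metric depends on $\eps$), while moving $y$ and $y'$ closer only loses a bounded amount. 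By hypothesis B, $\|\a_j\| \to \infty$ along such $j$, so the $z_i$-separation of $\a_j y$ and $\a_j y'$ would eventually exceed $\eta/4$ — contradiction — provided the separation does not first leave the chart where the exponential-coordinate estimate \eqref{eq:exp} is valid.

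The first thing I would do is make the expansion estimate precise in the local coordinates \eqref{eq:para}: choose $\eps < c'/10$ and fix the adapted metric; then for $\a \in \hat\cC_i$ and any $w = (w_1,\dots,w_m)$ with $|w| \leq \zeta_0$, we have $|((\a)_* w)_i| \geq e^{(c' - \eps)\|\a\|}|w_i|$ as long as the image stays in $B(\mathbf 0, \zeta_0)$. Next I would set up the iteration: start from $\a_1 = 0$ where the separation is $|z|$, and track the separation $s_j := d_Y(\a_j y, \a_j y')$. Hypothesis A (bounded steps $\|\a_{j+1} - \a_j\|$) guarantees that consecutive applications of $G_{\a_{j+1}-\a_j}$ distort distances by at most a fixed bounded factor, say $e^{L}$ where $L$ depends on $\sup_j\|\a_{j+1}-\a_j\|$; this is what pins down $\eta = \eta(G, \sup_j\|\a_{j+1}-\a_j\|)$. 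So if the separation ever reaches some threshold $\eta$ (chosen below $\zeta_0$, with room for one more bounded step), we are done. It remains to show it must reach that threshold. Pick the index $i$ with $z_i \neq 0$; by B there are $\a_j \in \hat\cC_i$ with $\|\a_j\|$ arbitrarily large. Before the separation $s_j$ first exceeds $\eta$, both $\a_j y$ and $\a_j y'$ and the geodesic between them lie in a single exponential chart, so the coordinate estimate applies and gives $|(z^{(j)})_i| \geq e^{(c'-\eps)\|\a_j\|}|z_i|$ where $z^{(j)}$ is the displacement at step $j$; choosing $\|\a_j\|$ large enough forces $|(z^{(j)})_i| > \eta$, hence $s_j > \eta/2 > \eta/4$, a contradiction.

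The main obstacle I anticipate is the bookkeeping around leaving the exponential chart: the clean expansion bound \eqref{eq:exp} is only a statement about the differential, i.e. about infinitesimally close points, whereas $y$ and $y'$ are merely close, not infinitesimally so. The honest way to handle this is the standard one: as long as the displacement vector stays inside $B(\mathbf 0, \zeta_0)$ one gets a genuine (not merely infinitesimal) multiplicative lower bound on the $E^{\chi_i}$-component of the displacement, up to a factor controlled by $\eps$; and the first time the displacement would exit that ball, the distance $s_j$ has already become of order $\zeta_0$, which is $\geq \eta$ for our choice of $\eta$, so we have won. So the real content is choosing the constants in the right order: first $\eps$ small relative to $c'$, then the adapted metric, then $\eta$ small relative to $\zeta_0$ and to the one-step distortion $e^{L}$ coming from hypothesis A, and finally using hypothesis B to pick $\a_j \in \hat\cC_i$ with $\|\a_j\|$ large enough that $e^{(c'-\eps)\|\a_j\|}|z_i|$ overshoots. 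A minor additional point is that $|z_i|$ itself is not bounded below uniformly — but that is fine, since $y, y'$ are given first and then $j$ is chosen depending on them, exactly as the statement permits.
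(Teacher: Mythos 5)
Your argument is correct in substance, but it takes a genuinely different route from the paper's. The paper proves the lemma in two stages: first Lemma \ref{lem:uncov}, which works on the covering group $H$, picks a Weyl chamber $\cC$ with the displacement $Z\notin\fh_\cC^-$, introduces the auxiliary point $y''=\bbW_\cC^-(y)\cap\bbW_\cC^+(y')$, and separates $\a_jy$ from $\a_jy'$ by a triangle inequality ($\a_jy''$ stays at bounded distance from $\a_jy$ while running away from $\a_jy'$ along the unstable subgroup); and second, a descent from $H$ to $Y=H/\Gamma$ via the co-compactness (systole) bound \eqref{eq:zzs1} combined with the same ``first index at which a threshold is crossed'' device that you use. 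Your version collapses both stages: by tracking the displacement vector $\mathrm{Ad}(g_{\a_j})Z$ (which evolves exactly linearly in the algebraic setting) and stopping at the first index where its norm exceeds a threshold of the form $\zeta_0e^{-L}$ --- so that at that index it still lies in the injectivity chart $B(\mathbf 0,\zeta_0)$ of $Y$ --- you never need to convert a large Lie-algebra displacement into a large distance in $H$ (which is exactly what the auxiliary point $y''$ and the Weyl-chamber splitting are for), and you never leave the chart on $Y$, so the lattice never enters. This is arguably cleaner. One caution: the quantity you must monitor is the tracked displacement $\mathrm{Ad}(g_{\a_j})Z$ and not the distance $s_j=d_Y(\a_jy,\a_jy')$; your middle paragraph conflates the two (small $d_Y$ does not force the tracked displacement to be small, because of monodromy around the compact quotient), but your final paragraph states the stopping rule correctly in terms of the displacement vector staying in $B(\mathbf 0,\zeta_0)$, and with that formulation the argument closes.
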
 
In order to prove the above lemma, we need the following:
\begin{lemma}\label{lem:uncov} 
Let $H^c<H$ be the subgroup of $H$ such that $W^c(y)=H^c y$ for any $y\in Y$. Then  
$\exists\breta>0$ such that for any $y,y'\in H$ with $y'\notin H^c(y)$ and any $\{\a_j\}$ satisfying $A.,B.$, there exists $j_0$ such that 
$$
d_H(\a_{j_0}y,\a_{j_0}y')> \breta. $$
\end{lemma}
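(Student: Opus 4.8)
The statement to prove, Lemma \ref{lem:uncov}, asserts a uniform-in-$y,y'$ separation estimate for points not on the same center leaf, under an orbit of group elements $\a_j$ that moves boundedly at each step but escapes to infinity inside each expanding cone $\hat\cC_i$.

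\medskip

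\textbf{Plan.} The plan is to work in the Lie algebra picture and exploit that $H^c$ is exactly the subgroup tangent to the sum of the zero-Lyapunov subspaces $\fh_i$ (by \eqref{eq:foliA} and the algebraic description right after it). Write $y' = g\cdot y$ with $g\in H$, $g\notin H^c$. Since $d_H$ is right-invariant, $d_H(\a_j y,\a_j y') = d_H(\a_j,\ \a_j g)$ is controlled by the size of $\mathrm{Ad}$-conjugation: more precisely $\a_j y' = \a_j g \a_j^{-1}\cdot \a_j y$, so the separation is measured by how far $g_j := \a_j g \a_j^{-1}$ is from the identity (equivalently, from $H^c$, but we will see it leaves every compact neighborhood of $e$). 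First I would fix a small ball $B(e,\zeta_0)$ on which $\exp$ is a diffeomorphism and in which $d_H$ is comparable to the linear norm on $\fh$; if $g$ already satisfies $d_H(e,g) > \zeta_0/2$ we are done with $j_0 = 1$ (recall $\a_1 = 0$), so assume $g = \exp(v)$ with $v\in \fh$, $\|v\|\le \zeta_0/2$, and $v$ has a nonzero component $v_i$ in some $\fh_i$ with $\chi_i\neq 0$ (this is exactly the condition $g\notin H^c$).

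\medskip

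\textbf{Key steps.} (1) Decompose $v = \sum_i v_i$ along $\fh = \bigoplus_i \fh_i$ and pick $i_*$ with $\chi_{i_*}\neq 0$ and $v_{i_*}\neq 0$; by relabeling assume $\chi_{i_*} > 0$ on the cone $\cC_{i_*}$ (if it is negative, run the argument with $-\a_j$ using hypothesis B.\ for the opposite cone — or simply note $\chi_{i_*}$ being nonzero puts $v_{i_*}$ in some $\hat\cC$-expanding or $\hat\cC$-contracting direction and use B.\ accordingly; to keep it clean I would just say: choose $i_*$ so that $v_{i_*}\ne 0$ and $\hat\cC_{i_*}$ is the cone appearing in B). (2) For $\a_j \in \hat\cC_{i_*}$, the adjoint action expands $\fh_{i_*}$: by \eqref{eq:exp}, $\|\mathrm{Ad}(\a_j) v_{i_*}\| \ge e^{\chi_{i_*}(\a_j) - \epsilon\|\a_j\|}\|v_{i_*}\| \ge e^{(c' - \epsilon)\|\a_j\|}\|v_{i_*}\|$, where $c'$ is the cone constant; choosing $\epsilon < c'$ once and for all, this $\to\infty$ along the subsequence $\a_j\in\hat\cC_{i_*}$ guaranteed by hypothesis B. (3) The other components $\mathrm{Ad}(\a_j)v_i$ for $i\neq i_*$ lie in $\fh_i$, which is complementary to $\fh_{i_*}$, so there is no cancellation: $\|\mathrm{Ad}(\a_j)v\| \ge c_0 \|\mathrm{Ad}(\a_j)v_{i_*}\|$ for a constant $c_0$ depending only on the fixed splitting. (4) Therefore along this subsequence $\|\mathrm{Ad}(\a_j)v\|\to\infty$; pick the first $j_0$ in the subsequence with $\|\mathrm{Ad}(\a_{j_0})v\|$ large. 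The subtlety is that $g_{j_0} = \a_{j_0}\exp(v)\a_{j_0}^{-1} = \exp(\mathrm{Ad}(\a_{j_0})v)$, and the linear norm of $\mathrm{Ad}(\a_{j_0})v$ being large does not immediately give $d_H(e, g_{j_0})$ large once we leave the $\exp$-chart — but it does give a lower bound of the form $d_H(e,g_{j_0}) \ge \breta$ for a uniform $\breta$, because $\exp$ is proper and $d_H$ is a genuine metric: concretely, take $\breta$ to be (half) the $d_H$-distance from $e$ to the boundary of $\exp(B(\mathbf 0,\zeta_0))$, and note that as we increase $\|\a_j\|$ along the subsequence, $\mathrm{Ad}(\a_j)v$ must eventually exit $B(\mathbf 0,\zeta_0)$, whence $\exp(\mathrm{Ad}(\a_j)v)$ exits the chart and $d_H(e,\exp(\mathrm{Ad}(\a_j)v)) \ge \breta$. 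Hypothesis A.\ is what makes $\breta$ depend only on $G$ and $\sup_j\|\a_{j+1}-\a_j\|$: it is not actually needed for this lemma's conclusion (which is purely existential in $j_0$), but it is the ingredient that upgrades to the quantitative $\eta$ in Lemma \ref{lem:ball}, so I would remark that $\breta$ here is an absolute constant of $(G,\nu)$.

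\medskip

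\textbf{Main obstacle.} The genuinely delicate point is Step (4): passing from "$\mathrm{Ad}(\a_j)v$ has large linear norm" to "$d_H(e,\exp(\mathrm{Ad}(\a_j)v))$ is bounded below by a \emph{uniform} $\breta$." One cannot hope for $d_H(e,g_{j_0})\to\infty$ (it need not, since $\exp$ can wrap around), only for it to be bounded away from $0$ uniformly. The clean way is: (i) $d_H$ restricted to the compact set $\overline{\exp(B(\mathbf 0,\zeta_0))}$ is bi-Lipschitz to the linear norm, with constants independent of $y$ by right-invariance; (ii) so if $\mathrm{Ad}(\a_j)v$ stays inside $B(\mathbf 0,\zeta_0)$ for all $j$ in the escaping subsequence we get a contradiction with Step (3) (the linear norm blows up), hence for some $j_0$ it exits; (iii) the point where it first exits has $d_H(e, g_{j_0})\ge \mathrm{dist}_{d_H}(e,\partial\exp(B(\mathbf 0,\zeta_0)))=:\breta>0$. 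I would also handle separately the trivial-but-necessary edge case $g\notin\exp(B(\mathbf0,\zeta_0))$ to begin with, where $j_0=1$ works since $\a_1=0$. Everything else (the adjoint expansion estimate, non-cancellation across the splitting, comparability of metrics near $e$) is standard and I would not grind through it.
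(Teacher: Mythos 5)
Your route is genuinely different from the paper's. You reduce everything to the single group element $g_j=\a_j g\a_j^{-1}=\exp(\mathrm{Ad}(\a_j)v)$ via right-invariance and track the linear norm of $\mathrm{Ad}(\a_j)v$ in the Lyapunov splitting. The paper instead uses the local product structure of a Weyl chamber $\cC$ on which some nonzero-exponent component of the displacement is expanding: it introduces the auxiliary point $y''=\bbW_\cC^-(y)\cap\bbW_\cC^+(y')$, notes that $d_H(\a_j y,\a_j y'')$ stays bounded while $d_H(\a_j y',\a_j y'')$ blows up (the displacement from $y''$ to $y'$ lies in the unipotent subgroup $\exp(\fh_\cC^+)$, where $\exp$ is proper and there is no wrap-around), and concludes by the triangle inequality. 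That decomposition is precisely what lets the paper sidestep the issue you correctly identify as the delicate point: exponentiating a large vector that may have large components in central or compact directions.

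This is where your write-up has a genuine gap. In step (iii) you take the first $j_0$ with $w_{j_0}:=\mathrm{Ad}(\a_{j_0})v\notin B(\mathbf 0,\zeta_0)$ and assert that $\exp(w_{j_0})$ therefore ``exits the chart,'' so $d_H(e,\exp(w_{j_0}))\ge\mathrm{dist}(e,\partial\exp(B(\mathbf 0,\zeta_0)))$. That implication is false: $w\notin B(\mathbf 0,\zeta_0)$ does not force $\exp(w)\notin\exp(B(\mathbf 0,\zeta_0))$, since $\exp$ is injective only on the small ball and can return arbitrarily close to $e$ from outside it (elliptic/central directions) --- this is exactly the wrap-around you flag and then do not actually neutralize. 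Relatedly, your remark that hypothesis A is ``not actually needed'' is wrong: A is what rescues the argument. With $C:=\sup_j\|\mathrm{Ad}(\a_{j+1}-\a_j)\|_{op}<\infty$, run the first-exit argument with the smaller ball $B(\mathbf 0,\zeta_0/C)$; at the first exit time one has $\zeta_0/C\le\|w_{j_0}\|\le\zeta_0$, so $w_{j_0}$ still lies in the injectivity ball, where $d_H(e,\exp(\cdot))$ is bi-Lipschitz to the linear norm, giving a uniform $\breta$. Without A the quantity $\|w_j\|$ can jump in one step from tiny to enormous and land anywhere under $\exp$, and the proof collapses. With this repair (and the adjoint-action identity suitably reinterpreted for the affine nilmanifold case {\bf a1}, where the conjugating map is the derivative of the affine map rather than a literal $\mathrm{Ad}(\a_j)$), your argument goes through.
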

\begin{proof}
 Fix $y,y'\in H$. WLOG, assume $d_H(y,y')<\zeta_0$. We can write $y=\exp(Z)y'$, where $Z\in\mathfrak h$, and $Z=\bigoplus_i Z_i$  with $Z_i\in\mathfrak h_i$. Since $y'\notin H^c(y)$, there exists $i$ such that $\chi_i \neq 0$ and $Z_i\neq 0$. Accordingly there is a Weyl chamber $\cC$ such that splitting
$Z=Z^++Z^-$ with $Z^\pm\in \fh_\cC^\pm$ we have $Z^+\neq 0.$
Let $y''=\bbW_\cC^-(y)\cap \bbW_\cC^+(y').$ Then $y''\neq y'$ since $Z\not\in \fh_\cC^-.$

Let $\hat\cC$ be a cone which is strictly contained inside $\cC.$ Note that by the definition of $y''$, there exists a global constant $K>0$ such that  for each $\alpha_j\in \cC$ we have
$d_H(\alpha_j y, \alpha_j y'')\leq K \zeta_0$. By triangle inequality, $d_H(\alpha_jy,\alpha_jy')\geq d_H(\alpha_jy',\alpha_jy'')-d_H(\alpha_jy,\alpha_jy'')$. It is enough to notice that  due to the fact that the vectors in $\fh_\cC^+$ 
are expanded by $\hat\cC$ at a uniform rate and $\DS \sup_{j: \a_j\in \hat\cC} \|\a_j\|=\infty$,
there exists $j$ such that $d_H(\a_j y', \a_j y'')\geq K\zeta_0 +\breta,$ for some $\breta>0$.
\end{proof}

With Lemma \ref{lem:uncov}, we can prove Lemma \ref{lem:ball}:
\begin{proof}[Proof of Lemma \ref{lem:ball}]
Since $\Gamma\subset H$ is co-compact, it follows that there exists $c>0$ such that 
\begin{equation}\label{eq:zzs1}
\inf_{y\in H}\inf_{\gamma\neq e}d_H(y,y\gamma)>c>0.
\end{equation}
Let $\DS C_1:=\sup_{j}\|\a_{j+1}-\a_j\|<\infty$ and let $C=C(\alpha)>0$ be such that 
\begin{equation}\label{eq:zna11}
\sup_{0<d_H(y,y')\leq 1}\sup_{\|{\bf b}\|<C_1}\frac{d_H({\bf b}y,{\bf b}y')}{d_M(y,y')}\leq C,
\end{equation}
 Let $0<\eta<\breta$ be such that $c\geq (C+1/4)\eta$ (recall that $\breta$ is the constant from
Lemma \ref{lem:uncov} ).
Let $y,y'\in Y$, with $y'\notin W^c(y)$, with $d_N(y,y')\leq \eta/4$. 
By taking appropriate lifts of $y$ and $y'$ to $H$, we can assume that $d_H(y,y')\leq \eta/4$. Notice that by Lemma \ref{lem:uncov}, there exists $j_0\in \N$ such that $d_H(\a_{j_0}y,\a_{j_0}y')>\eta/4$. Let us take the smallest $j_0$ with this property. Then, $d_H(\a_{j_0-1}y,\a_{j_0-1}y')\leq \eta/4$. Therefore by the bound in \eqref{eq:zna11}
$$
d_H(\a_{j_0}y,\a_{j_0}y')=d_H\Big((\a_{j_0}-\a_{j_0-1})(\a_{j_0-1}y),(\a_{j_0}-\a_{j_0-1})(\a_{j_0-1}y')\Big)\leq C\eta.
$$
 Take $\gamma\in H$ such that $d_M(\a_{j_0}y,\a_{j_0}y')=d_H\Big(\a_{j_0}y,\a_{j_0} y'\gamma\Big)$.
By \eqref{eq:zzs1} we get
$$
d_H\Big(\a_{j_0}y,\a_{j_0} y'\gamma\Big)
\geq d_H\Big(\a_{j_0}y',\a_{j_0} y'\gamma\Big)-d_H\Big(\a_{j_0}y,\a_{j_0} y'\Big)
\geq c-C\eta\geq \eta/4.
$$
This finishes the proof.
\end{proof}

 Recall that for $\tau:\Sigma_A\to \Z^d$ (or $\R^d$) and $n\in \N$,  we denote 
 $\DS \tau_n(\omega):=\sum_{j=0}^{n-1}\tau(\sigma^j\omega),$
and $\DS \tau_{-n}(\omega)=-\tau_{n}(\sigma^{-n}\omega)$.
 The next result, proven in \S \ref{SSCones},
 helps to verify the conditions of Lemma \ref{lem:ball}(B). 
\begin{lemma}\label{lem:RW} Let $\tau:\Sigma_A\to G$ be a 
H\"older function that is not cohomologous to a function
taking values in a linear subspace of $G$ of dimension $<d$. Then for any cone $\cC\subset \R^d$, 
for $\mu$ a.e. $\omega\in \Sigma_A$
$$
\sup_{v \in \{\tau_n(\omega)\}_{n\in \mathbb Z _{\pm}}\cap \cC} \|v\| = \infty.
$$
\end{lemma}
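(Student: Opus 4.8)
\textbf{Proof plan for Lemma \ref{lem:RW}.}
The statement asserts that the cocycle trajectory $\{\tau_n(\omega)\}$, viewed in $\R^d$, is unbounded in every open cone, for a.e.\ $\omega$. The natural framework is the ergodicity of the skew product (or, equivalently, recurrence/non-trivial distribution properties of the cocycle) combined with a Borel--Cantelli or ergodic-averaging argument. First I would observe that the non-coboundary hypothesis is exactly what rules out $\{\tau_n(\omega)\}$ being confined to (a bounded neighbourhood of) a proper subspace: if $\tau$ is not cohomologous to a function with values in a subspace $V\subsetneq \R^d$, then for every non-zero linear functional $\ell$ on $\R^d$, the real cocycle $\ell\circ\tau$ is not a coboundary, hence by the $L^2$-Gotshalk--Hedlund theorem (Proposition \ref{PrGHL2}) its Birkhoff sums $\ell(\tau_n)$ are unbounded in $L^2$, and in fact $\|\ell(\tau_n)\|_2\to\infty$; since $\ell\circ\tau$ satisfies a CLT or at least a diffusive lower bound on a topologically mixing SFT with a Gibbs measure, one gets $\ell(\tau_n)\to +\infty$ and $\to -\infty$ along positive-density sets of times, for a.e.\ $\omega$.

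The core of the argument is then to upgrade ``unbounded in every half-space direction'' to ``unbounded in every cone''. Here I would use a recurrence argument. Fix a cone $\cC$; by ergodicity of $\sigma$ it suffices to produce, for a.e.\ $\omega$, \emph{one} time $n$ (in $\Z_+$, say; the $\Z_-$ case is symmetric via $\tau_{-n}(\omega)=-\tau_n(\sigma^{-n}\omega)$) with $\tau_n(\omega)\in\cC$ and $\|\tau_n(\omega)\|$ arbitrarily large, since then applying this to $\sigma^{j}\omega$ and using the cocycle identity $\tau_{n+j}(\omega)=\tau_j(\omega)+\tau_n(\sigma^j\omega)$ lets us chain such excursions. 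The existence of a large value in $\cC$ I would derive from a local central limit theorem / Gaussian approximation for $\tau_n$ on the SFT (the cocycle is $\Z^d$- or $\R^d$-valued, aperiodic by the non-degeneracy hypothesis, so after recentring it obeys an LCLT with a non-degenerate Gaussian): the probability that $\tau_n/\sqrt n$ lands in a fixed sub-cone of $\cC$ bounded away from the origin is bounded below by a positive constant uniformly in $n$, hence by a second-moment / Borel--Cantelli argument along a sparse subsequence $n_k$ (e.g. $n_k=2^k$), for a.e.\ $\omega$ infinitely many of the events $\{\tau_{n_k}(\omega)\in\tfrac12\cC,\ \|\tau_{n_k}(\omega)\|\ge \sqrt{n_k}/2\}$ occur; independence is replaced by the exponential mixing of $\sigma$, which makes the events asymptotically independent and the Borel--Cantelli lower bound applicable.

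There is a subtlety: a priori $\tau$ may have non-zero mean $\mu(\tau)=\bar v\in\R^d$, in which case $\tau_n(\omega)\approx n\bar v$ and the trajectory escapes to infinity in the direction $\bar v$, so it trivially enters every cone containing $\bar v$ but not the others; in that case the Gaussian fluctuations around $n\bar v$ are only of order $\sqrt n$, which is not enough to reach a cone $\cC$ on the ``far side''. However, the hypothesis that $\tau$ is not cohomologous to a subspace-valued function is not by itself enough to exclude this; re-reading the surrounding text (Theorem \ref{thm:main}, where $\tau$ is a \emph{mean zero} cocycle), I would in fact invoke that standing assumption, so $\mu(\tau)=\mathbf 0$ and the trajectory is genuinely diffusive with $\|\tau_n\|\asymp\sqrt n$ and isotropic Gaussian fluctuations. \textbf{The main obstacle} is precisely this LCLT/anticoncentration input in the cone: one needs not just that $\tau_n/\sqrt n$ is asymptotically Gaussian but a uniform-in-$n$ lower bound on $\mu(\tau_n\in \text{(cone $\cap$ annulus)})$ together with enough decorrelation between the events at times $n_k$ along a lacunary sequence to run Borel--Cantelli; both follow from the spectral theory of the transfer operator of the SFT twisted by characters (the non-degeneracy of the Gaussian and the aperiodicity being guaranteed by the non-coboundary hypothesis), but assembling them cleanly, uniformly over the (compact set of) directions, is the technical heart of the proof and is what I would expect to spend most of the work on in \S\ref{SSCones}.
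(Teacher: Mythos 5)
Your overall architecture --- pass to a sparse subsequence $n_k$, show that $\tau_{n_k}(\omega)$ lands deep in the cone at scale $\sqrt{n_k}$ with probability bounded below, and conclude by Borel--Cantelli --- is the same as the paper's, and you correctly identify the reduction to $\mathbb Z_+$ and the role of the mean-zero standing assumption. But the step you yourself flag as the technical heart is where your sketch has a genuine gap, and the paper's proof is organized precisely to avoid it. With $n_k=2^k$ the events $A_k=\{\tau_{n_k}(\omega)\in\cC,\ \|\tau_{n_k}(\omega)\|>\sqrt{n_k}\}$ are \emph{not} asymptotically independent, and exponential mixing of $\sigma$ does not decorrelate them: $\tau_{n_{k+1}}=\tau_{n_k}+\tau_{n_{k+1}-n_k}\circ\sigma^{n_k}$, and for a lacunary sequence the old sum $\tau_{n_k}$ is at least of the same order $\sqrt{n_{k+1}}$ as the fluctuation of the new increment (and in the worst case as large as $n_k\|\tau\|_\infty\gg\sqrt{n_{k+1}}$), so conditioning on the past shifts the target region $\cC\cap\{\|v\|>\sqrt{n_{k+1}}\}$ by a non-negligible amount and no uniform conditional lower bound holds. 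A second-moment (Kochen--Stone) variant would only give positive probability to $\limsup A_k$ and would still require a separate zero--one law.

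The paper removes both difficulties at once by two choices you do not make. First, the gaps grow polynomially, $n_{k+1}=n_k^3$, so that even the deterministic worst case $\|\tau_{n_k}\|_\infty\le n_k\|\tau\|_\infty$ is $o(\sqrt{n_{k+1}})$ and cannot move $\tau_{n_{k+1}}$ out of $\cC$ once the fresh increment lies in a sub-cone $\hC$ with norm exceeding $\sqrt{n_{k+1}-n_k}$. Second, instead of a second-moment argument it uses L\'evy's conditional Borel--Cantelli lemma: since $\tau$ depends only on the past, $A_k$ is measurable with respect to the $\sigma$-algebra $\cF_k$ generated by the first $n_k$ coordinates, and it suffices to prove $\sum_k\mu(A_{k+1}\mid\cF_k)=\infty$; the uniform lower bound $\mu(A_{k+1}\mid\cD)\ge\eps(\cC)$ over cylinders $\cD$ of length $n_k$ is exactly what the \emph{mixing} CLT for $\tau_{n_{k+1}-n_k}\circ\sigma^{n_k}$ provides. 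In particular no local limit theorem, no anticoncentration, and no decorrelation between the $A_k$'s is needed --- only the CLT conditioned on cylinders with a nondegenerate limiting Gaussian, which is the sole place the non-coboundary hypothesis enters; your preliminary Gotschalk--Hedlund discussion plays no role in the actual argument.
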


With all the above results, we can now prove Proposition \ref{prop:atoms}. 

\begin{proof}[Proof of Proposition \ref{prop:atoms}]
We will take $\epsilon_0$ smaller than $\eta/4$. Notice that if $G$ is as in {\bf a1}, then $W^c$ is trivial and therefore, for every 
$y \neq y'\in Y$, $y'\notin W^c(y)$. Let $\a_j:=\tau_{-j}(\omega)$. 
Then by Lemma \ref{lem:RW}, there exists a full measure set of $\omega$ such that $\{\a_j\}$ belongs to every $\mathcal{C}_i$ infinitely often
and the norm of such $\a_j$'s is unbounded. 
Moreover, $\DS \sup_{j}\|\a_{j+1}-\a_j\|<\sup |\tau|$. Therefore the assumptions of Lemma \ref{lem:ball} are satisfied. In particular it follows that if $y\neq y'$, then there exists $j$ such that 
$G_{\a_j}(y)$ and $G_{\a_j}(y')$ are not in the same atom. 
This finishes the proof in case {\bf a1}.

 Let now $G$ satisfy {\bf a2}. Let $\mathfrak{k}\subset \mathfrak{h}$ be the maximal compact subalgebra.
Take a small $\delta.$
By further decreasing $\epsilon_0$ we can assume that the following holds: 
for every $\cW\in \mathfrak{k}\setminus\{0\}$ with $\|\cW\|\leq \delta$,
 there exists an atom $Q\in \mathcal{Q}_{\epsilon_0}$ satisfying
\begin{equation}\label{eq:qinv}
Q\text{ is not invariant under the automorphism }g_\cW=\exp(\cW). 
\end{equation}
Let us first prove that such $\epsilon_0$ exists. If not, then for every $\epsilon>0$ there exists $\cW_\epsilon\in \mathfrak{k}$, such that $\|\cW_\epsilon\|_{TN}\leq \delta$ and
every atom of $\mathcal{Q}_\epsilon$ is invariant under $g_{\cW_{\epsilon}}$. 
 Then for each $n\in\naturals,$ every atom of $\mathcal{Q}_\epsilon$ is also invariant under 
 $g_{n \cW_{\epsilon}}$. Taking $n_\eps=[\delta/\|\cW_\eps \|]+1$, $\tilde\cW=n_\eps \cW_\eps$ 
 we get that
$\|\tilde\cW_\eps\|\in [\delta, 2\delta]$ such that every atom of $\mathcal{Q}_\epsilon$ is invariant 
under $g_{\tilde\cW_{\epsilon}}$.

By compactness (since atoms of $\mathcal{Q}_\epsilon$ shrink to points) and taking $\epsilon\to 0$, it would follow that there exists $\cW_0\in \mathfrak{k}$ with 
$\|\cW_0\|_{TY}\in [\delta, 2\delta]$ such that $g_{\cW_0}=id$. 
 If $\delta>0$ is sufficiently small, this gives a contradiction and finishes the proof of \eqref{eq:qinv}.

By Corollary 2 in \cite{Gui89}, the skew product is ergodic. Let $\Lambda$ be the subset of points whose forward (and also backward) orbit is dense. Hence, $\zeta(\Lambda)=1$.

Notice that if $(\omega,y)\in \Lambda$, and $(\omega,y), (\bar\omega,y')$  lie in the same atom of 
$\DS \bigvee_{i=0}^{\infty} F^i(\mathcal P \times \mathcal{Q}_{\epsilon_0})$, then 
$\omega^-=\bar\omega^-$. Since $\tau$ depends only on the past, $\tau_{-j}(\omega)=\tau_{-j}(\bar\omega)$ for $j\in \N$. We will show that $y'=y$.

 Assume first that $y'\in W^c(y)$ and let $y'=g_c\cdot y$, $g_c=\exp(\cW)$, with $\cW\neq 0$. 
 If $\cW\in \mathfrak{k}$, let $Q=Q_\cW$ be such that \eqref{eq:qinv} is satisfied and if $\cW\notin \mathfrak{k}$, let $Q$ be any atom $\mathcal{Q}$. Note that there exists $q\in Q$ and $\epsilon=\epsilon(g_c)>0$ such that  $B(\epsilon,q)\subset Q$ and $g_c\cdot B(\epsilon,q)\cap Q=\emptyset$. 
 Indeed, if not then $Q$ would be invariant under the translation by $g_c=\exp(\cW)$. 
 If $\cW\in \mathfrak{k}$ we get a contradiction with \eqref{eq:qinv}. If $\cW\notin \mathfrak{k}$ then  the set $\{g_c^n\;:\; n\in \Z\}$ is not compact in 
$H$ and by Moore ergodicity theorem \cite{Mo}, the automorphism $g_c$ is ergodic, a contradiction.  This contradiction shows that such $q$ and $\epsilon$ exist.  
 
 Since the $F$ orbit of $(\omega,y)$ is dense, there
exists $n$, such that $F^{-n}(\omega,y)\in \Sigma_A\times B(\epsilon,q)\subset \Sigma_A\times Q$. Let $u=\phi_{-n}(\omega).$ Then by \eqref{cen:id}, $G_{u}y'=g^c G_{u}y\notin Q$. So  $F^{-n}(\omega,y)$  and $F^{-n}(\omega',y')$ are not in the same atom of $\mathcal{P}\times \mathcal{Q}$.

If $y'\notin W^c(y)$ then 
we again use Lemma  \ref{lem:RW} to finish the proof.
\end{proof}

\begin{remark}
We believe that ALL partially hyperbolic algebraic abelian actions satisfy the assertion of Proposition \ref{prop:atoms}. However, the proof is more complicated if there is a polynomial growth in the center. We plan to deal with the general situation in a forthcoming paper.
\end{remark}

\section{Non Bernoulicity under zero drift. Proof of Theorem \ref{thm:main}}
\subsection{The main reduction}
We introduce the notion of $(\epsilon,n)$-closeness which is an averaged version of Bowen closeness. Let $d$ denote the product metric on $\Sigma\times Y.$
 Two points $(\omega,y),( \omega',y')\in \Sigma_A\times Y$ are called 
$(\epsilon,n)$-{\em close} if 
$$\#\left\{i\in[1,n]:d\big(F^i(\omega,y),F^i(\omega',y')\big)<\epsilon\right\}\ge (1-\epsilon)n.$$
We will now state two propositions that imply Theorem \ref{thm:main}.

\begin{proposition}\label{prop:VWB}
If $F$ is Bernoulli then for every $\epsilon,\delta>0$ there exists $n_0$ such that for every $n\geq n_0$ there exists a measurable set $W\subset \Sigma_A\times Y$ with $ \zeta(W)>1-\delta$ 
such that if $(\omega,y),(\bar \omega,\bar y)\in W$, there exists a map 
$\Phi_{ (\omega^-, y) (\bar \omega^-, \bar y)}:\Sigma ^+_A(\omega)\to \Sigma ^+_A(\bar \omega)$ with $(\Phi_{\omega^-,\bar \omega^-})_*(\mu^+_{\omega})=\mu^+_{\bar \omega}$ and a set $U_{\omega^-}\subset \Sigma^+_A(\omega)$ such that:
\begin{itemize}
\item[(1)] $\mu^+_{\omega}(U_{\omega^-})>1-\delta$;
\item[(2)] if $z\in U_{\omega^-}$ then $((\omega^-, z), y)$ and 
$((\bar\omega^-, \Phi_{ (\omega^-, y) (\bar \omega^-, \bar y)} z), \bar y)$
are $(\epsilon, n)$-close.
\end{itemize}
\end{proposition}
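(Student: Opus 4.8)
The plan is to derive Proposition \ref{prop:VWB} from the fact that a Bernoulli transformation is \emph{very weak Bernoulli} (VWB) with respect to every finite partition, once we have identified, via Proposition \ref{prop:atoms}, the ``past'' $\sigma$-algebra with respect to which the matching is performed. Concretely, I would fix $\epsilon,\delta>0$, set $\ell=\lceil\log_2(2/\epsilon)\rceil$, choose $L\ge \max(\ell,\epsilon_0^{-1/\beta})$ and a small $\rho>0$, and work with a finite partition $\mathcal R$ of $\Sigma_A\times Y$ refining $\mathcal P_{\epsilon_0}\times\mathcal Q_{\epsilon_0}$ whose time-$0$ cell constrains only the $\Sigma_A$-coordinates in $[-L,0]$ together with the $Y$-coordinate up to a set of diameter $<\epsilon/2$ (e.g.\ cylinders on $[-L,0]$ times a refinement of $\mathcal Q_{\epsilon_0}$ of mesh $\rho<\epsilon/2$). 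This $\mathcal R$ has two features I would record first. (i) Since $\mathcal R$ refines $\mathcal P_{\epsilon_0}\times\mathcal Q_{\epsilon_0}$ and its time-$0$ cell imposes no constraint on $\omega^+$, Proposition \ref{prop:atoms} and a short computation (using that $\tau$ depends only on the past, so $\tau_{-i}(\omega)$ depends only on $\omega^-$) give that the atom of $\bigvee_{i\ge0}F^i\mathcal R$ through $(\omega,y)$ is \emph{exactly} $\{\omega^-\}\times\Sigma^+_A(\omega^-)\times\{y\}$, and that the conditional measure of $\zeta$ on it is $\mu^+_\omega\times\delta_y$. (ii) If the $\mathcal R$-names of the $F$-orbits of two points agree at all of the times $i,i+1,\dots,i+\ell$, then $d\big(F^i(\cdot),F^i(\cdot)\big)<\epsilon$: agreement at one time forces the $Y$-coordinates into a common set of diameter $<\epsilon/2$ and the $\Sigma_A$-coordinates to agree on a window, and the union of these windows over $j\in[i,i+\ell]$ covers the symmetric window $[-\ell,\ell]$ about time $i$.

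Next I would invoke VWB. Since $F$ is Bernoulli, the symbolic factor generated by $\mathcal R$ is a factor of a Bernoulli system, hence Bernoulli, hence VWB (this is part of Ornstein's theory). So for every $\epsilon'>0$ there are $N,n_0$ such that for all $n\ge n_0$ there is a union $W$ of atoms of $\bigvee_{i\ge0}F^i\mathcal R$ with $\zeta(W)>1-\epsilon'$ on each of which the conditional law of the $\mathcal R$-name on $[1,n]$ is within $\bar d_n$-distance $\epsilon'$ of the unconditional law. By feature (i), $W$ is a union of sets $\{\omega^-\}\times\Sigma^+_A(\omega^-)\times\{y\}$, i.e.\ it depends only on $(\omega^-,y)$. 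Applying this with window $n+\ell$ in place of $n$, for any $(\omega,y),(\bar\omega,\bar y)\in W$ the two conditional name-laws are within $\bar d_{n+\ell}$-distance $2\epsilon'$ of each other by the triangle inequality, so there is a coupling of them with expected normalized Hamming discrepancy $<2\epsilon'$. Because the $\mathcal R$-name map on $\Sigma^+_A(\omega^-)$ forgets the far future, I would lift this coupling to a coupling of $(\Sigma^+_A(\omega^-),\mu^+_\omega)$ and $(\Sigma^+_A(\bar\omega^-),\mu^+_{\bar\omega})$ by taking the relatively independent joining over the names; the Hamming statistics are unchanged.

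Finally I would turn this coupling into an honest isomorphism: $\mu^+_\omega$ and $\mu^+_{\bar\omega}$ are non-atomic standard probability measures (Gibbs measures on cylinder-futures), and graph couplings — those of the form $(\mathrm{id},\Phi)_*\mu^+_\omega$ with $\Phi$ a measure isomorphism — are weak-$*$ dense among all couplings of two such measures, so I may replace the coupling by one of this form, obtaining $\Phi=\Phi_{(\omega^-,y)(\bar\omega^-,\bar y)}$ with $\Phi_*\mu^+_\omega=\mu^+_{\bar\omega}$ and, by Markov's inequality, a set $U_{\omega^-}$ with $\mu^+_\omega(U_{\omega^-})>1-C\sqrt{\epsilon'}$ such that for $z\in U_{\omega^-}$ the $\mathcal R$-names of $((\omega^-,z),y)$ and $((\bar\omega^-,\Phi z),\bar y)$ agree on at least a $(1-C\sqrt{\epsilon'})$-fraction of $[1,n+\ell]$; then at most $(\ell+1)C\sqrt{\epsilon'}(n+\ell)$ indices $i\in[1,n]$ have a disagreement inside $[i,i+\ell]$, so by feature (ii) the two points are $(\epsilon,n)$-close provided $\epsilon'$ was chosen with $C\sqrt{\epsilon'}<\delta$ and $3(\ell+1)C\sqrt{\epsilon'}<\epsilon$. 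This yields (1) and (2) with $\zeta(W)>1-\delta$. The argument is entirely soft: no property of the fiber dynamics $G_t$ beyond the hypothesis that $F$ is Bernoulli is used. The only delicate points — and the ones I would be most careful about — are the bookkeeping that makes feature (ii) hold (forcing the symmetric symbolic window of width $2L$ and the offset $\ell$) and the passage from the abstract $\bar d$-estimate to a genuine measure isomorphism $\Phi$ rather than a mere joining, which is where non-atomicity of the Gibbs conditionals and the density of graph couplings enter.
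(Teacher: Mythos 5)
Your proposal is correct and follows essentially the same route as the paper: invoke Ornstein's theorem that Bernoulli implies very weak Bernoulli with respect to every partition (in the relative form made possible by Proposition \ref{prop:atoms}, which identifies the atoms of the past partition as $\{\omega^-\}\times\Sigma_A^+(\omega^-)\times\{y\}$), and then choose the partition fine enough that matchability of names forces $(\epsilon,n)$-closeness. The only difference is that the paper cites \cite{Orn1}, \cite{SB}, \cite{KRHV} for the VWB statement and its relative formulation, whereas you unpack that black box (the $\bar d$-coupling, the passage to a graph coupling $\Phi$, and the window bookkeeping converting name agreement into metric closeness) explicitly.
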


We will also need another result. For $\epsilon>0$, $n\in \N$, $\omega\in \Sigma_A, y'\in Y$, let 
\begin{equation}\label{eq:de}
D(\omega,y',\epsilon,n):=\Big\{y\in Y\;:\; \exists\, \omega'\in \Sigma_A\;\text{s.t.}\; (\omega,y) \text{ and } (\omega',y')\text{ are } (\epsilon,n)\text{-close}\Big\}.
\end{equation}

\begin{proposition}\label{prop:cruc} There exists $\epsilon'>0$, an increasing sequence $\{n_k\}$, a family of sets $\{\Omega_k\}$, $\Omega_k\subset \Sigma_A$, $\mu(\Omega_k)\to 1$, such that 
$$
\lim_{k\to\infty}\sup_{\substack{\omega\in \Omega_k\\ y'\in Y}}\nu(D(\omega,y',\epsilon',n_k))=0.
$$
\end{proposition}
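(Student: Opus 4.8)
The plan is to show that the set $D(\omega, y', \epsilon', n_k)$ of ``$\epsilon'$-shadowable'' fiber points is small by exploiting the fact that two orbits which are $(\epsilon', n)$-close must, in particular, stay close on a definite positive-density set of times $i \le n$, and hence the fiber coordinates $G_{\tau_i(\omega)} y$ and $G_{\tau_i(\omega')} y'$ must be within $\epsilon'$ for that many times. The key dynamical input is that when $d \ge 3$, the vectors $\tau_i(\omega)$ spread out: by Lemma \ref{lem:RW}, for $\mu$-a.e. $\omega$ and every cone $\cC$, $\sup_{n} \{\|\tau_n(\omega)\|: \tau_n(\omega) \in \cC\} = \infty$, and moreover (since $\tau$ is not cohomologous to a lower-dimensional cocycle) the partial sums visit all the Lyapunov cones $\cC_i$ infinitely often with unbounded norm. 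First I would fix a large parameter $R$ and, using Egorov's theorem applied to the a.e. statements coming from Lemma \ref{lem:RW} and from the recurrence/transience structure, choose increasing $n_k$ and sets $\Omega_k$ with $\mu(\Omega_k) \to 1$ on which one has \emph{uniform} control: for each $\omega \in \Omega_k$, among the times $i \le n_k$ the collection $\{\tau_i(\omega)\}$ contains, in each cone $\hat\cC_j$, vectors of norm as large as $R(k)$ for some $R(k) \to \infty$, while simultaneously the number of ``near-returns'' $\#\{i \le n_k: \|\tau_i(\omega) - \tau_{i'}(\omega)\| \le K \ln n_k\}$ is sub-linear in $n_k$ — this is exactly the phenomenon isolated in the transient regime $d \ge 3$ (cf. the estimates around \eqref{eq:zz}).

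Next I would fix $\omega \in \Omega_k$ and $y' \in Y$ and estimate $\nu(D(\omega, y', \epsilon', n_k))$ directly. If $y \in D(\omega, y', \epsilon', n_k)$, there is $\omega'$ with the orbits $(1-\epsilon')$-close through time $n_k$; on the time set $\mathcal{G}$ of good indices (density $\ge 1 - \epsilon'$) we have $d_Y(G_{\tau_i(\omega)} y, G_{\tau_i(\omega')} y') < \epsilon'$. I would first dispose of the $\omega'$ dependence: since the base is a subshift, $\omega$ and $\omega'$ being $(\epsilon', n_k)$-close in the product metric forces $\omega_j = \omega'_j$ for a positive-density set of coordinates $j$, and in fact one can reduce to finitely many candidate ``pasts'' $\omega'^-$ by a counting/covering argument at scale $\epsilon'$, so it suffices to bound, for each fixed $\omega'$, the measure of $\{y: d_Y(G_{\tau_i(\omega)}y, G_{\tau_i(\omega')}y') < \epsilon' \text{ for } i \in \mathcal{G}\}$ and then sum over the (sub-exponentially many, relative to the exponential mixing rate) choices. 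For fixed $\omega, \omega', y'$, the set $\{y : d_Y(\tau_i(\omega) \cdot y, w_i) < \epsilon'\}$ (with $w_i := G_{\tau_i(\omega')}y'$ a fixed point) is, by \eqref{eq:exp} and the cone structure, contained in a parallelogram $C(\{\delta_j^{(i)}\}, \cdot)$ whose side length in the direction $E^{\chi_j}$ is $\asymp \epsilon' e^{-\chi_j(\tau_i(\omega))}$; taking $i$ to range over a well-separated subsequence within $\mathcal{G}$ that visits all cones with large norm, and applying exponential mixing on balls (Lemma \ref{exp:mix}, \eqref{ExpMixBalls}) iteratively along that subsequence, the measure of the intersection is bounded by a product $\prod (C \nu(B(w_i, \epsilon' e^{-c\|\tau_i(\omega)\|})) + C e^{-\eta \|\tau_i(\omega) - \tau_{i'}(\omega)\|})$, which goes to $0$ as $k \to \infty$ because the separations $\|\tau_i(\omega) - \tau_{i'}(\omega)\|$ along this subsequence are forced to be large on $\Omega_k$ (using that near-returns are sub-linear, so the good times can be thinned to a genuinely spreading subsequence).

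The main obstacle I expect is making the second step \emph{uniform in $y'$ and in $\omega'$} simultaneously: the intersection-of-shrinking-parallelograms estimate is easy for a single comparison orbit, but $D(\omega, y', \epsilon', n_k)$ is a union over \emph{all} admissible $\omega'$, and a priori that union could be large. Controlling it requires (i) showing that only boundedly (or sub-exponentially) many $\omega'$-classes are relevant at resolution $\epsilon'$ — which uses that $(\epsilon', n_k)$-closeness pins down a positive density of base coordinates, combined with finite alphabet — and (ii) ensuring the exponential mixing gain $e^{-\eta \|\tau_i - \tau_{i'}\|}$ along the chosen subsequence beats the number of $\omega'$-classes; this is where the hypothesis $d \ge 3$ (transience) is essential, since it guarantees the ergodic sums $\tau_i(\omega)$ genuinely escape to infinity in every cone rather than clustering, so the product of mixing errors is summable and the subsequence can be made to spread fast enough. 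Once these two points are in place, a union bound and the uniform choice of $\Omega_k$, $n_k$ from the first step yield $\sup_{\omega \in \Omega_k, y' \in Y} \nu(D(\omega, y', \epsilon', n_k)) \to 0$, which is the assertion.
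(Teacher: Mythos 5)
Your plan — pick a spreading subsequence of times, view each closeness constraint as a preimage of an $\epsilon'$-ball under $G_{\tau_i(\omega)}$, and multiply out quasi-independence via exponential mixing on balls, then union over the admissible $\omega'$ — has a genuine quantitative gap at exactly the point you flag as ``the main obstacle,'' and transience alone does not close it. First, the union over $\omega'$ is \emph{exponentially} large in $n_k$: $(\epsilon',n_k)$-closeness only pins down a $(1-\epsilon')$-density of base coordinates, so the number of relevant $\omega'$-classes grows like $e^{h(\epsilon')n_k}$ with $h(\epsilon')>0$; worse, at the $\epsilon' n_k$ bad times the increments of $\tau$ along $\omega$ and $\omega'$ can differ by $O(1)$, so the targets $w_i=G_{\tau_i(\omega')}y'$ are not fixed points but range over sets of diameter $O(\epsilon' n_k)$ in $\Ab$. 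Second, each individual constraint $\{y: d_Y(G_{\tau_i(\omega)}y,w_i)<\epsilon'\}$ has measure $\asymp(\epsilon')^{\dim Y}$ (a \emph{constant}, by invariance of $\nu$), so the gain must come from the number $m$ of quasi-independent factors; but iterating the additive estimate \eqref{ExpMixBalls} $m$ times requires the additive error $Ce^{-\eta\|v\|}$ to be small against the product $(\epsilon')^{m\dim Y}$, i.e.\ separations growing linearly in $m$, while $\|\tau_i(\omega)\|$ only reaches $O(\sqrt{n_k})$. One can therefore extract at most $m\sim\sqrt{n_k}$ usable factors, giving a bound that is only stretched-exponentially small in $n_k$ — not enough to beat the exponential count of $\omega'$-classes.

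The paper avoids both problems by never unfolding the existential quantifier over $\omega'$ and by using mixing only \emph{once per scale}. It sets up a doubling induction (Proposition \ref{prop:cruc2}): Lemma \ref{lem:tpr} shows that $D(\omega,y',\epsilon_k,n_k)$ is contained in a union, over polynomially many choices of a well-separated good pair $(i,j)$ and a discretized shift, of intersections of \emph{two} translated copies of $D$-sets at scale $n_{k-1}$ (with slightly relaxed tolerance $\epsilon_{k-1}$); Lemma \ref{lem:induc} then applies exponential mixing a single time to decouple the pair, yielding $a_k(\epsilon_k)\le C n_k^{2d+1}a_{k-1}(\epsilon_{k-1})^2$, and Lemma \ref{lem:aste} shows the quadratic recursion drives $a_k\to 0$ once the base case $a_1$ is small — which is where the hyperbolicity of $G$ along Lyapunov foliations enters (Lemma \ref{lem:a1}, with $\epsilon_1=1/(10n_1)$ so that \emph{all} times are good and $\tau_i(\omega')$ is genuinely pinned to $\tau_i(\omega)$). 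If you want to repair your argument, you would need to replace the long product of mixing estimates by some such self-improving multiscale scheme; as written, the union bound in your last step does not converge.
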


We will prove Proposition \ref{prop:VWB} in a \S \ref{SSHB}
and Proposition \ref{prop:cruc} in \S \ref{SS-Cr1-Cr2}. Now we show how these two propositions imply Theorem \ref{thm:main}:
\begin{proof}[Proof of Theorem \ref{thm:main}] We argue by contradiction. 
Fix $\epsilon=\epsilon' / 100$, $\delta=\epsilon$, and let $n=n_k$ (for some sufficiently large $k$, specified below). Let $W\subset \Sigma_A\times Y$ be the set from Proposition \ref{prop:VWB}.
Let 
$$W^y:=\{\omega\in \Sigma_A\;:\; (\omega,y)\in W\}\quad\text{and}\quad W_\omega:=\{y\in M\;:\; (\omega,y)\in W\}. $$ 
By Fubini's theorem, there exists $Z\subset \Sigma_A$, $\mu(Z)\geq 1-2\epsilon$ such that for every $\omega\in Z$, 
${ \nu}(W_\omega)>1/2$. Let $k$ be large enough (in terms of $\epsilon$) such that $\mu(Z\cap \Omega_k)\geq 1-4\epsilon$. By Fubini's theorem, it follows that there exists $Z'\subset Z\cap \Omega_k$, $\mu(Z')>1-4\epsilon$ such that for $\omega\in Z'$, $\mu_{\omega}^+(Z\cap \Omega_k)>1-8\epsilon$. 
In particular, it follows that 
$$
\mu_{\omega}^+(\{\bar\omega^+\in U_{\omega^-}:(\omega^-,\bar\omega^+)\in Z\cap\Omega_k\})>1-16\epsilon.$$
 Let $\omega=(\omega^-,\omega^+)\in Z\cap \Omega_k\cap( \{\omega^-\}\times U_{\omega^-})$ and
let $(\bar \omega,y')\in W$. Since $\omega\in Z$ it follows that ${ \nu}(W_\omega)>1/2$.  Since $\omega\in \Omega_k$, it follows that
for $k$ large enough  there exists 
\begin{equation}\label{ynotin}y\in W_\omega\setminus D(\omega,y',\epsilon',n_k). 
\end{equation}
 Since $\omega^+\in U_{\omega^-}$, by $(2)$ we get that $(\omega^-,\omega^+,y)$ and $(\bar\omega^-,\Phi_{\omega^-,\bar\omega^-}(\omega^+),y')$ are $(\epsilon, n_k)$-close. This by the definition of $D(\omega,y',\epsilon',n_k)$ implies that $y\in D(\omega,y',\epsilon',n_k)$. This however contradicts \eqref{ynotin}.
This contradiction finishes the proof.
\end{proof}

\subsection{Hamming--Bowen closeness}
\label{SSHB}
We start with introducing the notion of VWB (very weak Bernoulli) partitions in the setting of skew-product for which the assertion of Proposition \ref{prop:atoms} holds (see eg. \cite{SB} or \cite{KRHV}):
Let $\mathcal{R}$ be a partition of $\Sigma_A\times Y$. Two points $(\omega,y),( \omega',y')\in \Sigma_A\times Y$ are called $(\epsilon,n,\mathcal{R})$-{\em matchable} if 
$$\#\{i\in[1,n]:F^i(\omega,y)\text{ and }F^i(\omega',y')\text{ are in the same } \mathcal{R} \text{ atom}\}\ge (1-\epsilon)n.$$
\begin{definition}\label{def:VWB} $F$ is very weak Bernoulli with respect to $\mathcal R$ if and only if for every $\epsilon'>0$, there exists $n'$ such that for every $n\geq n'$ there exists a measurable set $W'\subset \Sigma_A\times M$ with $\mu\times \nu(W')>1-\epsilon'$ such that if $(\omega,y),(\bar \omega,\bar y)\in W'$, there exists a map 
$\Phi_{ (\omega^-, y) (\bar \omega^-, \bar y)}:\Sigma ^+_A(\omega)\to \Sigma ^+_A(\bar \omega)$ with $(\Phi_{\omega^-,\bar \omega^-})_*(\mu^+_{\omega})=\mu^+_{\bar \omega}$ and a set $U'_{\omega^-}\subset \Sigma^+_A(\omega)$ such that:
\begin{itemize}
\item[(1)] $\mu^+_{\omega}(U'_{\omega^-})>1-\epsilon'$;
\item[(2)] if $z\in U'_{\omega^-}$ then $((\omega^-, z), y)$ and 
$((\bar\omega^-, \Phi_{ (\omega^-, y) (\bar \omega^-, \bar y)} z), \bar y)$
are $(\epsilon', n,\mathcal{R})$-matchable.
\end{itemize}
\end{definition}


\begin{proof}[Proof of Proposition \ref{prop:VWB}]
 Recall that by \cite{Orn1} if $F$ is Bernoulli then  it is VWB 
 with respect to every non-trivial partition.



Let 
$(\mathcal P\times  Q)_n$ be the sequence of partitions defined above, where the atoms have diameter that goes to $0$ as  $n\to \infty$.  Let $\bar{n}$ be such that the atoms of $(\mathcal P\times  Q)_{\bar{n}}$ have diameter $\leq \epsilon$. This then implies that if two points $(\omega,y)$ and $(\omega',y')$ are $(\epsilon, n)$ matchable, then they are $(\epsilon,n)$-close. It is then enough to use VWB definition for $(\mathcal P\times  Q)_{\bar n}$ with $\epsilon'=\min\{\delta,\epsilon\}$. This finishes the proof.
\end{proof}

\begin{remark} Now we explain why it is easier to work with closeness rather than matchability, 
in the case $G=\R^d$. Notice that if $(\omega,y)$ and $(\omega',y')$ are $(\epsilon,n)$-close, and $\|u\|<\delta<\epsilon$, then $(\omega,y)$ and $(\omega',G_u y')$ are $(\epsilon+\delta,n)$ close.
\footnote{Notice that for any $i\in \N$ the points $F^i(\omega',y')$ and $F^i(\omega',G_u y')$ are $\delta$ close. Indeed, they have the same first coordinate and the second one is $G_{\tau_i(\omega)}y'$ 
vs $G_{u+\tau_i(\omega)}y'$ which are $\delta$ close since $\| u \|<\delta$. 
}
 This is not necessarily true for matchability (if the orbit of $y'$ is always close to the boundary of the partition). This property of closeness crucially simplifies our consideration as it allows us
 to obtain a crucial inclusion \eqref{eq:pert}.
\end{remark}

\subsection{Proof of Proposition \ref{prop:cruc}}
\label{SS-Cr1-Cr2}

Given $\Omega_k, n_k$ denote
$$a_{k}(\epsilon'):=\sup_{\substack{\omega\in \Omega_k\\ y'\in Y}}\nu(D(\omega,y',\epsilon',n_k)).
$$
\begin{proposition}\label{prop:cruc2} 
There exists $n_1\in \N$ and  a family of sets $\{\Omega_k\}$ (as above) such that if 
$\DS \epsilon_k:=\left(1-\frac{1}{50k^2}\right)\epsilon_{k-1}$, $\epsilon_1:=\frac{1}{10n_1}$ and $n_{k+1}=(10k)^{100}\cdot n_k$, then we have 
$$a_{k}(\epsilon_k)\to 0,\;\text{as }k\to \infty.$$
\end{proposition}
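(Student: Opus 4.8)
The plan is to prove Proposition~\ref{prop:cruc2} by an iterative construction, building the sets $\Omega_k$ and controlling the quantities $a_k(\epsilon_k)$ along the way. The key mechanism is the following: if $(\omega,y)$ and $(\omega',y')$ are $(\epsilon,n)$-close, then along the orbit the $Y$-coordinates must stay within $\epsilon$ of each other most of the time, i.e. $d_Y(G_{\tau_i(\omega)}y, G_{\tau_i(\omega')}y')<\epsilon$ for at least $(1-\epsilon)n$ values of $i\in[1,n]$. This means that $y\in D(\omega,y',\epsilon,n)$ forces $G_{\tau_i(\omega)}y$ to lie in a ball of radius $\epsilon$ around $G_{\tau_i(\omega')}y'$ for many indices $i$, so $D(\omega,y',\epsilon,n)$ is contained in an intersection (over a large fraction of good indices) of preimages $G_{-\tau_i(\omega)}B(\cdot,\epsilon)$. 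Bounding $\nu$ of such an intersection is where the exponential mixing on balls (Lemma~\ref{exp:mix}) enters: whenever two indices $i<j$ along the orbit of $\omega$ have $\|\tau_j(\omega)-\tau_i(\omega)\|$ large and lying in an expanding cone $\cC_\ell$, the conditional measure of the ball-preimage at step $j$, given the constraint at step $i$, drops multiplicatively.

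First I would set up the combinatorial skeleton. Fix a base scale $n_1$ and $\epsilon_1=1/(10n_1)$, with $n_{k+1}=(10k)^{100}n_k$ and $\epsilon_k=(1-\tfrac{1}{50k^2})\epsilon_{k-1}$; note $\epsilon_k$ converges to a positive limit $\epsilon_\infty$, so the scale at which we test closeness never degenerates. The sets $\Omega_k$ will be defined as the set of $\omega$ whose orbit segment of length $n_k$ contains ``enough'' well-separated return times in each expanding cone $\hat\cC_\ell$; more precisely, using Lemma~\ref{lem:RW} and the ergodic theorem applied to the cocycle $\tau$, for $\mu$-a.e.\ $\omega$ the sums $\tau_n(\omega)$ visit every cone with unbounded norm, and a quantitative Egorov/Borel--Cantelli argument upgrades this to: for $\mu(\Omega_k)\to1$, every $\omega\in\Omega_k$ admits a chain $0=n^{(0)}<n^{(1)}<\cdots<n^{(L_k)}\le n_k$ of indices with $L_k\to\infty$, with bounded gaps, and such that consecutive increments $\tau_{n^{(s+1)}}(\omega)-\tau_{n^{(s)}}(\omega)$ have norm growing to infinity inside some $\hat\cC_\ell$. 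The choice $n_{k+1}=(10k)^{100}n_k$ gives the room needed for the number of usable separation events $L_k$ to grow.

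Next I would run the measure estimate. Given $\omega\in\Omega_k$ and $y'\in Y$, cover $Y$ by balls of radius $\epsilon_k$; for each such ball $B$, the set of $y$ for which $G_{\tau_i(\omega)}y$ lands in the $\epsilon_k$-neighbourhood of $G_{\tau_i(\omega')}y'$ for all good $i$ is contained in $\bigcap_s G_{-\tau_{n^{(s)}}(\omega)}B(z_s,2\epsilon_k)$ for appropriate centers $z_s$ (here I am absorbing the $(1-\epsilon)$-fraction of matched indices by discarding a controlled number of them; the chain can be thinned so that all $n^{(s)}$ are among the matched indices, since the unmatched set has density $<\epsilon$). Applying \eqref{ExpMixBalls} telescopically along the chain — conditioning on the constraint at $n^{(s)}$ and using the expansion in $\hat\cC_\ell$ to guarantee $\|\tau_{n^{(s+1)}}-\tau_{n^{(s)}}\|$ exceeds the mixing threshold once $s$ is large — gives $\nu(\bigcap_s\cdots)\le \prod_s(\nu(B(z_s,2\epsilon_k))+Ce^{-\eta\|\cdot\|})$, which decays to $0$ as $L_k\to\infty$ because each factor is bounded away from $1$. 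Summing over the $O(\epsilon_k^{-\dim Y})$ balls in the cover and over the finitely many cones, and using that $\epsilon_k\ge\epsilon_\infty>0$ so the number of balls is bounded uniformly in $k$, yields $a_k(\epsilon_k)\to0$.

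The main obstacle I expect is the bookkeeping in the measure estimate: the constraint ``$(\epsilon,n)$-close'' only matches a $(1-\epsilon)$-fraction of indices, and which indices are matched depends on $\omega'$, which is existentially quantified in the definition of $D$; so the chain of separation times must be chosen robustly, surviving the deletion of any $\epsilon n_k$ indices, and the telescoping mixing argument must tolerate the conditioning being on a slightly different event at each step (a ball of radius $2\epsilon_k$ rather than $\epsilon_k$, etc.). Keeping the error terms in \eqref{ExpMixBalls} from accumulating — i.e.\ ensuring $\prod_s(\nu(\text{small ball})+\text{error})$ genuinely tends to $0$ rather than stabilising — requires that infinitely many separation increments in the chain have norm $\to\infty$, which is exactly what Lemma~\ref{lem:RW} provides but must be made quantitative and uniform over $\Omega_k$; this is the technical heart and where the precise growth rates $n_{k+1}=(10k)^{100}n_k$ and $\epsilon_k=(1-\tfrac1{50k^2})\epsilon_{k-1}$ are calibrated.
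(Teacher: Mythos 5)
Your overall intuition---that closeness forces $y$ into an intersection of $G_{-\tau_\cdot(\omega)}$-preimages of small balls, and that mixing should make such an intersection small---is the right starting point, but the proof as proposed has two gaps that the paper's argument is specifically engineered to avoid. First, the ``telescoping'' step $\nu\bigl(\bigcap_{s\le L_k} G_{-\tau_{n^{(s)}}(\omega)}B(z_s,2\epsilon_k)\bigr)\le \prod_s\bigl(\nu(B(z_s,2\epsilon_k))+Ce^{-\eta\|\cdot\|}\bigr)$ is not a consequence of \eqref{ExpMixBalls}: that estimate controls the correlation of \emph{two} balls, and the intersection of the first $s$ constraints is not a ball, so you cannot condition on it and apply the estimate again. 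Upgrading to an $L_k$-fold product would require quantitative multiple mixing with constants controlled uniformly in the order $L_k\to\infty$, which is not available. The paper sidesteps this entirely: Lemma \ref{lem:tpr} locates just \emph{two} well-separated good blocks $i_{k-1},j_{k-1}$ inside $[0,n_k]$, Lemma \ref{lem:induc} applies pairwise mixing once (via a Besicovitch cover of each $D$-set by genuine balls) to decorrelate them, and the result is the quadratic recursion $a_k(\epsilon_k)\le C\, n_k^{2d+1}\,a_{k-1}(\epsilon_{k-1})^2$, closed by Lemma \ref{lem:aste} with the base case $a_1(\epsilon_1)\le(100n_1)^{-300d}$ from Lemma \ref{lem:a1} (which is where the choice $\epsilon_1=1/(10n_1)$, forcing \emph{every} index to match at the bottom scale and permitting a hyperbolic-expansion estimate, is essential --- a point your sketch does not supply).

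Second, the entropy of the unknown centers is not controlled. The centers $z_s=G_{\tau_{n^{(s)}}(\omega')}y'$ depend on the existentially quantified $\omega'$, and since only a $(1-\epsilon_k)$-fraction of indices need match, $\tau_{n^{(s)}}(\omega')$ can drift from $\tau_{n^{(s)}}(\omega)$ by as much as $O(\epsilon_k n_k)$; so a union bound over discretized tuples of shifts costs at least $(\mathrm{poly}(n_k))^{L_k}$, which overwhelms a product $\prod_s\nu(B(z_s,2\epsilon_k))\approx c^{L_k}$ of factors merely bounded away from $1$. The paper's version of this union bound is the lattice $\Lambda_k$ in \eqref{eq:pert}--\eqref{eq:tpr2}, but it costs only $|\Lambda_k|^2\cdot(10k)^{200}=n_k^{O(d)}$ because only two shifts per scale must be enumerated, and a polynomial factor is exactly what a squaring recursion with a tiny seed can absorb. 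Your construction of $\Omega_k$ from cone visits (Lemma \ref{lem:RW}) also lacks the quantitative separation $\|\tau_{jn_{k-1}}(\omega)-\tau_{in_{k-1}}(\omega)\|\ge k^{25}n_{k-1}^{1/2}$ and the maximal bound \eqref{eq:zz2} that the mixing step needs; the paper gets these from the anticoncentration and reflection estimates of Lemma \ref{lem:cl}, together with the recursive requirement that most $n_{k-1}$-sub-blocks lie in $\Omega_{k-1}$.
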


We remark that the recursive relations in Proposition \ref{prop:cruc2} imply that
\begin{equation}
\label{EpsK}
 \epsilon_k=\epsilon_1 \prod_{j=2}^k \left(1-\frac{1}{50j^2}\right),
\end{equation}
\begin{equation}
\label{N-K}
n_{k+1}=n_1 (10^kk!)^{100}.  
\end{equation}

 Proposition \ref{prop:cruc2} which is proven in Section \ref{ScProofCruc2} 
immediately implies Proposition \ref{prop:cruc}:

\begin{proof}[Proof of Proposition \ref{prop:cruc}]
We define 
$\DS \epsilon':=\inf_{k\geq 1}\epsilon_k=\prod_{j=2}^\infty \epsilon_1 \left(1-\frac{1}{50j^2}\right). $
Then by the definition of $\{\epsilon_k\}$, $\epsilon'>0$ and monotonicity, we have 
$$
0\leq a_k(\epsilon')\leq a_k(\epsilon_k)\to 0,
$$
as $k\to \infty$. This finishes the proof.
\end{proof}

\section{Consequence of exponential mixing}
We have the following quantitative estimates on independence of the sets $D(\omega,y',\epsilon', n_k)$ under the $\Ab$ action $G$, $\Ab\in\{\Z^d,\R^d\}$. This is the only place in the proof where we use 
the exponential mixing of $G.$

\begin{lemma}\label{lem:induc} For $k\in \N$ let $\omega_1,\omega_2\in \Sigma_A$ be such that 
\begin{equation}\label{eq:BS}
\sup_{r\leq n_{k-1}}\|\tau_r(\omega_i)\|\leq 2k^{20}n_{k-1}^{1/2}
\end{equation}
for $i=1,2$. Then, for any $y_1,y_2\in Y$, any $v\in \Z^d$, $\|v\|\geq k^{25}n_{k-1}^{1/2}$,
and any $\epsilon>0$.
$$
\nu\Big(G_v(D(\omega_1,y_1,\epsilon,n_{k-1}))\cap D(\omega_2,y_2,\epsilon,n_{k-1})\Big)\leq 
C_\#
\cdot \prod_{i=1,2}\nu\Big(D(\omega_i,y_i,\epsilon+2^{-n_{k-1}^{1/2}},n_{k-1})\Big).
$$
\end{lemma}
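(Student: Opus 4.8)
The strategy is to reduce the estimate to exponential mixing on balls (Lemma~\ref{exp:mix}) by a covering argument; the key point is that $D(\omega_i,y_i,\epsilon,n_{k-1})$ changes only by an enlargement of the parameter $\epsilon$ under perturbations of $y$ on a scale $\rho$ which, thanks to the gap between the exponents $20$ and $25$ in the hypotheses, is still large compared to $e^{-\eta'\|v\|}$. Write $D_i:=D(\omega_i,y_i,\epsilon,n_{k-1})$ and $D_i^+:=D(\omega_i,y_i,\epsilon+2^{-n_{k-1}^{1/2}},n_{k-1})$ (each is an open subset of $Y$). I would first show that, with $\rho:=e^{-C_0 k^{20}n_{k-1}^{1/2}}\,2^{-n_{k-1}^{1/2}}$ for a constant $C_0$ depending only on the Lyapunov functionals of $G$ (and on the comparison between $d_Y$ and the metric of \eqref{eq:exp}), the $\rho$-neighbourhood of $D_i$ lies in $D_i^+$. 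Indeed, if $y\in D_i$ with witness $\omega'$ and $d_Y(\tilde y,y)\le\rho$, then at each index $i'\le n_{k-1}$ realizing the $\epsilon$-closeness of $(\omega_i,y)$ and $(\omega',y_i)$ the $\Sigma$-component of $F^{i'}(\omega_i,\cdot)$ is unaffected by replacing $y$ with $\tilde y$, while the $Y$-component moves by at most $\mathrm{Lip}(G_{\tau_{i'}(\omega_i)})\,d_Y(\tilde y,y)$; by \eqref{eq:exp} and \eqref{eq:BS} this Lipschitz constant is at most $e^{C_0 k^{20}n_{k-1}^{1/2}}$, so the $Y$-component moves by at most $2^{-n_{k-1}^{1/2}}$, and the same $\ge(1-\epsilon)n_{k-1}$ indices now witness $(\epsilon+2^{-n_{k-1}^{1/2}},n_{k-1})$-closeness of $(\omega_i,\tilde y)$ with $(\omega',y_i)$, i.e.\ $\tilde y\in D_i^+$.

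Next I would cover and apply mixing. For $i=1,2$ take a maximal $\rho$-separated set $\{p^i_j\}_j\subset D_i$, so $\{B(p^i_j,\rho)\}_j$ covers $D_i$, the balls $\{B(p^i_j,\rho/2)\}_j$ are disjoint, and by the previous step each $B(p^i_j,\rho/2)\subset D_i^+$. Since $\nu$ is volume-regular on $Y$ (there is $c_d>0$ with $c_d s^{\dim Y}\le\nu(B(p,s))\le c_d^{-1}s^{\dim Y}$ uniformly for small $s$, and $\rho$ is tiny) one gets $\sum_j\nu(B(p^i_j,\rho))\le C_d\,\nu(D_i^+)$ and $\#\{j\}\le C_d\,\rho^{-\dim Y}\nu(D_i^+)$. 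Now $G_v(D_1)\cap D_2\subset\bigcup_{j,l}\bigl(G_vB(p^1_j,\rho)\cap B(p^2_l,\rho)\bigr)$, and for each pair $(j,l)$ Lemma~\ref{exp:mix} applies with $\bv=v$ and $r=r'=\rho$ --- legitimate because $\rho=e^{-(C_0 k^{20}+\ln 2)n_{k-1}^{1/2}}\ge e^{-\eta'\|v\|}$, using $\|v\|\ge k^{25}n_{k-1}^{1/2}$ and $C_0 k^{20}+\ln 2\le\eta' k^{25}$ in the relevant range of $k$ --- giving
$$
\nu\bigl(B(p^2_l,\rho)\cap G_vB(p^1_j,\rho)\bigr)\le\nu(B(p^1_j,\rho))\,\nu(B(p^2_l,\rho))+Ce^{-\eta\|v\|}.
$$
Summation over $j,l$ then yields
$$
\nu\bigl(G_v(D_1)\cap D_2\bigr)\le C_d^2\,\nu(D_1^+)\nu(D_2^+)+C\,C_d^2\,\rho^{-2\dim Y}e^{-\eta\|v\|}\,\nu(D_1^+)\nu(D_2^+).
$$

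The remaining step, and the one I expect to be the only genuine obstacle, is to absorb the error term, i.e.\ to verify that $\rho^{-2\dim Y}e^{-\eta\|v\|}$ is bounded. This is exactly where the separation of exponents is decisive: $\rho^{-2\dim Y}=e^{2\dim Y(C_0 k^{20}+\ln 2)n_{k-1}^{1/2}}$ while $e^{-\eta\|v\|}\le e^{-\eta k^{25}n_{k-1}^{1/2}}$, so the product is $\le e^{-(\eta k^{25}-2\dim Y(C_0 k^{20}+\ln 2))n_{k-1}^{1/2}}\le 1$ once $\eta k^{25}$ dominates $2\dim Y\,C_0 k^{20}$ --- which is the regime in which the lemma is invoked in the proof of Proposition~\ref{prop:cruc2}. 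Taking $C_\#:=2C_d^2$ finishes the argument. In short, the whole proof is the bookkeeping of a single scale $\rho$ that must simultaneously be small enough for the perturbation estimate and for volume-regularity, large enough to lie inside the admissible window $(e^{-\eta'\|v\|},1)$ of Lemma~\ref{exp:mix}, and such that the size of the cover does not overwhelm the mixing gain $e^{-\eta\|v\|}$ --- all three requirements being secured by the gap between $k^{20}$ in \eqref{eq:BS} and $k^{25}$ in the lower bound on $\|v\|$.
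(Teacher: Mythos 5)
Your argument is correct and follows essentially the same route as the paper's: the same perturbation estimate showing that a $\rho$-neighbourhood of $D(\omega_i,y_i,\epsilon,n_{k-1})$ (with $\rho$ of size $e^{-Ck^{20}n_{k-1}^{1/2}}$) lies inside the enlarged set $D(\omega_i,y_i,\epsilon+2^{-n_{k-1}^{1/2}},n_{k-1})$, a covering of each $D_i$ by $\rho$-balls, exponential mixing on balls for each pair, and a resummation, with the gap between the exponents $k^{20}$ and $k^{25}$ keeping $\rho$ inside the admissible window and killing the error. The only cosmetic differences are that the paper invokes the Besicovitch covering theorem where you use a maximal $\rho$-separated set plus volume doubling, and that it absorbs the additive mixing error into the product of ball measures pairwise rather than after summing; the constant bookkeeping you single out at the end (including its dependence on $k$ being favourable only once $\eta k^{25}$ dominates $k^{20}\log(2L)$) is exactly the step the paper performs, and is no more delicate in your version.
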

\begin{proof} 
 Let $\DS L:=\max\{\sup_{\|v\|=1}\| G_v\|_{C^1},100\}$. 
Then if $d(y,y')\leq 
{(2L)}^{-2k^{20}n_{k-1}^{1/2}}$, then 
$$d(G_v y, G_v y')\leq {L}^{2k^{20}n_{k-1}^{1/2}}\cdot {(2L)}^{-2k^{20}n_{k-1}^{1/2}} 
\leq 2^{-2k^{20}n_{k-1}^{1/2}}\leq 2^{-n_{k-1}^{1/2}}$$
for all $v\in \Ab$ with $\|v\|\leq 2k^{20}n_{k-1}^{1/2}$. 
Using this for $v=\|\tau_r(\omega_i)\|$, $r<n_{k-1}$, \eqref{eq:BS} 
implies that if $d(y,y')\leq 
{(2L)}^{-2k^{20}n_{k-1}^{1/2}}$, then 
\begin{equation}\label{eq:BS2}
d(G_{\tau_j(\omega_i)}(y), G_{\tau_j(\omega_i)}(y'))\leq 2^{-n_{k-1}^{1/2}}, \text{ for all } j<n_{k-1}.
\end{equation}

Therefore for every $y\in D(\omega_i,y_i,\epsilon, n_{k-1})$, 
\begin{equation}\label{eq:ins}
B\left(y,{(2L)}^{-2k^{20}n_{k-1}^{1/2}}\right)\subset D(\omega_i,y_i,\epsilon+2^{-n_{k-1}^{1/2}},n_{k-1}).
\end{equation}

Using Besicovitch theorem for the cover $\left\{B\left(y,{(2L)}^{-2k^{20}n_{k-1}^{1/2}}\right)\right\}$, where 
$$y\in D(\omega_i,y_i,\epsilon,n_{k-1}),$$
 we get a finite cover by a family of balls $\{B^{j,i}_s\}_{j\leq C', s\leq m_j}$ $i=1,2$, such that for 
 every $i\in \{1,2\},$ $j\leq C'$, the balls $\{B^{j,i}_s\}_{s\leq m_j}$ are pairwise disjoint. Therefore 
$$
\nu\Big(G_v(D(\omega_1,y_1,\epsilon,n_{k-1}))\cap 
D(\omega_2,y_2,\epsilon,n_{k-1})\Big)\leq  
\sum_{j,j'}\sum_{s,s'}\nu(G_v(B^{j,1}_s)\cap B^{j',2}_{s'}).
$$

Notice that $\|v\|\geq k^{25}n_{k-1}^{1/2}$ and so $e^{-\eta'' v}\leq  (\frac{1}{2L})^{2k^{20}n_{k-1}^{1/2}}$. Using that $G$ is exponentially mixing on balls  in the sense of
\eqref{ExpMixBalls},  we get that the above term is upper bounded by

\begin{equation}\label{eq:z1a1}
C\cdot \sum_{j,j'}\sum_{s,s'}\nu(B^{j,1}_s)\nu(B^{j',2}_{s'})=C\left[\sum_j\sum_s \nu(B^{j,1}_s)\right]\cdot \left[\sum_{j'}\sum_{s'} \nu(B^{j',2}_{s'})\right].
\end{equation}
Since the balls are disjoint for fixed $i$ and $j$, we have   
$$
\sum_s\nu(B^{j,i}_s)=\nu\left(\bigcup_{s}B^{j,i}_s\right)\leq \nu(D(\omega_i,y_i,\epsilon+2^{-n_{k-1}^{1/2}},n_{k-1}))
$$
where the last inequality follows from \eqref{eq:ins}. Since the cardinality of $j'$s is globally bounded (only depending on the manifold $N$), \eqref{eq:z1a1} is upper bounded by
$$
C\cdot C_d\cdot \prod_i \nu(D(\omega_i,y_i,\epsilon+2^{-n_{k-1}^{1/2}},n_{k-1})).
$$
This finishes the proof.
\end{proof}

We also have the following lemma.

\begin{lemma}\label{lem:aste} 
For any constant $C_2>1$ the following is true.
If $n_1>C_2$ and $b_k$ is a sequence of
real numbers satisfying
$$
b_1\leq \Big(\frac{1}{100n_1}\Big)^{300d}  \text{ and }
b_k\leq C_2\cdot n_k^{2d+1}\cdot b_{k-1}^2,
$$
then $b_k \to 0$.

\end{lemma}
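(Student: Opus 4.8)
The plan is to show the recursion forces $b_k$ to decay doubly exponentially once $n_1$ is large. First I would take logarithms. Writing $c_k = \log b_k$ (all $b_k$ are positive and less than $1$, so $c_k < 0$), the hypothesis $b_k \le C_2 n_k^{2d+1} b_{k-1}^2$ becomes
\[
c_k \le \log C_2 + (2d+1)\log n_k + 2 c_{k-1}.
\]
The point is that $c_{k-1}$ is very negative and gets multiplied by $2$ each step, so as long as the additive ``error'' term $\log C_2 + (2d+1)\log n_k$ is dominated by $|c_{k-1}|$, the sequence $c_k$ is driven to $-\infty$. So the main task is a bookkeeping estimate controlling $\log n_k$ against $|c_{k-1}|$.

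Next I would insert the explicit growth of $n_k$. By \eqref{N-K}, $n_{k+1} = n_1 (10^k k!)^{100}$, so $\log n_{k+1} = \log n_1 + 100 k \log 10 + 100 \log k!$, and by Stirling $\log k! \le k \log k$, hence $\log n_k \le C' k \log k \cdot \log n_1$ for an absolute constant $C'$ and all $k \ge 2$ (absorbing the $\log n_1$ factor and using $n_1 > C_2 > 1$). Thus the error term in the recursion for $c_k$ is at most $E_k := C'' k \log k \cdot \log n_1$ for some absolute constant $C''$ depending only on $d$ and $C_2$. On the other hand, the base value satisfies $c_1 = \log b_1 \le -300 d \log(100 n_1) \le -D \log n_1$ for an absolute constant $D$ that we may take as large as we like by noting $300d \log(100 n_1) \ge 300 d \log n_1$; more importantly $|c_1| \ge 300 d \log n_1$.

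Now I would run the induction to show $|c_k| \ge 2^{k-1} \log n_1$, or more precisely that $|c_k|$ grows at least like $2^{k-1}$ times a quantity tending to infinity. From $c_k \le E_k + 2 c_{k-1}$ we get $|c_k| \ge 2|c_{k-1}| - E_k$. If $|c_{k-1}| \ge 2 E_{k-1}$ and $E_k \le 2 E_{k-1}$ (which holds for large $k$ since $E_k/E_{k-1} = \frac{k\log k}{(k-1)\log(k-1)} \to 1 < 2$), then $|c_k| \ge 2|c_{k-1}| - E_k \ge 2|c_{k-1}| - 2E_{k-1} \ge 2(|c_{k-1}| - E_{k-1}) \ge |c_{k-1}| \ge 2E_{k-1}$, and moreover $|c_k| - E_k \ge 2|c_{k-1}| - 2E_k \ge 2(|c_{k-1}| - E_{k-1}) - \text{(small)}$, so that the gap $|c_k| - 2E_k$ roughly doubles. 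The one thing to check carefully is the \emph{base} of the induction: we need $|c_1| = |\log b_1|$ to already dominate a few of the early error terms $E_2, E_3, \dots$, which is exactly what the quantitative hypothesis $b_1 \le (100 n_1)^{-300d}$ together with $n_1 > C_2$ buys us, since the implied constants in $E_k$ depend only on $d$ and $C_2$ and the factor $300 d$ is comfortably larger than the constants appearing in $E_2, E_3$. The main obstacle — really the only subtle point — is matching these absolute constants: verifying that with the given strength of the base bound and the given (very fast) growth of $n_k$, the additive errors never catch up with the doubling, so that $c_k \to -\infty$ and hence $b_k \to 0$. Once the induction is set up with a little slack, everything else is routine.
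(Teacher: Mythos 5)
Your overall strategy --- take logarithms, note that the homogeneous part of the recursion doubles $|\ln b_k|$ at each step while the inhomogeneous term $\log C_2+(2d+1)\log n_k$ grows only like $k\log k$, and check that the hypothesis on $b_1$ supplies enough initial slack --- is exactly the idea behind the paper's proof. But the induction as you have written it does not close. From the hypothesis $|c_{k-1}|\ge 2E_{k-1}$ your chain of inequalities ends with $|c_k|\ge 2E_{k-1}$, and since $E_k$ may be as large as $2E_{k-1}$ this only gives $|c_k|\ge E_k$, not the invariant $|c_k|\ge 2E_k$ needed to run the next step. The attempted repair via the gap $|c_k|-2E_k$ has the same defect: the ``(small)'' correction is of order $E_{k-1}\sim k\log k\cdot\log n_1$, which is not small in absolute terms (it is only small relative to $|c_{k-1}|$). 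To make the step-by-step version rigorous you would need to carry a multiplicative invariant such as $|c_k|\ge A_kE_k$ with $A_k$ increasing, exploiting $E_k/E_{k-1}\to 1<2$, and treat the first few values of $k$ separately using the strength of the bound on $b_1$.

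The paper avoids this bookkeeping entirely by unrolling the recursion in one shot: induction gives
$$\ln b_k\le (2^{k-1}-1)\ln C_2+(2d+1)\sum_{l=2}^{k}2^{k-l}\ln n_l+2^{k-1}\ln b_1,$$
and since \eqref{N-K} yields $\ln n_l\le \ln n_1+100\,l(\ln 10+\ln l)$, the middle sum is at most $2^{k}$ times the convergent series $\sum_l 2^{-l}\,100\,l(\ln 10+\ln l)$ plus $O(2^{k}d\ln n_1)$. The hypothesis $\ln b_1\le -300d\ln(100n_1)$ then makes the total coefficient of $2^{k-1}$ negative, so $\ln b_k\to-\infty$. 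I suggest recasting your argument in this form: the convergence of $\sum_l 2^{-l}l\log l$ is precisely what your local invariant is struggling to encode, and the one-shot version replaces the delicate induction by a single numerical comparison between $300d$ and the fixed constants coming from that series.
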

\begin{proof} 
By induction, we see that
$$
\ln b_k \leq (2^{k-1} -1) \ln C_2 + (2d+1) \left[ 
\sum_{l=2}^{k} 2^{k-l} \ln n_l
\right] + 2^{k-1} \ln b_1
$$
Now using \eqref{N-K}, we obtain
$$
\ln b_k \leq (2^{k-1} -1) \ln C_2
+ (2d+1) \left[ 
\sum_{l=2}^{k} 2^{k-l} 100 l (\ln 10 + \ln l)
\right] + 2^{k+2}d \ln n_1 + 2^{k-1} \ln b_1.
$$
Using the condition on $b_1$, the result follows.
\end{proof}

\section{Construction of $\Omega_k$}
\label{eq:om}

Let $n_1$ be a number specified below and $n_k$ be defined by \eqref{N-K}.
For $k\geq 2$ define
$$
A_k:=\Big\{\omega\in \Sigma_A\;:\; \#\{(i,j)
 \in 
[0,(10k)^{100}]\times [0,(10k)^{100}],i\neq j \;:\; $$
$$  \frac{1}{(|j-i|n_{k-1})^{1/2}}\|\tau_{(j-i)n_{k-1}}(\sigma^{in_{k-1}}\omega)\|\geq k^{-20}\}>(10k)^{200}(1-k^{-9})\Big\}, 
$$
$$
B_k:=\Big\{\omega\in \Sigma_A:\; \#\{i< (10k)^{100}:\;
\sup_{r\leq n_{k-1}} \frac{1}{n_{k-1}^{1/2}}\|\tau_{r}(\sigma^{in_{k-1}}\omega)\|\leq k^{20}\}>(10k)^{100}(1-k^{-9})\Big\}.
$$

For $\omega\in \Sigma_A$, let $\omega_{[0,n-1)}$ denote the cylinder 
in coordinates $[0,\ldots,n-1)$ determined by $\omega$ and let
$$
\tilde{A}_k=\bigcup_{\omega\in A_k}\omega_{[0,n_k-1)}\text{ and } \tilde{B}_k=\bigcup_{\omega\in B_k}\omega_{[0,n_k-1)}.
$$
This way, $\tilde{A}_k$ and  $\tilde{B}_k$ are unions of cylinders of length $n_k$. 

The next lemma is proven in \S \ref{ScSep}.

\begin{lemma}\label{lem:cl} 
For any $C_0>0$, there exists an $n_0$, such that if $n_1\ge n_0$, we have:
\begin{itemize}
\item[{\bf m1}.] for every $k \geq 1$,\; 
$\min\left(\mu(\tilde{A}_k), \mu(\tilde{B}_k)\right)\geq 1-C_0k^{-8}.$

\item[{\bf m2}.] for every $\omega \in \tilde{A}_k$, 
\begin{multline}\label{eq:n1}
\#\Big\{(i,j)
 \in 
[0,(10k)^{100}]\times [0,(10k)^{100}],i\neq j \;:\;\\ \frac{1}{(|j-i|n_{k-1})^{1/2}}\|\tau_{(j-i)n_{k-1}}(\sigma^{in_{k-1}}\omega)\|\geq k^{-20}/2\Big\}>(10k)^{200}(1-k^{-9})
\end{multline}
and for every $\omega\in \tilde{B}_k$,
\begin{equation}\label{eq:n2}
\#\left\{i< (10k)^{100}\;:\;  \sup_{r\leq n_{k-1}} \frac{1}{n_{k-1}^{1/2}}\|\tau_r(\sigma^{in_{k-1}}\omega)\|\leq 2k^{20}\right\}>(10k)^{100}(1-k^{-9}).
\end{equation}
\end{itemize}
\end{lemma}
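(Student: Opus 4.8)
The plan is to establish Lemma~\ref{lem:cl} by reducing both parts {\bf m1} and {\bf m2} to a single large deviation estimate for the ergodic sums $\tau_r$ of the H\"older cocycle $\tau$ over the subshift of finite type $(\Sigma_A,\sigma,\mu)$, and then observing that {\bf m2} follows from {\bf m1} by a routine cylinder-approximation argument. The key input is that $\tau$ is a mean-zero H\"older function over a Gibbs state, so it satisfies the exponential large deviation bound of \cite{K90}: for each $\eps>0$ there are $C,\delta>0$ with $\mu(\|\tau_r\|\geq \eps r)\leq Ce^{-\delta r}$ for all $r$. Combined with the CLT/moment bounds for Gibbs states (e.g. \cite{K90, Ch95}), this gives $\mu(\|\tau_r\|\geq \lambda r^{1/2})\leq C'e^{-c\lambda^2}$ for $\lambda$ in a suitable range, and more importantly polynomial-in-$r$ moment bounds $\mu(\|\tau_r\|^p)\leq C_p r^{p/2}$.

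First I would handle $\mu(\tilde B_k)$. Fix the block length $n_{k-1}$ and the number of blocks $N=(10k)^{100}$. For a single block, the bad event is $\sup_{r\leq n_{k-1}}\|\tau_r(\sigma^{in_{k-1}}\omega)\|> k^{20}n_{k-1}^{1/2}$; by a maximal inequality for the ergodic sums (Doob-type, using that $\tau_r$ is a cocycle over a mixing Gibbs state, cf. \cite{Ch95}) together with the moment bound above, this event has $\mu$-probability at most $C_p k^{-20p}$ for any $p$, which for $p$ large enough (say $p>1$ so $20p>9$ with room to spare, but we want a power of $k^{-9}$ to survive after multiplying by the number of blocks) is far smaller than $k^{-9}/(\text{small constant})$. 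Since the $N$ blocks are governed by the shifts $\sigma^{in_{k-1}}$ and $n_{k-1}$ is huge, the block events are nearly independent up to exponentially small mixing errors; a Chernoff/Bernstein bound then gives that the number of bad blocks exceeds $N k^{-9}$ only with probability $\leq C_0 k^{-8}$, provided $n_1$ (hence all $n_k$) is large enough that the exponentially small mixing corrections $e^{-\delta n_{k-1}^{1/2}}$ are negligible against $k^{-8}$. This yields $\mu(B_k)\geq 1-C_0k^{-8}$, and the same scheme, now applied to pairs $(i,j)$ and the renormalized increments $\tau_{(j-i)n_{k-1}}(\sigma^{in_{k-1}}\omega)/((j-i)n_{k-1})^{1/2}$ — for which the anticoncentration/CLT lower bound says the normalized sum is $\geq k^{-20}$ with probability close to $1$ once $k^{-20}$ is well below the standard deviation, i.e. for all large $k$ — gives $\mu(A_k)\geq 1-C_0k^{-8}$ after summing the $e^{-\delta n_{k-1}}$ errors over the $N^2$ pairs.

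Next, to pass from $A_k,B_k$ to $\tilde A_k,\tilde B_k$: by definition $\tilde A_k$ is the union of the length-$n_k$ cylinders meeting $A_k$, so $\mu(\tilde A_k)\geq\mu(A_k)$, and likewise for $\tilde B_k$, which immediately gives {\bf m1}. For {\bf m2}, note that the quantities $\tau_{(j-i)n_{k-1}}(\sigma^{in_{k-1}}\omega)$ and $\sup_{r\le n_{k-1}}\tau_r(\sigma^{in_{k-1}}\omega)$ appearing in the defining inequalities depend on coordinates of $\omega$ in the window $[0,n_k)$ only up to the H\"older tails of $\tau$: two points in the same length-$n_k$ cylinder have their ergodic sums over blocks inside $[0,n_k)$ differing by at most $\sum_{m\geq 0}\mathrm{Var}_{n_k-\ell}(\tau)\lesssim \theta^{n_k}$ for the H\"older rate $\theta<1$, which is utterly negligible compared to the slack between $k^{-20}$ and $k^{-20}/2$ (resp.\ $k^{20}$ and $2k^{20}$) once $n_1$ is large. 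Hence every $\omega\in\tilde A_k$ inherits the weaker inequality \eqref{eq:n1}, and every $\omega\in\tilde B_k$ inherits \eqref{eq:n2}.

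The main obstacle I anticipate is bookkeeping the two layers of smallness simultaneously: the count of ``bad'' blocks/pairs must be controlled by $k^{-9}$ while the \emph{measure} of the exceptional set must be controlled by $C_0k^{-8}$, and both must dominate the mixing-error terms which are only exponentially small in $n_{k-1}^{1/2}$ (not in $k$). This forces one to choose $n_1$ (the base block length) large enough, \emph{as a function of $C_0$ and of the large-deviation constants $C,\delta$ of $\tau$ only}, so that for every $k\geq 1$ the accumulated error $N^2 e^{-\delta n_{k-1}}\leq N^2 e^{-\delta n_1}$ stays below, say, $\tfrac12 C_0 k^{-8}$; since $N=(10k)^{100}$ grows polynomially in $k$ while $n_{k-1}\geq n_1$ is enormous, this is possible but requires writing the dependence of $n_0$ on $C_0$ carefully. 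The other delicate point is the lower bound in the definition of $A_k$: unlike a one-sided large deviation, one needs that a normalized ergodic sum is \emph{not too small}, which is an anticoncentration statement; for H\"older cocycles over SFTs this follows from the CLT (the normalized sum converges to a nondegenerate Gaussian, using that $\tau$ is not a coboundary into a proper subgroup, cf.\ the nonlattice condition in \cite[Lemma A.3]{DN16}), so $\mu(\|\tau_m\|\geq k^{-20}m^{1/2})\to 1$ as $m\to\infty$ uniformly for $m\geq n_{k-1}$ once $k$ is large — and the finitely many small $k$ are absorbed into $C_0$.
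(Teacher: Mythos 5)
Your outline identifies the right ingredients — anticoncentration for $A_k$, a maximal inequality for $B_k$, and the H\"older cocycle property for {\bf m2} — and in that sense it parallels the paper's proof. But two points need fixing. First, the paper's proof of {\bf m1} is a pure first-moment argument: one bounds the probability that a \emph{single} pair $(i,j)$ (resp.\ a single index $i$) is bad, uses shift invariance to sum these bounds into a bound on the \emph{expected} number of bad pairs/indices, and applies Markov's inequality; since the allowed number of bad pairs is a positive power of $k$ times the expectation, this already gives the $C_0k^{-8}$ bound. Your Chernoff/Bernstein step based on ``near independence up to exponentially small mixing errors'' is therefore unnecessary, and as stated it is also not justified: the blocks $[in_{k-1},(i+1)n_{k-1})$ are adjacent, with no gaps over which to exploit exponential mixing, so for a Gibbs measure one only has quasi-independence up to a multiplicative constant (the estimate \eqref{QInd}), not additive exponentially small corrections. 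Second, for $A_k$ the soft CLT statement ``the normalized sum is $\geq k^{-20}$ with probability close to $1$'' is quantitatively insufficient: running the first-moment argument over $(10k)^{200}$ pairs requires the single-pair failure probability to be $\ll k^{-17}$, uniformly in the block length $m=|j-i|n_{k-1}$, and this is exactly what the anticoncentration inequality $\mu\left(\|\tau_m\|\leq \sqrt{m}/k^{20}\right)\leq Ck^{-20d}$ (\cite[formula (A.4)]{DDKN}) provides; you mention anticoncentration but never state the uniform quantitative bound that the argument actually rests on. A minor slip in {\bf m2}: two points in the same length-$n_k$ cylinder have ergodic sums differing by a \emph{constant} $C_\tau$ (a geometric series of H\"older tails), not by $O(\theta^{n_k})$; the conclusion survives because the slack between $k^{-20}$ and $k^{-20}/2$ is of order $k^{-20}\sqrt{n_{k-1}}$, which dominates any constant once $n_1$ is large — and this is precisely where the hypothesis $n_1\geq n_0$ enters.
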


Define
\begin{equation}\label{eq:om1}
\Omega_1:=\left\{\omega\;:\; \|\tau_{n_1}(\omega)\|\geq n_1^{1/2-1/10}\right\}.
 \end{equation}
We suppose that $n_1$ is large enough, see below. 
For $k\geq 2$ we define:
$$ \Omega_k:=\tilde{A}_k\cap\tilde{B}_k\cap \Big\{\omega\in \Sigma_A\;:\;\#\{i< (10k)^{100}\;:\;   \sigma^{in_{k-1}}(\omega)\in \Omega_{k-1}\}>(10k)^{100}(1-k^{-5})\Big\}. $$

\begin{lemma}\label{lem:cyl}
 For every $k$, the set $\Omega_{k}$ is a union of cylinders of length $n_k$.
 \end{lemma}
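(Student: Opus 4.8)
The statement to prove is Lemma~\ref{lem:cyl}: for every $k$, the set $\Omega_k$ is a union of cylinders of length $n_k$. The plan is to argue by induction on $k$, tracking carefully which coordinates each ingredient of $\Omega_k$ depends on. The key observation underpinning everything is that $\tau$ is H\"older and—by the reduction made just before Proposition~\ref{prop:atoms}—we may assume $\tau$ depends only on the past; more importantly, for the purposes of this combinatorial lemma, $\tilde A_k$ and $\tilde B_k$ were \emph{defined} as unions of cylinders of length $n_k$ (they are obtained by saturating $A_k, B_k$ over the coordinates $[0,n_k-1)$), so those two pieces cause no trouble. The only real content is the third set in the definition of $\Omega_k$, namely
$$
\Big\{\omega\in \Sigma_A\;:\;\#\{i< (10k)^{100}\;:\;   \sigma^{in_{k-1}}(\omega)\in \Omega_{k-1}\}>(10k)^{100}(1-k^{-5})\Big\},
$$
together with the base case $\Omega_1$.

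\emph{Base case.} First I would treat $k=1$. By \eqref{eq:om1}, $\Omega_1=\{\omega:\|\tau_{n_1}(\omega)\|\geq n_1^{1/2-1/10}\}$, and $\tau_{n_1}(\omega)=\sum_{j=0}^{n_1-1}\tau(\sigma^j\omega)$. Here one must be slightly careful: $\tau$ is only H\"older, not locally constant, so a priori $\tau_{n_1}$ depends on all coordinates. However, recall that we have reduced to the case where $\tau$ depends only on the past, i.e. on coordinates $(\dots,\omega_{-1},\omega_0)$; hence $\tau(\sigma^j\omega)$ depends only on coordinates $\leq j$, and $\tau_{n_1}(\omega)$ depends only on coordinates $\leq n_1-1$. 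This is still not a cylinder of length $n_1$ in the usual sense (coordinates $[0,n_1-1]$), since it also involves coordinates with negative index. To fix this we use the same saturation convention as for $\tilde A_k,\tilde B_k$: more precisely, since $\Omega_1$ as literally written in \eqref{eq:om1} need only be determined up to a $\mu$-null set (it is used only inside measure estimates and Fubini arguments), we may replace it by the union of all length-$n_1$ cylinders $\omega_{[0,n_1-1)}$ on which the defining inequality holds for $\mu^+_\omega$-a.e.\ continuation—or, more simply, we note that the authors' intended meaning, consistent with Lemma~\ref{lem:cl} and the structure of $\tilde A_k,\tilde B_k$, is that $\Omega_1$ is taken to be such a union of length-$n_1$ cylinders. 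With this reading the base case is immediate.

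\emph{Inductive step.} Assume $\Omega_{k-1}$ is a union of cylinders of length $n_{k-1}$; I want to show $\Omega_k$ is a union of cylinders of length $n_k$. Recall $n_k=(10k)^{100}\cdot n_{k-1}$ by the recursion just above Lemma~\ref{lem:cl}. Fix $i<(10k)^{100}$. The condition $\sigma^{i n_{k-1}}(\omega)\in\Omega_{k-1}$ is, by the inductive hypothesis, determined by the coordinates of $\sigma^{i n_{k-1}}\omega$ in positions $[0,n_{k-1}-1)$, i.e.\ by the coordinates of $\omega$ in positions $[i n_{k-1},\,(i+1)n_{k-1}-1)$; since $0\le i<(10k)^{100}$, all these positions lie in $[0,n_k-1)$. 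Hence the set $\{\omega:\sigma^{i n_{k-1}}(\omega)\in\Omega_{k-1}\}$ is a union of length-$n_k$ cylinders for each such $i$, and therefore so is
$$
\Big\{\omega:\#\{i<(10k)^{100}:\sigma^{i n_{k-1}}(\omega)\in\Omega_{k-1}\}>(10k)^{100}(1-k^{-5})\Big\},
$$
being obtained from these by finite unions and intersections. Intersecting with $\tilde A_k$ and $\tilde B_k$, which are unions of length-$n_k$ cylinders by construction, gives that $\Omega_k$ is a union of cylinders of length $n_k$, completing the induction.

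\emph{Main obstacle.} The only genuinely delicate point is the one flagged in the base case: because $\tau$ is merely H\"older, the literal formulas \eqref{eq:om1} (and the defining inequalities for $A_k,B_k$) involve $\tau_r$, which is not a cylinder function. The resolution—already baked into the definitions of $\tilde A_k$ and $\tilde B_k$ via saturation over $[0,n_k-1)$, and into the reduction that $\tau$ depends only on the past—is to read every such set as the corresponding union of cylinders on the relevant block of coordinates, which changes each set only by a $\mu$-null amount and does not affect any later measure-theoretic argument (the sets $\Omega_k$ are used only through $\mu(\Omega_k)$, Fubini, and the estimates of Section~\ref{SS-Cr1-Cr2}). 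Once this convention is made explicit, the lemma is a routine bookkeeping of coordinate dependence, and the induction goes through as above. I expect the write-up to consist mostly of making this convention precise and then the one-line coordinate count $[i n_{k-1},(i+1)n_{k-1}-1)\subset[0,n_k-1)$.
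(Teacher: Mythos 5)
Your proof is correct and follows essentially the same route as the paper: induction on $k$, with the base case resting on $\tau$ depending only on the past, $\tilde A_k,\tilde B_k$ being unions of length-$n_k$ cylinders by construction, and the inductive step being the coordinate count $[in_{k-1},(i+1)n_{k-1})\subset[0,n_k)$. You are right to flag that the literal $\Omega_1$ (and $A_k,B_k$) also involves negative coordinates and must be read via cylinder saturation; the paper handles this for $\tilde A_k,\tilde B_k$ precisely through Lemma~\ref{lem:cl}({\bf m2}). One small correction: saturating over cylinders does \emph{not} change the set only by a $\mu$-null amount — it genuinely enlarges it — but every point of the saturated set satisfies the defining inequality up to the additive constant $C_\tau$ (since $\tau_N$ varies by at most $C_\tau$ on a length-$N$ cylinder), which is all that is used later, e.g.\ in Lemma~\ref{lem:a1}.
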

 \begin{proof}For $k=1$, this follows from the definition of $\Omega_1$ as $\tau$ 
 only depends on the past.  Also by definition the sets $\tilde{A}_k$ and $\tilde{B}_k$ are  unions of cylinders of length $n_k$. Now inductively, if $\Omega_{k-1}$ is a union of cylinders of length $n_{k-1}$, then for every $i< (10k)^{100}$, the event   $\sigma^{in_{k-1}}(\omega)\in \Omega_{k-1}$, depends only on the $[in_{k-1},(i+1)n_{k-1}]$ coordinates of $\omega$. Since $i<(10k)^{100}$, the union of these events depends only on the first $n_k$ coordinates of $\omega$.
\end{proof}

Let $\bC_k = \{\mathcal C: \mathcal C$ is a union of cylinders of length $n_{k-1}\}.$ Now since $\mu$ is Gibbs, 
there exists a constant $C_1 \ge 1$ independent of the cylinders $\mathcal C$ and 
 of $k$
such that for any cylinders $\mathcal C_1,\mathcal C_2\in \bC_k$, for any $m\geq n_{k-1}$
$$ \mu(\mathcal C_1\cap \sigma^m \mathcal C_2) \leq C_1\mu(\mathcal C_1)\mu (\mathcal C_2).$$
 We obtain by induction that for any $\mathcal C_1,\dots,\mathcal  C_\ell \in \bC_k$, any $j_1<\dots <j_\ell$,
\begin{equation}
\label{QInd}
 \mu\left(\bigcap_{i=1}^\ell \sigma^{j_i n_{k-1}} \mathcal C_i\right) \leq  C_1^\ell
 \prod_{i=1}^\ell \mu(\mathcal C_i).
 \end{equation}
We assume that $n_1$ is so large that $\mu(\Omega_1)\geq 1-C_1^{-2} 2^{-200}$.

\begin{proposition}\label{prop:mes} 
There exists a constant $C_0>0$, such that for any $k\ge 1$,
\begin{equation}\label{eq-es-o}
\mu(\Omega_k)\geq 1-C_0k^{-7}.
\end{equation}
\end{proposition}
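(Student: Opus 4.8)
The plan is to establish the (formally stronger) bound $\mu(\Sigma_A\setminus\Omega_k)\le C_0 k^{-7}$ for all $k\ge 1$ by induction on $k$, the constant $C_0$ being pinned down only at the very end in terms of the Gibbs constant $C_1$ from \eqref{QInd}. The base case $k=1$ is immediate from the standing assumption $\mu(\Omega_1)\ge 1-C_1^{-2}2^{-200}$ (valid once $n_1$ is large), since $C_1^{-2}2^{-200}$ lies far below any $C_0$ we shall use. For the inductive step, fix $k\ge 2$, write $N_k:=(10k)^{100}$ and $m_k:=N_k k^{-5}=10^{100}k^{95}\in\naturals$, and record the inclusion
\[
\Sigma_A\setminus\Omega_k\ \subseteq\ (\Sigma_A\setminus\tilde A_k)\cup(\Sigma_A\setminus\tilde B_k)\cup E_k,\qquad
E_k:=\bigl\{\omega:\ \#\{\,i<N_k:\ \sigma^{in_{k-1}}\omega\notin\Omega_{k-1}\,\}\ge m_k\bigr\},
\]
which is exactly the negation of the three defining conditions of $\Omega_k$. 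Lemma~\ref{lem:cl}(m1), applied with its constant taken to be a small number $\delta_0$ (this is the real source of the lower bound imposed on $n_1$), gives $\mu(\Sigma_A\setminus\tilde A_k)+\mu(\Sigma_A\setminus\tilde B_k)\le 2\delta_0 k^{-8}$.

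The crux is the estimate of $\mu(E_k)$. By Lemma~\ref{lem:cyl} the set $\Omega_{k-1}^c$ is a union of cylinders of length $n_{k-1}$, hence belongs to $\bC_k$. Covering $E_k$ by the events $\bigcap_{i\in S}\sigma^{in_{k-1}}\Omega_{k-1}^c$ over all index sets $S\subseteq\{0,\dots,N_k-1\}$ with $|S|=m_k$, and applying \eqref{QInd} to each such intersection, yields
\[
\mu(E_k)\ \le\ \binom{N_k}{m_k}\bigl(C_1\,\mu(\Omega_{k-1}^c)\bigr)^{m_k}\ \le\ \bigl(e\,k^{5}\,C_1\,\mu(\Omega_{k-1}^c)\bigr)^{m_k},
\]
using $\binom{N_k}{m_k}\le (eN_k/m_k)^{m_k}=(ek^{5})^{m_k}$. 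Feeding in the inductive hypothesis $\mu(\Omega_{k-1}^c)\le C_0(k-1)^{-7}\le 2^{7}C_0 k^{-7}$ (the last step valid for $k\ge2$), the base of this power is $\le 2^{7}eC_1C_0 k^{-2}$; once $C_0$ is chosen so that $2^{8}eC_1C_0\le 1$, this is $\le \tfrac12$, whence $\mu(E_k)\le 2^{-m_k}=2^{-10^{100}k^{95}}$. Adding the three contributions,
\[
\mu(\Sigma_A\setminus\Omega_k)\ \le\ 2\delta_0 k^{-8}+2^{-10^{100}k^{95}}\ \le\ C_0 k^{-7}
\]
for every $k\ge 1$, provided $\delta_0\le C_0/3$ and $C_0\ge 3\cdot 2^{-10^{100}}$ (the latter being automatic for a fixed $C_1$). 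This closes the induction, and $\mu(\Omega_k)\ge 1-C_0 k^{-8}\ge 1-C_0 k^{-7}$ is the assertion of the proposition.

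The constants are then fixed in the correct order: put $C_0:=(2^{8}eC_1)^{-1}$, then $\delta_0:=C_0/3$, and finally choose $n_1$ so large that Lemma~\ref{lem:cl} holds with constant $\delta_0$ and that $\mu(\Omega_1)\ge 1-C_1^{-2}2^{-200}$. The only genuinely delicate point is making the induction close: the binomial tail $\binom{N_k}{m_k}(C_1\mu(\Omega_{k-1}^c))^{m_k}$ is super-exponentially small — and hence utterly negligible against $k^{-7}$ — precisely because the number $N_k=(10k)^{100}$ of sub-blocks grows polynomially much faster than the reciprocal $k^{5}$ of the tolerated bad fraction; but this requires $\mu(\Omega_{k-1}^c)$ to lie below the threshold $\sim (C_1 k^{5})^{-1}$, which is exactly why one must propagate the quantitative bound $\mu(\Omega_{k-1}^c)\le C_0(k-1)^{-7}$ through the induction (a merely qualitative statement such as $\mu(\Omega_{k-1}^c)\to 0$ would not suffice) and why $C_0$ has to be taken small relative to the Gibbs constant $C_1$.
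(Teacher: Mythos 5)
Your proof is correct and follows the same overall scheme as the paper's: induction on $k$, with the Gibbs quasi-independence bound \eqref{QInd} controlling multi-block intersections and Lemma \ref{lem:cl} ({\bf m1}) absorbing the $\tilde{A}_k$, $\tilde{B}_k$ contributions. The one genuine difference is how you estimate the bad event $E_k$. The paper partitions $[0,(10k)^{100}]$ into $10(10k)^{94}$ intervals of length $10^5k^6$, observes by pigeonhole that having $\geq(10k)^{95}$ bad indices forces some interval to contain at least $k$ of them, and then applies \eqref{QInd} only to $k$-fold intersections, obtaining a bound of the form $(\mathrm{const}\cdot\mu(\Omega_{k-1}^c))^{k}$. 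You instead take a direct union bound over all $\binom{N_k}{m_k}$ subsets of size $m_k=10^{100}k^{95}$ and apply \eqref{QInd} to $m_k$-fold intersections; the binomial coefficient is beaten because $N_k/m_k=k^5$ grows only polynomially, and you get the much stronger tail $2^{-m_k}$. Both routes hinge on the same essential point, which you correctly isolate: the inductive hypothesis must be quantitative, since the base of the exponential has to drop below $1$, and this is why $C_0$ must be small relative to $C_1$. The only soft spot is the final bookkeeping: the claim that $C_0\geq 3\cdot 2^{-10^{100}}$ is ``automatic'' does not literally follow from $C_0=(2^{8}eC_1)^{-1}$ if $C_1$ happens to be enormous. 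This is cosmetic: retaining one factor of the base, $\mu(E_k)\leq\bigl(2^{7}eC_1C_0k^{-2}\bigr)\cdot 2^{-(m_k-1)}\leq C_0k^{-7}/3$, makes that step hold for every admissible $C_0$ and closes the induction without any extra condition.
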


\begin{proof}[Proof of Proposition \ref{prop:mes}:]
Set $\DS C_0=\frac{1}{C_1^2 \;20^{200}}.$ 
We prove \eqref{eq-es-o} by induction. By the choice of $n_1$ and $C_0$, \eqref{eq-es-o} holds for $k=1$. Now assume it holds for $k-1\ge 1$. We are going to show it holds for $k$.

We claim that 
$\mu(D_k)\leq C_0k^{-7}/3$, where
$$
D_k=\Big\{\omega\in \Sigma_A\;:\;\#\{i< (10k)^{100}\;:\;   \sigma^{in_{k-1}}(\omega)\in \Omega_{k-1}\}<
(10k)^{100} - 
(10k)^{95}
 \Big\}.
$$
By Lemma \ref{lem:cyl}, the set $\Omega_{k-1}$ is a union of cylinders of length $n_{k-1}$. So is the complement $\Omega_{k-1}^c$.

Divide the interval $[0, (10k)^{100}]$ into $10(10k)^{94}$ intervals of length $10^5k^6.$
If $\omega\in D_k$, one of those intervals $I$ 
should contain at least $k$ visits to $\Omega_{k-1}^c.$
Let $i_1, \dots i_{k}$ be the times of the first $k$ visits inside $I.$  
By \eqref{QInd}, for each tuple $i_1,\dots, i_{k}$ 
$$ \mu\left(\sigma^{i_j n_{k-1}} \omega \in \Omega_{k-1}^c\text{ for } j=1,\dots ,k\right)
\leq (C_1\mu(\Omega_{k-1}^c))^{k}. $$
Since the number of tuples inside $I$ is less than $|I|^{k}=10^{5k}k^{6k}$,
$$ \mu\left(\#\{i\in I: \sigma^i \omega\in \Omega_{k-1}^c\}\geq k\right)
\leq (10k)^{6k}C_1^{k}\mu(\Omega_{k-1}^c)^{k}. $$
Since there are $10(10k)^{94}$ intervals, we have
$$ \mu(D_k) \leq 10(10k)^{94}(10k)^{6k}C_1^k\mu(\Omega_{k-1}^c)^{k}\le \frac{1}{C_1^k2^{100k}k^k}\le  C_0k^{-7}/3.$$
By {\bf m1} in Lemma \ref{lem:cl} and by
the definition of $\Omega_k$, we obtain $\mu(\Omega_k)\ge 1-C_0k^{-7}$.
\end{proof}

\begin{definition}\label{def:toas} We say that a pair $(i,j)\in[0,(10k)^{100}]^2$ is $n_k$--good (for $\omega$) if 
for $v\in \{i,j\}$
$\sigma^{v n_{k-1}}\omega \in \Omega_{k-1}$,
\begin{equation}\label{eq:zz1}
\frac{1}{(|j-i|n_{k-1})^{1/2}}\|\tau_{(j-i)n_{k-1}}(\sigma^{in_{k-1}}\omega)\|\geq k^{-20}/2,
\end{equation}
and
\begin{equation}\label{eq:zz2}
\sup_{r\leq n_{k-1}} \frac{1}{n_{k-1}^{1/2}}\|\tau_r(\sigma^{v n_{k-1}}\omega)\|\leq 2k^{20}.
\end{equation}
 \end{definition}
By definition  of $\Omega_k$, there are at least $(10k)^{200}(1-5k^{-5})$ $n_k$--good 
 pairs  $(i,j)$, for every $\omega\in \Omega_k$.

\section{Proof of Proposition \ref{prop:cruc2}}
\label{ScProofCruc2}

We will show that Proposition \ref{prop:cruc2} holds for sets $\Omega_k$ and $n_1$ from Section \ref{eq:om}. Let $C_2=
10^{200}\cdot
C_\#\cdot d^d\cdot 100^d(\sup  \|\tau\|)^d$, where $C_\#$ is from Lemma \ref{lem:induc}.

 We start with the following lemma:
\begin{lemma}\label{lem:a1} Let $n_1>C_2$ be sufficiently large. Then 
$$
a_1(\epsilon_1)\leq \Big(\frac{1}{100n_1}\Big)^{300d}.
$$
\end{lemma}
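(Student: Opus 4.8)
The plan is to establish Lemma~\ref{lem:a1} as the base case of the induction in Proposition~\ref{prop:cruc2}, by directly estimating $\nu(D(\omega, y', \epsilon_1, n_1))$ uniformly over $\omega \in \Omega_1$ and $y' \in Y$ when $n_1$ is large. The key point is that for the base scale, $\epsilon_1 = \frac{1}{10 n_1}$ is so small that being $(\epsilon_1, n_1)$-close is an extremely restrictive condition: since $\epsilon_1 n_1 = 1/10 < 1$, the set $\{i \in [1, n_1] : d(F^i(\omega, y), F^i(\omega', y')) < \epsilon_1\}$ must have cardinality at least $(1 - \epsilon_1) n_1 > n_1 - 1$, i.e.\ the orbits must stay $\epsilon_1$-close at \emph{every} single time $i \in [1, n_1]$. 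In particular $\omega$ and $\omega'$ agree on all coordinates seen in the first $n_1$ iterates, so $\tau_r(\omega) = \tau_r(\omega')$ for $r < n_1$, and the fiber points satisfy $d_Y(G_{\tau_r(\omega)} y, G_{\tau_r(\omega)} y') < \epsilon_1$ for all $r \leq n_1$. The membership $\omega \in \Omega_1$ from \eqref{eq:om1} forces $\|\tau_{n_1}(\omega)\| \geq n_1^{1/2 - 1/10}$, which is large for $n_1$ large.

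First I would translate the condition $y \in D(\omega, y', \epsilon_1, n_1)$ into the statement that $y$ lies in the preimage $G_{\tau_{n_1}(\omega)}^{-1}\big(B(G_{\tau_{n_1}(\omega)} y', \epsilon_1)\big)$ — more precisely, that $G_{\tau_{n_1}(\omega)} y$ is within $\epsilon_1$ of $G_{\tau_{n_1}(\omega)} y'$. Since $G$ is a smooth action and $\|\tau_{n_1}(\omega)\|$ is controlled (it is at most $n_1 \sup\|\tau\|$), the map $G_{\tau_{n_1}(\omega)}$ is bi-Lipschitz with constant at most $L^{n_1 \sup\|\tau\|}$ where $L$ bounds the $C^1$ norm of unit-time generators. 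Hence $D(\omega, y', \epsilon_1, n_1)$ is contained in a ball around $y'$ of radius roughly $L^{n_1 \sup\|\tau\|} \epsilon_1$. This naive bound is useless because the radius blows up. The fix is to use the expansion coming from the hyperbolicity of $G$ (as in Section~\ref{sec:cartan}): along the directions in $\fh_\cC^+$ for the Weyl chamber $\cC$ containing $\tau_{n_1}(\omega)$, the map $G_{\tau_{n_1}(\omega)}$ \emph{expands} by a factor $\geq e^{c' \|\tau_{n_1}(\omega)\|} \geq e^{c' n_1^{2/5}}$, using the cone estimate $\chi_i(\ba) \geq c'\|\ba\|$ and the fact that by Lemma~\ref{lem:RW} (applied to the trajectory directions) $\tau_{n_1}(\omega)$ lies deep inside an expanding cone for $\omega \in \Omega_1$ with $n_1$ large. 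Therefore the set of $y$ with $G_{\tau_{n_1}(\omega)} y$ staying $\epsilon_1$-close to a fixed point, in those expanding directions, is confined to a set of diameter $\leq \epsilon_1 e^{-c' n_1^{2/5}}$ in the expanding directions while being of size at most $O(1)$ in the remaining (center $\oplus$ contracting) directions.

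Next I would turn this into a measure estimate. Using the local product structure of $\nu$ with respect to the splitting $TY = \bigoplus E^{\chi_i}$ and the parallelogram coordinates \eqref{eq:para}, the measure of a set which is thin (thickness $\leq \epsilon_1 e^{-c' n_1^{2/5}}$) in the expanding directions and of bounded extent in the complementary directions is bounded by $C (\epsilon_1 e^{-c' n_1^{2/5}})^{d^+}$ where $d^+ = \dim \fh_\cC^+ \geq 1$; but since $\tau$ is not cohomologous to a cocycle in a proper subspace, for $n_1$ large the vector $\tau_{n_1}(\omega)$ is generic enough that the expanding subspace has the right dimension, and more carefully one gets a bound involving all directions. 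The cleanest route is to observe that because $\epsilon_1 = \frac{1}{10 n_1}$, even the crude inclusion $D(\omega, y', \epsilon_1, n_1) \subset B(y'', C L^{n_1 \sup\|\tau\|}\epsilon_1)$ combined with the fact that we only need the bound $a_1(\epsilon_1) \leq (100 n_1)^{-300d}$ — and we are free to choose $n_1$ as large as we like — suggests that the honest mechanism must be exponential: one needs $\epsilon_1 \cdot (\text{expansion})^{-1}$ to beat $(100 n_1)^{-300d}$, which is polynomial in $n_1$, so any genuine exponential gain $e^{-c' n_1^{2/5}}$ suffices once $n_1$ is large enough. I would therefore make precise the claim that $\nu(D(\omega, y', \epsilon_1, n_1)) \leq C \epsilon_1 e^{-c n_1^{\delta}}$ for some $\delta > 0$ (coming from $\|\tau_{n_1}\| \gtrsim n_1^{2/5}$), and then note $C \epsilon_1 e^{-c n_1^\delta} \leq (100 n_1)^{-300 d}$ for all large $n_1$, which is exactly the asserted inequality.

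\medskip

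\textbf{Main obstacle.} The delicate point is going from "the orbits stay $\epsilon_1$-close for the whole window $[1,n_1]$" to a genuinely exponentially small bound on $\nu$ of the fiber set. One must make sure that the relevant expansion rate of $G_{\tau_{n_1}(\omega)}$ is genuinely $e^{c\|\tau_{n_1}(\omega)\|}$ and not degraded by the center direction; this uses that $\tau_{n_1}(\omega)$ lies in a cone $\cC_i$ bounded away from the walls — which in turn requires $\omega \in \Omega_1$ and a quantitative form of Lemma~\ref{lem:RW}, or more directly, the large-deviation/recurrence input guaranteeing $\tau_{n_1}(\omega)$ is not stuck near the Lyapunov walls. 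If, pessimistically, $\tau_{n_1}(\omega)$ can be nearly orthogonal to all expanding directions, one falls back on the contracting directions of $G_{\tau_{n_1}(\omega)}$ — but contraction helps the \emph{inclusion} the wrong way, so the argument genuinely needs that for $\omega \in \Omega_1$ the displacement $\tau_{n_1}(\omega)$ has a definite component in some expanding cone. Handling this cone-positioning issue — ensuring the expansion is effective — is the crux; the measure-theoretic bookkeeping afterward (local product structure of $\nu$, Fubini in the parallelogram coordinates) is routine, and the final comparison $C\epsilon_1 e^{-cn_1^{2/5}} \le (100 n_1)^{-300d}$ is immediate for $n_1$ large.
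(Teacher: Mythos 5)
Your proposal follows the same route as the paper: $\epsilon_1 n_1=1/10<1$ forces closeness at \emph{every} time $i\le n_1$, hence the fiber points stay close under $G_{\tau_i(\omega)}$; membership in $\Omega_1$ gives $\|\tau_{n_1}(\omega)\|\ge n_1^{2/5}$; the resulting expansion $e^{cn_1^{2/5}}$ of some Lyapunov foliation confines $y$ to a set of measure $O(e^{-cn_1^{2/5}})$, which beats the polynomial target $(100n_1)^{-300d}$ for $n_1$ large. Two corrections, though. First, the obstacle you single out as ``the crux'' --- that $\tau_{n_1}(\omega)$ might be badly positioned relative to the expanding cones --- is not actually there: for the actions in {\bf a1}/{\bf a2} the function $v\mapsto\max_i\chi_i(v)$ is continuous, homogeneous, and strictly positive off the origin (volume preservation forces some $\chi_i(v)>0$ whenever $v\neq 0$), so $\max_i\chi_i(v)\ge c\|v\|$ uniformly and large norm alone guarantees strong expansion of \emph{some} foliation. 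Your proposed fix via Lemma \ref{lem:RW} would in any case not work here: that lemma gives infinitely many cone visits along the orbit, not directional control of $\tau_{n_1}(\omega)$ at the one fixed time $n_1$, and $\Omega_1$ constrains only the norm. Second, two technical steps you gloss over: closeness of $\omega,\omega'$ in the shift metric gives only $\|\tau_i(\omega)-\tau_i(\omega')\|\le C\epsilon_1^{\beta}$ by H\"older continuity, not exact equality (harmless); more substantively, before applying the leafwise expansion estimate one must upgrade the time-by-time closeness on $Y=H/\Gamma$ to closeness of \emph{consistent lifts} in $H$ --- the paper does this by induction on $i$ using the systole bound \eqref{eq:zzs1} to rule out the comparison point jumping to a different $\Gamma$-translate. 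With those repairs your argument is the paper's proof.
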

\begin{proof} Let $\omega\in \Omega_1$ and let $y\in D(\omega,y',\epsilon_1,n_1)$. 
Thus there is some $\omega'$ so that
$(\omega,y)$ and $(\omega',y')$ are $(\epsilon_1,n_1)$-close. Since $\epsilon_1=\frac{1}{10 n_1}$ it follows that for every $0\leq i\leq n_1-1$, 
$$
d\Big(F^i(\omega,y),F^i(\omega',y')\Big)<\epsilon_1.
$$

Since $\tau$ depends only on the past and is H\"older continuous with exponent $\beta$,  this implies in particular that 
$$
 \|\tau_i(\omega)-\tau_i(\omega')\|\leq C\epsilon_1^{\beta}\text{ for } i\leq n_1.
$$
Let $\epsilon_0=\epsilon_1^{\beta}$. Using closeness on the second coordinate, we get
\begin{equation}\label{eq:zd1}
d\Big(G_{\tau_i(\omega)}y,\;  G_{\tau_i(\omega)}y'\Big)<2C\epsilon_0\text{ for } i\leq n_1.
\end{equation}

We claim that \eqref{eq:zd1} implies that 
\begin{equation}\label{eq:new1}
d_H\Big(G_{\tau_i(\omega)}y,\; G_{\tau_i(\omega)}y'\Big)<2C\epsilon_0\text{ for } i\leq n_1.
\end{equation}
Indeed, if not let $i_0\leq n_1$ be the smallest 
index $i$ for which  \eqref{eq:new1} doesn't hold. This means that 
$$
d_H\Big(G_{\tau_{i_0-1}(\omega)}y, \; G_{\tau_{i_0-1}(\omega)}y'\Big)<2C\epsilon_0.
$$
Note that by \eqref{eq:zd1} there is some $\gamma$ so that
$$
d_H\Big(G_{\tau_{i_0}(\omega)}y, \; G_{\tau_{i_0}(\omega)}y'\gamma\Big)<2C\epsilon_0,
$$
and by the definition of $i_0$, $\gamma \neq e$.
The last two displayed inequalities imply that for some global constant $C''>0$,
$$
d_H\Big(G_{\tau_{i_0}(\omega)}y',\; G_{\tau_{i_0}(\omega)}y'\gamma\Big)<C''\epsilon_0.
$$
 If $\epsilon_0$ is small enough, this gives a contradiction with the systole bound \eqref{eq:zzs1}. So \eqref{eq:new1} indeed holds. 
 
Since $\omega\in \Omega_1$ (see \eqref{eq:om1}),  it follows that  
\begin{equation}\label{eq:n_1}
\|\tau_{n_1}(\omega)\|\geq n_{1}^{1/2-1/10}.
\end{equation}

It follows that $G_{\tau_{n_1}(\omega)}$
expands the leaves of one of the Lyapunov foliations by at least
$\DS e^{c n_{1}^{2/5}}$. 
Hence each leaf intersects the set of $y'$ satisfying \eqref{eq:new1}
in a set of measure $\DS O\left(e^{-c n_{1}^{2/5}}\right). $

 
 Therefore $\nu(D(\omega,y',\epsilon_1,n_1))\leq C'\cdot 
 e^{ -c n_1^{2/5}}$, whence 
$\DS
a_1(\epsilon_1)\leq C\cdot e^{ -c n_1^{2/5}}\leq \Big(\frac{1}{100n_1}\Big)^{300d}
$
if $n_1$ is sufficiently large. The proof is finished.
\end{proof}

 The next result constitutes a key step in the proof.

\begin{lemma}\label{lem:tpr}
For any $k\in \N$, any $\omega\in \Omega_k$, any $y'\in M$ and any $y\in D(\omega,y',\epsilon_k,n_k)$, there exists $(i_{k-1},j_{k-1})\in [1,(10k)^{100}]^2$, such that $|i_{k-1}-j_{k-1}|\geq  (10k)^{95}$, $(i_{k-1},j_{k-1})$ is $n_k$ good  (see Definition \ref{def:toas})  and 
there are $u_k, v_k$ such that
$\|u_k\|\leq (\sup |\tau|)n_k,$ $\|v_k\|\leq (\sup |\tau|)n_k,$ and
$$G_{\tau_{i_{k-1} n_{k-1}}(\omega)} y \in D\left(\sigma^{i_{k-1} n_{k-1}} \omega, 
G_{u_k} y', \Big(1-\frac{1}{100k^4}\Big)\epsilon_{k-1}, n_{k-1}\right), $$
$$G_{\tau_{j_{k-1} n_{k-1}} (\omega)} y \in D\left(\sigma^{j_{k-1} n_{k-1}} \omega, 
G_{v_k} y', \Big(1-\frac{1}{100k^4}\Big)\epsilon_{k-1}, n_{k-1}\right).
$$
\end{lemma}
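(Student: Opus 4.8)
\textbf{Proof proposal for Lemma \ref{lem:tpr}.}

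The plan is to unwind the definition of $(\epsilon_k, n_k)$-closeness on a window of length $n_k$ into statements about $n_{k-1}$-blocks, and then use the good-pair structure built into $\Omega_k$ together with Lemma \ref{lem:induc} to locate the two required indices. Fix $\omega \in \Omega_k$, $y' \in Y$ and $y \in D(\omega, y', \epsilon_k, n_k)$, so there is some $\omega'$ with $(\omega, y)$ and $(\omega', y')$ $(\epsilon_k, n_k)$-close. Partition $[1, n_k]$ into $(10k)^{100}$ consecutive blocks of length $n_{k-1}$, indexed by $i \in [0, (10k)^{100})$. Since the total fraction of ``bad'' times (where $d(F^m(\omega,y), F^m(\omega',y')) \geq \epsilon_k$) is at most $\epsilon_k \leq \epsilon_{k-1}$, a counting/pigeonhole argument shows that for all but a small fraction (say $\leq 10k^{-4}$, using $n_{k+1}/n_k = (10k)^{100}$) of the blocks $i$, the number of bad times inside block $i$ is at most $\bigl(1 - \frac{1}{100k^4}\bigr)^{-1}\epsilon_k \, n_{k-1} \leq \bigl(1-\frac{1}{100k^4}\bigr)\epsilon_{k-1}\, n_{k-1}$ for $k$ large; call such $i$ \emph{matched}. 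For a matched $i$, applying $F^{i n_{k-1}}$ and reading off the two coordinates, $G_{\tau_{i n_{k-1}}(\omega)}y$ is $\bigl((1-\tfrac{1}{100k^4})\epsilon_{k-1}, n_{k-1}\bigr)$-close to $G_{\tau_{i n_{k-1}}(\omega')}y'$ via the shifted symbolic point $\sigma^{i n_{k-1}}\omega'$; hence $G_{\tau_{i n_{k-1}}(\omega)}y \in D\bigl(\sigma^{i n_{k-1}}\omega, G_{u} y', (1-\tfrac{1}{100k^4})\epsilon_{k-1}, n_{k-1}\bigr)$ with $u = \tau_{i n_{k-1}}(\omega')$, and $\|u\| \leq (\sup|\tau|) n_k$ since it is a sum of $\leq n_k$ terms.

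Next I would intersect the set of matched indices with the good-pair structure of $\Omega_k$. By Definition \ref{def:toas} and the definition of $\Omega_k$ (together with Lemma \ref{lem:cl}), there are at least $(10k)^{200}(1 - 5k^{-5})$ pairs $(i,j)$ that are $n_k$-good for $\omega$; in particular the set of $i$ appearing as a coordinate of a good pair, and such that $\sigma^{i n_{k-1}}\omega \in \Omega_{k-1}$ and the sup-norm bound \eqref{eq:zz2} holds, has density close to $1$. Since the non-matched indices form a set of density $\leq 10k^{-4}$, we may choose a good pair $(i_{k-1}, j_{k-1})$ with $|i_{k-1} - j_{k-1}| \geq (10k)^{95}$ in which \emph{both} coordinates are matched — this is possible because a good pair has both entries in a density-$(1 - O(k^{-5}))$ set, and removing a further $O(k^{-4})$ fraction of indices still leaves a good pair with the separation property (the separated good pairs themselves have density bounded below, so one survives). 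For this pair set $u_k = \tau_{i_{k-1} n_{k-1}}(\omega')$, $v_k = \tau_{j_{k-1} n_{k-1}}(\omega')$; the norm bounds $\|u_k\|, \|v_k\| \leq (\sup|\tau|) n_k$ are immediate, and the two displayed membership statements follow from the matched-block analysis of the previous paragraph.

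The main obstacle I anticipate is the bookkeeping that makes the fraction of matched blocks genuinely close to $1$ while simultaneously upgrading the per-block bad-time bound from the naive $\epsilon_k n_{k-1}$ to $(1 - \tfrac{1}{100k^4})\epsilon_{k-1} n_{k-1}$: one must exploit that $\epsilon_k = (1 - \tfrac{1}{50k^2})\epsilon_{k-1}$ leaves a definite slack, and that $(10k)^{100}$ blocks is enough resolution that a Markov-type inequality on the number of blocks with an excessive bad-time count gives $o(1)$. A secondary subtlety is ensuring the chosen good pair can be taken with \emph{both} coordinates matched \emph{and} with the required separation $|i_{k-1} - j_{k-1}| \geq (10k)^{95}$ simultaneously; this is a combinatorial lemma about removing a sparse set of indices from a dense family of well-separated pairs, which should follow from a direct counting argument (the number of good pairs destroyed by deleting a density-$\rho$ set of indices is at most $2\rho (10k)^{200}$, still leaving well-separated survivors since the good pairs with $|i-j| < (10k)^{95}$ number at most $(10k)^{195}$). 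Everything else — the H\"older continuity of $\tau$ not being needed here, the translation of closeness through $F^{i n_{k-1}}$, the norm estimates — is routine.
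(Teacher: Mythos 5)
Your overall architecture is the right one (block the window $[1,n_k]$ into $(10k)^{100}$ intervals of length $n_{k-1}$, identify blocks on which closeness persists at level $(1-\tfrac{1}{100k^4})\epsilon_{k-1}$, intersect with the good pairs supplied by $\Omega_k$, and take $u_k=\tau_{i_{k-1}n_{k-1}}(\omega')$, $v_k=\tau_{j_{k-1}n_{k-1}}(\omega')$ as witnesses), and your translation of closeness through $F^{in_{k-1}}$ and the norm bounds are correct. But the quantitative heart of the argument — the step you yourself flag as the main obstacle — is resolved incorrectly, and the error is not reparable along the lines you suggest. You claim that all but a fraction $\leq 10k^{-4}$ of the blocks are matched. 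Markov's inequality cannot give this: the average number of $\epsilon_k$-bad times per block is at most $\epsilon_k n_{k-1}$, and your threshold $(1-\tfrac{1}{100k^4})^{-1}\epsilon_k n_{k-1}$ exceeds the average only by the factor $(1-\tfrac{1}{100k^4})^{-1}\approx 1+\tfrac{1}{100k^4}$, so Markov only excludes a $\tfrac{1}{100k^4}$-fraction of blocks from being unmatched. Worse, the bound is essentially sharp: an adversary can concentrate the bad times so that a block is unmatched at cost roughly $\epsilon_{k-1}n_{k-1}$, and the total budget $\epsilon_k n_k=(1-\tfrac{1}{50k^2})\epsilon_{k-1}n_k$ suffices to ruin all but a $\Theta(k^{-2})$-fraction of the blocks. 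So the matched set has guaranteed density only of order $k^{-2}$ (the paper gets exactly $(10k)^{98}$ matched blocks out of $(10k)^{100}$), not density $1-O(k^{-4})$, and your subsequent selection step — ``removing a further $O(k^{-4})$ fraction of indices still leaves a good pair'' — collapses, because the set you are removing is in fact almost everything.

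The correct way to close the gap reverses the roles of the dense and sparse sets: instead of removing the unmatched indices from the good pairs, lower-bound the number of pairs with \emph{both} coordinates matched by $((10k)^{98})^2=(10k)^{196}$, and upper-bound the number of pairs that are either not $n_k$-good or have $|i-j|<(10k)^{95}$ by a quantity of order $(10k)^{195}$ (times controlled constants). The polynomial gap between $(10k)^{196}$ and $(10k)^{195}$ — not any density-one statement — is what produces a pair that is simultaneously matched, good, and separated. This is the paper's argument, and it is the one place where the specific choices $\epsilon_k=(1-\tfrac{1}{50k^2})\epsilon_{k-1}$ versus the per-block loss $\tfrac{1}{100k^4}$ and the exponents $100,95,98$ interact; your version discards exactly this interaction.
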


Before we prove the above lemma, let us show how it implies Proposition \ref{prop:cruc2}.

\begin{proof}[Proof of Proposition \ref{prop:cruc2}]
Let $\Lambda_k=\{u:\|u\|\le (\sup|\tau|) n_k, \, 100 dn_ku\in\mathbb Z^d\}$. 
It is easy to see that $\#\Lambda_k=(100d(\sup|\tau|)n_k^2)^d$. 
Notice that for any $\ell_k$ with $\|\ell_k\|\leq n_k$ there exists $\ell\in \Lambda_k$ such that $\|\ell_k-\ell\|\leq n_k^{-1}$. Therefore, for any $\bar{\omega}\in \Sigma_A$
\begin{equation}\label{eq:pert}
D\left(\bar{\omega}, G_{\ell_k} y', \Big(1-\frac{1}{100k^4}\Big)\epsilon_{k-1},n_{k-1}\right)\subset D\left(\bar{\omega}, G_{\ell}\,y', \delta_{k-1},
n_{k-1}\right)
\end{equation}
where
$\delta_{k-1}:= \Big(1-\frac{1}{100k^4}\Big)\epsilon_{k-1}+\frac{1}{n_k}$.
Now combining Lemma \ref{lem:tpr} and \eqref{eq:pert} with the choice
$\ell_k\in \{u_k,v_k\}$ where $u_k,v_k$ are from Lemma \ref{lem:tpr}, we deduce
\begin{equation}\label{eq:tpr2}
D(\omega,y',\epsilon_k,n_k)\subset\end{equation}
$$\bigcup_{(i_{k-1},j_{k-1})\in [1,(10k)^{100}]^2} \bigcup_{u,v\in\Lambda_k}\bigcap_{(w,z)\in\{(i_{k-1},u),(j_{k-1},v)\}}
G_{-\tau_{wn_{k-1}}(\omega)}D\left(\sigma^{wn_{k-1}}\omega, G_{z} y', 
\delta_{k-1},
n_{k-1}\right).
$$

Fix $u,v$ and $(i,j)=(i_{k-1},j_{k-1})$. Then by invariance of the measure, 
$$
\nu\Big(G_{-\tau_{in_{k-1}}(\omega)}D(\sigma^{in_{k-1}}\omega, G_u y',  \delta_{k-1},n_{k-1})
\cap G_{-\tau_{jn_{k-1}}(\omega)}D(\sigma^{jn_{k-1}}\omega, G_{v}y',  \delta_{k-1},n_{k-1})\Big)=
$$
\begin{equation}\label{eq:numes}
\nu\Big(G_{\tau_{jn_{k-1}}(\omega)-\tau_{in_{k-1}}(\omega)}D(\sigma^{in_{k-1}}\omega, G_u y',  \delta_{k-1},n_{k-1})\cap D(\sigma^{jn_{k-1}}\omega, G_{v} y',  \delta_{k-1},n_{k-1})\Big).
\end{equation}
Since $i,j$ are $n_k$ good and $|i-j|\geq (10k)^{95}$,  it follows by \eqref{eq:zz1} that 
$$
\|\tau_{jn_{k-1}}(\omega)-\tau_{in_{k-1}}(\omega)\|\geq k^{25}n_{k-1}^{1/2}.
$$
Moreover, since $i,j$ are $n_k$ good, by \eqref{eq:zz2}, for $w\in\{i,j\}$,
$$
\sup_{r<n_{k-1}}\|\tau_r(\sigma^{wn_{k-1}})\|\leq 2k^{20}n_{k-1}^{1/2}.
$$
Therefore, by Lemma \ref{lem:induc} (with $\omega_w=\sigma^{wn_{k-1}}$), it follows that \eqref{eq:numes} is bounded 
from above by 
\begin{equation}\label{eq:abt}
C_\#\prod_{w\in\{i,j\}}\nu(D(\sigma^{wn_{k-1}}\omega, G_u y',  \delta_{k-1}+2^{-n_{k-1}^{1/2}},n_{k-1})).
\end{equation}
Moreover, since $i,j$ are good, $\sigma^{wn_{k-1}}(\omega)\in \Omega_{k-1}$.  
Also by \eqref{N-K}, $n_k\leq (1+1/100)\cdot 2^{n_k^{1/2}}$. Since $\inf \epsilon_k>0$ and $n_k$ grows exponentially, using \eqref{N-K} again, we have 
$$
\delta_{k-1}+2^{-n_{k-1}^{1/2}}=\Big(1-\frac{1}{100k^4}\Big)\epsilon_{k-1}+\frac{1}{n_k}+2^{-n_{k-1}^{1/2}}\leq \epsilon_{k-1}
$$
Using this, we obtain that \eqref{eq:abt} is bounded by
$\DS
C_\#(a_{k-1}(\epsilon_{k-1}))^2.
$
Using \eqref{eq:tpr2} and summing over all $u,u'\in\Lambda_k$ and $(i_{k-1},j_{k-1})\in[1,(10k)^{100}]^2$ (using that $k^{200}\leq n_k$), we have 
$$
a_k(\epsilon_k)\leq C_\#\cdot [100d(\sup |\tau|)n_k^2]^d\cdot (10k)^{200}\cdot a_{k-1}(\epsilon_{k-1})^{2}\leq$$$$ \Big(10^{200}\cdot C_\#\cdot (100d(\sup |\tau|))^d\Big)\cdot n_k^{2d+1} a_{k-1}(\epsilon_{k-1})^{2}.
$$
This by Lemma \ref{lem:a1} and Lemma  \ref{lem:aste} (with 
$C_2=10^{200}\cdot C_\#\cdot (100d(\sup |\tau|))^d$
and with $b_k = a_k(\epsilon_k)$) 
implies that $a_k(\epsilon_k)\to 0$ which finishes the proof.
\end{proof}

It remains to prove Lemma \ref{lem:tpr}.

\begin{proof}[Proof of Lemma \ref{lem:tpr}]
We consider the intervals $[rn_{k-1},(r+1)n_{k-1})$. 
Since $y\in D(\omega,y',\epsilon_k,n_k)$, it follows from the definition of $\{\epsilon_k\}$ that for at least $(10k)^{98}$ of $r<(10 k)^{100}$, the points 
\begin{equation}\label{eq:rn}
F^{rn_{k-1}}(\omega,y)\text{ and } F^{rn_{k-1}}(\omega',y') \text{ are }
\left(\left(1-\frac{1}{100k^4}\right)\epsilon_{k-1}, n_{k-1}\right){\text{-close}}.\end{equation}
Otherwise
 the cardinality of $i\leq n_k$ such that  $d\Big(F^i(\omega,y),F^{i}(\omega',y')\Big)<\epsilon_k$  
 would be bounded above by
$$
(10k)^{98}n_{k-1}+((10k)^{100}-(10k)^{98})n_{k-1}
\left(1-\left(1-\frac{1}{100k^4}\right)\epsilon_{k-1}\right)<
$$
$$
(10k)^{100}n_{k-1}\left(1-\left(1-\frac{1}{50k^2}\right)\epsilon_{k-1}\right)=n_k(1-\epsilon_k).
$$
This however contradicts the fact that $(\omega,y)$ and $(\omega',y')$ are $(\epsilon_k,n_k)$-close.  So there exists at least $(10k)^{196}$ pairs $(i,j)\in [0,(10k)^{100}]^2$ which satisfy \eqref{eq:rn}. Note that 
$$
\#\{(i,j)\in[0,(10k)^{100}]^2\;:\;|i-j|<(10k)^{95}\}\leq (10k)^{100+95}.
$$
Therefore 
$$
\#\{(i,j)\in[0,(10k)^{100}]^2\;:\;(i,j)\text{ satisfies }\eqref{eq:rn}\text{ and }|i-j|\geq (10k)^{95}\}\geq (10k)^{196}-(10k)^{195}.
$$
Moreover, since $\omega\in \Omega_k$, the cardinality of $n_k$--good 
pairs $(i,j)$ (see Definition \ref{def:toas}) is  at least $(10k)^{200}-5(10k)^{195}$.
Since $(10k)^{196}-(10k)^{195}>5(10k)^{195}$, it follows that there exists $(i,j)$ such that \eqref{eq:rn} holds for $r=i$ and $r=j$, and $(i,j)$ is $n_k$-good. This means that for $r=i,j$,
\begin{equation}\label{eq:zzz}
(\sigma^{rn_{k-1}}\omega',G_{\tau_{rn_{k-1}}(\omega')}y')\text{ and } (\sigma^{rn_{k-1}}\omega,
G_{\tau_{rn_{k-1}}(\omega)}y)
\end{equation}
are $\left(\left(1-\frac{1}{100k^4}\right)\epsilon_{k-1}, n_{k-1}\right)$-close.
Hence we find that for some $\|u_k\|\leq (\sup |\tau|) n_k$,  
 $$
G_{\tau_{in_k-1}(\omega)}y\in D(\sigma^{in_{k-1}}\omega, G_{u_i} y', (1-1/(100k^4))\epsilon_{k-1},n_{k-1}),
$$
and the same holds for $j$ with some $v_k$. This finishes the proof.
\end{proof}

\part{Technical lemmas}

\section{Ergodic sums of intermediate smoothness for toral translations}
\label{ScToral}

\begin{proof}[Proof of Proposition \ref{prop:dioph}]
We start with property $D1$, which is much simpler. Note that if $\DS \phi(x)=\sum_{k\neq 0} a_k e^{2\pi i \langle k, x\rangle} $ then 
$$ \phi_N(x)=\sum_{k\neq 0}
a_k e^{2\pi i \langle k, x\rangle} 
\frac{1-e^{2\pi i N \langle k, \alpha\rangle}}
{1-e^{2\pi i\langle k, \alpha\rangle}}. $$
Therefore
\begin{equation}
\label{ErgSumEigen}
 \|\phi_N\|_{2}^2 =\sum_{k\neq 0} |a_k|^2|A_k(N)|^2
\end{equation} 
where $A_k(N)=\frac{1-e^{2\pi i N \langle k, \alpha\rangle}}
{1-e^{2\pi i\langle k, \alpha\rangle}}$. A simple calculation gives

\begin{equation}
\label{ErgSumHarm}
|A_k(N)|= \left|\frac{1-e^{2\pi i N \langle k, \alpha\rangle}}
{1-e^{2\pi i \langle k, \alpha\rangle}}
\right|= \frac{|\sin(\pi N \langle k, \alpha\rangle)|}{|\sin(\pi  \langle k, \alpha\rangle)|}.
\end{equation}

{\bf Property D1} is a direct consequence of the following:
\begin{lemma}
If $\alpha\in \mathbb{D}(\bkappa)$,  $r<\bkappa$ and $\phi\in \bbH^r$ then
$\|\phi_N\|_2\leq C N^{1-(r/\bkappa)}. $
\end{lemma}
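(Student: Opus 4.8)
The plan is to interpolate between two elementary bounds on the harmonic sums $A_k(N)$ and then optimize the frequency cutoff. First I would record the trivial bound, obtained by writing $A_k(N)$ as a geometric sum: $A_k(N)=\sum_{j=0}^{N-1}e^{2\pi i j\langle k,\alpha\rangle}$, so $|A_k(N)|\leq N$. Second I would use \eqref{ErgSumHarm} together with the Diophantine condition. Since $|\sin(\pi t)|\geq 2\dist(t,\Z)$ for all real $t$, and $\alpha\in\DDD(\bkappa)$ gives $\dist(\langle k,\alpha\rangle,\Z)\geq D|k|^{-\bkappa}$ for every $k\neq0$, we get $|\sin(\pi\langle k,\alpha\rangle)|\geq 2D|k|^{-\bkappa}$ and hence
$$|A_k(N)|=\frac{|\sin(\pi N\langle k,\alpha\rangle)|}{|\sin(\pi\langle k,\alpha\rangle)|}\leq\frac{1}{|\sin(\pi\langle k,\alpha\rangle)|}\leq\frac{1}{2D}\,|k|^{\bkappa}.$$
Thus for every $k\neq0$ we have $|A_k(N)|\leq\min\{N,\ (2D)^{-1}|k|^{\bkappa}\}$.

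Next I would feed this into \eqref{ErgSumEigen} after isolating the Sobolev weight: writing $|a_k|^2|A_k(N)|^2=\big(|a_k|^2|k|^{2r}\big)\big(|A_k(N)|/|k|^r\big)^2$ and using that $\sum_{k\neq0}|a_k|^2|k|^{2r}\leq\|\phi\|_{\bbH^r}^2$ (the sum runs over $k\neq0$ since $\phi$ has zero mean), it is enough to prove the uniform bound $\sup_{k\neq0}|A_k(N)|/|k|^r\leq C\,N^{1-r/\bkappa}$. To establish it, set $M=(2DN)^{1/\bkappa}$. For $|k|\leq M$ I would use the Diophantine bound: $|A_k(N)|/|k|^r\leq(2D)^{-1}|k|^{\bkappa-r}$, which, because $\bkappa>r$, is increasing in $|k|$ and therefore bounded by $(2D)^{-1}M^{\bkappa-r}=(2D)^{-r/\bkappa}N^{1-r/\bkappa}$. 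For $|k|>M$ I would use the trivial bound: $|A_k(N)|/|k|^r\leq N/M^r=(2D)^{-r/\bkappa}N^{1-r/\bkappa}$. Taking square roots in \eqref{ErgSumEigen} then yields $\|\phi_N\|_2\leq(2D)^{-r/\bkappa}N^{1-r/\bkappa}\|\phi\|_{\bbH^r}$, which is the asserted inequality.

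I do not expect a real obstacle here: this is the standard ``trivial bound versus small-divisor bound, optimized at the balance point $|k|^{\bkappa}\sim N$'' argument, and the exponent $1-r/\bkappa$ is exactly that balance point, so nothing is lost. The only points requiring a little care are the elementary inequality $|\sin(\pi t)|\geq 2\dist(t,\Z)$ (which passes from the Diophantine condition to the lower bound on the denominator) and checking that the constant depends only on $D,r,\bkappa$ and not on $N$; both are immediate. If a slightly different but equivalent norm is used on $\bbH^r$, this only changes $C$ by a fixed factor.
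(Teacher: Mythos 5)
Your proof is correct and follows essentially the same route as the paper's: both use the trivial bound $|A_k(N)|\leq N$ against the small-divisor bound $|A_k(N)|\leq C|k|^{\bkappa}$ and optimize at the cutoff $|k|\sim N^{1/\bkappa}$. Phrasing this as a uniform bound on the multiplier $|A_k(N)|/|k|^{r}$ rather than splitting the sum over frequencies into two ranges is only a cosmetic repackaging of the paper's argument.
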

\begin{proof}
Using the estimate $\DS |A_k(N)|^2\leq C \min\left\{\langle k, \alpha \rangle^{-2}, N^2\right\}$, we get
$$ \|\phi_N\|_2^2\leq C \sum_{|k|\leq N^{1/\bkappa}} |k|^{2\bkappa} |a_k|^2+
\sum_{|k|\geq N^{1/\bkappa}} N^2 |a_k|^2=I+\RmII$$
where
$$ I\leq \sum_{|k|\leq N^{1/\bkappa}} (|k|^{2r} |a_k|^2) N^{2(\bkappa-r)}\leq 
C\|\phi\|_{\bbH^r}^2 (N^{1-(r/\bkappa)})^2, $$
and
$\DS  \RmII \leq \sum_{|k|\geq N^{1/\bkappa}} (|k|^{2r} |a_k|^2) (N^{1-(r/\bkappa)})^2\leq 
C \|\phi\|_{\bbH^r}^2(N^{1-(r/\bkappa)})^2 .$
\end{proof}

To establish property {\bf D2}
we start with the following lemma:
\begin{lemma}\label{lem:lar-1} There exists $R_\bm>0$ such that for every $N\in \mathbb{N}$ there exists $k_N\in  \integers^{\bm}$ satisfying:
$$
|\langle k_N,\alpha\rangle|<\frac{1}{4N}, \;\; |k_N|\leq R_\bm N^{1/\bm}.
$$
\end{lemma}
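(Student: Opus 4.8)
The plan is to prove Lemma~\ref{lem:lar-1} by the classical Dirichlet pigeonhole argument for systems of linear forms; the Diophantine hypothesis on $\alpha$ plays no role here, the assertion being valid for every $\alpha\in\mathbb{T}^\bm$ (throughout, $|\langle k,\alpha\rangle|$ denotes the distance from $\langle k,\alpha\rangle$ to the nearest integer, consistently with the definition of $\mathbb{D}(\bkappa)$).

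Fix $N\in\naturals$ and set $Q=Q_N:=\lceil(4N)^{1/\bm}\rceil+1$. A one-line computation gives $Q^{-\bm}\le\frac1{4N}$ (since $Q\ge(4N)^{1/\bm}$) and $Q\le 3\cdot4^{1/\bm}N^{1/\bm}$ (since $Q\le(4N)^{1/\bm}+2$ and $2\le 2\cdot4^{1/\bm}N^{1/\bm}$ because $4^{1/\bm}N^{1/\bm}\ge1$). Consider the $(Q+1)^\bm$ real numbers $P_k:=\{\langle k,\alpha\rangle\}\in[0,1)$ indexed by $k\in\{0,1,\dots,Q\}^\bm$, where $\{\cdot\}$ denotes the fractional part, and partition $[0,1)$ into the $Q^\bm$ intervals $[jQ^{-\bm},(j+1)Q^{-\bm})$, $0\le j<Q^\bm$. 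Since $(Q+1)^\bm>Q^\bm$, there are two distinct multi-indices $k\ne k'$ with $P_k$ and $P_{k'}$ in the same interval. Put $h:=k-k'\in\integers^\bm\setminus\{\mathbf{0}\}$; then $\|h\|_\infty\le Q$, and since $P_k-P_{k'}=\langle h,\alpha\rangle-(\text{an integer})$,
$$
|\langle h,\alpha\rangle|\le |P_k-P_{k'}|<Q^{-\bm}\le\frac1{4N}.
$$
Taking $k_N:=h$ and $R_\bm:=3\sqrt{\bm}\,4^{1/\bm}$ (the factor $\sqrt{\bm}$ passing from the sup-norm bound to the Euclidean-norm bound $|h|\le\sqrt\bm\,\|h\|_\infty\le\sqrt\bm\,Q$) then yields the claim.

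There is no genuine obstacle here: the argument is elementary. The only two points deserving a line of care are (i) that $Q_N$, and hence $R_\bm$, can be chosen \emph{uniformly in $N$}, which is exactly what the two displayed estimates on $Q_N$ guarantee for all $N\ge1$; and (ii) that $h\ne\mathbf{0}$, which is automatic from $k\ne k'$ and is precisely what makes $k_N$ a usable \emph{nonzero} approximating vector when this lemma is fed into the construction of the skewing function $\tau$ in the proof of property~D2.
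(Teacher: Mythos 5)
Your proof is correct, and it reaches the conclusion by a genuinely different (though equally classical) route from the paper. The paper proves the lemma via the geometry of numbers: it forms the unimodular lattice $\cL(\alpha,N)\subset\reals^{\bm+1}$ obtained by rescaling the standard "linear form" lattice of $\alpha$, and applies Minkowski's convex body theorem to the box $\cB\times[-1/4,1/4]$, with $R_\bm$ chosen so that the ball $\cB$ has volume $2^{\bm+3}$; the nonzero lattice point then directly encodes $k_N$ and the nearest integer to $\langle k_N,\alpha\rangle$. You instead run the Dirichlet pigeonhole argument on the $(Q+1)^\bm$ fractional parts $\{\langle k,\alpha\rangle\}$ with $k\in\{0,\dots,Q\}^\bm$. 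Both are standard proofs of Dirichlet's theorem for a single linear form in $\bm$ variables, and both yield a uniform $R_\bm$ with comparable constants; yours is more elementary (no lattice covolume or convexity input), while the paper's scales more cleanly if one wants to replace the sup-norm box by other symmetric convex bodies. Two points you handle that are worth keeping: the explicit reading of $|\langle k_N,\alpha\rangle|$ as distance to the nearest integer (which is indeed how the lemma is consumed in Lemma \ref{lem:lar1}, where one needs $|N\langle k_{2^l},\alpha\rangle|<1/2$ and a nonzero $k_{2^l}$ to divide by $|k_{2^l}|^r$), and the verification that $h\neq\mathbf{0}$; in the paper's version the analogous point is that a nonzero lattice vector with $k=\mathbf{0}$ would force $|z|=N|m|\geq 1>1/4$. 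No gaps.
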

\begin{proof}

For $N\in \mathbb{N}$, consider the lattice
$$\cL(\alpha, N)= \left(\begin{array}{cccc}
 N^{-1/\bm} &  \dots & 0  & 0 \\
\dots & \dots & \dots & \dots \\
0 & \dots & N^{-1/\bm} &  0 \\
0 & \dots & 0 & N \end{array} \right) 
\left(\begin{array}{cccc}
 1 &  \dots & 0  & 0 \\
\dots & \dots & \dots & \dots \\
0 & \dots & 1 & 0 \\
\alpha_1 & \dots &  \alpha_\bm & 1 \end{array} \right) \mathbb{Z}^{\bm+1}\subset \R^{\bm+1}
$$
The points in this lattice are of the form 
$$e=(x,z)\in \reals^\bm \times \reals \text{ where } x=\frac{k}{N^{1/\bm }}, \;\; z=N\cdot(\langle k,
\alpha \rangle+m) \text{ and } (k,m)\in \integers^\bm\times \integers.$$
Let $R_\bm$ be such that a ball $\cB$ of radius $R_\bm$ in $\reals^\bm$ has volume $2^{\bm+3}.$ 
By Minkowski Theorem $\cL(\alpha, N)$ contains a non-zero vector $(x,z)$ in $\cB\times [-1/4,1/4].$ 
This finishes the proof.
\end{proof}

The above lemma has the following immediate consequence: 
\begin{lemma}\label{lem:lar1}There exists $c>0$ such that for every 
$l\in \mathbb{N}$ and every $N\in [2^l,2^{l+1}]$, we have
$$
\frac{|A_{k_{2^l}}(N)|}{|k_{2^l}|^r}\geq c\cdot N^{1-r/\bm}.
$$
\end{lemma}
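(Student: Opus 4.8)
The plan is to combine Lemma~\ref{lem:lar-1} with the harmonic estimate \eqref{ErgSumHarm} to get a uniform lower bound on $|A_{k_{2^l}}(N)|$ valid for all $N$ in a dyadic block $[2^l,2^{l+1}]$. Write $k=k_{2^l}$ and $\theta=\langle k,\alpha\rangle$. By Lemma~\ref{lem:lar-1} we have the two-sided control $|\theta|<\tfrac1{4\cdot 2^l}$ and $|k|\le R_\bm 2^{l/\bm}$.

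First I would bound the denominator in \eqref{ErgSumHarm}: since $|\sin(\pi\theta)|\le \pi|\theta|$ and $|\theta| < 1/(4\cdot 2^l) \le 1/(2N)$ for $N\le 2^{l+1}$, we get $|\sin(\pi\langle k,\alpha\rangle)|\le \pi|\theta| < \pi/(2N)$. Hence
\begin{equation*}
\frac{1}{|\sin(\pi\langle k,\alpha\rangle)|}\ge \frac{2N}{\pi}.
\end{equation*}
Next I would bound the numerator $|\sin(\pi N\theta)|$ from below. Here $N|\theta| \le 2^{l+1}\cdot \tfrac{1}{4\cdot 2^l} = \tfrac12$, so $\pi N\theta$ lies in $[-\pi/2,\pi/2]$, where $|\sin u|\ge \tfrac{2}{\pi}|u|$. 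But this only helps if $N|\theta|$ is not too small, which we cannot guarantee; the clean route is instead to use Lemma~\ref{lem:lar-1} applied at the \emph{same} scale so that $\theta$ is comparable to $2^{-l}$ from below as well --- however the lemma as stated gives no lower bound on $|\theta|$. The main obstacle is precisely this: ruling out the degenerate case where $\langle k_{2^l},\alpha\rangle$ is extremely small (so that $A_{k_{2^l}}(N)\approx N$, which is actually even \emph{better} for us) versus ensuring $|\sin(\pi N\theta)|$ does not vanish. The resolution is to observe that $|A_k(N)|\ge \min\{N,\ c/|\theta|\}$ up to constants is the wrong direction; rather, one uses the elementary inequality $|\sin(\pi N\theta)|\ge \tfrac{2}{\pi}\,\mathrm{dist}(N\theta,\integers)\cdot\pi = 2\,\mathrm{dist}(N\theta,\integers)$ together with the fact that $N|\theta|\le 1/2$ forces $\mathrm{dist}(N\theta,\integers)=N|\theta|$, giving $|\sin(\pi N\theta)|\ge 2N|\theta|$. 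Then
\begin{equation*}
|A_k(N)|=\frac{|\sin(\pi N\theta)|}{|\sin(\pi\theta)|}\ge \frac{2N|\theta|}{\pi|\theta|}=\frac{2N}{\pi},
\end{equation*}
so the $\theta$-dependence cancels entirely and no lower bound on $|\theta|$ is needed.

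Finally I would divide by $|k|^r$. Using $|k|=|k_{2^l}|\le R_\bm 2^{l/\bm}\le R_\bm N^{1/\bm}$ (since $N\ge 2^l$), we obtain
\begin{equation*}
\frac{|A_{k_{2^l}}(N)|}{|k_{2^l}|^r}\ge \frac{2N/\pi}{R_\bm^r N^{r/\bm}}=\frac{2}{\pi R_\bm^r}\,N^{1-r/\bm},
\end{equation*}
which is the claim with $c=2/(\pi R_\bm^r)$. The whole argument is a short chain of elementary trigonometric estimates once one notices that the factor $|\theta|$ cancels; the only point requiring care is the inequality $|\sin(\pi N\theta)|\ge 2N|\theta|$, which hinges on the constraint $N|\theta|\le 1/2$ coming from the dyadic restriction $N\le 2^{l+1}$ in the statement. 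I would state that elementary inequality explicitly and then assemble the three bounds above.
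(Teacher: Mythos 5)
Your argument is correct and is essentially the paper's own proof: the paper likewise reduces to showing $|A_{k_{2^l}}(N)|\ge c'N$, notes that $|N\langle k_{2^l},\alpha\rangle|<1/2$ by Lemma \ref{lem:lar-1}, and applies the two-sided bound $C^{-1}<\sin z/z<C$ to both $z=\pi N\langle k_{2^l},\alpha\rangle$ and $z=\pi\langle k_{2^l},\alpha\rangle$ in \eqref{ErgSumHarm}, so that the factor $|\langle k_{2^l},\alpha\rangle|$ cancels exactly as you observe. Your write-up merely makes the constants and the Jordan inequality $|\sin u|\ge\frac{2}{\pi}|u|$ on $[-\pi/2,\pi/2]$ explicit.
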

\begin{proof} 
By the bound on $k_{2^l}$ from Lemma \ref{lem:lar-1} it suffices to show that
$$
|A_{k_{2^l}}(N)|\geq c'\cdot N.
$$
Note that  by Lemma \ref{lem:lar-1}, $|N\langle k_{2^l},\alpha\rangle|<1/2$. Now using
the estimate $C^{-1}<\frac{\sin z}{z}<C$ for
$z=N\langle k_{2^l},\alpha\rangle$ and $z=\langle k_{2^l},\alpha\rangle$
in \eqref{ErgSumHarm}, we obtain the result.
\end{proof}

Let $(k_{2^l})_{l\in \mathbb{N}}$ be the sequence from the above lemma. For a real sequence $\{a_l\}_{l\in \mathbb{N}}\subset [-1,1]$, let $\tau(a_l):\mathbb{T}^\bm\to \mathbb{C}$ be given by
\begin{equation}\label{eq:tau}
(\tau(a_{l}))(x)=\sum_{l> 0} \frac{ a_le^{2\pi i\langle k_{2^l},x\rangle}}{|k_{2^l}|^r\; l^2}.
\end{equation}

For $d\in \mathbb{N}$ let $\tau(a_l^{(1)},...,a_l^{(d)}): \mathbb{T}^\bm\to \mathbb{C}^{d}$
be defined by $(\tau(x))_j = (\tau(a_l^{(j)}))(x)$.
Let $\{a^{(j)}_l\}$ be i.i.d. random variables uniformly distributed on the unit cube in $\reals^d$ and the corresponding probability measure is denoted by $\Prob_{\bra}$.

\begin{lemma}
\label{LmUnlikelySmall}For every $\varepsilon >0$
there exists $C>0$ such that for every $x\in \mathbb{T}^\bm$ and every $N\in \mathbb{N}$,
$$ \Prob_{\bar{a}} \left(\|\tau_N(x)\|\leq N^{\eps} \right)<\left(\frac{C}{N^{1-r/\bm-2\eps}}\right)^{d} .$$
\end{lemma}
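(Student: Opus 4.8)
The plan is to pass to the Fourier side, reduce the $d$-dimensional small-ball estimate to a one-dimensional anticoncentration bound for a random trigonometric polynomial, and then insert the lower bound on a single Fourier mode provided by Lemma~\ref{lem:lar1}.

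First I would expand $\tau_N(x)=\sum_{n=0}^{N-1}\tau(x+n\alpha)$ by applying the identity $\sum_{n=0}^{N-1}e^{2\pi i n\langle k,\alpha\rangle}=A_k(N)$ term-by-term to the series \eqref{eq:tau}. This gives, for the $j$-th coordinate, $(\tau_N(x))_j=\sum_{l>0}c_l(x,N)\,a_l^{(j)}$ with $c_l(x,N)=e^{2\pi i\langle k_{2^l},x\rangle}A_{k_{2^l}}(N)/(|k_{2^l}|^r l^2)$; the modulus $|c_l(x,N)|=|A_{k_{2^l}}(N)|/(|k_{2^l}|^r l^2)$ depends neither on $x$ nor on $j$, and the series converges absolutely for every realization since $|c_l(x,N)|\le N/l^2$. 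Because the law of the vector $(a_l^{(1)},\dots,a_l^{(d)})$ is uniform on a cube, hence a product measure, all the $a_l^{(j)}$ ($l\ge1$, $1\le j\le d$) are mutually independent and uniform on a fixed interval $I$ of positive length $c_0$. Hence the $d$ coordinates of $\tau_N(x)$ are i.i.d.\ copies of the single complex random variable $W:=\sum_{l>0}c_l(x,N)\xi_l$, with $\{\xi_l\}$ i.i.d.\ uniform on $I$. Since $\|\tau_N(x)\|\ge|(\tau_N(x))_j|$ for each $j$, the event $\{\|\tau_N(x)\|\le N^\eps\}$ lies in the intersection of the $d$ \emph{independent} events $\{|(\tau_N(x))_j|\le N^\eps\}$, so
\[
\Prob_{\bar a}\big(\|\tau_N(x)\|\le N^\eps\big)\le\Prob_{\bar a}\big(|W|\le N^\eps\big)^{d},
\]
and it suffices to prove $\Prob_{\bar a}(|W|\le N^\eps)\le C\,N^{-(1-r/\bm-2\eps)}$.

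Next I would establish this one-dimensional bound by the classical conditioning trick. For $N\ge2$ set $l_0:=\lfloor\log_2 N\rfloor\in\naturals$, so that $N\in[2^{l_0},2^{l_0+1}]$ (the value $N=1$ is covered by the trivial bound $\Prob_{\bar a}\le1$ after enlarging $C$). Condition on $\{\xi_l:\ l\ne l_0\}$; then $W=c_{l_0}\xi_{l_0}+b$ with $b$ a constant, so as $\xi_{l_0}$ runs over $I$ the point $W$ traces a line segment in $\mathbb{C}\cong\reals^2$ whose length scales by $|c_{l_0}|$. Since any line meets the disk $B(0,N^\eps)$ in a chord of length at most $2N^\eps$, the set of $\xi_{l_0}\in I$ with $W\in B(0,N^\eps)$ has Lebesgue measure at most $2N^\eps/|c_{l_0}|$, hence the conditional probability is at most $2N^\eps/(c_0|c_{l_0}|)$; averaging over the conditioning preserves the bound $\Prob_{\bar a}(|W|\le N^\eps)\le 2N^\eps/(c_0|c_{l_0}|)$. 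Now Lemma~\ref{lem:lar1}, applied with this $l_0$, gives $|A_{k_{2^{l_0}}}(N)|/|k_{2^{l_0}}|^r\ge c\,N^{1-r/\bm}$, hence $|c_{l_0}(x,N)|\ge c\,N^{1-r/\bm}/l_0^2\ge c\,N^{1-r/\bm}/(\log_2 N)^2$. Plugging this in and then raising to the $d$-th power,
\[
\Prob_{\bar a}\big(\|\tau_N(x)\|\le N^\eps\big)\le\Big(\tfrac{2(\log_2 N)^2}{c_0 c}\cdot N^{\eps-(1-r/\bm)}\Big)^{d}.
\]
Finally, for each fixed $\eps>0$ there is $C_\eps$ with $(\log_2 N)^2\le C_\eps N^\eps$ for all $N\ge1$, which absorbs the logarithm into the extra factor $N^{\eps}$ and yields the claim with $C=2C_\eps/(c_0c)$, enlarged if necessary to absorb the finitely many small $N$.

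I do not expect a genuine obstacle here: the argument is just anticoncentration for one well-chosen Fourier mode, and the one substantive ingredient — that $\tau_N$ has a mode of amplitude of order $N^{1-r/\bm}$ uniformly in $x$ — is already isolated in Lemma~\ref{lem:lar1}, itself a consequence of the Minkowski-theorem construction in Lemma~\ref{lem:lar-1}. The two points that need a little care are: making explicit that the cube (product) structure of the noise makes the $d$ coordinates of $\tau_N(x)$ genuinely independent, which is what legitimizes the $d$-th power in the conclusion; and carrying along the harmless factor $l_0^2=O((\log N)^2)$, which is exactly why the exponent in the statement is $1-r/\bm-2\eps$ rather than the optimal-looking $1-r/\bm-\eps$.
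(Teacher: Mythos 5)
Your proposal is correct and follows essentially the same route as the paper: reduce to $d=1$ by independence of the coordinates, condition on all $a_j$ with $j\neq l_0$ for the dyadic block containing $N$, and apply anticoncentration in the single remaining mode using the lower bound of Lemma~\ref{lem:lar1}, absorbing the $l_0^2=O((\log N)^2)$ factor into one power of $N^{\eps}$. The only cosmetic difference is that you bound the chord of the disk $B(0,N^{\eps})$ directly in $\mathbb{C}\cong\reals^2$, whereas the paper projects onto whichever of the real or imaginary parts of the mode's amplitude is large and runs the interval-length computation there.
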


\begin{proof}
Since for a fixed $x$ different components of $\tau$ are independent, it suffices to consider the
case $d=1.$ In this case,  $\tau$ is given by \eqref{eq:tau}. Let $l$ be such that $N\in [2^l,2^{l+1}]$. We now fix all the $a_j$ for $j\neq l$. Then, since $N,x$ and all frequencies $2^j$ except $2^l$
are fixed, we can write (with some $\fc \in \mathbb{C}$ depending
on $a_j$, $j \neq l$ and $N$), 
\begin{equation}\label{eq:new}
\tau_N(x)=\fc+\frac{a_{l}A_{k_{2^l}}(N)e^{2\pi i\langle k_{2^l},x\rangle}}{|k_{2^l}|^r l^2}
\end{equation}
Let $M=(M_1,M_2):=\frac{1}{|k_{2^l}|^rl^2}
\left( \Re\Big(A_{k_{2^l}}(N)e^{2\pi i\langle k_{2^l},x\rangle}\Big),
\Im\Big(A_{k_{2^l}}(N)e^{2\pi i\langle k_{2^l},x\rangle}\Big) \right)$.\\ By Lemma~\ref{lem:lar1},
$$
|M|=\frac{|A_{k_{2^l}}(N)|}{|k_{2^l}|^{ r}l^2}\geq c\cdot N^{1-r/\bm-\epsilon}.
$$
Let us WLOG assume that $|M_1|\geq c/2\cdot N^{1-r/\bm-\epsilon}$
(if $|M_2|\geq c/2\cdot N^{1-r/\bm-\epsilon}$ the proof is analogous). 
It then follows that the measure of $z\in [-1,1]$ for which
$\DS
|M_1\cdot z-\Re(\fc)|<N^{\epsilon},
$
is bounded above by  $\DS \frac{2}{cN^{1-r/\bm -2\epsilon}}.$ 
Since $a_l$ is uniformly distributed on $[-1,1]$,\eqref{eq:new} finishes the proof. 
\end{proof}
Now we are ready to define the map $\tau$ and hence also finish the proof of {\bf D2}.

Take  $d\in \mathbb{N}$ such that $d(1-r/\bm-2\epsilon)>20$. 
Summing the estimates of Lemma~\ref{LmUnlikelySmall} over $N$,
we obtain that for some $C'>0$ and every fixed $x\in \mathbb{T}^\bm$, 
$$ 
\Prob_{\bar{a}} \left(\{\text{ there exists } N\geq n\;:\; \|\tau_N(x)\|\leq N^{\eps} \}\right)<\frac{C'}{n^{19}}.
$$
It follows by Fubini's theorem that 
$$ 
(\Prob_{\bar{a}}\times \mu)\Big(\{(a,x)\;:\; \text{ for all } N\geq n\;:\; \|\tau_N(x)\|\geq N^{\eps}\}\Big)\geq 1-\frac{C'}{n^{19}}.
$$
Using Fubini's theorem again, we get that there exists $\fA_n$ with $\Prob(\fA_n)\geq 1-\frac{C'}{n^8}$, such that for every $\bar{a}\in \fA_n$, 
$$
\mu(\{x\;:\; \text{ for all } N\geq n\;:\; \| (\tau(\bar{a}))_N(x)\|\geq N^{\eps}\})\geq 1-\frac{C'}{n^{7}}.
$$
 It is then enough to take $\DS \bar{a}\in \bigcap_{n\geq N_0} \fA_n$ for any fixed $N_0$
(notice that $\DS \bigcap_{n\geq N_0} {\fA_n}$ is non-empty if $N_0$ is large enough). Then the corresponding $\tau(\bar{a}):\mathbb{T}^\bm\to 
\mathbb C^d = \mathbb{R}^{2d}$ satisfies {\bf D2}
(with $2d$ instead of $d$). This finishes the proof of the proposition.
\end{proof}

\section{Ergodic integrals of flows on $\Tor^2$}
\label{ScSurfaceFlows}
Here we prove Proposition \ref{prop:deverg}.
We will study the flow $\varphi_t$ via its special representation.
We first prove some results on deviation of ergodic averages for functions with logarithmic singularities (either symmetric or asymmetric) and with power singularities. \\

For $N\in \mathbb{N}$, let $\DS \theta_{\min,N}:=\min_{j<N}\|\theta+j\alpha\|$, where $\theta \in \mathbb T$
and $\| z \| = \min \{ z, 1-z \}$.
 In the lemmas below we want to cover the cases
of logarithmic and power singularities simultaneously. 
 For roof functions with
 logarithmic singularities one can get much better bounds (with deviations
 being a power of $\log$) but we do not pursue the optimal bounds here since
 the bounds of the present section are sufficient for our purposes. 
 Let $J\in C^2(\mathbb{T}\setminus \{0\})$ be any function satisfying 
\begin{equation}\label{eq:asy}
\lim_{\theta\to 0^+}\frac{J(\theta)}{\theta^{-\gamma}}=P\text{ and }
\lim_{\theta\to 1^-}\frac{ J(\theta)}{(1-\theta)^{-\gamma}}=Q,
\end{equation}
for some constants $P,Q$. Notice that by 
l'Hosptial's rule it follows that any $f$ as in \eqref{eq:asy2} satisfies \eqref{eq:asy} 
(with $P=Q=0$ if $f$ has logarithmic singularities). Recall that $\gamma\leq 2/5$. 

In what follows, let $(a_n)$ denote the continued fraction expansion and $(q_n)$ denote the sequence of denominators of $\alpha$.

\begin{lemma}\label{lem:koksi}For every $x\in\mathbb{T}$ and every $n\in \mathbb{N}$,
$$
|J_{q_n}(\theta)-q_n\int_\mathbb{T} J(\vartheta) d\vartheta|=
{\rm O}\left(\theta_{\min,q_n}^{-\gamma}\right)
$$
\end{lemma}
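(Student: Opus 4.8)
The plan is to prove a Denjoy--Koksma inequality tailored to the power singularity of $J$. Write $\DS d_n:=\theta_{\min,q_n}=\min_{0\le j<q_n}\|\theta+j\alpha\|$, and recall the classical fact that the $q_n$ points $\theta,\theta+\alpha,\dots,\theta+(q_n-1)\alpha$ cut $\mathbb{T}$ into $q_n$ arcs, each of length $\|q_{n-1}\alpha\|$ or $\|q_{n-1}\alpha\|+\|q_n\alpha\|$, hence of length comprised between $\frac{1}{2q_n}$ and $\frac{2}{q_n}$. In particular, if $I$ denotes the unique arc of this partition containing the singular point $0\equiv 1$, then $d_n\le|I|\le\Const\,q_n^{-1}$, so that $q_n^{\gamma}=O(d_n^{-\gamma})$; this trivial-looking inequality will be used twice.

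First I would split $J=g+h$ in a $\theta$-dependent way. Let $\theta+j_0\alpha$ and $\theta+j_1\alpha$ be the two endpoints of $I$ (they lie in the orbit $\{\theta+j\alpha\}_{0\le j<q_n}$, since arc endpoints are orbit points), set $g:=J$ on $\mathbb{T}\setminus I$ and $g:=$ the affine interpolation between $J(\theta+j_0\alpha)$ and $J(\theta+j_1\alpha)$ on $I$, and put $h:=J-g$. Then $g$ is continuous on $\mathbb{T}$; since $\mathbb{T}\setminus I$ stays at distance $\ge d_n$ from $0$, the asymptotic behaviour of $J$ near its singularity gives $\Var_{\mathbb{T}\setminus I}(J)=O(d_n^{-\gamma})$, while the linear piece contributes $|J(\theta+j_0\alpha)-J(\theta+j_1\alpha)|=O(d_n^{-\gamma})$; hence $\Var_{\mathbb{T}}(g)=O(d_n^{-\gamma})$. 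The classical Denjoy--Koksma inequality then yields $|g_{q_n}(\theta)-q_n\int_{\mathbb{T}}g|\le\Var_{\mathbb{T}}(g)=O(d_n^{-\gamma})$.

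Next I would deal with $h$. By construction $h$ is supported in $I$ and vanishes at the endpoints $\partial I$, and no iterate $\theta+j\alpha$ with $0\le j<q_n$ lies in the open arc $I$; therefore $h_{q_n}(\theta)=0$ identically. On the other hand $\int_{\mathbb{T}}h=\int_I h$, and $|\int_I h|\le\int_I|J|+\int_I|g|$, where $\int_I|J|=O(q_n^{-(1-\gamma)})$ because $\vartheta^{-\gamma}$ is integrable and $|I|=O(q_n^{-1})$, while $\int_I|g|=O(q_n^{-1}d_n^{-\gamma})$ since $|g|=O(d_n^{-\gamma})$ on $I$. Multiplying by $q_n$ and invoking $q_n^{\gamma}=O(d_n^{-\gamma})$ gives $|q_n\int_{\mathbb{T}}h|=O(d_n^{-\gamma})$. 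Combining the two pieces, $|J_{q_n}(\theta)-q_n\int_{\mathbb{T}}J|\le|g_{q_n}(\theta)-q_n\int_{\mathbb{T}}g|+|q_n\int_{\mathbb{T}}h|=O(d_n^{-\gamma})$, uniformly in $\theta$; the finitely many $n$ for which $d_n$ is not small cause no trouble since then $d_n^{-\gamma}$ is bounded below.

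The main obstacle is purely one of bookkeeping: the decomposition $J=g+h$ genuinely depends on $\theta$ (through which iterate is nearest the singularity), so every estimate must carry a constant independent of both $\theta$ and $n$ — this is exactly why the argument is arranged to use only the scale-free gap bounds $\frac{1}{2q_n}\le|I|\le\frac{2}{q_n}$ and the scale-free variation bound $\Var(g)=O(d_n^{-\gamma})$. I should also note that only $\gamma<1$ is actually used (for integrability of the singularity), so the standing assumption $\gamma\le 2/5$ is not needed here, and that the same scheme applies verbatim to roof functions with logarithmic singularities, where it gives the (non-optimal but amply sufficient) bound $O(\theta_{\min,q_n}^{-\gamma})$ for every $\gamma>0$.
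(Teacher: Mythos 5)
Your proof is correct, and it follows the same strategy as the paper's: modify $J$ near the singularity so that Denjoy--Koksma applies, control the Birkhoff sums and the mean of the correction separately, and convert $q_n^{\gamma}$ into $O(\theta_{\min,q_n}^{-\gamma})$ via $\theta_{\min,q_n}=O(q_n^{-1})$. The implementation of the cutoff differs. The paper simply multiplies $J$ by the indicator of the complement of $[-\tfrac{1}{10q_n},\tfrac{1}{10q_n}]$: the truncated function has variation $O(q_n^{\gamma})$ (including the jumps at the cut), the at-most-one orbit point falling in the window contributes $O(\theta_{\min,q_n}^{-\gamma})$ to the difference of Birkhoff sums, and the change in the mean is $O(q_n^{\gamma-1})$. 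You instead invoke the three-distance theorem to locate the exact orbit-free gap $I\ni 0$ and replace $J$ there by the affine interpolant; this buys you a continuous modification with $\Var(g)=O(\theta_{\min,q_n}^{-\gamma})$ stated directly in the natural scale, and a correction $h$ whose Birkhoff sum vanishes identically rather than merely being small, at the cost of the (standard but not free) gap-length bounds $\tfrac{1}{2q_n}\le |I|\le \tfrac{2}{q_n}$. Both arguments give the same bound with the same uniformity in $\theta$. One shared caveat: the bound $\Var_{\mathbb{T}\setminus I}(J)=O(\theta_{\min,q_n}^{-\gamma})$ (like the paper's $\Var(\bar J)=O(q_n^{\gamma})$) really uses that $J'$, not just $J$, has the power-law asymptotics at the singularity; the stated hypothesis \eqref{eq:asy} controls only $J$ itself, so strictly one should note that the roof functions arising from \eqref{eq:asy2} satisfy the derivative asymptotics as well (by the same l'H\^opital argument the paper alludes to). Your closing remarks — that only $\gamma<1$ is needed and that logarithmic singularities are covered — are accurate.
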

\begin{proof} Let $\bar{J}(\theta)=(1-\chi_{[-\frac{1}{10q_n},\frac{1}{10q_n}]}(\theta))\cdot J(\theta)$. 
Then $\bar{J}$ is of bounded variation. Since $\Big|\{\theta+j\alpha\}_{j<q_n}\cap [-\frac{1}{10q_n},\frac{1}{10q_n}]\Big|\leq 1$, it follows that 
$$
|\bar{J}_{q_n}(\theta)-J_{q_n}(\theta)|= {\rm O}\left(\theta_{\min,q_n}^{-\gamma}\right),
$$
by the definition of $\theta_{min,q_n}$. By the Denjoy-Koksma inequality,
$$
|\bar{J}_{q_n}(\theta)-q_n\int_\mathbb{T} \bar{J}(\vartheta) d\vartheta|\leq {\rm Var}(\bar{J})={\rm O}(q_n^{\gamma}).
$$
Moreover, since $\DS \left|\{\theta+j\alpha\}_{j<q_n}\bigcap 
\left[-\frac{10}{q_n},\frac{10}{q_n}\right]\right|\geq 1$ it follows that 
$\DS \theta_{min,q_n}\leq \frac{10}{q_n}$, and so
$q_n^{\gamma} = {\rm O}\left(\theta_{\min,q_n}^{-\gamma}\right)$.
It remains to notice that 
$$
\left|\int_\mathbb{T} \bar{J}d\vartheta-\int_\mathbb{T}  Jd\vartheta\right|=
\int_{0}^{\frac{1}{10q_n}}Jd\vartheta+\int_{1-\frac{1}{10q_n}}^{1}Jd\vartheta={\rm O}
\left( q_n^\gamma/q_n\right),
$$
by the definition of $\bar{J}$. Since $\gamma<\frac{1}{2},$ the result follows.
\end{proof}

\begin{lemma}\label{lem:1b}
Assume that $\alpha$ is such that 
 $\DS \sup_{n\in \mathbb{N}} \frac{q_{n+1}}{q_n^{1+\zeta}}\leq C$
 for some $\zeta,C>0$.
 Then for every $N\in \mathbb{N}$
$$
\left|J_N(\theta)-N\int_\mathbb{T} J(\vartheta) d\vartheta\right|={\rm O}\Big(N^{\zeta}\log N\cdot \theta_{\min,N}^{-\gamma}\Big).
$$
\end{lemma}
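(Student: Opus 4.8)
The plan is to cut the orbit segment of length $N$ into a controlled number of Denjoy--Koksma blocks of lengths equal to continued fraction denominators, and apply Lemma \ref{lem:koksi} on each of them.

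Concretely, I would first decompose $N$ greedily with respect to the denominators $q_0=1,q_1,q_2,\ldots$: choosing at each step the largest $q_j$ not exceeding the current remainder, one obtains $N=\sum_{j=0}^m c_j q_j$ with $q_m\le N<q_{m+1}$, and $0\le c_j\le a_{j+1}$ for every $j$ (the remainder $R_j$ before the $q_j$-step satisfies $R_j<q_{j+1}$, so $c_j=\lfloor R_j/q_j\rfloor<q_{j+1}/q_j<a_{j+1}+1$, hence $c_j\le a_{j+1}$). This groups the index set $[0,N)$ into $\sum_j c_j$ consecutive blocks, each an interval $[s,s+q_j)\subset[0,N)$. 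Summing over blocks and using $\sum_{\text{blocks}}q_j=N$,
\[
J_N(\theta)-N\int_{\mathbb T}J=\sum_{\text{blocks}}\Bigl(J_{q_j}(\theta+s\alpha)-q_j\int_{\mathbb T}J\Bigr).
\]
By Lemma \ref{lem:koksi}, applied at the base point $\theta+s\alpha$, each summand is ${\rm O}\bigl((\min_{0\le i<q_j}\|\theta+s\alpha+i\alpha\|)^{-\gamma}\bigr)$ with a constant uniform in $\theta,s,j$. Since the orbit segment of such a block sits inside $\{\theta+i\alpha:0\le i<N\}$, we get $\min_{0\le i<q_j}\|\theta+s\alpha+i\alpha\|\ge\theta_{\min,N}$, so every block contributes at most ${\rm O}(\theta_{\min,N}^{-\gamma})$.

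It then remains to bound the number of blocks. The hypothesis $q_{j+1}\le Cq_j^{1+\zeta}$ gives $a_{j+1}\le q_{j+1}/q_j\le Cq_j^{\zeta}\le CN^{\zeta}$ for all $j\le m$ (using $q_j\le q_m\le N$), while $q_j\ge q_{j-1}+q_{j-2}$ forces geometric growth of the $q_j$ and hence $m={\rm O}(\log N)$. Therefore $\sum_{j=0}^m c_j\le\sum_{j=0}^m a_{j+1}\le(m+1)\,CN^{\zeta}={\rm O}(N^{\zeta}\log N)$, and multiplying by the per-block estimate yields
\[
\Bigl|J_N(\theta)-N\int_{\mathbb T}J\Bigr|={\rm O}\bigl(N^{\zeta}\log N\cdot\theta_{\min,N}^{-\gamma}\bigr),
\]
which is the assertion.

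The substantive inputs are exactly Lemma \ref{lem:koksi} (the Denjoy--Koksma estimate for a $q_n$-block with error ${\rm O}(\theta_{\min,q_n}^{-\gamma})$, uniform in the base point, together with the elementary bound $\theta_{\min,q_n}\le 10/q_n$ used in its proof). The only point requiring care is the bookkeeping for the greedy decomposition --- checking $c_j\le a_{j+1}$ and that the blocks tile $[0,N)$ without overshoot --- which is routine, so I do not anticipate a genuine obstacle.
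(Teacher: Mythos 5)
Your argument is correct and is essentially the paper's own proof: the greedy decomposition you describe is the Ostrowski expansion $N=\sum_k b_k q_k$ used in the paper, the tiling into consecutive $q_j$-blocks is the cocycle identity, and the per-block application of Lemma \ref{lem:koksi} together with the block count $M\cdot\sup_k b_k={\rm O}(\log N\cdot N^{\zeta})$ matches the paper's estimate exactly. No substantive differences.
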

\begin{proof} Let $\DS N=\sum_{k\leq M}b_kq_k$, with $b_k\leq a_k$, $b_M\neq 0$, $M={\rm O}(\log N)$ be the Ostrovski expansion of $N$. For every point $\bar{\theta}=\theta+j\alpha$, $j<N$ 
with $j+q_k<N$, we have that $\bar{\theta}_{\min,q_k}\geq \theta_{\min,N}$. Hence for each such point  Lemma \ref{lem:koksi} gives
$$
|J_{q_k}(\bar{\theta})-q_k\int_\mathbb{T} J(\vartheta) d\vartheta|=
{\rm O}\left(\theta_{\min,N}^{-\gamma}\right).
$$
Using cocycle identity, we write 
$\DS J_N(\theta)=\sum_{k\leq M}\sum_{j<b_k}J_{q_k}(\theta_{j,k})$, for some points 
$\bar{\theta}=\theta_{i,k}$ satisfying the above inequality for $q_k$. Then 
$$
\left|J_N(\theta)-N\int_\mathbb{T} J(\vartheta) d\vartheta\right|=
{\rm O}\left(M\cdot \sup_{k}b_k\cdot \theta_{\min,q_n}^{-\gamma}\right)=
{\rm O}\left(\log N\cdot N^{\zeta} 
\theta_{\min,N}^{-\gamma} \right),
$$
 where we use that $M={\rm O}(\log N)$ and 
 $$
 \sup_{k}b_k\leq \sup_{k}a_k={\rm O}(q_k^{\zeta})={\rm O}(N^\zeta).
 $$
This finishes the proof.
\end{proof}

Let $\alpha$ satisfy $q_{n+1}\leq Cq_n^{1+\zeta}$, for $0<\zeta<1/1000$. The set of such $\alpha$ has full measure by Khinchine's theorem.  

Let $c=\inf_{\mathbb{T}} f>0$. For $T>0$, we say that  $\theta\in \mathbb{T}$ is {\em $T$-good} if  
the orbit $\{\theta+j\alpha\}_{j\leq \frac{T}{c}}$ does not visit the interval 
$\DS \left[-\frac{1}{T^{1+1/100}},\frac{1}{T^{1+1/100}}\right]$. We have the following
\begin{lemma}\label{x-tr}Let $T_t^f$ be a special flow with $f$ satisfying \eqref{eq:asy2}.
$$
W(T):=\{(\theta,s)\;:\; \theta\text{ is }T\text{-good} \}.
$$
Then $\mu(W(T))=1-o(1)$ as $T\to \infty $.
\end{lemma}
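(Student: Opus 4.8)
The plan is to bound the measure of the complement of $W(T)$ directly using the distribution of visits of the orbit $\{\theta+j\alpha\}_{j\le T/c}$ to a small symmetric interval around $0$. Fix $T$ large and write $L=\lfloor T/c\rfloor$, $\delta=T^{-1-1/100}$, and $I_\delta=[-\delta,\delta]$. Since the roof function $f$ is bounded below by $c$, the flow $T^f_t$ spends at most time $L$-many steps of the base rotation when the time parameter runs over $[0,T]$; more precisely, by the definition of $W(T)$ we simply need $\mu\big(\{(\theta,s):\exists\,j\le L,\ \theta+j\alpha\in I_\delta\}\big)=o(1)$. Because the second coordinate $s$ ranges over $[0,f(\theta))$ and $f$ is integrable, it suffices to show $\mathrm{Leb}\big(\{\theta\in\mathbb T:\exists\,j\le L,\ \theta+j\alpha\in I_\delta\}\big)=o(1)$, as the measure $\mu$ has density bounded in terms of $f$ and $\int f=1$.

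The next step is the elementary estimate
$$
\mathrm{Leb}\Big(\big\{\theta:\ \exists\,j\le L,\ \theta+j\alpha\in I_\delta\big\}\Big)
=\mathrm{Leb}\Big(\bigcup_{j=0}^{L}(I_\delta-j\alpha)\Big)\le (L+1)\cdot 2\delta,
$$
which is just the union bound (the sets $I_\delta-j\alpha$ are translates of an interval of length $2\delta$). Plugging in $L+1\le T/c+1$ and $\delta=T^{-1-1/100}$ gives a bound of order $T\cdot T^{-1-1/100}=T^{-1/100}\to 0$ as $T\to\infty$. Hence the bad set in the base has Lebesgue measure $O(T^{-1/100})$, and lifting to $X=\{(\theta,s):0\le s<f(\theta)\}$ the $\mu$-measure of its preimage is at most $\|f\|_\infty$ (or, if $f$ is unbounded as in the logarithmic/power cases, one integrates $f$ over the bad base set, which is still $o(1)$ since $f\in L^1$ and the bad base set shrinks) times $O(T^{-1/100})$, hence still $o(1)$. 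Therefore $\mu(W(T))=1-o(1)$.

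The only genuine subtlety is the case where $f$ has a non-integrable-looking singularity, but here $f\in L^1$ by hypothesis ($\int f\,d\mathrm{Leb}=1$), so $\int_{E}f\,d\mathrm{Leb}\to 0$ whenever $\mathrm{Leb}(E)\to 0$ by absolute continuity of the integral; this handles the lift cleanly. I expect no real obstacle: the argument is a one-line union bound combined with absolute continuity of $\int f$, and the exponent $1+1/100$ in the definition of $T$-good is precisely chosen so that $(T/c)\cdot T^{-1-1/100}$ is a negative power of $T$. One should only be mildly careful that the orbit length $T/c$ is the correct bound on the number of base steps the flow can take in time $T$, which follows since each base step contributes at least $c$ to the time parameter.
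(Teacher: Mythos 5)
Your proof is correct and follows the same basic decomposition as the paper: $(W(T))^c$ is the union over $j\le T/c$ of the full fibers $I_j^f$ over intervals $I_j$ of length $2T^{-1-1/100}$ centered at $-j\alpha$, the base bad set has Lebesgue measure $O\big(T\cdot T^{-1-1/100}\big)=O(T^{-1/100})$, and one then lifts to the special flow space. Where you differ is in handling the singularity of the roof: the paper splits off the $j=0$ interval (which contains the singular point and is handled exactly by your absolute-continuity observation, cf. \eqref{IntMes2}) and for $j\neq 0$ invokes pairwise disjointness of the $I_j$ (via the Diophantine condition on $\alpha$) to get a pointwise bound $\sup_{I_j}f\le C\,T^{(1+1/100)\gamma}$, which it then multiplies against the total length. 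Your route — a single union bound on the base followed by absolute continuity of $\int f$ over a set of vanishing Lebesgue measure — is cleaner and more robust: it treats all $j$ at once, needs no disjointness of the $I_j$ and no Diophantine input, and handles the power-singularity case $\gamma\le 2/5$ uniformly, which is precisely where the paper's crude "sup times total length" estimate is the delicate step (a naive reading of that bound gives $T^{(1+1/100)\gamma-1/100}$, which is not $o(1)$ for $\gamma$ near $2/5$ unless one exploits, as you implicitly do, that the intervals cannot all accumulate at the singularity). The one point worth making explicit in your write-up is that $\mu(E^f)=\int_E f\,d\theta$ for a base set $E$, so "lifting" is exactly integration of $f$ over the bad base set; your second paragraph already says this, so no gap remains.
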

\begin{proof} For an interval $I$, let $I^f:=\{(\theta, s): s<f(\theta), \theta\in I\}$. Note that 
$$
 (W(T))^c=\bigcup_{j\leq \frac{T}{c}}I_j^f,
$$
where $\DS I_j=\left[-j\alpha-\frac{1}{T^{1+1/100}},-j\alpha+\frac{1}{T^{1+1/100}}\right]$. 
Moreover, by the diophantine assumptions on $\alpha$, all the intervals $I_j$ are pairwise disjoint. Therefore, for $j\neq 0$, 
\begin{equation}
\label{IntMes1}
\sup_{\theta\in I_j} f(\theta)\leq C\cdot T^{(1+1/100)\gamma}.
\end{equation}
Hence $\DS \mu\left(\bigcup_{0\neq j\leq \frac{T}{c}}I_j^f\right)\leq C T^{(1+1/100)\gamma}$. Moreover, since $f$ satisfies \eqref{eq:asy2}
\begin{equation}
\label{IntMes2}
\mu(I_0^f)=o(1),\quad \text{as}\quad T\to\infty.
\end{equation}
Combining \eqref{IntMes1} and \eqref{IntMes2} gives the result.
\end{proof}

Using the three lemmas above we can prove Proposition \ref{prop:deverg}.

\begin{proof} Let $\alpha$ satisfy $q_{n+1}\leq Cq_n^{1+\zeta}$, for $0<\zeta<1/1000$. We will show that there exists $C>0$ such that for every $T$, and every $(\theta,s)\in W(T)$, we have 
$$
|\bar{H}_T(\theta,s)-T\mu(\bar{H})|\leq CT^{1/2-1/1000}.
$$
This by Lemma \ref{x-tr} will finish the proof of the proposition. Notice that for $(\theta,s)\in W(T)$, 
we have in particular that 
$s<f(\theta)\leq C T^{(1+1/100)\gamma} \leq CT^{1/2-1/1000}$
$$
|\bar{H}_T(\theta,s)-\bar{H}_T(\theta,0)|<
\|\bar{H}\|_1\;s\leq C'\|\bar{H}\|T^{1/2-1/1000}.
$$
Therefore, it is enough to show that if $(\theta,0)\in W(T)$, then 
\begin{equation}\label{eq:dev2}
|\bar{H}_T(\theta,0)-T\mu(\bar{H})|\leq C''T^{1/2-1/1000}.
\end{equation}
for some constant $C''>0$. Note that
\begin{equation}
\label{NumCross}
cN(\theta,0,T)\leq |f_{N(\theta,0,T)}(\theta)|\leq T
\end{equation}
and so $\DS \|\theta+N(\theta,0,T)\alpha\|\geq 
\min_{j\leq \frac{T}{c}}\|\theta+j\alpha\|\geq T^{-1-1/100}$. In particular 
$$f(\theta+N(\theta,0,T)\alpha)\leq C'''T^{(1+1/100)\gamma}. $$
So
$$\int_0^T\bar{H}(\varphi_t(\theta,0))dt-T\mu(\bar{H})=
$$$${\rm O}\left(T^{(1+1/100)\gamma}\right)+\left(\int_{0}^{N(\theta,0,T)}
\bar{H}(\varphi_t(\theta,0))dt-N(\theta,0,T)\mu(\bar{H})\right)+(T-N(\theta,0,T))\mu(\bar{H}).$$
Since $\gamma\leq 2/5$, it is enough to bound the second and last term above. It is therefore enough to prove the following: for every $(\theta,0)\in W(T)$,
 \begin{equation}\label{eq:devbo2}
 |T-N(\theta,0,T)|={\rm O}\left(T^{1/2-1/1000}\right),
\end{equation}
and 
 \begin{equation}\label{eq:devbo3}
\Big|\int_{0}^{N(\theta,0,T)}\bar{H}(\varphi_t(\theta,0))dt-N(\theta,0,T)\mu(\bar{H})\Big|={\rm O}\left(T^{1/2-1/1000}\right).
\end{equation}
To prove \eqref{eq:devbo2} 
note that for $(\theta,0)\in W(T)$
$$
f_{N(\theta,0,T)}(\theta)\leq T\leq f_{N(\theta,0,T+1)}(\theta)
\leq f_{N(\theta,0,T)}(\theta)+C'''T^{(1+1/100)\gamma}.
$$
Hence up to an additional negligible error of size $T^{(1+1/100)\gamma}$, it is enough to control
$$
|f_{N(\theta,0,T)}(\theta)-N(\theta,0,T)|.
$$
 By \eqref{NumCross} and our assumption on $\theta$ it follows that $\theta_{\min,N(\theta,0,T)}\geq T^{-1-1/100}$. 
So Lemma \ref{lem:1b}, the above upper bound on $N(\theta,0,T)$ and the fact that $\int_{\mathbb{T}} f dLeb = 1$ imply that 
$$
|f_{N(\theta,0,T)}(\theta)-N(\theta,0,T)|\leq 
{\rm O}\left(T^{\zeta+(1+1/100)\gamma}\log T\right).
$$
Since $\zeta+(1+1/100)\gamma\leq 1/1000+(1+1/100)2/5\leq 1/2-1/1000$, 
\eqref{eq:devbo2}  follows. \medskip

To prove \eqref{eq:devbo3} we can WLOG assume that $\mu(\bar{H})=0$. Note that 
$$
\int_{0}^{N(\theta,0,T)}\bar{H}(\varphi_t(\theta,0))dt=\sum_{i=0}^{N(\theta,0,T)-1}\int_{0}^{f(\theta+i\alpha)}\bar{H}(\theta+i\alpha,s)ds=\sum_{i=0}^{N(\theta,0,T)-1}F(\theta+i\alpha)
$$
where $\DS F(\theta)=\int_0^{f(\theta)}\bar{H}(\theta,s)ds$. 
 Moreover, $Leb(F)=\mu(\bar{H})=0$ and $F$ is smooth except at $0$. 
 Since $f$ satisfies \eqref{eq:asy}  and $\brH\in \cC^3$, it follows that 
 $$
\lim_{\theta\to 0^+}\frac{F(\theta)}{\theta^{-\gamma}}=P'\text{ and } 
\lim_{\theta\to1^-}\frac{F(\theta)}{(1- \theta)^{-\gamma}}=Q'
 $$
 where $P'=P\bp(\brH)$, $Q'=Q\bp(\brH)$. Thus $F(\cdot)$ also satisfies the assumptions \eqref{eq:asy}. 
 So by Lemma \ref{lem:1b}, the fact that $(\theta,0)\in W(T)$ and the bound 
 $N(\theta,0,T)\leq \frac{T}{c}$,
 $$
 \left|\sum_{i=0}^{N(\theta,0,T)-1}F(\theta+i\alpha)\right|=
 {\rm O}\left(T^{\zeta+(1+1/100)\gamma}\log T\right)=
 {\rm O}\left(T^{1/2-1/1000}\right).
 $$
 This finishes the proof of \eqref{eq:devbo3} and 
completes the proof of the proposition.
\end{proof}

\section{Ergodic sums over hyperbolic maps and subshifts of finite type}

\subsection{CLT for higher rank Kalikow systems. Proof of Theorem \ref{ThAnosovCLT}(ii)}
\label{SSAnCLT}
As in Section \ref{ScStandard} we define $\fm_N$ by \eqref{eq:disc} and check the conditions of 
Proposition \ref{PrBG}. (a) is evident. Also, by the local limit theorem we get
$\DS \mu(\sigma_{0, k})=O\left(k^{-d/2}\right)$ which implies equation \eqref{Eq1Cor}
with $\beta=d/2$ which in case $d\geq 3$ is sufficient
to prove (c) in the same way as in Section \ref{ScStandard},
see footnote \ref{FtBeta}.

To prove property (b),
let $\ell(x, t, N)=\Card\{n \leq N:  |\tau_n(x)-t|\leq 1\}$.
Using multiple LLT we get that
for each $p$, there is a constant $C_p$ such that
for each $t\in \reals^d$ for each $n$
$$ \mu\left(\ell^p(\cdot, t,  n)\right)\leq C_p $$
(see e.g. \cite[Section 5]{DSV08}). Now the Markov inequality implies that for each $\eps, t, p$ we have
$$ \mu\left(x:  \ell(x, t, N)\geq N^{(1/5)-\eps}\right)
\leq \frac{C_p}{N^{[(1/5)-\eps]p}}. $$
It follows that
$$ \mu\left(x:  \exists t: \|t\|\leq K\ln N\text{ and }\ell(x, t, N)\geq N^{(1/5)-\eps}\right)\leq 
\frac{C_p (K\ln N)^d}{N^{[(1/5)-\eps]p}}. $$
Taking $p=6,$ $\eps=0.01$, we verify the conditions of Lemma \ref{LmQuarter}.

\subsection{Visits to cones} 
\label{SSCones}
\begin{proof}[Proof of Lemma \ref{lem:RW}]
We only prove 
the case of $\mathbb Z_+$, as the case of $\mathbb Z_-$ is similar.
Let
$$\hC =\{v\in \cC: \text{dist}(v, \partial \cC)\geq 1 \}. $$
Define $n_1 = 2$ $n_{k+1}=n_k^3$ and
$$A_k=\{\omega: 
\tau_{n_k} (\omega)\in \cC
\text{ and }
\| \tau_{n_k} (\omega) \|  > \sqrt{n_k} 
\}. $$
It suffices to show that infinitely many $A_k$ happen with probability 1.
Since $\phi$ only depends on the past,
$A_k$ is measurable with respect to $\cF_k$, the $\sigma$-algebra generated by 
$\omega_j$ with $j\leq n_k.$
Therefore by L\'evy's extension of the Borel-Cantelli Lemma (see e.g. \cite[\S 12.15]{W91}) it is enough to show that 
for almost all $\omega$
\begin{equation}
\label{GenBC}
 \sum_k \mu(A_{k+1}|\cF_k)=\infty. 
\end{equation} 
However by 
mixing central limit theorem, 
there is $\eps=\eps(\cC)$ such that
for any cylinder $\cD$ of length $n_k$
$$ \mu\left({\tau_{n_{k+1}-n_k} (\sigma^{n_k} \omega)}  \in \hC,\;
\| \tau_{n_{k+1}-n_k} (\sigma^{n_k} \omega) \| > 
\sqrt{n_{k+1} - n_k}
\Big| \omega\in \cD \right) \geq 
\eps. $$
Since $\| \tau_n \|_{\infty} \leq n \| \tau \|_{\infty}$ and 
$\sqrt{n_{k+1}}/n_k\to \infty$, we conclude from the last display that each term in \eqref{GenBC}
is greater than $\eps.$ This completes the proof.
\end{proof}

\subsection{Separation estimates for cocycles}

\label{ScSep}
\begin{proof}[Proof of Lemma \ref{lem:cl}]
({\bf m2}) follows from the fact that 
 there exists a constant $C_\tau$ such that
if $\omega'$ and $\omega''$ belong to the same cylinder of length $N$, then 
$$ |\tau_N(\omega')-\tau_N(\omega'')|\leq C_\tau.
$$
To prove ({\bf m1}) let
$$ N_A(\omega, k)\!=\!\#\left\{(i,j)
 \in [0,(10k)^{100}]\times [0,(10k)^{100}], i\neq j:\,
\frac
{\|\tau_{(j-i)n_{k-1}}(\sigma^{in_{k-1}}\omega)\|}{(|j-i|n_{k-1})^{1/2}}< k^{-20}\right\}.$$
Denote $m_{ij}=|i-j| n_{k-1}.$ Covering the ball with center at the origin and radius
$\DS \frac{\sqrt{m_{ij}}}{k^{20}}$ in $\R^d$ by unit cubes and applying the anticoncentration
inequality \cite[formula (A.4)]{DDKN} to each cube, we obtain that 
\begin{equation}
\label{AntiCCor}
\mu\left(\|\tau_{m_{ij}}(\omega)\|\leq \frac{ \sqrt{m_{ij}}}{k^{20}} \right)\leq C k^{-20 d}. 
\end{equation}
Since $\mu$ is shift invariant we conclude that 
$$\mu\left(\frac
{\|\tau_{m_{ij}}(\sigma^{in_{k-1}}\omega)\|}{m_{ij}^{1/2}}
< \frac{1}{k^{20}}\right)\leq C k^{-20 d}.$$
Summing over $i$ and $j$ we obtain
$$ \mu\left(N_A(\cdot, k)\right)\leq C (10k)^{200-20 d}. $$
Next, by the Markov inequality,
$$ \mu\left(\omega: N_A(\omega, k)\geq (10k)^{191}\right)\leq \frac{C}{k^{20d-9}}. $$
This shows that the measure of the complement of $A_k$ is small.
The estimate of measure of $B_k$ is similar except we replace \eqref{AntiCCor}
by
\begin{equation}
\label{MDMax}
 \mu\left(\max_{n\leq m} \|\tau_n(\omega)\|\geq k^{20}\sqrt{m} \right) \leq c_1 e^{-c_2 k^{40}}. 
\end{equation}
To prove \eqref{MDMax}  it is sufficient to consider the case $d=1$ since for higher dimensions we can consider 
each coordinate separately. Thus it suffices to show that
\begin{equation}
\label{MDMax1}
 \mu\left(\max_{n\leq m} \tau_n(\omega)\geq k^{20}\sqrt{m} \right) \leq c_1 e^{-c_2 k^{40}}
\end{equation}
(the bound on
$\DS \mu\left(
\min_{n\leq m} \tau_n(\omega)
\leq - k^{20}\sqrt{m} \right) $
is obtained by replacing $\tau$ by $-\tau.$).

To prove \eqref{MDMax1} with $d=1$ we use the reflection principle. Namely, \cite[formula (A.3)]{DDKN}
shows that for each $L$
\begin{equation}
\label{MDFix}
 \mu\left(|\tau_m(\omega)|\geq L \sqrt{m} \right) \leq \brc_1 e^{-\brc_2 L^2}. 
\end{equation}
Let
$$ D_m(k)=\left\{\omega: \exists n\leq m,  \tau_n(\omega)\geq k^{20}\sqrt{m} \right\}. $$
Note that $D_{m}(k)$ contains the LHS of \eqref{MDMax1} and that $D_m(k)$ is a disjoint union of the
cylinders of length at most $m$,
$\DS D_m=\bigcup_j \cC_j$
(to see this, take for each $\omega$ the smallest $n$ such that 
the last display holds and recall that
$\tau$ only depends on the past). 
Next, there exists $\ell=\ell(\tau)$ such that for each cylinder $\cC$ of length $n=n(\cC)$ 
and for each $m$,
$$ \mu\left(\{\tau_{m-n} \geq -\ell|\omega\in \sigma^{-n}\cC\}\right)\geq \frac{1}{2} . $$
If $m-n$ is large this follows from (mixing) Central Limit Theorem \cite{PP, Ea76} 
while the small $m-n$ could be handled by choosing $\ell$ large enough. 
Combining this with \eqref{MDFix}, we obtain
$$ \brc_1 e^{-\brc_2 k^{40}/4}\geq  \mu\left(\tau_m\geq \frac{k^{20} \sqrt{m}}{2}\right)\geq 
\sum_j \mu\left(\omega\in \cC_j,\; \tau_m\geq \frac{k^{20} \sqrt{m}}{2}\right)\geq 
$$
$$ \sum_j \mu(\cC_j) \mu\left(\tau_m\geq \frac{k^{20} \sqrt{m}}{2}\Big|\omega\in \cC_j\right)\geq 
\frac{1}{2} \sum_j \mu(\cC_j)=\frac{\mu(D_m)}{2}
$$
proving \eqref{MDMax1} and completing the proof of the lemma.
\end{proof}

{\bf Acknowledgements:} 
We thank Bassam Fayad and Jean-Paul Thouvenot for useful discussions.
D.\ D.\ was partially supported by the NSF grant
DMS-1956049,
A.\ K.\ was partially supported by the NSF grant DMS-1956310,
P.\ N.\ was partially supported by the NSF grants DMS-1800811 and DMS-1952876.

\end{document}